\definecolor{red(ncs)}{rgb}{0.77, 0.01, 0.2}
\newcommand{\Gdue}{{{g}}}
\newcommand{\Gzero}{{{g_0}}}
\newcommand{\xupref}[2]{\hspace{-0.3ex}\stackrel{\eqref{#1}}{#2}} % for (x)
\newtheorem{theorem}{Theorem}[section]
\newtheorem{lemma}[theorem]{Lemma}
\newtheorem{proposition}[theorem]{Proposition}
\newtheorem{corollary}[theorem]{Corollary}
\newtheorem{remark}[theorem]{Remark}
\numberwithin{equation}{section}
\newcommand{\e}{\varepsilon}
\newcommand{\vphi}{\varphi}
\newcommand{\N}{\mathbb N}
\newcommand{\R}{\mathbb R}
\DeclareMathOperator*{\loc}{loc}
\newcommand{\hn}{{\mathcal H}^{n-1}}
\newcommand{\Lu}{{\mathcal L}^1}
\newcommand{\hz}{{\mathcal H}^{0}}
\newcommand{\wto}{\rightharpoonup}
\newcommand{\de}{\,\mathrm{d}}
\renewcommand{\setminus}{\backslash}
\newcommand{\mres}{\mathbin{\vrule height 1.6ex depth 0pt width 0.13ex\vrule height 0.13ex depth 0pt width 1.3ex}}
\newcommand{\sfrac}{s_{\mathrm{frac}}}
\newcommand{\AC}{{\rm AC}}
\newcommand{\BV}{{\rm BV}}
\newcommand{\SBV}{{\rm SBV}}
\newcommand{\f}{\mathcal{F}}
\newcommand{\fe}{\mathcal{F}_\e}
\newcommand{\fk}{\mathcal{F}_{k}}
\newcommand{\febar}{{\hat{\mathcal{F}}_\e}}
\newcommand{\fbar}{{\hat{\mathcal{F}}}}
\newcommand{\g}{\mathcal{G}}
\newcommand{\gs}{\mathcal{G}_s}
\newcommand{\jump}{\Gzero(|[u](\bar{x})|)}
\newcommand{\tif}{f_1}
\newcommand\slopefuno{\eta}
\title{Cohesive fracture in {1D}: quasi-static evolution and { derivation} from static phase-field models}
\author{Marco Bonacini\thanks{\emailmarco} \and Sergio Conti\thanks{\emailsergio} \and Flaviana Iurlano\thanks{\emailfla}}
\date{${}^*$\addmarco \\ ${}^\dag$\addsergio \\ $^\ddag$\addfla \\[3ex]\today}
\newcommand{\email}[1]{E-mail: \tt #1}
\newcommand{\emailmarco}{\email{marco.bonacini@unitn.it}}
\newcommand{\emailsergio}{\email{sergio.conti@uni-bonn.de}}
\newcommand{\emailfla}{\email{iurlano@ljll.math.upmc.fr}}
\newcommand{\addmarco}{\emph{\small Department of Mathematics, University of Trento\\ Via Sommarive 14, 38123 Povo (TN), Italy}}
\newcommand{\addsergio}{\emph{\small University of Bonn, Institute for Applied Mathematics\\ Endenicher Allee 60, 53115 Bonn, Germany}}
\newcommand{\addfla}{\emph{\small Sorbonne-Université, CNRS, Université de Paris,\\ Laboratoire Jacques-Louis Lions (LJLL), F-75005 Paris, France}}
\begin{document}
	
\maketitle

\begin{abstract}
In this paper we propose a notion of irreversibility for the evolution of cracks in presence of cohesive forces, which allows for different responses in the loading and unloading processes, motivated by a variational approximation with damage models. We investigate its applicability to the construction of a quasi-static evolution in a simple one-dimensional model. The cohesive fracture model arises naturally via  $\Gamma${-}convergence from a phase-field model of the generalized Ambrosio-Tortorelli type, which may be used as regularization for numerical simulations.
\end{abstract}
	
\tableofcontents

%%%%%%%%%%%%%%%%%%%%%%%%%%%%%%%%%%%%%%%%%%%%%%%%%%%%%%%%%%%%%%%%%%
%%%%%%%%%%%%%%%%%%%%%%%%%%%%%%%%%%%%%%%%%%%%%%%%%%%%%%%%%%%%%%%%%%
%%%%%%%%%%%%%%%%%%%%%%%%%%%%%%%%%%%%%%%%%%%%%%%%%%%%%%%%%%%%%%%%%%
%%%%%%%%%%%%%%%%%%%%%%%%%%%%%%%%%%%%%%%%%%%%%%%%%%%%%%%%%%%%%%%%%%
%%%%%%%%%%%%%%%%%%%%%%%%%%%%%%%%%%%%%%%%%%%%%%%%%%%%%%%%%%%%%%%%%%

\section{Introduction} \label{sect:intro}

% In this paper we propose a notion of irreversibility for the evolution of cracks in presence of cohesive forces, which allows for different responses in the loading and unloading processes, motivated by a variational approximation with damage models. We investigate its applicability to the construction of a quasi-static evolution in a simple one-dimensional model.

The variational approach to fracture, as formulated by Francfort and Marigo \cite{FraMar} (see also \cite{BFM}), is based on Griffith's idea \cite{Gri20} that the crack growth is determined by the competition between the energy spent to increase the crack and the corresponding release in the bulk elastic energy.
By representing the crack as a $(n-1)$-dimensional surface $\Gamma$ and assuming for simplicity that the material behaves as a linearly elastic body in the unbroken part $\Omega\setminus\Gamma$ of the reference configuration, 
{for pristine materials}
one is led to consider an energy of the form
\begin{equation} \label{intro1}
E(u,\Gamma) := \int_{\Omega} |\nabla u|^2\de x + \int_{\Gamma} 
\Gzero(|[u](x)|)\de\hn,
\end{equation}
where $[u](x):=u^+(x)-u^-(x)$ is the difference of the traces of the displacement ${u:\Omega\to\R}$ on the two sides of $\Gamma$. In the original Griffith's theory for \emph{brittle fracture} the energy associated with the crack is proportional to the measure of the surface of the crack itself, and $\Gzero$ is assumed not to {depend} on the crack opening $[u]$.
It was however early recognized (Barenblatt \cite{Bar}) that in many situations fracture should be regarded as a gradual process, and that the presence of cohesive forces between the lips of the crack should be taken into account: in \emph{cohesive fracture} models Griffith's energy is replaced by various surface energies depending on the actual opening of the crack; in this case the function $\Gzero$ in \eqref{intro1} is typically assumed to be an increasing and concave function, with $\Gzero(0)=0$, with positive and finite slope $\Gzero'(0)\in(0,\infty)$ at the origin, and asymptotically converging to the value of the fracture toughness. Variants of \eqref{intro1} with $\Gzero$ not necessarily concave have been used to study the formation of micro-cracks \cite{DPT1,DPT2}.

The notion of \emph{irreversible quasi-static evolution} in the brittle case ($\Gzero$ constant), proposed in \cite{FraMar}, under the action of time-dependent loads (for instance, in the form of a prescribed boundary displacement $b(t)$), requires three essential ingredients: one looks for a map $t\mapsto(u(t),\Gamma(t))$ satisfying
\begin{enumerate}
	\item (\emph{irreversibility}) $\Gamma(t_1)\subset\Gamma(t_2)$ for $t_1<t_2$;
	\item (\emph{static equilibrium}) $(u(t),\Gamma(t))$ is a global minimizer of the energy \eqref{intro1} with respect to the boundary condition $b(t)$;
	\item (\emph{energy balance}) the increment in stored energy plus the energy spent in crack increase equals the work of external forces.
\end{enumerate}
The common approach in showing the existence of a variational evolution with the properties above is based on a time-discretization algorithm, in which one selects at each time step a global minimizer of the total energy, and then recovers the time-continuous limit by sending to zero the discretization parameter. This program has been successfully accomplished in the brittle case in a series of contributions: the existence of a quasi-static evolution {was first proven} by Dal Maso and Toader \cite{DMToa} in a two-dimensional, antiplane shear setting and with a uniform bound on the number of connected components of the crack, {then} extended to the case of planar elasticity by Chambolle \cite{Cha}. The general $n$-dimensional case, without restrictions on the number of connected components, {was} studied by Francfort and Larsen \cite{FraLar} (antiplane case) and by Dal Maso, Francfort and Toader \cite{DMFraToa} (finite elasticity). Further results in this direction {were} obtained in \cite{KneMieZan08,DMLaz,Laz,ALL}.

In the cohesive case the picture is less clear and there is no agreement on a single notion of irreversibility. Indeed, while in the brittle case the irreversibility constraint is a purely geometric condition, in the cohesive context one has to take into account also the amplitude of the cracks, through the inclusion of some {internal} variable which keeps track of the complete history of the fracture process. In particular, it would be desirable to model the possibility of having different responses of the material to loading and unloading processes. Up to now, several plausible choices have been proposed in the mathematical literature 
{under the assumption that the crack path is prescribed}
{(see also the discussion in \cite[Chapter~5]{BFM})}: Dal Maso and Zanini \cite{DMZan} adopt as {internal} variable  the maximal opening of the crack, postulating that the energy is dissipated only once for a given value of the jump, and additional dissipation can only occur if the distance between the {crack} lips is further increased; { Bourdin, Francfort and Marigo \cite{BFM} propose a different model, which adopts as the memory variable the cumulated opening of the crack and allows for the phenomenon of fatigue, as the surface energy evolves during each loading phase}; in the model by Cagnetti and Toader \cite{CagToa} part of the dissipated energy is recovered when the crack opening is decreased; finally also Crismale, Lazzaroni and Orlando \cite{CriLazOrl} include {fatigue assuming that} some energy is dissipated also during the unloading phase. 
In {these} cases the existence of a quasi-static evolution (encoding in the definition the relevant notion of irreversibility) is established. In the cohesive fracture context, further results on evolutions of local minimizers or critical {points} (rather than global minimizers) are obtained in \cite{Cag,Alm,ArtCagForSol,NegSca}.

\medskip
{ In the first part of this paper (Sections~\ref{sect:setting}--\ref{sect:evol2}) we discuss the construction of a quasi-static evolution for a one-dimensional model of cohesive fracture, which allows for a different response of the material to loading and unloading. This is achieved by introducing a cohesive energy density {$\Gdue(s,s')$  which depends both on the crack opening $s$ and on the internal variable $s'$, which keeps track of the maximum previous opening
(see Section~\ref{sect:setting}
for the detailed assumptions on $\Gdue$)}. It is the goal of the second part of the paper (Sections~\ref{sect:gnew}--\ref{sect:proofs}) to show how those assumptions on the energy density ${\Gdue}$ can be naturally derived by introducing  an irreversibility condition on a phase-field approximation.

The construction of a quasi-static evolution follows a rather standard approach, which we now outline.} We let $\Omega=(0,1)$ represent an elastic bar, on which we prescribe a time-dependent boundary displacement $t\mapsto (b(t,0),b(t,1))$. We also include in the energy a lower-order penalization of the form $\gamma\|u-w(t)\|_{L^2(0,1)}^2$, where $\gamma>0$ and $t\mapsto w(t)$ is a given function {which prescribes the average behavior of the material. This foundation-type term, as usual in one-dimensional simplifications of models in solid mechanics, represents the boundary conditions in the directions that have been eliminated. A simple way to understand its physical significance is to imagine solving for ${\tilde u}:(0,1)\times(-H,H)\to\R$ with boundary conditions ${\tilde u}(x,\pm H)=w(x)$ on the top and bottom boundaries, and then focusing on $u(x):={\tilde u}(x,0)$. The difference $u-w$ would be penalized by a $\partial_2 {\tilde u}$ term in the energy. If dealing with the Dirichlet functional, the $H^{1/2}$ norm of the difference would be the most natural way to model this effect; for general energy densities there is no clear natural form. For simplicity we use the $L^2$ norm.}
Due to the presence of this term the bar can be fractured at multiple points, while if $\gamma=0$ one can prove that minimizers of the energy have at most one jump.

We keep track of the state of the fracture at each time $t\in[0,T]$ by means of a pair {of internal variables} $(\Gamma(t),s(t))$, where $\Gamma(t)$ is a discrete set of points (representing the sites of the crack points), and $s(t):\Gamma(t)\to(0,\infty)$ is the maximal amplitude of the jump at the points of $\Gamma(t)$ for all the previous times. For technical reasons, we fix a (arbitrarily small) positive threshold $\bar{s}>0$ and we keep track only of the jumps that overcome this value; in other words, fracture points where the jump is smaller than $\bar{s}$ are not affected by the irreversibility condition, and the opening can be reduced with a complete recover of the dissipated energy. From the mathematical point of view this guarantees a uniform bound on the total number of crack points (this assumption can be easily removed in the case $\gamma=0$, that is if we do not include the lower order penalization). {It is often convenient to switch to the equivalent formulation in which $s$ is  first extended by 0 outside $\Gamma$, and $\Gamma$ 
is implicitly replaced by the set $\{s>0\}$.}

{ {One main novelty in the problem we consider here} is that a new energy density ${\Gdue}(s,s')$ appears at the points where the bar was previously broken. The first argument $s$ of ${\Gdue}$ represents the current opening of the crack at a given point; the second argument $s'$ represents the maximal amplitude of the jump reached at that point at the previous times,
{in particular $\Gdue(s,0)=\Gzero(s)$ represents the fracture energy in a pristine material. More in general,}
when $s\geq s'$ the additional constraint  is irrelevant and one has ${\Gdue}(s,s')=\Gzero(s)$: this means that, in the process of crack opening, the energy per unit area of the fracture at a point $x$ is given by $\Gzero(|[u(x)]|)$, the ``original'' cohesive energy density. The behaviour of ${\Gdue}(s,s')$ is instead different when $s<s'$: in this case $s\mapsto{\Gdue}(s,s')$ is monotone nondecreasing and, {generally,} ${\Gdue}(s,s')>\Gzero(s)$. Hence, when $|[u](x)|$ is smaller than the maximal opening reached at previous times, the energy density follows a curve which is above the graph of $\Gzero$, see Figure~\ref{figggbarintro}. {
If a fracture with opening $s$ is closed, a certain amount of energy $\Gdue(0,s)$ is permanently dissipated, so that a successive opening only has the smaller cost $\Gdue(s,s)-\Gdue(0,s)$.
We refer to Section \ref{subsect:hyp} and Section \ref{subsect:barg} below for the construction of specific examples for the functions $\Gzero$ and $\Gdue$, respectively.}

We consider a time discretization $0=t_0<t_1<\ldots<t_N=T$ and we initially select the displacement $u_0$ by minimizing the energy
\begin{equation} \label{intro5}
\int_0^1 |u'|^2\de x + \sum_{x\in J_u} \Gzero(|[u](x)|) + \gamma\int_0^1 |u-w(t_0)|^2\de x
\end{equation}
among all $u\in\SBV(0,1)$ attaining the boundary conditions $b(t_0)$ at the endpoints of the bar. To be more precise, we need to consider the lower semicontinuous envelope of the energy and a minimizer will belong in general to $\BV(0,1)$: indeed the presence of the lower-order term allows for the possibility of having more than one jump, while some regularity can still be proved, see Proposition~\ref{prop:nocantor}. In the case $\gamma=0$ it is well-known that minimizers in dimension one are in $\SBV$ with a single jump, see \cite{BDG}; see also \cite{DMG,CCF} for further regularity results for cohesive energies.

We then define
\begin{equation*}
\Gamma_0:= \bigl\{ x\in J_{u_0}^{b(t_0)} : |[u_0](x)|>\bar{s} \bigr\}, \qquad s_0(x):=|[u_0](x)| \quad\text{for }x\in\Gamma_0
\end{equation*}
(where the superscript $b(t_0)$ in the jump set of $u_0$ indicates that we consider as part of the crack also the points at which the boundary conditions are not attained).
Iteratively, assuming to have constructed $(u_{i},\Gamma_i,s_i)$ for $i=0,\ldots,k-1$, we select the displacement $u_k$ at time $t_k$ by minimizing (the relaxation of) the energy
\begin{equation} \label{intro6}
\int_0^1 |u'|^2\de x + \sum_{x\in\Gamma_{k-1}} {\Gdue}(|[u](x)|,s_{k-1}(x)) + \sum_{x\in J_u\setminus\Gamma_{k-1}} \Gzero(|[u](x)|) + \gamma\int_0^1 |u-w(t_k)|^2\de x
\end{equation}
with respect to the boundary condition $b(t_k)$, and we update the state of the fracture by setting
\begin{equation*}
\Gamma_k:= 
{\Gamma_{k-1}\cup}
\bigl\{ x\in J^{b(t_k)}_{u_k} \,:\, |[u_k](x)|>\bar{s}  \bigr\}, \qquad s_k(x):=|[u_k](x)|\vee s_{k-1}(x) \quad\text{for }x\in\Gamma_k.
\end{equation*}}

%Going back to the discrete evolution constructed with the procedure described above,}
{Passing} to the limit as the time-step goes to zero we obtain in Theorem~\ref{thm:evolution} the existence of a quasi-static evolution $(u(t),\Gamma(t),s(t))$ satisfying the { by now classical unilateral minimality and energy conservation conditions of quasi-static energy minimizing evolutions, {see \cite{MieRou}}.} The irreversibility condition is reflected in the fact that the sets $\Gamma(t)$ and the maps $s(t)$ are monotonically increasing in time.

\begin{figure}
 \begin{center}
  \includegraphics[width=6cm]{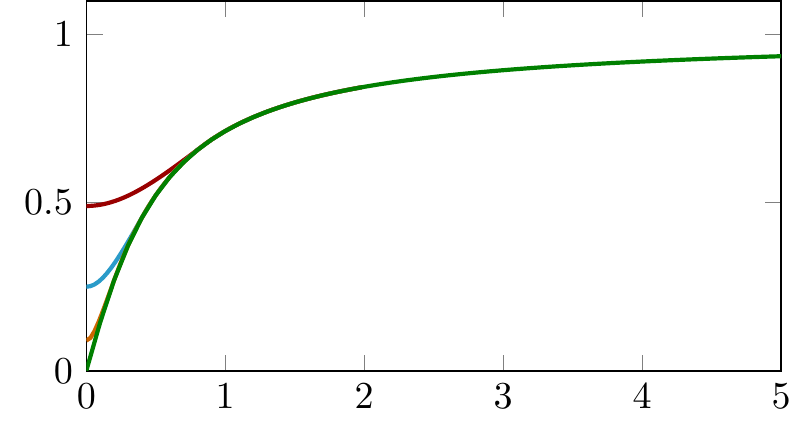}
  \end{center}
\caption{Graph of $\Gzero(s)$ (bottom curve) and $\Gdue(s,s')$ for $m_{s'}=0.3, 0.5$ and $0.7$ (from bottom to top)
using the function $f_1^b(s)$ defined in \eqref{eqexamplefb} with $\ell=1.5$, $\ell_1=0.2$. 
See {Section~\ref{subsect:hyp} and in particular} Figure~\ref{figgambbar1} below for further details.}
\label{figggbarintro}
\end{figure}

\medskip
{ The second part of the paper is devoted to explain how the notion of irreversibility, used in the construction of the quasi-static evolution in the first part, is related to a corresponding irreversibility constraint on approximating damage models. In particular, one of the goals of the second part is to derive the properties of the cohesive energy density ${\Gdue}$, stated in Section~\ref{sect:setting}, from a phase-field approximation of the cohesive functional \eqref{intro6} in the spirit of \cite{CFI}.}

In order to explain the main idea, it is first convenient to refer once again to the brittle case, and to recall the classical Ambrosio-Tortorelli approximation \cite{AmbTor90,AmbTor92} of the Mumford-Shah functional
\begin{equation*}
E(u) := \int_{\Omega} |\nabla u|^2\de x + \hn(J_u), \qquad u\in\SBV(\Omega),
\end{equation*}
by the regularizing energies
\begin{equation*}
E_\e(u_\e,v_\e) := \int_{\Omega} (v_\e^2 + o(\e)) |\nabla u_\e|^2\de x + \frac12\int_{\Omega} \Bigl( \e|\nabla v_\e|^2 + \frac{(1-v_\e)^2}{\e} \Bigr)\de x, \quad (u_\e,v_\e)\in H^1(\Omega)\times H^1(\Omega),
\end{equation*}
with $0\leq v_\e\leq 1$. Here the function $u_\e$ has to be understood as a regularization of the displacement $u$, possibly creating jumps in the limit as $\e\to0$, while $v_\e$ can be interpreted as a damage variable which concentrates in the regions where the singularities of $u$ are created: more precisely, $v_\e\to1$ almost everywhere in $\Omega$, and $v_\e\to0$ on the $(n-1)$-dimensional surface where the limit jump will appear. The notion of quasi-static evolution in the regular context of the Ambrosio-Tortorelli functionals can be formulated including an irreversibility condition in the form of a global monotonicity constraint on $v_\e$ (the damage can only increase in time). It is proved in \cite{Gia05} that this evolution converges as $\e\to0$ to a quasi-static evolution for brittle fracture.

Returning to the cohesive setting, in the spirit of Ambrosio and Tortorelli it is shown in \cite{CFI} (see also \cite{FreIur} for a numerical insight) that, {for a suitable choice of the function $f_\e$,} functionals of the form
\begin{equation} \label{intro3}
F_\e(u_\e,v_\e) := \int_{\Omega} \Bigl( f_\e^2(v_\e)|\nabla u_\e|^2 + \frac{(1-v_\e)^2}{4\e} + \e|\nabla v_\e|^2 \Bigr)\de x,
\quad (u_\e,v_\e)\in H^1(\Omega)\times H^1(\Omega),
\end{equation}
with $0\leq v_\e\leq 1$, $\Gamma$-converge as $\e\to0$ to the relaxation of an energy of the form \eqref{intro1} (see Section~\ref{subsect:blowup} for the precise statement). The function $\Gzero$ obtained as limit surface energy density can be characterized as the solution to a minimum problem,
\begin{multline} \label{intro2}
\Gzero(s) = \inf\biggl\{ \int_{-\infty}^\infty \biggl( {s^2}f^2(\beta)|\alpha'|^2 + \frac{(1-\beta)^2}{4} + |\beta'|^2 \biggr) \de t \;:\; (\alpha,\beta)\in H^1_{\loc}(\R)\times H^1_{\loc}(\R),\, \alpha'\in L^1(\R), \\
\bigg|\int_{-\infty}^\infty \alpha'(t)\de t\bigg| = {1},\, 0\leq\beta\leq1,\, \lim_{|t|\to\infty}\beta(t)=1 \biggr\} ,
\end{multline}
and satisfies the natural requirements for a cohesive energy {(see Figure \ref{figggbarintro})}. As before, $u_\e$ is a regularization of $u$ developing singularities in the limit, and $v_\e$ converges to the value 1 almost everywhere and {the measure $|\nabla v_\e|\mathcal L^n$} concentrates on the limit jump. The crucial observation is that, in contrast with the Ambrosio-Tortorelli case, $v_\e$ does not tend to zero everywhere on the jump, but reaches a value depending on the amplitude of the limit jump of $u$.
Indeed, the one-dimensional blow-up analysis of the behaviour of recovery sequences $(u_\e,v_\e)\to(u,1)$ around a jump point $\bar{x}\in J_u$, which we perform in Section~\ref{subsect:blowup}, shows that the rescaled functions $z_\e(x):=u_\e({x_\e+}\e x)$, $w_\e(x):=v_\e({x_\e+}\e x)$ converge to an optimal profile $(\alpha_s,\beta_s)$ for the minimum problem \eqref{intro2} defining $\Gzero(s)$, for the value $s=|[u](\bar{x})|$ (Theorem~\ref{thm:blowup}); moreover the minimum value reached by $v_\e$ in a small neighbourhood of $\bar{x}$, $m_s:=\min_{\R}\beta_s$, is in one-to-one correspondence with the limit amplitude $s$ of the jump.

Therefore the relevant information about the amplitude of the limit jump $|[u]|$ is carried only by the value of the minimum of the damage variable $v_\e$.
This suggests to impose, at level $\e$, an irreversibility condition in the form of a monotonicity constraint on the value of local minima of $v_\e$: in the points where $v_\e$ has a local minimum, its value can only decrease.
By looking at the $\Gamma$-limit of the functionals $F_\e$ in \eqref{intro3} with this additional constraint (see Section~\ref{subsect:constrainedpbl}), one finds that the limit energy density has the form
\begin{multline} \label{intro4}
{\Gdue}(s,s') = \inf\biggl\{ \int_{-\infty}^\infty \biggl( {s^2} f^2(\beta)|\alpha'|^2 + \frac{(1-\beta)^2}{4} + |\beta'|^2 \biggr) \de t \;:\; (\alpha,\beta)\in H^1_{\loc}(\R)\times H^1_{\loc}(\R),
{\alpha'\in L^1(\R),}\\ \bigg|\int_{-\infty}^\infty \alpha'(t)\de t\bigg| = {1},\, 0\leq\beta\leq1,\, \lim_{|t|\to\infty}\beta(t)=1,\, \inf_\R \beta \leq m_{s'} \biggr\} ,
\end{multline}
see Figure \ref{figggbarintro}.
In other words, we restrict the class of admissible profiles by imposing that the minimum value of the profile $\beta$ does not exceed the minimum value $m_{s'}$ of the optimal profile $\beta_{s'}$ corresponding to the amplitude $s'$.

{ We then show that the function ${\Gdue}$ derived in this way has exactly the properties stated in Section~\ref{sect:setting}. Notice that when $s\geq s'$ the additional constraint in \eqref{intro4} is irrelevant and one has $\Gzero(s)={\Gdue}(s,s')$. At the same time, $\Gdue(0,s')$ is (at least for {$s'$ not too large}, see Section~\ref{sect:proofs} for details) strictly smaller than $\Gzero(s')=\Gdue(s',s')$. Indeed, upon closure of the fracture, the part of the energy that originates from the elastic degrees of freedom (corresponding to the $f^2(\beta)|\alpha'|^2$ term in \eqref{intro4}) is recovered, whereas the part that originates from the internal variable (corresponding to the two terms $ \frac{(1-\beta)^2}{4} $ and $|\beta'|^2$ in \eqref{intro4}, with $\beta(\pm\infty)=1$ and $\beta(0)\le m_{s'}$) is permanently dissipated. This behaviour is consistent with models where some of the dissipated energy is recovered when the crack opening is decreased \cite{CagToa,Alm,NegSca,NegVit} and our analysis shows that it can be recovered as the limit of damage models.}

\medskip
It is worth to briefly comment on the connection of this approach with other results in the mathematical literature. As remarked above, in our model the response of the material when the crack opening decreases is similar to that considered in \cite{CagToa}, that is some dissipated energy is recovered. 
{However, the {memory of the fracture} is permanent, and further iterations of opening and closing the same fracture are reversible.}
In particular, the phenomenon of fatigue is completely absent. Fatigue effects are instead taken into account { in \cite{BFM} and in \cite{CriLazOrl} (see also \cite{AleCriOrl}): in particular, in \cite{CriLazOrl}} also when the crack opening is reduced some energy is dissipated, and oscillations of jumps can produce a complete fracture even if the maximal crack amplitude remains small. This behaviour is consistent with models where cohesive effects result from the interaction between damage and plasticity (see \cite{AleMarVid14,AleMarVid15,DMOrlToa}); on the contrary, our approach is based on the approximation \cite{CFI}, where the cohesive behaviour is seen as the effect of the interplay between elasticity and damage. 
{
We expect that our approach could be extended to models of fatigue, by replacing the
maximal amplitude  of the opening by the 
{cumulated} opening as an internal variable. This extension is, however, beyond the scope of this paper.}

We further remark that the functionals \eqref{intro3}, when a monotonicity constraint on the minimum values of $v_\e$ is included, provide a variational approximation of the limit energy \eqref{intro6} in a static setting (see Section~\ref{subsect:constrainedpbl} for the rigorous $\Gamma$-convergence result). It remains an open question whether a result in the spirit of Giacomini \cite{Gia05} holds also in this case, that is if one can construct a variational evolution for the functionals $F_\e$ and show its convergence, as $\e\to0$, to a quasi-static evolution for the limit cohesive model.

We believe that this approach is also suitable for numerical implementation. An appropriate algorithm should identify the regions where the damage $v_\e$ is concentrated, and impose the monotonicity constraint only at the local minimum points. This might be numerically less expensive than imposing the monotonicity constraint at any point.
{However, it is unclear how to handle the interplay between the asymptotic process of $\Gamma$-convergence and evolution. This problem is closely related to the difficulties that arise in studying the interplay of relaxation and evolution, and which may lead to the definition of complex concepts of solutions \cite{GarroniLarsen,FrancfortGarroni}.}

The generalization to the higher dimensional case poses several challenges. {A first step in this direction has been obtained in \cite{HelMT}, where the $\Gamma$-convergence result in Section~\ref{subsect:constrainedpbl} has been extended to general dimensions, under minimal regularity assumptions on the set $\Gamma$ on which the irreversibility constraint is imposed. The next step would consist in the construction of a quasi-static evolution under the assumption of a prescribed crack path.} {We stress, however, that in the present 1D analysis the position of the crack is not prescribed a priori.}

\medskip\noindent\textbf{Structure of the paper.}
{ The paper is mainly divided into two parts. The first part (Sections~\ref{sect:setting}--\ref{sect:relaxation}) is devoted to the proof of the existence of a quasi-static evolution in Theorem~\ref{thm:evolution}. In particular, in Section~\ref{sect:setting} we state the main assumptions on the cohesive energy density ${\Gdue}$ and we set up the evolution problem; then the construction of a quasi-static evolution for the one-dimensional model is discussed in Section~\ref{sect:evol} (time-discretization) and Section~\ref{sect:evol2} (continuous limit); finally in Section~\ref{sect:relaxation} we compute the relaxation of the cohesive energy.

In the second part of the paper (Sections~\ref{sect:gnew}--\ref{sect:proofs}) we explain the connection of the model with the phase-field approximation. In Section~\ref{sect:gnew} we report the approximation result from \cite{CFI} and we discuss the main properties of the cohesive energy density $\Gzero(s)={\Gdue(s,0)}$ of the {pristine} material; the $\Gamma$-convergence result in \cite{CFI} is also improved with the analysis of the behaviour of recovery sequences in {Section}~\ref{subsect:blowup}. In Section~\ref{sect:barg} we introduce the new surface energy density ${\Gdue}$, and we show in {Section}~\ref{subsect:constrainedpbl} how it can be recovered, via $\Gamma$-convergence, from damage models which include an irreversibility constraint. The proofs of the properties of $\Gzero$ and ${\Gdue}$ in Sections~\ref{sect:gnew} and \ref{sect:barg} are rather technical and are postponed to Section~\ref{sect:proofs}.}

\medskip\noindent\textbf{Notation.}
Along the paper we work in dimension one and for simplicity in the interval $\Omega:=(0,1)$. For a function of bounded variation $u\in\BV(0,1)$ we denote by $Du$ its distributional derivative, which is a bounded Radon measure on $(0,1)$, and by $|Du|$ its total variation. The standard decomposition of $Du$ is given by
\begin{equation*}
Du = u'\Lu + D^cu + \sum_{x\in J_u} [u](x)\delta_x\,,
\end{equation*}
where $u'\in L^1(0,1)$ denotes the density of the absolutely continuous part of $Du$ with respect to the Lebesgue measure $\Lu$, $D^cu$ is the Cantor part of $Du$, $J_u$ is the jump set of $u$ (sometimes also denoted by $J(u)$ in the following), $[u](x):=u^+(x)-u^-(x)$, and $u^+(x)$ and $u^-(x)$ are the {approximate} limits from the right and from the left of $u$ at $x$ respectively. We denote by $\SBV(0,1)$ the space of functions $u\in\BV(0,1)$ such that $D^cu\equiv0$. For the properties of functions of bounded variation we refer to the monograph \cite{AFP}. {We use standard notation for $\Gamma$-convergence, see \cite{Br02,Dalmaso1993}}.

%%%%%%%%%%%%%%%%%%%%%%%%%%%%%%%%%%%%%%%%%%%%%%%%%%%%%%%%%%%%%%%%%%
%%%%%%%%%%%%%%%%%%%%%%%%%%%%%%%%%%%%%%%%%%%%%%%%%%%%%%%%%%%%%%%%%%
%%%%%%%%%%%%%%%%%%%%%%%%%%%%%%%%%%%%%%%%%%%%%%%%%%%%%%%%%%%%%%%%%%
%%%%%%%%%%%%%%%%%%%%%%%%%%%%%%%%%%%%%%%%%%%%%%%%%%%%%%%%%%%%%%%%%%
%%%%%%%%%%%%%%%%%%%%%%%%%%%%%%%%%%%%%%%%%%%%%%%%%%%%%%%%%%%%%%%%%%

%\chapter{Quasi-static evolution of cohesive fracture (in 1D)}\label{chap:i}

\section{Cohesive quasi-static evolution: general setting and assumptions of the model} \label{sect:setting}

{ We start by introducing the main assumptions on the cohesive energy densities ${\Gdue}(s,s')$ considered in this paper. We refer to the Introduction for the interpretation of the meaning of the two variables of ${\Gdue}$, and to Figure~\ref{figggbarintro} for a visualization of the qualitative behaviour of ${\Gdue}$.

We let $\Gdue\colon [0,+\infty)\times[0,+\infty)\to [0,+\infty)$ satisfy the following assumptions:}
\begin{enumerate}
		\item \label{item1gbar1} ${\Gdue}$ is { continuous and} monotone nondecreasing in both variables, ${\Gdue}(s,s')=\Gdue(s,0)$ if $s\geq s'$;
		%, ${\Gdue}(s,s')\geq \Gdue(s,0)$ if $s<s'$;
		\item \label{item3gbar1} ${\Gdue}(s,s') \leq 1$
		%${\Gdue}(s,0)\leq 1\wedge \ell s$, for some $\ell>0$,
		{ and $\lim_{s\to+\infty}{\Gdue}(s,s')=1$ {for any $s'$}};
		\item \label{item5gbar1} there exist $\ell,\tilde{\ell}>0$ and $1<p<2$ such that
		\begin{equation} \label{gexpansion1}
		\Gdue(s,0) = \ell s - \tilde{\ell}s^{p} + o(s^{p}) \qquad\text{as }s\to0^+;
		\end{equation}
		\item \label{item2gbar1} for every $s_1,s_2,s'\geq0$,
		\begin{equation} \label{gbarsub1}
		{\Gdue}(s_1+s_2,s') \leq \Gdue(s_1,0) + {\Gdue}(s_2,s')\,;
		\end{equation}
		if in addition  $s_1>0$ and $s_2\vee s'>0$, the inequality is strict.
	\end{enumerate}
Notice that for fixed $s'$, the map ${\Gdue}(\cdot,s')$ is Lipschitz continuous with Lipschitz constant $\ell$ due to its monotonicity, its subadditivity, and the bound {\eqref{gexpansion1}; in particular} $\Gdue(s,0)\leq \ell s$.
%Then $\Gdue$ is also globally continuous.
Moreover, the global continuity and the strict subadditivity \ref{item2gbar1} of $\Gdue$ imply that for every $\e>0$ there exists a constant $c_{\e}>0$, depending on $\e$, such that
\begin{eqnarray} \label{gsub1}
& \Gdue(s_1+s_2,0)-\Gdue(s_1,0)-\Gdue(s_2,0) \leq - c_\e, \qquad\text{for every } s_1,s_2\geq\e, \\
& \label{gsub21}
{\Gdue}(s_1+s_2,s')-\Gdue(s_1,0)-{\Gdue}(s_2,s') \leq -c_{\e}, \qquad\text{for every } s_2\geq 0 \text{ and } s_1,s'\geq\e.
\end{eqnarray}

{
\begin{remark}
We briefly discuss the role of the previous assumptions. We first observe that the function $\Gzero(s)={\Gdue}(s,0)$ (that is, the cohesive energy of a ``new'' jump) has the expected behaviour of a cohesive energy in the sense of Barenblatt. The assumption on the behaviour of ${\Gdue}$ at infinity in \ref{item3gbar1} is just a normalization condition and can be assumed without loss of generality. The assumption \eqref{gexpansion1} is instead fundamental and guarantees that minimizers of the associated energy {(see \eqref{defPhi} below)} in one dimension have no Cantor part (see Proposition~\ref{prop:nocantor}); we expect that the range of exponents $p\in(1,2)$ is not optimal. Finally, the subadditivity condition \ref{item2gbar1} is a natural assumption and is related to the lower semicontinuity of the associated functional \eqref{defPhi}.
{The condition of strict subadditivity, which means that one larger jump is energetically more efficient than two small jumps which are spatially very close to each other, is important for obtaining regularity of minimizers, see for example Proposition \ref{prop:discretetime}\ref{item3discr} {and Theorem \ref{thm:evolution}\ref{item1ev}}.}
{We further observe that monotonicity and strict subadditivity imply that any fracture which is opened and then closed again dissipates a positive amount of energy, $\Gdue(0,s)>0$ for all $s>0$.}
\end{remark}}

\medskip
{ We next prepare the setting for the construction of a quasi-static evolution.}  Let $T>0$ be the final time. The state of the fracture at each time $t\in[0,T]$ is modeled by a discrete set of points (representing the sites where the crack is localized), to each of which is attached a scalar quantity (representing the maximal amplitude of the jump at all the previous times):
\begin{equation} \label{previousjump}
\Gamma \subset [0,1]\, \text{ finite set,} \qquad s:\Gamma\to(\bar{s},\infty).
\end{equation}
Here $\bar{s}>0$ is a fixed positive threshold (which can be taken arbitrarily small but strictly positive, for technical reasons). The effect of the irreversibility condition affects only the fracture points where the jump amplitude exceeds the threshold $\bar{s}$, since only these points will be included in the crack set $\Gamma$; in other words, for small fractures the process is completely reversible. The {internal} variable $s$ in \eqref{previousjump} will be always implicitly extended as $s(x)=0$ for $x\notin\Gamma$.

The bar is subject to a time-dependent boundary displacement
\begin{equation} \label{assb}
b\in H^1([0,T];\R^2), \qquad\text{with } b(t)=(b(t,0),b(t,1)).
\end{equation}
It is also convenient to denote, for $t$ fixed, by $b(t):[0,1]\to\R$ the affine function interpolating between the boundary values $b(t,0)$ and $b(t,1)$.
We will usually write $b(t,x):=b(t)(x)$ and we will denote by $\dot{b}$ the time-derivative of $b$, by $b'(t)$ the spatial derivative of the function $b(t)$.

We would then iteratively minimize a functional of the form
\begin{equation} \label{defPhi0}
\int_0^1 |u'|^2\de x +{\sum_{x\in\Gamma\cup J_u} {\Gdue}(|[u](x)|,s(x))}
\end{equation}
among all possible displacements $u$ attaining the boundary conditions $u(0)=b(t,0)$, $u(1)=b(t,1)$. The surface energy in the new fracture points $J_u\setminus\Gamma$, where the bar was previously {pristine}, {is $\Gdue(|[u]|,0)$}; at the points of $\Gamma$, where the fracture was present at previous times, an irreversibility condition appears: the surface energy density is given by { ${\Gdue}(|[u]|,s)$, with $s>\bar s$}, which takes into account the previous work made on $\Gamma$.

Since the function { $\Gdue(\cdot,0)$} has finite slope at the origin, the minimization of the functional \eqref{defPhi0} is in principle not well-posed in $\SBV$, and we have to consider its relaxation with respect to the weak*-topology of $\BV$, which is given by
\begin{equation} \label{defPhi}
\Phi(u;\Gamma,s,b) := \int_0^1 h(|u'|)\de x + {\sum_{x\in J_u^b\cup \Gamma} {\Gdue}(|[u](x)|,s(x))} + \ell|D^cu|(0,1)\,,
\end{equation}
where we highlighted the dependence of $\Phi$ on the state of the fracture (described by the pair $(\Gamma,s)$) and on the boundary datum $b$. The rigorous proof of the relaxation result will be given in Section~\ref{sect:relaxation}. Here the elastic energy density $h$ is 
\begin{equation}\label{defh1}
h(\xi):=
{\begin{cases}
\xi^2
& \text{if } |\xi| \leq \frac{\ell}{2},\\
\ell |\xi| - \frac{\ell^2}{4}
& \text{if  } |\xi| > \frac{\ell}{2},
\end{cases}}
\end{equation}
and to take into account the boundary conditions, we {use} the following notation:
\begin{equation}\label{Dirichlet}
J_u^b := J_u \cup \bigl\{ x\in\{0,1\} \,:\, u(x)\neq b(x) \bigr\}\,,
\end{equation}
\begin{equation} \label{Dirichlet2}
[u](0) := u^+(0)-b(0)\,,\qquad [u](1) := b(1)- u^-(1)\,.
\end{equation}
Finally, we also add to the total energy a lower-order term of the form $\gamma\|u-w(t)\|_{L^2(0,1)}^2$, where $\gamma\geq0$ is a fixed constant and
\begin{equation} \label{assw}
w\in\AC([0,T];L^\infty(0,1))
\end{equation}
is a given map, absolutely continuous in time.

%%%%%%%%%%%%%%%%%%%%%%%%%%%%%%%%%%%%%%%%%%%%%%%%%%%%%%%%%%%%%%%%%%
%%%%%%%%%%%%%%%%%%%%%%%%%%%%%%%%%%%%%%%%%%%%%%%%%%%%%%%%%%%%%%%%%%
%%%%%%%%%%%%%%%%%%%%%%%%%%%%%%%%%%%%%%%%%%%%%%%%%%%%%%%%%%%%%%%%%%
%%%%%%%%%%%%%%%%%%%%%%%%%%%%%%%%%%%%%%%%%%%%%%%%%%%%%%%%%%%%%%%%%%
%%%%%%%%%%%%%%%%%%%%%%%%%%%%%%%%%%%%%%%%%%%%%%%%%%%%%%%%%%%%%%%%%%

\section{Cohesive quasi-static evolution: the time-discrete evolution} \label{sect:evol}

{ We construct in this section a time-discrete evolution for the one-dimensional cohesive fracture model, relative to a given time-dependent boundary displacement $b(t)$. We assume in the following that the boundary displacement $b$ satisfying the assumption \eqref{assb} is given, as well as the lower-order term $w$ satisfying \eqref{assw} {and that a  positive threshold $\bar s>0$ for the minimal opening which leads to irreversibility is fixed}.}

We fix a discretization step $\tau>0$ and we consider a subdivision of $[0,T]$ of the form
\begin{equation*}
0=t_0<t_1<\ldots<t_{N_\tau}<t_{N_\tau+1}=T,
\end{equation*}
where $N_\tau$ is the largest integer such that $\tau N_\tau< T$, and $t_k:=k\tau$ for $k\in\{0,\ldots,N_\tau\}$. We also set $b^\tau_k=b(t_k)$, $w^\tau_k=w(t_k)$.

The time-discrete evolution in this setting is defined as follows. For $k=0$, select a solution $u^\tau_0\in\BV(0,1)$ of the minimum problem
\begin{align} \label{min0}
\min\biggl\{ \Phi(u;b^\tau_0) + \gamma\int_0^1|u-w_0^\tau|^2\de x \,:\, u\in \BV(0,1) \biggr\}
\end{align}
(with the convention that we do not indicate the dependence on $(\Gamma,s)$ in the functional $\Phi$ defined in \eqref{defPhi} when $\Gamma=\emptyset$).
We set
\begin{equation} \label{min0bis}
\Gamma^\tau_0 := \bigl\{ x\in J^{b^\tau_0}(u^\tau_0) : |[u^\tau_0](x)|>\bar{s} \bigr\}\,, \qquad s^\tau_0(x):=|[u^\tau_0](x)| \,,
\end{equation}
where $\bar{s}>0$ is the threshold fixed at the beginning of this section.
Assume now to have constructed $u^\tau_i\in\BV(0,1)$ and pairs $(\Gamma^\tau_i,s^\tau_i)$ in the form \eqref{previousjump}, for $i=0,\ldots,k-1$. We let $u^\tau_k\in\BV(0,1)$ be a minimum point of
\begin{align} \label{mink}
\min\biggl\{ \Phi(u;\Gamma^\tau_{k-1},s^\tau_{k-1},b^\tau_k) + \gamma\int_0^1|u-w^\tau_k|^2\de x \,:\, u\in \BV(0,1) \biggr\} \,,
\end{align}
and we set
\begin{equation} \label{minkbis}
\begin{split}
\Gamma^\tau_k &:= \Gamma^\tau_{k-1} \cup \bigl\{ x\in J^{b^\tau_k}(u^\tau_k) : |[u^\tau_k](x)|>\bar{s} \bigr\} \,, \\
s^\tau_k(x) &:=
\begin{cases}
|[u^\tau_k](x)|\vee s^\tau_{k-1}(x) & \text{if }x\in\Gamma^\tau_{k-1}, \\
|[u^\tau_k](x)| & \text{if }x\in \Gamma^\tau_k\setminus \Gamma^\tau_{k-1}.
\end{cases}
\end{split}
\end{equation}
Notice in particular that all the sets $\Gamma^\tau_k$ are finite, since at each step the minimizer $u_k^\tau$ has at most a finite number of jump points where the amplitude of the jump is larger than the fixed threshold $\bar{s}$.
The well-posedness of this construction is proved in the following proposition.
{We show below that $u_k^\tau\in \SBV(0,1)$.}

\begin{proposition} \label{prop:discretetime}
	For all $k=0,1,\ldots N_\tau$ there exists $u^\tau_k\in\BV(0,1)$ such that, by defining $(\Gamma^\tau_k,s^\tau_k)$ as in \eqref{minkbis} (with $\Gamma^\tau_{-1}=\emptyset$), the following hold:
	\begin{enumerate}
		\item\label{item1discr} $u^\tau_k$ is a minimizer of problem \eqref{mink};
		\item\label{item2discr} $\|u^\tau_k\|_\infty \leq \max\bigl\{ \|b^\tau_k\|_\infty, \|w^\tau_k\|_\infty \bigr\}$;
		\item\label{item3discr} $\inf_{\tau} \inf_{k} \min \bigl\{ |x-y| \,:\, x,y\in\Gamma^\tau_k,\, x\neq y \bigr\} >0$.
	\end{enumerate}
\end{proposition}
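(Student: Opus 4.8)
The plan is to prove the three assertions essentially in the order stated, building up from standard direct-method arguments to the more delicate separation estimate. For part \ref{item1discr}, I would run the direct method on the relaxed functional $\Phi$: take a minimizing sequence $(u_j)$ for \eqref{mink}, observe that the energy bound together with the coercivity of $h$ (which grows linearly) and the bound $\Gdue(|[u]|,s)\ge \Gdue(|[u]|,0)\ge c\min\{|[u]|,1\}$ (from \eqref{gexpansion1} and monotonicity) gives a uniform bound on $|Du_j|(0,1)$; combined with the $L^2$-penalization term this yields a uniform $\BV$ bound, hence (up to subsequence) $u_j\wstar u^\tau_k$ in $\BV(0,1)$. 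Lower semicontinuity of $\Phi$ with respect to weak-$*$ convergence — which is exactly the content that makes \eqref{defPhi} the relaxation, established in Section~\ref{sect:relaxation}, and which uses subadditivity \ref{item2gbar1} in the surface part — together with strong $L^2$-convergence for the lower-order term, gives minimality. One must be slightly careful that jump points in $\Gamma^\tau_{k-1}$ that are "healed" in the limit (i.e. $[u^\tau_k]=0$ there) still contribute $\Gdue(0,s^\tau_{k-1}(x))>0$, but this only helps lower semicontinuity.

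For part \ref{item2discr}, I would use a truncation/projection argument: let $M:=\max\{\|b^\tau_k\|_\infty,\|w^\tau_k\|_\infty\}$ and replace $u^\tau_k$ by its truncation $u^M:=(u^\tau_k\wedge M)\vee(-M)$. Truncation does not increase $\int_0^1 h(|u'|)\,\de x$ (it only kills part of the absolutely continuous gradient where $|u|>M$) nor $|D^cu|$, does not increase any jump amplitude $|[u](x)|$ (so by monotonicity \ref{item1gbar1} the surface term does not increase — here one also checks the boundary jumps \eqref{Dirichlet2} using $|b(t,0)|,|b(t,1)|\le M$), and strictly decreases $\gamma\int_0^1|u-w^\tau_k|^2\,\de x$ on the set $\{|u^\tau_k|>M\}$ since $|w^\tau_k|\le M$. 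Hence if that set had positive measure we would contradict minimality, so $\|u^\tau_k\|_\infty\le M$. (If $\gamma=0$ the inequality is non-strict but one can still select a minimizer satisfying the bound.)

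Part \ref{item3discr} is the main obstacle. The statement is a uniform-in-$(\tau,k)$ lower bound on the spacing between crack points, so I would first reduce to a uniform bound on the cardinality of $\Gamma^\tau_k$ together with an energy estimate. The key point: if two distinct points $x,y\in\Gamma^\tau_k$ are very close, then both carry jumps $|[u^\tau_j]|>\bar s$ at the times $t_j$ when they entered $\Gamma$; by the strict-subadditivity estimate \eqref{gsub1}–\eqref{gsub21} (with $\e=\bar s$), merging the two nearby jumps into one — a competitor obtained by a small affine modification of $u^\tau_k$ on the short interval $(x,y)$, which changes the elastic energy by $O(|x-y|^{-1})$... wait, merging two jumps that are close actually *lowers* surface energy by at least $c_{\bar s}$ while the elastic and $L^2$ corrections are $o(1)$ as $|x-y|\to0$ — would strictly decrease the energy, contradicting minimality of $u^\tau_j$ at that step. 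The subtlety is that the irreversibility constraint couples the steps: a point in $\Gamma^\tau_k$ may have been created at an earlier time $t_i<t_k$, so one must argue at the step where each point was created, and check that a local merging competitor is admissible for that step's minimization problem (it is, since modifying $u$ on a tiny interval away from the other fixed data changes nothing structural). To get the *uniform* bound over all $\tau$ and $k$, I would combine this with a uniform a priori energy bound $\Phi(u^\tau_k;\cdot)\le C$ (from using $b^\tau_k$ itself, or $w^\tau_k$, as a competitor, and the $H^1$/AC regularity of $b,w$), which bounds $\#\Gamma^\tau_k\le C/\min\{\Gzero(\bar s),\Gdue(0,\bar s)\}$; then the merging argument, quantified, yields the spacing bound. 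The delicate bookkeeping is tracking which minimization step controls which pair of points, and making the "$o(1)$ elastic cost of local merging" precise uniformly — this is where I expect the real work to lie.
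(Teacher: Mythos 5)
Your overall route coincides with the paper's: direct method plus the lower semicontinuity of Theorem~\ref{thm:relaxation} for \ref{item1discr}, truncation for \ref{item2discr}, and a jump-transfer competitor combined with the strict subadditivity estimates \eqref{gsub1}--\eqref{gsub21} for \ref{item3discr}. However, one step in \ref{item1discr} as written would fail: a uniform energy bound together with the $L^2$ penalization does \emph{not} bound $|Du_j|(0,1)$. Since $\Gdue\le 1$, a function equal to a large constant $A$ on an interval of length $\delta$ and to $0$ elsewhere pays at most $2$ in surface energy, nothing in elastic energy, and only $\gamma A^2\delta$ in the $L^2$ term, which is arbitrarily small when $\delta\ll A^{-2}$, while its total variation $2A$ blows up; and for $\gamma=0$ (which is allowed) even the $L^2$ control is absent. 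The bound $\Gdue(s,0)\ge c\min\{s,1\}$ cannot see the amplitude of large jumps. The cure is exactly the truncation you invoke only later in \ref{item2discr}: apply it to the \emph{minimizing sequence} (it increases no term of the energy, including the boundary jumps, since $|b|\le M$), so every jump amplitude is at most $2M$; then the number of jumps above a fixed threshold is bounded by the energy, their contribution to $|Du_j|$ is at most $2M$ times that number, and the small jumps are controlled linearly via \eqref{gexpansion1}. This is precisely how the paper obtains \eqref{boundbv}, and the truncation also yields the $L^\infty$ bound of \ref{item2discr} for the limit.

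For \ref{item3discr} your corrected reasoning is the paper's: the competitor is $u^\tau_k\pm s\,\chi_{(x_1,x_2)}$, which transfers the whole jump from one point to the other, so the elastic and Cantor parts are untouched (it is not an ``affine modification'', which would cost $O(|x_1-x_2|^{-1})$ in elastic energy), and the only price is an $L^2$ correction of order $|x_1-x_2|$, uniform thanks to \ref{item2discr} and the bound on $w$. The bookkeeping you worry about resolves itself by arguing at the step at which the \emph{later} of the two points enters $\Gamma$: at that step the new point has jump larger than $\bar s$, while the older point may have a small or even zero current jump, and this is exactly the situation covered by \eqref{gsub21} (memory $s'\ge\bar s$, arbitrary $s_2\ge0$); new--new pairs are handled by \eqref{gsub1}, and induction over $k$ then separates all pairs of $\Gamma^\tau_k$. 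The detour through a cardinality bound for $\Gamma^\tau_k$ is unnecessary: uniformity in $\tau$ and $k$ comes solely from the uniform constant $c_{\bar s}$ and the uniform $L^\infty$ bounds.
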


\begin{proof}
{As this is a static result, we fix $\tau$ and $k$ and drop them from the notation.}
We need to show that, given a pair $(\Gamma,s)$ as in \eqref{previousjump}, an affine function $b:[0,1]\to\R$, and $w\in L^\infty(0,1)$, the minimum problem
	\begin{equation} \label{minphi}
	\min\biggl\{ \Phi(u;\Gamma,s,b) + \gamma\int_0^1|u-w|^2\de x \,:\, u\in \BV(0,1) \biggr\}
	\end{equation}
	has a solution. Let $(u_n)_n$ be a minimizing sequence for problem \eqref{minphi}. By a truncation argument we can take $\|u_n\|_{\infty}\leq\max\{\|b\|_\infty,\|w\|_\infty\}$. By comparing $u_n$ with the function $b$ {(and possibly replacing $u_n$ with $b$)} we obtain the uniform bound
	\begin{equation*}
	\Phi(u_n;\Gamma,s,b)+\gamma\int_0^1|u_n-w|^2\de x \leq \int_0^1 h(|b'|)\de x + \sum_{x\in\Gamma}{\Gdue}(0,s(x)) + \gamma\int_0^1|b-w|^2\de x  \leq C.
	\end{equation*}
	Let now ${s_\ast}>0$ be such that { $\Gdue({s_\ast},0)=\frac12$}, and let ${\ell_\ast}\in(0,\ell)$ be such that  {$\Gdue(s,0)\geq{\ell_\ast}s$} for all $s\in[0,{s_\ast}]$. Then the lower estimates $h(t)\geq\ell t - \frac{\ell^2}{4}$, { ${\Gdue}(t,s)\geq \Gdue(t,0)$} yield
	\begin{align} \label{boundbv}
	C
	& \geq \Phi(u_n;\Gamma,s,b)
	\geq \ell\int_0^1 |u_n'|\de x -\frac{\ell^2}{4} + \ell|D^cu_n|(0,1) + \sum_{x\in J_{u_n}^b} {\Gdue(|[u_n](x)|,0)} \nonumber\\
	& \geq \ell\int_0^1 |u_n'|\de x -\frac{\ell^2}{4} + \ell|D^cu_n|(0,1) + \sum_{|[u_n](x)|\leq {s_\ast}} {\ell_\ast}|[u_n](x)| + \frac12\mathcal{H}^0\bigl(\{|[u_n](x)|>{s_\ast}\}\bigr) \\
	& \geq \ell\int_0^1 |u_n'|\de x -\frac{\ell^2}{4} + \ell|D^cu_n|(0,1) + \sum_{|[u_n](x)|\leq {s_\ast}} {\ell_\ast}|[u_n](x)| + \frac{1}{4\|u_n\|_\infty} \sum_{|[u_n](x)|>{s_\ast}} |[u_n](x)| \,.\nonumber
	\end{align}
	Therefore the sequence $(u_n)_n$ is bounded in $\BV(0,1)$ and, up to subsequences, it converges weakly* in $\BV$ to some function $u\in\BV(0,1)$, which is a minimizer of \eqref{minphi} by the lower semicontinuity of the functional $\Phi$, proved in Theorem~\ref{thm:relaxation} {below}. In particular, it also follows that $\|u\|_\infty\leq\max\{\|b\|_\infty,\|w\|_\infty\}$.
	
	It remains to prove \ref{item3discr}. We first show that there is a minimal distance between any two new fracture points in $\Gamma^\tau_k\setminus\Gamma^\tau_{k-1}$, independent of $k$ and $\tau$. Consider $x_1,x_2\in J^{b^\tau_k}(u^\tau_k)\setminus \Gamma^\tau_{k-1}$, with $s_i:=[u^\tau_k](x_i)$ and $|s_i|>\bar{s}$, $i=1,2$. Suppose also $x_1<x_2$. Let $v:=u^\tau_k+s_2\chi_{(x_1,x_2)}$: by minimality of $u^\tau_k$ in problem \eqref{mink} we have
	\begin{equation*}
	0 \leq { \Gdue(|s_1+s_2|,0) - \Gdue(|s_1|,0) - \Gdue(|s_2|,0)} + 2\gamma s_2\int_{x_1}^{x_2} (u^\tau_k-w^\tau_k)\de x + \gamma s_2^2|x_1-x_2| \leq -c_{\bar{s}} + C|x_1-x_2|,
	\end{equation*}
	where the last inequality { follows} {from \eqref{gsub1}} and the uniform $L^\infty$-bound on $u^\tau_k$ and $w$.
	This proves that the distance between $x_1$ and $x_2$ is larger than a uniform positive constant.
	
	Similarly, we prove that any new fracture point of $u^\tau_k$ {cannot} be too close to the points of $\Gamma^\tau_{k-1}$. Consider $x_1\in J^{b^\tau_k}(u^\tau_k)\setminus\Gamma^\tau_{k-1}$ with $s_1:=[u^\tau_k](x_1)$, $|s_1|>\bar{s}$. Let $x_2\in\Gamma^\tau_{k-1}$ and {let} $s_2:=[u^\tau_k](x_2)$. Assuming without loss of generality that $x_1<x_2$ (the construction in the other case is symmetric), by comparing the energy of $u^\tau_k$ and $v:=u^\tau_k-s_1\chi_{(x_1,x_2)}$ we have
	\begin{equation*}
	\begin{split}
	0 &\leq {\Gdue}(|s_1+s_2|,{s^\tau_{k-1}}(x_2)) - { \Gdue(|s_1|,0)} - {\Gdue}(|s_2|,{s^\tau_{k-1}}(x_2)) - 2\gamma s_1\int_{x_1}^{x_2} (u^\tau_k-w^\tau_k)\de x + \gamma s_1^2|x_1-x_2| \\
	&\leq -c_{\bar{s}} + C|x_1-x_2|,
	\end{split}
	\end{equation*}
	where the last inequality { follows from \eqref{gsub21}} and the uniform $L^\infty$-bound on $u^\tau_k$ and $w$.
\end{proof}

\begin{proposition}\label{prop:nocantor}
	{Any minimizer $u$ of \eqref{mink}  obeys $D^cu=0$ and $|u'|\le \frac\ell2$ almost everywhere.}
\end{proposition}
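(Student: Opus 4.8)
The plan is to argue by contradiction, replacing diffuse variation of slope exceeding $\ell/2$ (and all of the Cantor part) by many tiny jumps. Let $u$ be a minimizer of \eqref{mink}; as in the previous proof we fix the time step and the index and write the energy as $\Phi(u;\Gamma,s,b)+\gamma\|u-w\|_{L^2(0,1)}^2$. Set
\[
M := \int_{\{|u'|>\ell/2\}}\bigl(|u'|-\tfrac\ell2\bigr)\de x + |D^cu|(0,1) .
\]
Since $u'\Lu$ and $D^cu$ are mutually singular, $M=0$ is equivalent to the conjunction of $|u'|\le\ell/2$ a.e.\ and $D^cu=0$; so it suffices to show that $M>0$ is impossible.

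Suppose $M>0$. Let $\mu$ be the ``bad'' part of $Du$, namely the sum of $D^cu$ and of the part of $u'\Lu$ of modulus larger than $\ell/2$; then $\mu$ is non-atomic with $|\mu|(0,1)=M$, and $Du-\mu$ has no Cantor part, the same jumps as $u$, and absolutely continuous density $T(u')$ equal to the truncation of $u'$ at level $\ell/2$. For $n\in\N$, partition $(0,1)$ into $n$ consecutive intervals $E_1,\dots,E_n$ with $|\mu|(E_j)=M/n$ for each $j$ (possible because $|\mu|$ is non-atomic; each $E_j$ is then a non-degenerate interval). On each $E_j$ replace the positive and negative parts $\mu^\pm\mres E_j$ by the point masses $\mu^+(E_j)\delta_{x_j^+}$ and $-\mu^-(E_j)\delta_{x_j^-}$, choosing $x_j^+\neq x_j^-$ in $E_j\setminus(\Gamma\cup J_u\cup\{0,1\})$; denote by $\hat u_n\in\BV(0,1)$ the resulting function. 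Because this operation leaves the net $\mu$-mass of each $E_j$ unchanged, a direct check shows $\|\hat u_n-u\|_\infty\le 2M/n$, that $\hat u_n=u$ at the endpoints and $[\hat u_n]=[u]$ at every point of $J_u^b\cup\Gamma$, and that the only new jumps of $\hat u_n$ are the $2n$ atoms, of sizes $m_j^\pm:=\mu^\pm(E_j)\le M/n$, located off $\Gamma$.

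Comparing energies: the endpoint and $J_u^b\cup\Gamma$ surface terms are unchanged; by the explicit form \eqref{defh1} of $h$, the bulk term decreases by exactly $\ell\int_{\{|u'|>\ell/2\}}(|u'|-\tfrac\ell2)$ and the Cantor term by $\ell|D^cu|(0,1)$, a total diffuse saving of $\ell M$; the new jumps add $\sum_j(\Gdue(m_j^+,0)+\Gdue(m_j^-,0))$; and the lower-order term changes by $O(\|\hat u_n-u\|_\infty)=O(1/n)$, using the $L^\infty$ bounds on $u$ and $w$. Writing $\ell M=\sum_j(\ell m_j^++\ell m_j^-)$, the energy of $\hat u_n$ minus that of $u$ is therefore at most
\[
-\sum_j\Bigl[\bigl(\ell m_j^+-\Gdue(m_j^+,0)\bigr)+\bigl(\ell m_j^--\Gdue(m_j^-,0)\bigr)\Bigr] + \frac Cn .
\]
By the expansion \eqref{gexpansion1}, $\ell t-\Gdue(t,0)\ge\tfrac{\tilde\ell}{2}t^p$ for all small $t$, in particular for $t\le M/n$ with $n$ large. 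Since at most $n$ of the $m_j^+$ are nonzero and $\sum_j m_j^+=:M_+$ (and similarly $M_-$, with $M_++M_-=M$), convexity of $t\mapsto t^p$ gives $\sum_j(m_j^\pm)^p\ge n^{1-p}M_\pm^p$, hence the bracketed sum above is at least $\tfrac{\tilde\ell}{2}n^{1-p}(M_+^p+M_-^p)\ge\tfrac{\tilde\ell}{2}2^{1-p}M^p\,n^{1-p}$. The energy difference is thus bounded by $-c_1n^{1-p}+c_2n^{-1}$ with $c_1,c_2>0$, which is negative for $n$ large because $p<2$ makes $1-p>-1$. This contradicts the minimality of $u$, so $M=0$.

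The main obstacle is to keep the gain coming from the strict concavity of $\Gdue(\cdot,0)$ near the origin larger than both the error $O(1/n)$ forced by the lower-order term and the loss that would occur if the re-injected mass cancelled. This dictates the two structural choices in the construction: splitting $\mu$ into its positive and negative parts before atomizing (so that on each $E_j$ the re-injected jump really has size of order $M/n$), and using equal weights $M/n$ (so that $\sum_j(m_j^\pm)^p$ is as large as possible). The resulting exponent balance $n^{1-p}\gg n^{-1}$ is precisely where the hypothesis $1<p<2$ of \eqref{gexpansion1} is used; in particular, only the local behaviour of $\Gdue(\cdot,0)$ at $0$ and the explicit shape of $h$ enter, and the admissibility of $\hat u_n$ requires nothing more than $\hat u_n\in\BV(0,1)$.
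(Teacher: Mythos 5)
Your proof is correct, and it takes a genuinely different route from the paper. The paper argues by first variation: it sets $\mu:=(u'-\frac\ell2)_++(D^cu)_+$, first transfers an infinitesimal amount of this measure between two small intervals to derive the orthogonality relation $\int_0^1 F\,\de\nu_{\e,\e'}=0$ (this step exists precisely to neutralize the linear contribution of the $\gamma\|u-w\|_{L^2}^2$ term), and then transfers a small mass $\rho$ into a single jump point, where \eqref{gbarsub1} and \eqref{gexpansion1} give a gain of order $\tilde\ell|\rho|^{p}$ against an error $O(\rho^2)$, so $p<2$ yields the contradiction. You instead build one global competitor: truncate $u'$ at level $\frac\ell2$, delete the Cantor part, and re-inject the removed mass $M$ as $2n$ jumps of size at most $M/n$ placed off $\Gamma\cup J_u\cup\{0,1\}$ in a partition with $|\mu|(E_j)=M/n$; the exact diffuse saving $\ell M$ (from the explicit form \eqref{defh1} of $h$) against the jump cost $\sum_j\Gdue(m_j^\pm,0)$ produces, via \eqref{gexpansion1} and Jensen, a gain of order $n^{1-p}$, which beats the $O(1/n)$ perturbation of the lower-order term since $p<2$. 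Both proofs hinge on the same superlinear gap $\ell s-\Gdue(s,0)\gtrsim s^{p}$; your construction buys a one-shot argument that treats the positive and negative excess and the Cantor part simultaneously and replaces the paper's two-parameter measure-transfer trick by a crude $\|\hat u_n-u\|_\infty\le 2M/n$ bound (note that the $L^\infty$ bound on an arbitrary minimizer you invoke is automatic, since $\BV(0,1)\subset L^\infty(0,1)$, so there is no circularity), at the price of a more elaborate competitor; the paper's variational argument is more local and also yields, along the way, the information encoded in the function $F$, which your approach never needs.
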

\begin{proof}
	{We define the measure $\mu:=(u'-\frac\ell2)_++(D^cu)_+$. We shall  show that $\mu=0$, 
		the argument with $(u'+\frac\ell2)_-+(D^cu)_-$ is identical.
		
		Assume that there are  $a\ne b\in(0,1)$ such that $\mu((a-\e,a+\e))>0$ and $\mu((b-\e,b+\e))>0$ for all $\e>0$ sufficiently small. 
		If such a pair does not exist, then the measure $\mu$ is concentrated on $\{0,1\}$ and at most one additional point, and since $\mu$ does not contain any Dirac measure 
		we obtain $\mu=0$ and we are done. Let $I_\e:=(a-\e,a+\e)$, $J_{\e'}:=(b-\e', b+\e')$,
		where $\e,\e'$ are positive numbers such that these two intervals are disjoint subsets of $(0,1)$.
		We set 
		\begin{equation*}
		\psi_{\e,\e'}(x):=\nu_{\e,\e'}((0,x)), \hskip5mm\text{with}\hskip5mm
		\nu_{\e,\e'}:= \frac{1}{\mu(I_\e)} \mu\mres I_\e -  \frac{1}{\mu(J_{\e'})} \mu\mres J_{\e'}.
		\end{equation*}
		We compare $u$ with $u+\rho\psi_{\e,\e'}$ for some $\rho$ chosen below. This gives, since 
		$\psi_{\e,\e'}\in \BV(0,1)$ with $\psi_{\e,\e'}(0)=\psi_{\e,\e'}(1)=0$ and $J_{\psi_{\e,\e'}}=\emptyset$,
		\begin{equation*}
		\begin{split}
		0\le& \int_0^1 \left[ h(u'+\rho\psi'_{\e,\e'})-h(u')\right] \de x +\ell |D^c(u+\rho\psi_{\e,\e'})|(0,1)-\ell |D^cu|(0,1)\\
		&+\gamma\int_0^1 \left[(u-{w_k^\tau}+\rho\psi_{\e,\e'})^2-(u-{w_k^\tau})^2\right] \de x .
		\end{split}  
		\end{equation*}
		We observe that
		\begin{equation*}
		u'+\rho\psi'_{\e,\e'}-\frac\ell2=u'-\frac\ell2 + \frac{\rho}{\mu(I_\e)} \chi_{I_\e}(u'-\frac\ell2)_+
		- \frac{\rho}{\mu(J_{\e'})} \chi_{J_{\e'}} (u'-\frac\ell2)_+ 
		\end{equation*}
		where $\chi_{I_\e}$ and $\chi_{J_{\e'}}$ are the characteristic functions of the two intervals.
		Therefore for any $\rho\in (-\mu(I_\e),\mu(J_{\e'}))$ we have $ u'+\rho\psi'_{\e,\e'}-\frac\ell2\ge0$ whenever
		$\rho\psi'_{\e,\e'}\ne0$, so that
		$h(u'+\rho\psi_{\e,\e'}')-h(u')=\ell\rho\psi_{\e,\e'}'$.
		{Analogously, for the same set of values of $\rho$ we have 
			\begin{equation*}
			(D^cu)_++\rho D^c\psi_{\e,\e'}=\left(1+\frac\rho{\mu(I_\e)}\chi_{I_\e} - \frac{\rho}{\mu(J_{\e'})}\chi_{J_{\e'}}\right) (D^cu)_+ \ge0
			\end{equation*}
			which implies $|D^c(u+\rho\psi_{\e,\e'})|(0,1)= |D^cu|(0,1)+\rho D^c\psi_{\e,\e'}(0,1)$.}
		The above expression then becomes
		\begin{equation*}
		\begin{split}
		0\le&
		\int_0^1 \ell \rho\psi_{\e,\e'}'\de x + \ell \rho D^c\psi_{\e,\e'}(0,1)
		+\gamma \int_0^1 \left[2\rho (u-{w_k^\tau})\psi_{\e,\e'} + \rho^2\psi_{\e,\e'}^2\right]\de x.
		\end{split}  
		\end{equation*}
		The sum of  the first two terms is $\ell\rho\nu_{\e,\e'}(0,1)=0$, and since $\rho$ can still be chosen 
		arbitrarily small we conclude that $\int_0^1 (u-{w_k^\tau})\psi_{\e,\e'} \de x=0$ for all 
		admissible $\e, \e'$. By Fubini's theorem we have
		\begin{equation*}
		\begin{split}
		0=&\int_0^1 (u-{w_k^\tau})(x)\psi_{\e,\e'}(x)\de x = 
		\int_0^1 (u-{w_k^\tau})(x)\int_0^x \de \nu_{\e,\e'}(y) \de x = \int_0^1 F(y) \de\nu_{\e,\e'}(y)
		\end{split}
		\end{equation*}
		for all $\e,\e'$, where $F(y):=\int_y^1(u-{w_k^\tau})(x)\de x$ is continuous.}
	
	We now consider the measure $\hat\nu_\e:=\frac{1}{\mu(I_\e)} \mu\mres I_\e -\delta_b$ and set 
	$\hat\psi_\e(x):=\hat\nu_\e((0,x))$. 
	Since $\nu_{\e,\e'}\wto\hat\nu_\e$ as $\e'\to0$, and $F$ is continuous, we obtain 
	$\int_0^1 F\de\hat\nu_\e=0$.
	We compare
	$u$ with $u+\rho\hat\psi_\e$ and obtain
	\begin{equation*}
	\begin{split}
	0\le& \int_0^1 \left[ h(u'+\rho\hat\psi_\e')-h(u')\right] \de x +\ell |D^c(u+\rho\hat\psi_\e)|(0,1)-\ell |D^cu|(0,1)\\
	&+\gamma\int_0^1 \left[(u-{w_k^\tau}+\rho\hat\psi_\e)^2-(u-{w_k^\tau})^2\right] \de x 
	+\Gdue(|[u](b)-\rho|,s^\tau_{k-1}(b))-\Gdue(|[u](b)|,s^\tau_{k-1}(b))
	\end{split}  
	\end{equation*}
	where as usual $s^\tau_{k-1}(b)=0$ if $b\not\in \Gamma_{k-1}^\tau$.
	By the same computation as above,  using that $\hat\nu_\e(I_\e)=1$,
	\begin{equation*}
	\begin{split}
	0\le& \rho\ell+2\gamma\rho \int_0^1 F \de\hat\nu_\e + \gamma\rho^2\int_0^1\hat\psi_\e^2\de x
	+\Gdue(|[u](b)-\rho|,s^\tau_{k-1}(b))-\Gdue(|[u]|(b),s^\tau_{k-1}(b)).
	\end{split}  
	\end{equation*}
	{ By \eqref{gbarsub1} and \eqref{gexpansion1}} we have {for $|\rho|$ small}
	\begin{equation*}
	\Gdue(|[u](b)-\rho|,s^\tau_{k-1}(b))\le \Gdue(|[u]|(b),s^\tau_{k-1}(b))+ { \Gdue(|\rho|,0)}
	\le \Gdue(|[u]|(b),s^\tau_{k-1}(b))+ \ell |\rho| - { \tilde\ell |\rho|^{p} +o(|\rho|^{p})}.
	\end{equation*}
	Recalling $\int_0^1 F\de\hat\nu_\e=0$ we have
	\begin{equation*}
	\begin{split}
	0\le& \rho\ell+\ell|\rho|- { \tilde\ell |\rho|^{p} +o(|\rho|^{p})} + \gamma\rho^2\int_0^1\hat\psi_\e^2\de x
	\end{split}  
	\end{equation*}
	for all $\rho$ sufficiently small (positive or negative). This is a contradiction, hence we conclude that $\mu=0$.
\end{proof}

Proposition~\ref{prop:discretetime} allows to construct a \textit{piecewise constant evolution}, relative to the discrete boundary values $b^\tau_k=b(t_k)$ and to $w^\tau_k:=w(t_k)$,
\begin{equation} \label{interpolant0}
t\mapsto \bigl( u_\tau(t), \Gamma_\tau(t), s_\tau(t) \bigr) \qquad\text{for }t\in[0,T],
\end{equation}
where the piecewise constant interpolations in time are defined as
\begin{equation} \label{interpolant1}
u_\tau(t) := u^\tau_k\,, \quad {\Gamma_\tau(t) := \Gamma^\tau_{k}\,, \quad s_\tau(t) := s^\tau_{k}}
\qquad\qquad \text{for } t\in[t_k, t_{k+1})\,,
\end{equation}
with $k=0,1,\ldots, N_\tau$,
{with $\Gamma^\tau_{-1}=\emptyset$, $s^\tau_{-1}=0$}. We also consider the piecewise constant and the piecewise affine interpolations in time of the boundary data, which are respectively defined as
\begin{equation} \label{interpolant2}
b_\tau(t) := b^\tau_k\,, \qquad b^\tau(t) := b^\tau_k + \frac{t-t_k}{\tau}(b^\tau_{k+1}-b^\tau_k) \qquad \text{for } t\in[t_k, t_{k+1})\,,
\end{equation}
and similarly
\begin{equation} \label{interpolant3}
w_\tau(t) := w^\tau_k\,, \qquad w^\tau(t) := w^\tau_k + \frac{t-t_k}{\tau}(w^\tau_{k+1}-w^\tau_k) \qquad \text{for } t\in[t_k, t_{k+1})\,.
\end{equation}
In the following we use the notation $u_\tau(t,x):=(u_\tau(t))(x)$ (and similarly for the other functions introduced above).
{We recall that we extend ${s_k^\tau}$ by $0$ outside ${\Gamma_k^\tau}$.}

{We observe that for any $k$
	\begin{equation}\label{eqPhikkm}
	\Phi(u^\tau_k;\Gamma^\tau_{k-1},s^\tau_{k-1},b^\tau_k) =\Phi(u^\tau_k;\Gamma^\tau_{k},s^\tau_{k},b^\tau_k)  .
	\end{equation}
	To see this, it suffices to show that
	\begin{equation*}
	{\Gdue}\bigl( |[u^\tau_{k}](x)|,s^\tau_{k}(x) \bigr)
	=  {\Gdue}\bigl( |[u^\tau_{k}](x)|,s^\tau_{k-1}(x) \bigr) 
	\text{ for any $x\in  J^{b^\tau_{k}}(u^\tau_{k})\cup \Gamma^\tau_{k}$.}
	\end{equation*}
	This is true, since in
	\eqref{minkbis} we defined $s^\tau_{k}(x)=s^\tau_{k-1}(x)\vee |[u^\tau_{k}]|(x)$.
	If $s^\tau_{k-1}(x)\ge |[u^\tau_{k}]|(x)$ then
	$s^\tau_{k-1}(x)=s^\tau_{k}(x)$ and the two terms are identical. If not,
	then $s^\tau_{k}(x)= |[u^\tau_{k}]|(x)$ and,
	{ recalling the assumption \ref{item1gbar1} on $\Gdue$ in Section~\ref{sect:setting}}, we have 
	$   {\Gdue}\bigl( |[u^\tau_{k}]|(x),s^\tau_{k}(x) \bigr)=
	{ \Gdue(|[u^\tau_{k}]|(x),0)}
	= {\Gdue}\bigl( |[u^\tau_{k}]|(x),s^\tau_{k-1}(x) \bigr)$.
	Therefore \eqref{eqPhikkm} holds.
}

We are interested in the evolution of the discrete total energy at time $t$, with timestep $\tau$, which is defined for $t\in[0,T]$ by
\begin{equation} \label{energydisc}
\begin{split}
\mathcal{E}_\tau(t)
& := \Phi(u_\tau(t);\Gamma_\tau(t), s_\tau(t), b_\tau(t))+ \gamma\int_0^1 |u_\tau(t)-w_\tau(t)|^2\de x \,.
\end{split}
\end{equation}

\begin{lemma} \label{lem:energy}
	For any $t\in[0,T]$, letting $t_k$ be the discretization point such that $t\in[t_k,t_{k+1})$, we have
	\begin{equation} \label{energydisc2}
	\mathcal{E}_\tau(t)
	\leq \mathcal{E}_\tau(0) + \int_{0}^{t_k}\int_0^1 \Bigl( h'((u_\tau(r))')(\dot{b}^\tau(r))' + 2\gamma (u_\tau(r)-w_\tau(r))(\dot{b}^\tau(r)-\dot{w}^\tau(r)) \Bigr)\de x\de r + R_\tau(t),
	\end{equation}
	where
	\begin{multline} \label{energydisc3}
	R_\tau(t) := \int_0^{t_k} \biggl( \tau \int_0^1|(\dot{b}^\tau(r))'|^2\de x + \gamma\tau\int_0^1 |\dot{b}^\tau(r)|^2\de x \\
	{+ \gamma \left|\int_0^1 (w_\tau(r+\tau)-w_\tau(r)) \dot{w}^\tau(r)\de x  \right|
		+ 2\gamma \left|\int_0^1 (w_\tau(r+\tau)-w_\tau(r)) \dot{b}^\tau(r)\de x  \right| \biggr)\de r .}
	\end{multline}
\end{lemma}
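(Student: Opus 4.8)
The plan is to derive a one-step energy inequality comparing $\mathcal{E}_\tau(t_k)$ with $\mathcal{E}_\tau(t_{k-1})$ and then to sum it along the discretization. Since $u_\tau,\Gamma_\tau,s_\tau,b_\tau,w_\tau$ are piecewise constant in time, we have $\mathcal{E}_\tau(t)=\mathcal{E}_\tau(t_k)$ for $t\in[t_k,t_{k+1})$, so it suffices to bound $\mathcal{E}_\tau(t_k)$ by the right-hand side of \eqref{energydisc2}.

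For the one-step estimate I would use, as competitor in the minimum problem \eqref{mink} defining $u^\tau_k$, the function $v:=u^\tau_{k-1}+(b^\tau_k-b^\tau_{k-1})$, where $b^\tau_k-b^\tau_{k-1}$ is the affine function interpolating between the increments of the boundary data. Adding a continuous affine function matching the boundary increment changes neither the jump set, nor the Cantor part, nor the jump amplitudes, including the endpoint ones: using \eqref{Dirichlet2}, $[v](0)=u^{\tau,+}_{k-1}(0)+\big(b^\tau_k(0)-b^\tau_{k-1}(0)\big)-b^\tau_k(0)=[u^\tau_{k-1}](0)$ with respect to $b^\tau_{k-1}$, and similarly at $x=1$ and at interior points, so that $J^{b^\tau_k}_v=J^{b^\tau_{k-1}}(u^\tau_{k-1})$. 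Hence the surface and Cantor contributions of $\Phi(v;\Gamma^\tau_{k-1},s^\tau_{k-1},b^\tau_k)$ coincide exactly with those of $\Phi(u^\tau_{k-1};\Gamma^\tau_{k-1},s^\tau_{k-1},b^\tau_{k-1})$, which is a part of $\mathcal{E}_\tau(t_{k-1})$. On the left-hand side, I invoke \eqref{eqPhikkm} to rewrite the surface energy of $u^\tau_k$ in $\mathcal{E}_\tau(t_k)$, evaluated with the updated state $(\Gamma^\tau_k,s^\tau_k)$, as $\Phi(u^\tau_k;\Gamma^\tau_{k-1},s^\tau_{k-1},b^\tau_k)$, i.e.\ the minimum value of \eqref{mink}. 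Minimality then yields
\[
\mathcal{E}_\tau(t_k)\le \Phi(v;\Gamma^\tau_{k-1},s^\tau_{k-1},b^\tau_k)+\gamma\int_0^1|v-w^\tau_k|^2\de x .
\]

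It remains to expand this right-hand side. For the bulk term I write $v'=(u^\tau_{k-1})'+\tau\,(\dot b^\tau(r))'$ for $r\in(t_{k-1},t_k)$ (the slope increment being the constant $(\dot b^\tau(r))'$) and use the elementary inequality $h(a+b)\le h(a)+h'(a)b+b^2$, valid since $h$ is convex and $C^{1,1}$ with $h''\le2$ a.e.; integrating in $x$ and writing $\tau(\cdot)=\int_{t_{k-1}}^{t_k}(\cdot)\de r$ reproduces $\int_{t_{k-1}}^{t_k}\int_0^1 h'((u_\tau(r))')(\dot b^\tau(r))'\de x\de r$ and the error $\int_{t_{k-1}}^{t_k}\tau\int_0^1|(\dot b^\tau(r))'|^2\de x\de r$. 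For the lower-order term I write $v-w^\tau_k=(u^\tau_{k-1}-w^\tau_{k-1})+(b^\tau_k-b^\tau_{k-1})-(w^\tau_k-w^\tau_{k-1})$, expand the square, recognise $\gamma\int_0^1|u^\tau_{k-1}-w^\tau_{k-1}|^2\de x$ as a part of $\mathcal{E}_\tau(t_{k-1})$ and the mixed term as $\int_{t_{k-1}}^{t_k}2\gamma\int_0^1(u_\tau-w_\tau)(\dot b^\tau-\dot w^\tau)\de x\de r$, and bound the three remaining quadratic-in-increment terms by their absolute values; using $b^\tau_k-b^\tau_{k-1}=\tau\dot b^\tau$ and $w^\tau_k-w^\tau_{k-1}=\tau\dot w^\tau=w_\tau(\cdot+\tau)-w_\tau(\cdot)$, these reproduce exactly the remaining three pieces of \eqref{energydisc3}. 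Summing the resulting one-step inequality over $j=1,\dots,k$ then gives \eqref{energydisc2}.

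I do not expect a serious obstacle here: this is the standard a priori energy estimate for minimizing movements. The point that requires genuine care is the interaction of the competitor with the internal variable — one must verify that the affine shift leaves the surface energy unchanged, including the endpoint terms of \eqref{Dirichlet2}, and that \eqref{eqPhikkm} is precisely what makes $\mathcal{E}_\tau(t_k)$, which carries the updated state $(\Gamma^\tau_k,s^\tau_k)$, comparable with the minimization \eqref{mink}, which uses the previous state $(\Gamma^\tau_{k-1},s^\tau_{k-1})$ — together with the bookkeeping that identifies the quadratic errors with the four terms of $R_\tau$, keeping the $w$-increments in the absolute-value form written there.
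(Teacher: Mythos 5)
Your proposal is correct and follows essentially the same route as the paper: the same competitor $u^\tau_{k-1}+b^\tau_k-b^\tau_{k-1}$ in \eqref{mink}, the same use of \eqref{eqPhikkm} to reconcile the updated state in $\mathcal{E}_\tau(t_k)$ with the state $(\Gamma^\tau_{k-1},s^\tau_{k-1})$ used in the minimization, the same bound $h(a+b)\le h(a)+h'(a)b+b^2$, and the same expansion of the $L^2$ term followed by summation over the steps. Your regrouping of the quadratic increment terms is an equivalent rearrangement of the paper's, and the endpoint-jump check for the shifted competitor matches the remark made in the paper's proof.
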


\begin{proof}
	We compare $u^\tau_k$ with ${v:=}u^\tau_{k-1}+b^\tau_k-b^\tau_{k-1}$: by minimality of $u^\tau_k$ in problem \eqref{mink} we have {by Proposition~\ref{prop:nocantor}}
	\begin{align*}
	\Phi(u^\tau_k;&\Gamma^\tau_{k-1},s^\tau_{k-1},b^\tau_k) + \gamma\int_0^1 |u^\tau_k-w^\tau_k|^2\de x \nonumber\\
	& \leq \int_0^1 h\bigl( |( u^\tau_{k-1}+b^\tau_k-b^\tau_{k-1} )'| \bigr)\de x 
	+ \gamma\int_0^1 |u^\tau_{k-1}+b^\tau_k-b^\tau_{k-1}-w^\tau_k|^2\de x \nonumber\\
	& \qquad + \sum_{x\in\Gamma^\tau_{k-1}{\cup  J^{b^\tau_{k-1}}(u^\tau_{k-1})}} {\Gdue}\bigl( |[u^\tau_{k-1}](x)|,s^\tau_{k-1}(x) \bigr) . 
	\end{align*}
	{If $x\in\{0,1\}$, we remark that $v(x)-b^\tau_k(x)=u^\tau_{k-1}(x)-b^\tau_{k-1}(x)$, so that
		the notation $[u^\tau_{k-1}](x)$ is appropriate. 
		Therefore, recalling \eqref{eqPhikkm},}
	\begin{align} \label{proofenergydisc}
	\Phi(u^\tau_k;&\Gamma^\tau_{k-1},s^\tau_{k-1},b^\tau_k) + \gamma\int_0^1 |u^\tau_k-w^\tau_k|^2\de x \nonumber\\
	& \le \Phi(u^\tau_{k-1};\Gamma^\tau_{k-2},s^\tau_{k-2},b^\tau_{k-1}) + \gamma \int_0^1 |u^\tau_{k-1}-w^\tau_{k-1}|^2\de x \nonumber\\
	& \qquad + \int_0^1 \Bigl[ h\bigl( |( u^\tau_{k-1}+b^\tau_k-b^\tau_{k-1} )'| \bigr) - h\bigl(|( u^\tau_{k-1} )'|\bigr) \Bigr] \de x \nonumber\\
	& \qquad + \gamma\int_0^1 \Bigl( |u^\tau_{k-1}+b^\tau_k-b^\tau_{k-1}-w^\tau_k|^2 - |u^\tau_{k-1}-w^\tau_{k-1}|^2 \Bigr) 
	\,.
	\end{align}
	{Since $h'$ is  a Lipschitz function with $|h''|\le 2$, the mean-value theorem gives
		\begin{equation*}
	{	h(x+y)\le h(x)+y h'(x) +y^2 \hskip5mm\text{ for all } x,y\in\R.}
		\end{equation*}
		In particular,
		\begin{equation*}
		h\bigl(( u^\tau_{k-1}+b^\tau_k-b^\tau_{k-1} )'\bigr) - h\bigl(( u^\tau_{k-1} )'\bigr)
		\leq h'((u_{k-1}^\tau)')(b^\tau_{k}-b^\tau_{k-1})' + |(b^\tau_k-b^\tau_{k-1})'|^2 \,. 
		\end{equation*}}
	
	{Similarly,}
	\begin{multline*}
	|u^\tau_{k-1}+b^\tau_k-b^\tau_{k-1}-w^\tau_k|^2 - |u^\tau_{k-1}-w^\tau_{k-1}|^2 \\
	= (w^\tau_k+w^\tau_{k-1}-2u^\tau_{k-1})(w^\tau_k-w^\tau_{k-1}) + 2(u^\tau_{k-1}-w^\tau_k)(b^\tau_k-b^\tau_{k-1}) + |b^\tau_k-b^\tau_{k-1}|^2 \,.
	\end{multline*}
	By inserting these inequalities into \eqref{proofenergydisc} and iterating we find
	\begin{equation*}
	\begin{split}
	\Phi(u^\tau_k;\Gamma^\tau_{k-1},s^\tau_{k-1},b^\tau_k) &+ \gamma\int_0^1 |u^\tau_k-w^\tau_k|^2\de x
	\leq \Phi(u^\tau_0;b^\tau_0) + \gamma\int_0^1 |u^\tau_0-w^\tau_0|^2 \de x \\
	& \qquad + \sum_{i=1}^k \biggl( \int_0^1 h'((u_{i-1}^\tau)')(b^\tau_{i}-b^\tau_{i-1})' \de x + \int_0^1|(b^\tau_i-b^\tau_{i-1})'|^2\de x \\
	& \qquad\qquad + \gamma\int_0^1 (w^\tau_i+w^\tau_{i-1}-2u^\tau_{i-1})(w^\tau_i-w^\tau_{i-1})\de x \\
	& \qquad\qquad + 2\gamma\int_0^1 (u^\tau_{i-1}-w^\tau_i)(b^\tau_i-b^\tau_{i-1}) \de x + \gamma\int_0^1  |b^\tau_i-b^\tau_{i-1}|^2\de x \biggr)\,.
	\end{split}
	\end{equation*}
	We can rewrite this inequality in terms of the interpolants defined in \eqref{interpolant1}, \eqref{interpolant2}, \eqref{interpolant3}: for any $t\in[0,T]$, denoting by $t_k$ the discretization point such that $t\in[t_k, t_{k+1})$, and observing that
	{\eqref{eqPhikkm} and \eqref{energydisc} give}
	\begin{equation*}
	\mathcal{E}_\tau(t)
	=\Phi(u^\tau_k;\Gamma^\tau_{k-1},s^\tau_{k-1},b^\tau_k) + \gamma\int_0^1 |u^\tau_k-w^\tau_k|^2\de x \,,
	\end{equation*}
	we have
	\begin{equation*}
	\begin{split}
	\mathcal{E}_\tau(t)
	&\leq \mathcal{E}_\tau(0) + \int_{0}^{t_k} \biggl( \int_0^1 h'((u_\tau(r))')(\dot{b}^\tau(r))'\de x + \tau \int_0^1|(\dot{b}^\tau(r))'|^2\de x \\
	& \qquad + \gamma\int_0^1 (w_\tau(r+\tau)+w_\tau(r))\dot{w}^\tau(r)\de x -2\gamma\int_0^1 w_\tau(r+\tau)\dot{b}^\tau(r)\de x \\
	& \qquad + 2\gamma\int_0^1 u_\tau(r) (\dot{b}^\tau(r)-\dot{w}^\tau(r)) \de x + \gamma\tau\int_0^1 |\dot{b}^\tau(r)|^2\de x \biggr) \de r \,.
	\end{split}
	\end{equation*}
	{This concludes the proof.}
\end{proof}

%%%%%%%%%%%%%%%%%%%%%%%%%%%%%%%%%%%%%%%%%%%%%%%%%%%%%%%%%%%%%%%%%%
%%%%%%%%%%%%%%%%%%%%%%%%%%%%%%%%%%%%%%%%%%%%%%%%%%%%%%%%%%%%%%%%%%
%%%%%%%%%%%%%%%%%%%%%%%%%%%%%%%%%%%%%%%%%%%%%%%%%%%%%%%%%%%%%%%%%%
%%%%%%%%%%%%%%%%%%%%%%%%%%%%%%%%%%%%%%%%%%%%%%%%%%%%%%%%%%%%%%%%%%
%%%%%%%%%%%%%%%%%%%%%%%%%%%%%%%%%%%%%%%%%%%%%%%%%%%%%%%%%%%%%%%%%%

\section{Cohesive quasi-static evolution: the time-continuous evolution} \label{sect:evol2}

The goal of this section is to pass to the limit in the time-discrete evolution as the time-step $\tau$ goes to zero.
As in the previous section, we fix a positive threshold $\bar{s}>0$ for the jumps, the final time $T>0$, a time-dependent boundary displacement $b\in H^1([0,T];\R^2)$, and a lower order term $w\in\AC([0,T];L^\infty(0,1))$. Let $(\tau_n)_{n\in\N}$ be a decreasing sequence of time-discretization steps with $\tau_n\to0$ as $n\to\infty$. Correspondingly, let
\begin{equation*}
0=t_0^n<t_1^n<\ldots<t^n_{N_n}<t^n_{N_n+1}=T
\end{equation*}
be the subdivision of $[0,T]$ with $t_k^n:=k\tau_n$ for $k\in\{0,\ldots,N_n\}$. Let
\begin{equation} \label{piecewiseevolution}
t\mapsto \bigl( u_n(t):=u_{\tau_n}(t),\, \Gamma_n(t):= \Gamma_{\tau_n}(t),\, s_n(t) := s_{\tau_n}(t) \bigr), \qquad \text{for }t\in[0,T],
\end{equation}
be the piecewise constant interpolation, defined in \eqref{interpolant1}, of a time-discrete evolution relative to the boundary data $b^n_k=b(t^n_k)$ and to $w^n_k=w(t^n_k)$, as constructed in the previous section. Let also
\begin{equation}  \label{piecewiseevolution2}
b_n(t):=b_{\tau_n}(t), \qquad b^n(t):=b^{\tau_n}(t), \qquad w_n(t):=w_{\tau_n}(t), \qquad w^n(t):=w^{\tau_n}(t) 
\end{equation}
be the piecewise constant and the piecewise affine interpolants of the maps $b$ and $w$, according to \eqref{interpolant2}--\eqref{interpolant3}. We also consider the associated energy $\mathcal{E}_n(t) := \mathcal{E}_{\tau_n}(t)$, see \eqref{energydisc}. Our main result is the following.

\begin{theorem}[Existence of a quasi-static evolution]\label{thm:evolution}
	Under the assumptions of Section~\ref{sect:setting}, with $b$ and $w$ as in \eqref{assb} and \eqref{assw}, there exists $\bigl(u(t),\Gamma(t),s(t)\bigr)$, for $t\in[0,T]$, with the following properties:
	\begin{enumerate}
		\item\label{item1ev} $u(t)\in\BV(0,1)$, $\sup_{t\in[0,T]}\|u(t)\|_{\infty}\leq \sup_{t\in[0,T]}\max\{ \|b(t)\|_\infty, \|w(t)\|_\infty \}$, $\Gamma(t)\subset[0,1]$ is a finite set, $s(t):\Gamma(t)\to[\bar{s},\infty)$, {$\sup_{t\in[0,T]}\|s(t)\|_{\infty}\leq 2\sup_{t\in[0,T]}\max\{ \|b(t)\|_\infty, \|w(t)\|_\infty \}$};
		\item\label{item2ev} (irreversibility) $\Gamma(t_1)\subset\Gamma(t_2)$ and $s(t_1)\leq s(t_2)$ for all $0\leq t_1\leq t_2\leq T$;
		\item\label{item4ev} $u(0)$ minimizes the functional $\Phi(v;b(0))+\gamma\|v-w(0)\|^2_{L^2(0,1)}$ among all $v\in\BV(0,1)$, and $\Gamma(0)=\{ x\in J^{b(0)}(u(0)) \,:\, |[u(0)](x)|>\bar{s} \}$, $s(0)=|[u](0)|$ on $\Gamma(0)$;
		\item\label{item5ev} $\{ x\in J^{b(t)}(u(t)) : |[u(t)](x)|>\bar{s} \} \subset\Gamma(t)$ and $|[u](t)|\leq s(t)$ on $\Gamma(t)$, for all $t\in(0,T]$;
		\item\label{item6ev} (static equilibrium) for all $t\in(0,T]$, $u(t)$ minimizes the functional
		$$
		\Phi(v;\Gamma(t),s(t),b(t)) + \gamma\int_0^1 |v-w(t)|^2\de x
		$$
		among all $v\in\BV(0,1)$;
		\item\label{item7ev} (non-dissipativity) the total energy
		$$
		\mathcal{E}(t) := \Phi(u(t);\Gamma(t),s(t),b(t)) + \gamma\int_0^1 |u(t)-w(t)|^2\de x
		$$
		satisfies for every $t\in[0,T]$
		\begin{equation} \label{dissip}
		\mathcal{E}(t) = \mathcal{E}(0) + \int_{0}^{t}\int_0^1 \Bigl( h'((u(r))')(\dot{b}(r))' + 2\gamma (u(r)-w(r))(\dot{b}(r)-\dot{w}(r)) \Bigr)\de x\de r .
		\end{equation}
	\end{enumerate}
\end{theorem}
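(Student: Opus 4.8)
\emph{Strategy and a priori bounds.} The plan is the classical scheme for quasi-static evolutions: starting from the time-discrete solutions of Section~\ref{sect:evol}, I would extract convergent subsequences and pass to the limit both in the minimality property \eqref{mink} and in the discrete energy estimate of Lemma~\ref{lem:energy}. First I would observe that $\mathcal{E}_\tau(0)=\Phi(u_0^\tau;b_0^\tau)+\gamma\|u_0^\tau-w_0^\tau\|_{L^2}^2$ is bounded uniformly in $\tau$ (compare $u_0^\tau$ with the affine function $b_0^\tau$), that $|h'|\le\ell$, that $\|u_\tau(t)\|_\infty$ is bounded uniformly by Proposition~\ref{prop:discretetime}\ref{item2discr}, and that in \eqref{energydisc2}--\eqref{energydisc3} one has $(\dot b^\tau)'\to b'$ and $\dot w^\tau\to\dot w$ in $L^2((0,T)\times(0,1))$ while $R_\tau(t)\to0$ as $\tau\to0$ (the first two terms in $R_\tau$ are $O(\tau)$ and the last two vanish by the absolute continuity of $w$). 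A Gronwall-type argument then gives $\sup_n\sup_{t\in[0,T]}\mathcal{E}_n(t)\le C$, and inserting this into the coercivity estimate \eqref{boundbv} yields a uniform $\BV$-bound on $u_n(t)$. Since $\Gamma_n(t_1)\subset\Gamma_n(t_2)$ and $s_n(t_1)\le s_n(t_2)$ for $t_1\le t_2$ by \eqref{minkbis}, the atomic measures $\mu_n(t):=\sum_{x\in\Gamma_n(t)}s_n(t,x)\delta_x$ are monotone in $t$ with uniformly bounded mass; a Helly-type selection principle then provides a subsequence with $\mu_n(t)\wstar\mu(t)$ for every $t\in[0,T]$, and, by the uniform lower bound $\bar s$ on the atoms together with the uniform minimal distance of Proposition~\ref{prop:discretetime}\ref{item3discr}, the limit $\mu(t)$ is again a finite atomic measure with atoms at mutual distance bounded below and masses in $[\bar s,\infty)$ (bounded above by the $L^\infty$-bound on $u_n$). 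One sets $\Gamma(t):=\supp\mu(t)$, $s(t,x):=\mu(t)(\{x\})$, which gives \ref{item1ev} and the irreversibility \ref{item2ev}. Finally, a diagonal extraction over a countable dense set $D\ni0$ gives $u_n(t)\wstar u(t)$ in $\BV(0,1)$ for $t\in D$, and $u(t)$ is defined for the remaining $t$ as an arbitrary weak* limit point of $u_n(t)$; by the next step these are all minimisers of the same functional, so the choice is harmless.

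\emph{Static equilibrium.} Fix $t$. One has to compare, for every $v\in\BV(0,1)$, the quantity $\Phi(u(t);\Gamma(t),s(t),b(t))+\gamma\|u(t)-w(t)\|_{L^2}^2$ with $\Phi(v;\Gamma(t),s(t),b(t))+\gamma\|v-w(t)\|_{L^2}^2$, starting from the minimality of $u_n(t)$ in \eqref{mink}. For the lower-semicontinuity inequality I would use that the $L^2$ term is continuous, that $\int_0^1 h(|u'|)\de x+\ell|D^cu|(0,1)$ is weakly* lower semicontinuous, and that the surface term passes to the $\liminf$ by localising around each atom $x_i\in\Gamma(t)$ (recalling $\Gamma_n(t)\to\Gamma(t)$ and $s_n(t,\cdot)\to s(t,\cdot)$) and invoking the relaxation result of Theorem~\ref{thm:relaxation} on a small neighbourhood of $x_i$, together with the continuity and monotonicity of $\Gdue$ and the bound $\Gdue(\cdot,s')\ge\Gdue(\cdot,0)$ on the complement of $\Gamma(t)$. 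For the recovery inequality, given $v$ one may assume by a density argument (and the continuity of $\Gdue$ and $h$) that $J_v$ meets $\Gamma(t)$ only at finitely many of its atoms, and then construct $v_n$ from $v$ by a one-dimensional jump transfer, translating the profile of $v$ near each $x_i\in\Gamma(t)$ to the corresponding point $x_i^n\in\Gamma_n(t)$; then $v_n\to v$ in $L^1$, $\|v_n\|_\infty$ stays bounded, and $\Phi(v_n;\Gamma_n(t),s_n(t),b_n(t))+\gamma\|v_n-w_n(t)\|_{L^2}^2\to\Phi(v;\Gamma(t),s(t),b(t))+\gamma\|v-w(t)\|_{L^2}^2$ by continuity of $\Gdue$. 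These two bounds and the minimality of $u_n(t)$ give \ref{item6ev}, and the case $t=0$ gives \ref{item4ev}. For \ref{item5ev}: the inequality $|[u_n(t)](x_i^n)|\le s_n(t,x_i^n)$ passes to the limit, and if $u(t)$ had a jump larger than $\bar s$ at some $\bar x\notin\Gamma(t)$ then for $n$ large the whole variation of $u_n(t)$ near $\bar x$ (which is $\approx|[u(t)](\bar x)|$, since $u_n(t)\wstar u(t)$) would, by the slope bound $|u_n(t)'|\le\tfrac\ell2$ of Proposition~\ref{prop:nocantor} and by the minimality of $u_n(t)$ --- which, by an argument as in Proposition~\ref{prop:discretetime}\ref{item3discr} using the strict subadditivity of $\Gdue$, prevents that variation from splitting into several small jumps --- be carried by a single jump of $u_n(t)$ exceeding $\bar s$, forcing a point of $\Gamma_n(t)$ to converge to $\bar x$, a contradiction.

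\emph{Energy balance.} For the inequality ``$\le$'' in \eqref{dissip} one passes to the $\liminf$ in \eqref{energydisc2}: the left-hand side $\mathcal{E}_n(t)$ is lower semicontinuous (by the same argument as above), $R_n(t)\to0$, and the work integral on the right-hand side converges because $|h'|\le\ell$, $(\dot b^n)'\to b'$ and $\dot w^n\to\dot w$ in $L^2$, and $u_n(r)\wstar u(r)$ for a.e. $r$. For the reverse inequality, I would fix $t$ and a partition $0=r_0<\dots<r_M=t$ and test the minimality of $u(r_{j-1})$ (cf. \ref{item6ev} and \ref{item4ev}) with the competitor $u(r_j)+\bigl(b(r_{j-1})-b(r_j)\bigr)$: because of irreversibility ($\Gamma(r_{j-1})\subset\Gamma(r_j)$, $s(r_{j-1})\le s(r_j)$ and $|[u(r_j)]|\le s(r_j)$ on $\Gamma(r_j)$) together with the monotonicity of $\Gdue$, the surface energy of this competitor at time $r_{j-1}$ does not exceed the surface energy of $u(r_j)$ at time $r_j$, so that, using $h(x+y)\le h(x)+yh'(x)+y^2$,
\begin{align*}
\mathcal{E}(r_j)-\mathcal{E}(r_{j-1})\ge{}&\int_0^1 h'\bigl((u(r_j))'\bigr)\bigl(b(r_j)-b(r_{j-1})\bigr)'\de x\\
&+2\gamma\int_0^1\bigl(u(r_j)-w(r_j)\bigr)\bigl(b(r_j)-b(r_{j-1})-w(r_j)+w(r_{j-1})\bigr)\de x-o(1),
\end{align*}
where the $o(1)$ collects second-order terms whose sum over $j$ vanishes as the mesh tends to $0$ (using $b\in H^1$ and $w\in\AC$). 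Summing over $j$ and refining the partition yields $\mathcal{E}(t)\ge\mathcal{E}(0)+\int_0^t\int_0^1(\dots)\de x\de r$, which together with the first inequality gives \eqref{dissip}, i.e. \ref{item7ev}.

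\emph{Main obstacle.} The two genuinely delicate points are the passage to the limit in the minimality condition when the crack data $(\Gamma_n,s_n)$ vary with $n$ --- i.e. the refined lower-semicontinuity near the atoms of $\Gamma(t)$ and the jump-transfer construction of recovery competitors, both relying on the continuity, monotonicity and strict subadditivity of $\Gdue$ and on Theorem~\ref{thm:relaxation} --- and the lower energy inequality, where one must simultaneously control the Riemann sums in time and establish enough time-regularity of $t\mapsto u(t)$ to make that control legitimate (the energy identity itself being what ultimately upgrades the time-regularity); here the role of irreversibility is precisely to keep the surface contribution of the backward competitor under control.
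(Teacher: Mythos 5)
Your overall scheme (time-discrete evolutions, compactness, passage to the limit in the minimality, energy balance by two opposite inequalities) is the same as the paper's, and the compactness and static-equilibrium parts are essentially sound: the Helly-type selection of the monotone atomic measures $\mu_n(t)$ is a workable variant of the paper's extraction on a countable dense set $D$ plus monotonicity outside a countable exceptional set, and your jump-transfer recovery sequence and localized lower semicontinuity reproduce Steps~3--4 of the paper. The genuine gap is in the energy balance \ref{item7ev}, and it is concentrated exactly where you declare the choice of $u(t)$ for $t\notin D$ ``harmless''. For the inequality $\leq$ you pass to the limit in \eqref{energydisc2} claiming that the work integral converges because ``$u_n(r)\wstar u(r)$ for a.e.\ $r$''. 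This convergence is not available: outside the countable set $D$ the subsequence along which $u_n(t)$ converges depends on $t$ (the paper stresses this in the remark following the theorem), so there is no single subsequence giving a.e.-in-time convergence. Moreover, even at times where $u_{n}(r)\wstar u(r)$, weak* convergence in $\BV$ does not imply $\int_0^1 h'((u_{n}(r))')\de x\to\int_0^1 h'((u(r))')\de x$, since $h'$ is a bounded nonlinear function of the weakly converging derivatives; the paper needs a minimality-based argument (Step~6, following \cite[Lemma~4.11]{DMFraToa}) to obtain this. Because minimizers are not known to be unique, an \emph{arbitrary} limit point $u(t)$ may produce a value $\theta(t)$ strictly below $\limsup_n\theta_n(t)$, and then the Fatou-type bound on $\int_0^t\theta_n$ no longer closes. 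The paper's fix is precisely a non-arbitrary choice: introduce $\theta_\infty(t):=\limsup_n\theta_n(t)$ as in \eqref{proofev51}--\eqref{proofev53}, define $u(t)$ for $t\notin D\cup D'$ as a limit along a $t$-dependent subsequence realizing $\theta_\infty(t)$, and then prove $\theta=\theta_\infty$ a.e.\ \eqref{proofev61}; this also delivers the measurability of $\theta$, which with an arbitrary pointwise selection of $u(t)$ you cannot even assert.

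For the opposite inequality your competitor $u(\rho_{i-1}^m)\mapsto u(\rho_i^m)+b(\rho_{i-1}^m)-b(\rho_i^m)$ and the use of irreversibility and monotonicity of $\Gdue$ are exactly the paper's, but the passage ``refining the partition yields $\int_0^t\theta$'' is not automatic: $\theta$ is only integrable, so the Riemann sums $\sum_i(\rho_i^m-\rho_{i-1}^m)\theta(\rho_i^m)$ converge to the Lebesgue integral only for suitably chosen partitions (the paper invokes \cite[Lemma~4.12]{DMFraToa}, after Doob), and the analogous care is needed for the remainder terms involving $\dot b$ and $\dot w$ as in \eqref{proofev62a}--\eqref{proofev62c}. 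You flag this, together with the time-regularity issue, as the ``main obstacle'', but it is left unresolved; since it is resolvable only after the $\theta=\theta_\infty$ identification gives measurability, the energy-balance part of your proposal needs the missing device described above rather than a routine completion.
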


\begin{corollary} {Theorem~\ref{thm:evolution} implies that}
	$(u(t),\Gamma(t),s(t))$ minimizes the functional
	$$
	\Phi(v;\Gamma,s,b(t)) + \gamma\int_0^1 |v-w(t)|^2\de x
	$$
	among all $(v,\Gamma,s)$ such that $\Gamma$ is a finite set with $\Gamma(t)\subset\Gamma$, $s:\Gamma\to[\bar{s},\infty)$ satisfies $s\geq s(t)$ on $\Gamma(t)$, and $v\in\BV(0,1)$ is such that $\{ x\in J^{b(t)}(v) : |[v](x)|>\bar{s}  \} \subset\Gamma$.
\end{corollary}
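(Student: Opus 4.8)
The plan is to deduce the corollary from the static-equilibrium property in Theorem~\ref{thm:evolution}\ref{item6ev} (and from \ref{item4ev} at $t=0$) together with one monotonicity observation: for every fixed competitor $v$, among all fracture states $(\Gamma,s)$ satisfying the compatibility constraints $\Gamma(t)\subset\Gamma$, $s\colon\Gamma\to[\bar s,\infty)$, $s\geq s(t)$ on $\Gamma(t)$, and $\{x\in J^{b(t)}(v):|[v](x)|>\bar s\}\subset\Gamma$, the pair $(\Gamma(t),s(t))$ is the cheapest one, i.e.\ it minimizes $\Phi(v;\,\cdot\,,\,\cdot\,,b(t))$. Since the lower-order term $\gamma\|v-w(t)\|_{L^2(0,1)}^2$ does not depend on $(\Gamma,s)$, this reduces the extended minimization to the minimization over $v$ at the frozen state $(\Gamma(t),s(t))$, which is exactly \ref{item6ev}.

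First I would fix $t\in(0,T]$ and an arbitrary admissible triple $(v,\Gamma,s)$ as above, and compare the surface parts of $\Phi(v;\Gamma,s,b(t))$ and $\Phi(v;\Gamma(t),s(t),b(t))$ index by index, using that $\Gdue$ is monotone nondecreasing in its second argument and that $s$, $s(t)$ are extended by zero off their domains (assumption \ref{item1gbar1}). For $x\in\Gamma(t)$ one has $\Gdue(|[v](x)|,s(x))\geq\Gdue(|[v](x)|,s(t)(x))$ because $s(x)\geq s(t)(x)$; for $x\in J^{b(t)}_v\setminus\Gamma(t)$ both sides carry a term bounded below by $\Gdue(|[v](x)|,0)$, which is precisely the corresponding term of $\Phi(v;\Gamma(t),s(t),b(t))$; and the remaining indices $x\in\Gamma\setminus(J^{b(t)}_v\cup\Gamma(t))$ contribute only the nonnegative extra terms $\Gdue(0,s(x))$ to $\Phi(v;\Gamma,s,b(t))$. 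Since $J^{b(t)}_v\cup\Gamma(t)\subseteq J^{b(t)}_v\cup\Gamma$ and the elastic and Cantor parts of $\Phi$ coincide, summing gives $\Phi(v;\Gamma,s,b(t))\geq\Phi(v;\Gamma(t),s(t),b(t))$.

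Combining this with Theorem~\ref{thm:evolution}\ref{item6ev}, which states that $u(t)$ minimizes $\Phi(\,\cdot\,;\Gamma(t),s(t),b(t))+\gamma\|\,\cdot\,-w(t)\|^2_{L^2(0,1)}$ over $\BV(0,1)$, yields
\[
\Phi(v;\Gamma,s,b(t))+\gamma\|v-w(t)\|_{L^2(0,1)}^2\ \geq\ \Phi(u(t);\Gamma(t),s(t),b(t))+\gamma\|u(t)-w(t)\|_{L^2(0,1)}^2
\]
for every admissible $(v,\Gamma,s)$, and I would conclude by checking that $(u(t),\Gamma(t),s(t))$ is itself admissible: $\Gamma(t)$ is finite, $\Gamma(t)\subset\Gamma(t)$, $s(t)\colon\Gamma(t)\to[\bar s,\infty)$ with $s(t)\geq s(t)$ by \ref{item1ev}, and $\{x\in J^{b(t)}(u(t)):|[u(t)](x)|>\bar s\}\subset\Gamma(t)$ by \ref{item5ev}. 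For $t=0$ the same scheme works, replacing \ref{item6ev} by \ref{item4ev} and using that $s(0)=|[u(0)]|$ on $\Gamma(0)$, so that $\Phi(u(0);\Gamma(0),s(0),b(0))=\Phi(u(0);b(0))$ (by \ref{item1gbar1}) and $\Phi(v;\Gamma(0),s(0),b(0))\geq\Phi(v;b(0))\geq\Phi(u(0);b(0))$. There is no genuine obstacle here; the only point that requires a little care is the case distinction in the index-by-index comparison together with the convention that $s$ vanishes outside $\Gamma$.
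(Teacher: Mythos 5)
Your proof is correct and follows essentially the same route as the paper: check that $(u(t),\Gamma(t),s(t))$ is admissible via Theorem~\ref{thm:evolution}\ref{item1ev} and \ref{item5ev}, use the monotonicity of $\Gdue$ in its second variable (together with $\Gamma(t)\subset\Gamma$ and the extension of $s$ by zero) to get $\Phi(v;\Gamma,s,b(t))\geq\Phi(v;\Gamma(t),s(t),b(t))$, and conclude with the static equilibrium property \ref{item6ev}. Your explicit index-by-index case distinction and the separate treatment of $t=0$ via \ref{item4ev} only spell out details the paper leaves implicit.
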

\begin{proof}
	{By {Theorem~\ref{thm:evolution}\ref{item1ev} and \ref{item5ev}}, 
		$(u(t),\Gamma(t),s(t))$ is admissible. Let now
		$(v,\Gamma,s)$ be admissible. 
		By monotonicity of $\Gdue$ we have
		$\Phi(v;\Gamma(t),s(t),b(t)) 
		\le\Phi(v;\Gamma,s,b(t)) $, and 
{Theorem~\ref{thm:evolution}}\ref{item6ev} then concludes the proof. }
\end{proof}

\begin{proof}[Proof of Theorem~\ref{thm:evolution}]
	We divide the proof into several steps. In the following, we will denote by $C$ a generic constant, possibly depending on $b$, $w$, and $\gamma$, but independent of $n$ and $t$, which might change from line to line.
	As a preliminary remark, we observe that $u_n(t)$ is a minimizer of problem \eqref{mink}, for the value of $k$ such that $t\in[t^n_k,t^n_{k+1})$; then 
	\begin{equation*}
	\begin{split}
	&{\Phi(u_n(t);\Gamma_n(t-\tau_n),s_n(t-\tau_n),b_n(t)) }+ \gamma\int_0^1 |u_n(t)-w_n(t)|^2\de x \\
	&\leq \Phi(v;{\Gamma_n(t-\tau_n),s_n(t-\tau_n)},b_n(t)) + \gamma\int_0^1 |v-w_n(t)|^2\de x
	\qquad\text{for all $v\in\BV(0,1)$.}
	\end{split}\end{equation*}
	Using \eqref{eqPhikkm} on the left-hand side, and monotonicity on the right-hand side, this implies
	\begin{equation} \label{proofev0}\begin{split}
	\Phi(u_n(t);&\Gamma_n(t),s_n(t),b_n(t)) + \gamma\int_0^1 |u_n(t)-w_n(t)|^2\de x \\
	&\leq \Phi(v;{\Gamma_n(t),s_n(t)},b_n(t)) + \gamma\int_0^1 |v-w_n(t)|^2\de x
	\qquad\text{for all $v\in\BV(0,1)$.}
	\end{split}\end{equation}
	
	\medskip\noindent\textit{Step 1: compactness.}
	The first goal is to prove a uniform bound on $\mathcal{E}_n(t)$, independent of $n$ and $t$, by using the energy inequality \eqref{energydisc2}.
	To this aim, notice that by using $v=b(0)$ as test function in \eqref{min0} we easily obtain a uniform bound on the initial energy $\mathcal{E}_n(0)$.
	Then \eqref{energydisc2} yields, as {$|h'(\xi)|\leq\ell$ for all $\xi$},
	\begin{equation*}
	\begin{split}
	\mathcal{E}_n(t)
	& \leq \mathcal{E}_n(0) + \int_{0}^{t_k^n}\int_0^1 \Bigl( h'((u_n(r))')(\dot{b}^n(r))' + 2\gamma (u_n(r)-w_n(r))(\dot{b}^n(r)-\dot{w}^n(r)) \Bigr)\de x\de r + R_n(t) \\
	& \leq C + \ell \int_{0}^{T} \int_0^1 |(\dot{b}^n(r))' | \de x \de r \\
	& \qquad + 2\gamma \sup_{r\in[0,T]} \Bigl( \|u_n(r)\|_\infty + \|w_n(r)\|_\infty \Bigr) \int_0^T \int_0^1 \Bigl( |\dot{b}^n(r)| + |\dot{w}^n(r)| \Bigr){\de x}\de r + R_n(t) .
	\end{split}
	\end{equation*}
	For the remainder $R_n(t)$, defined in \eqref{energydisc3}, we have:
	\begin{equation*}
	R_n(t) \leq C\tau_n\int_0^{T} \|\dot{b}^n(r)\|^2_{H^1(0,1)}\de r + C \sup_{r\in[0,T]}\|w_n(r)\|_\infty \int_0^{T} \Bigl(\|\dot{w}^n(r)\|_{L^1(0,1)} + \|\dot{b}^n(r)\|_{L^1(0,1)} \Bigr) \de r .
	\end{equation*}
	In view of the assumptions on $b$ and $w$ and of Proposition~\ref{prop:discretetime}\ref{item2discr}, all the previous quantities are uniformly bounded, therefore we obtain a uniform estimate on the energies $\mathcal{E}_n(t)$ and in turn, similarly to \eqref{boundbv}, on the $\BV$-norm of the functions $u_n(t)$:
	\begin{equation} \label{proofev1}
	\sup_{n,t}\mathcal{E}_n(t) <\infty, \qquad \sup_{n,t} |Du_n(t)|(0,1) < \infty.
	\end{equation}
	
	Fix now a countable dense set $D\subset[0,T]$, with $0\in D$. By a diagonal argument and by the uniform bounds \eqref{proofev1} we can find a subsequence (which we denote by the same symbol) such that
	\begin{equation} \label{proofev2}
	u_n(t) \wto u(t) \qquad\text{weakly* in }\BV(0,1), \text{ for all }t\in D,
	\end{equation}
	for some $u(t)\in\BV(0,1)$, with $\|u(t)\|_\infty \leq \max_{r\in[0,T]}\max\{ \|b(r)\|_\infty, \|w(r)\|_\infty \}$.
	{This implies in particular that $\sup_t \sup_n \sup_x |s_n(t)(x)|<\infty$.}
	
	Thanks to Proposition~\ref{prop:discretetime}\ref{item3discr}, it is clear that we have a uniform bound on the number of points of $\Gamma_n(t)$:
	\begin{equation} \label{proofev3}
	\sup_{n,t}\hz(\Gamma_n(t)) <\infty.
	\end{equation}
	Up to further subsequences, we can therefore assume that 
	{for any $t\in D$ and $n$ large enough}
	$\Gamma_n(t)=\{ \bar{x}_1^n(t),\ldots, \bar{x}_{N(t)}^n(t) \}$  (with the points $\bar{x}^i_n(t)$ distinct and $N(t)$ independent of $n$), and that each sequence $\bar{x}^n_{i}(t)$ converges as $n\to\infty$; we denote by $\Gamma(t)= \{ \bar{x}_1(t) ,\ldots, \bar{x}_{N(t)}(t) \}$ the set of limit points of these sequences,
	\begin{equation} \label{proofev4}
	\Gamma(t)  := \Bigl\{ \bar{x}_i(t) :=\lim_{n\to\infty} \bar{x}^n_i(t), \; i=1,\ldots,N(t) \Bigr\}\subset [0,1] \qquad\text{for }t\in D.
	\end{equation}
	Notice that all the points $\bar{x}_i(t)$ are distinct, since two different sequences $(\bar{x}^n_i(t))_n$, $(\bar{x}^n_j(t))_n$ cannot converge to the same limit point by Proposition~\ref{prop:discretetime}\ref{item3discr}.
	Moreover, in view of the monotonicity property $\Gamma_n(t_1)\subset\Gamma_n(t_2)$ for $t_1<t_2$, we also have
	\begin{equation} \label{proofev5}
	\Gamma(t_1) \subset \Gamma(t_2) \qquad \text{for all $t_1,t_2\in D$, $t_1<t_2$.}
	\end{equation}
	
	Finally, we also have compactness for the maps $s_n(t)$: by possibly extracting another subsequence, for every $t\in D$ there exists a map $s(t):\Gamma(t)\to[\bar{s},\infty)$, such that
	\begin{equation} \label{proofev6}
	s_n(t)(\bar{x}^n_i(t)) \to s(t)(\bar{x}_i(t)) \qquad\text{for all $i=1,\ldots,N(t)$, $t\in D$.}
	\end{equation}
	Moreover $s(t_1)\leq s(t_2)$ on $\Gamma(t_1)$ for all $t_1,t_2\in D$, $t_1<t_2$, since every map $s_n(t)$ is nondecreasing in time by construction.
	{As usual, we extend $s$ by 0 on $[0,1]\setminus \Gamma(t)$.}
	
	\medskip\noindent\textit{Step 2.} We now prove the following claims:
	\begin{equation} \label{proofev7}
	\bigl\{ x\in J^{b(t)}(u(t)) : |[u(t)](x)|>\bar{s} \bigr\} \subset \Gamma(t) \qquad \text{for all $t\in D$,}
	\end{equation}
	and
	\begin{equation} \label{proofev7b}
	|[u(t)](x)|\leq s(t,x) \qquad \text{for all $x\in\Gamma(t)$ and $t\in D$.}
	\end{equation}
	{We first recall that by Proposition~\ref{prop:nocantor} we have $u_n\in SBV(0,1)$ and $|u'_n|\le\frac\ell2$ almost everywhere.}

	In order to prove \eqref{proofev7}, suppose by contradiction that for some $t\in D$ there exists $\bar{x}\in J^{b(t)}(u(t))\setminus\Gamma(t)$ with $|[u(t)](\bar{x})|\geq\bar{s}+\e$, for some $\e>0$.
	We also assume that $\bar{x}\in(0,1)$, as the boundary case follows by a similar argument. Let $I_\delta:=(\bar{x}-\delta,\bar{x}+\delta)$ be such that $I_\delta\cap \Gamma_n(t)=\emptyset$ for all sufficiently large $n$; then, by definition of $\Gamma_n(t)$, all the jumps of $u_n(t)$ inside $I_\delta$ are smaller than the threshold $\bar{s}$: $|[u_n(t)](x)|\leq\bar{s}$ for every $x\in J_{u_n(t)}\cap I_\delta$. Moreover, 
	{as $\Gamma_n(t-\tau_n)\subset\Gamma_n(t)$,}
	$u_n(t)$ solves the minimum problem
	\begin{multline} \label{proofev9}
	\min\biggl\{ \int_{I_\delta} h(|v'|)\de x + \sum_{x\in J_v\cap I_\delta} { \Gdue(|[v](x)|,0)} + \ell|D^cv|(I_\delta) + \gamma\int_{I_\delta}|v-w_n(t)|^2\de x \,:\, v\in\BV(I_\delta),\,\\ \{v\neq u_n(t)\} \subset\subset I_\delta \biggr\} \,.
	\end{multline}
	By \eqref{proofev2} and lower semicontinuity of the total variation, we have for every $\delta>0$
	\begin{equation*}
	\liminf_{n\to\infty}|Du_n(t)|(I_\delta) \geq |Du(t)|(I_\delta) \geq \bar{s} + \e .
	\end{equation*}
	In view of  Proposition~\ref{prop:nocantor}, it is not possible that the limit jump of $u(t)$ at $\bar{x}$ is created by a nonzero contribution from the absolutely continuous part of the measures $Du_n(t)$: more precisely, we can assert that
	\begin{equation*}
	\text{for every $\delta>0$ sufficiently small} \qquad \liminf_{n\to\infty} \sum_{x\in J_{u_n(t)}\cap I_\delta} \big|[u_n(t)](x)\big| \geq \bar{s} + \frac{\e}{2},
	\end{equation*}
	and each $u_n(t)$ has at least two jumps in $I_\delta$ since $|[u_n(t)](x)|\leq\bar{s}$. In particular, for $n$ large we can find a partition $J_{u_n(t)} \cap I_\delta=A_n\cup B_n$ such that
	\begin{equation}  \label{proofev9b}
	\sum_{x\in A_n}\big| [u_n(t)](x)\big| \geq\frac{\e}{4}, \qquad \sum_{x\in B_n} \big| [u_n(t)](x) \big| \geq\frac{\e}{4}.
	\end{equation}
	We construct a competitor for the minimum problem \eqref{proofev9} by moving all the jumps of $u_n(t)$ in a single point $x_n\in I_\delta\setminus J_{u_n(t)}$: then the minimality of $u_n(t)$ in \eqref{proofev9} and the subadditivity of { $\Gdue$} yield
	{ \begin{equation*}
	\begin{split}
	0
	& \leq \Gdue\Biggl( \bigg| \sum_{x\in J_{u_n(t)}\cap I_\delta} [u_n(t)](x) \bigg|,0 \Biggr) - \sum_{x\in J_{u_n(t)}\cap I_\delta} \Gdue\bigl( |[u_n(t)](x)|,0 \bigr) + C\delta \\
	& \leq \Gdue\Biggl( \sum_{x\in J_{u_n(t)}\cap I_\delta} |[u_n(t)](x)| ,0\Biggr) - \Gdue\biggl( \sum_{x\in A_n} |[u_n(t)](x)|,0 \biggr) - \Gdue\biggl( \sum_{x\in B_n} |[u_n(t)](x)|,0 \biggr) + C\delta \\
	& \leq -c_{\e/4} + C\delta,
	\end{split}
	\end{equation*}}%
	the last inequality following by \eqref{proofev9b} { and \eqref{gsub1}}. This is a contradiction for $\delta$ small enough and completes the proof of \eqref{proofev7}.
	
	We next show \eqref{proofev7b}. Let $\bar{x}_i(t)$ be any point in $\Gamma(t)$. Recalling \eqref{proofev6} and that by construction $|[u_n(t)](\bar{x}^n_i(t))|\leq s_n(t,\bar{x}^n_i(t))$, it is sufficient to prove that
	\begin{equation} \label{proofev9c}
	|[u(t)](\bar{x}_i(t))| \leq \liminf_{n\to\infty} |[u_n(t)](\bar{x}^n_i(t))| \,.
	\end{equation}
	Let us momentarily omit the dependence on $t$. Suppose by contradiction that for some $\e>0$ one has $|[u](\bar{x}_i)| \geq \liminf_{n\to\infty} |[u_n](\bar{x}^n_i)| + \e$. Then, by {Proposition~\ref{prop:nocantor}}, \eqref{proofev2}, and lower semicontinuity of the total variation, we have for 
	{$I_\delta:=(\bar{x}_i-\delta,\bar{x}_i+\delta)$}
	and sufficiently small $\delta$
	\begin{equation*}
	\limsup_{n\to\infty} \sum_{x\in J_{u_n}\cap I_\delta\setminus\{\bar{x}^n_i\}}  \big|[u_n](x)\big| \geq\frac{\e}{2}.
	\end{equation*}
	We construct a competitor by moving all the jumps of $u_n$ at the point $\bar{x}^n_i$: the minimality of $u_n$ in \eqref{proofev0} yields for $n$ large enough
	\begin{equation*}
	\begin{split}
	0
	& \leq {\Gdue}\Biggl( \bigg| \sum_{x\in J_{u_n}\cap I_\delta} [u_n](x) \bigg| , s_n(\bar{x}^n_i) \Biggr) - {\Gdue}(|[u_n](\bar{x}^n_i)|, s_n(\bar{x}^n_i)) - \sum_{x\in J_{u_n}\cap I_\delta\setminus\{\bar{x}^n_i\}} { \Gdue(|[u_n](x)|,0)} + C\delta \\
	& \leq {\Gdue}\Biggl( \sum_{x\in J_{u_n}\cap I_\delta} \big| [u_n](x)\big| , s_n(\bar{x}^n_i) \Biggr) - {\Gdue}(|[u_n](\bar{x}^n_i)|, s_n(\bar{x}^n_i)) - { \Gdue\Biggl( \sum_{x\in J_{u_n}\cap I_\delta\setminus\{\bar{x}^n_i\}} \big| [u_n](x) \big|,0 \Biggr)} + C\delta \\
	& \leq -c_{\e/2} + C\delta,
	\end{split}
	\end{equation*}
	where the last inequality { follows by \eqref{gsub21}}. This is a contradiction for $\delta$ small enough, and proves that \eqref{proofev9c} holds.
	
	\medskip\noindent\textit{Step 3: lower semicontinuity of the energies.}
	We now claim that for every $t\in D$
	\begin{equation} \label{proofev8}
	\Phi(u(t);\Gamma(t),s(t),b(t)) \leq \liminf_{n\to\infty} \Phi(u_n(t);\Gamma_n(t),s_n(t),b_n(t)) \,.
	\end{equation}
	First observe that, in view of the continuity and monotonicity properties of ${\Gdue}$, \eqref{proofev9c}, and \eqref{proofev6}, we have
	\begin{equation} \label{proofev10}
	\sum_{x\in\Gamma(t)} {\Gdue}(|[u(t)](x)|,s(t,x)) \leq \liminf_{n\to\infty} \sum_{x\in\Gamma_n(t)} {\Gdue}\bigl( |[u_n(t)](x)|, s_n(t,x) \bigr) \,.
	\end{equation}
	Consider any relatively open set $A\subset\subset[0,1]\setminus\Gamma(t)$; by lower semicontinuity of the functional
	\begin{multline*}
	\int_{A} h(|u'(t)|)\de x + \sum_{x\in J^{b(t)}(u(t))\cap A} { \Gdue(|[u(t)](x)|,0)} + \ell|D^cu(t)|(A) \\
	\leq\liminf_{n\to\infty}\biggl[ \int_{A}  h(|u'_n(t)|)\de x + \sum_{x\in J^{b_n(t)}(u_n(t))\cap A} { \Gdue(|[u_n(t)](x)|,0)} + \ell|D^cu_n(t)|(A) \biggr]
	\end{multline*}
	so that taking the supremum over all $A$ and recalling \eqref{proofev10} we obtain \eqref{proofev8}.
	
	\medskip\noindent\textit{Step 4: static equilibrium.}
	We now show that the minimality condition \ref{item6ev} holds for every $t\in D$. In order to do this, 
	{we first}
	show that for every $v\in\BV(0,1)$ we can construct $v_n\in\BV(0,1)$ such that $v_n\to v$ in $L^2(0,1)$ as $n\to\infty$ and
	\begin{equation} \label{proofev11}
	\limsup_{n\to\infty} \Bigl[ \Phi(v_n;\Gamma_n(t), s_n(t), b_n(t)) - \Phi(v;\Gamma(t),s(t),b(t)) \Bigr] \leq 0.
	\end{equation}
	Recalling \eqref{proofev4}, this can be done by considering the (possible) jumps of $v$ on the points of $\Gamma(t)$ and moving them to the corresponding points of $\Gamma_n(t)$: more precisely, assuming for simplicity that $\Gamma(t)\subset(0,1)$ (the construction can be straightforwardly adapted if one of the boundary points belongs to $\Gamma(t)$), we define $v_n$ by the conditions
	\begin{equation*}
	v_n^+(0):=v^+(0), \qquad Dv_n := Dv - \sum_{i=1}^{N(t)} [v](\bar{x}_i(t))\delta_{\bar{x}_i(t)} + \sum_{i=1}^{N(t)} [v](\bar{x}_i(t))\delta_{\bar{x}_i^n(t)}.
	\end{equation*}
	Then $v_n\to v$ in $L^2(0,1)$.
	{By \eqref{proofev4}, if $n$ is sufficiently large then $\bar{x}_i^n(t)\ne \bar{x}_j(t)$ for $i\ne j$. We set 
		$\chi_i^n=1$ if $\bar{x}_i^n(t)\ne \bar{x}_i(t)$, and
		$\chi_i^n=0$ if $\bar{x}_i^n(t)=\bar{x}_i(t)$. 
		We estimate}
	\begin{equation*}
	\begin{split}
	&\Phi(v_n;\Gamma_n(t), s_n(t), b_n(t)) - \Phi(v;\Gamma(t),s(t),b(t)) \\
	& = \sum_{i=1}^{N(t)}{\chi_i^n} \Bigl[ {\Gdue}\bigl( |[v](\bar{x}^n_i(t))+[v](\bar{x}_i(t))|, s_n(t,\bar{x}^n_i(t))\bigr) - {\Gdue}\bigl(|[v](\bar{x}_i(t))|,s(t,\bar{x}_i(t))\bigr) - { \Gdue(|[v](\bar{x}_i^n(t))|,0)} \Bigr] \\
	& \: + \sum_{i=1}^{N(t)}{(1-\chi_i^n)} \Bigl[ {\Gdue}\bigl( |[v](\bar{x}_i(t))|, s_n(t,\bar{x}^n_i(t))\bigr) - {\Gdue}\bigl(|[v](\bar{x}_i(t))|,s(t,\bar{x}_i(t))\bigr)  \Bigr] \\
	& \: +  { \Gdue(|v^+(0)-b_n(t,0)|,0) - \Gdue(|v^+(0)-b(t,0)|,0) + \Gdue(|v^-(1)-b_n(t,1)|,0) - \Gdue(|v^-(1)-b(t,1)|,0)} \\
	& { \xupref{gbarsub1}{\leq}} \sum_{i=1}^{N(t)}{\chi_i^n}  \Bigl[ {\Gdue}\bigl( |[v](\bar{x}_i(t))|, s_n(t,\bar{x}^n_i(t))\bigr) - {\Gdue}\bigl(|[v](\bar{x}_i(t))|,s(t,\bar{x}_i(t))\bigr) \Bigr] \\
	& \: + \sum_{i=1}^{N(t)}{(1-\chi_i^n)} \Bigl[ {\Gdue}\bigl( |[v](\bar{x}_i(t))|, s_n(t,\bar{x}_i^n(t))\bigr) - {\Gdue}\bigl(|[v](\bar{x}_i(t))|,s(t,\bar{x}_i(t))\bigr)  \Bigr] \\
	& \: +  { \Gdue(|v^+(0)-b_n(t,0)|,0) - \Gdue(|v^+(0)-b(t,0)|,0) + \Gdue(|v^-(1)-b_n(t,1)|,0) - \Gdue(|v^-(1)-b(t,1)|,0)}.
	\end{split}
	\end{equation*}
	Hence \eqref{proofev11} follows by taking into account \eqref{proofev6}, {the continuity of $\Gdue$,} and that $b_n(t)\to b(t)$.
	
	We are now in position to conclude the proof of \ref{item6ev} for $t\in D$. Let $v\in\BV(0,1)$ and let $v_n$ be the sequence constructed before. Then, using the convergence of $u_n(t)\to u(t)$ and $w_n(t)\to w(t)$ in $L^2(0,1)$,
	\begin{equation*}
	\begin{split}
	\Phi(u(t);\Gamma(t),s(t),b(t)) &+ \gamma\int_0^1 |u(t)-w(t)|^2\de x \\
	& \xupref{proofev8}{\leq} \liminf_{n\to\infty} \biggl[ \Phi(u_n(t);\Gamma_n(t),s_n(t),b_n(t)) + \gamma\int_0^1 |u_n(t)-w_n(t)|^2\de x \biggr] \\
	& \xupref{proofev0}{\leq} \liminf_{n\to\infty} \biggl[ \Phi(v_n;\Gamma_n(t),s_n(t),b_n(t)) + \gamma\int_0^1 |v_n-w_n(t)|^2\de x\biggr] \\
	& \xupref{proofev11}{\leq}  \Phi(v;\Gamma(t),s(t),b(t)) + \gamma\int_0^1 |v-w(t)|^2\de x.
	\end{split}
	\end{equation*}
	Notice in particular that, by taking $v=u(t)$, the previous chain of inequalities yields the convergence of the energies:
	\begin{equation} \label{proofev13}
	\Phi(u(t);\Gamma(t),s(t),b(t)) = \lim_{n\to\infty} \Phi(u_n(t);\Gamma_n(t),s_n(t),b_n(t)) .
	\end{equation}
	
	\medskip\noindent\textit{Step 5: definition of the evolution for $t\notin D$.}
	By the monotonicity property \eqref{proofev5}, the uniform bound on the number $N(t)$ of points in $\Gamma(t)$, and the monotonicity of $t\mapsto s(t)$, there exists a set $D'\subset[0,T]\setminus D$, at most countable, such that
	\begin{equation} \label{proofev50}
	\bigcap_{t'\in D, t'\geq t} \Gamma(t') = \bigcup_{t'\in D, t'\leq t}\Gamma(t'),
	\qquad
	\inf_{t'\in D,t'\geq t} s(t') = \sup_{t'\in D,t'\leq t}s(t')
	\qquad\text{for all $t\in[0,T]\setminus D'$.}
	\end{equation}
	We then define $\Gamma(t)$ and $s(t)$ to be equal to the common values in \eqref{proofev50} for all $t\in[0,T]\setminus D'$. Moreover, by repeating the construction in Steps~1--4 for the points $t\in D'$, and up to a further subsequence, we obtain a triple $(u(t),\Gamma(t),s(t))$ for all $t\in D\cup D'$ such that the conclusions of the previous steps hold for every $t\in D\cup D'$.
	
	It remains to define $u(t)$ for $t\in[0,T]\setminus(D\cup D')$.
	{For $t\in[0,T]$,} we introduce the quantity
	\begin{equation} \label{proofev51}
	\theta_n(t) := \int_0^1 \Bigl( h'((u_n(t))')(\dot{b}(t))' + 2\gamma (u_n(t)-w_n(t))(\dot{b}(t)-\dot{w}(t)) \Bigr)\de x,
	\end{equation}
	as well as
	\begin{equation} \label{proofev52}
	\theta_\infty(t) := \limsup_{n\to\infty} \theta_n(t).
	\end{equation}
	Notice that $|\theta_n(t)|\leq C(\|\dot{b}(t)\|_{W^{1,1}(0,1)} + \|\dot{w}(t)\|_{L^1(0,1)})\in L^1(0,T)$, therefore $\theta_\infty\in L^1(0,T)$ and by Fatou's Lemma
	\begin{equation} \label{proofev53}
	\limsup_{n\to\infty}\int_0^t \theta_n(r)\de r \leq \int_0^t \theta_\infty(r)\de r 
	\hskip1cm{\text{for all $t\in[0,T]$}}.
	\end{equation}
	For a given $t\in[0,T]\setminus(D\cup D')$, we can find a subsequence $(n_j)_j$, dependent on $t$, such that $\theta_{n_j}(t)\to\theta_\infty(t)$,
	and $u_{n_j}(t)$ converges weakly* in $\BV(0,1)$ to a function $u(t)$ as $j\to\infty$. We choose this limit function to define the triple $(u(t),\Gamma(t),s(t))$ for $t\in[0,T]\setminus(D\cup D')$. Notice that the arguments in Steps~2--4 can be repeated at the point $t$ for this subsequence: therefore the evolution $t\mapsto(u(t),\Gamma(t),s(t))$ satisfies the properties \ref{item1ev}--\ref{item6ev} in the statement for all $t\in[0,T]$.
	
	\medskip\noindent\textit{Step 6: non-dissipativity.}
	To complete the proof, it only remains to show the condition \ref{item7ev}. Setting
	\begin{equation} \label{proofev60}
	\theta(t) := \int_0^1 \Bigl( h'((u(t))')(\dot{b}(t))' + 2\gamma (u(t)-w(t))(\dot{b}(t)-\dot{w}(t)) \Bigr)\de x,
	\end{equation}
	we first claim that
	\begin{equation} \label{proofev61}
	\theta(t)=\theta_\infty(t) \qquad\text{for almost every $t\in[0,T]$.} 
	\end{equation}
	In particular, this will give the measurability and integrability of $\theta(t)$ in $[0,T]$. The claim \eqref{proofev61} can be proved by an argument similar to \cite[Lemma~4.11]{DMFraToa}: fix $t\in[0,T]$ and consider any sequence of positive numbers ${\delta_i}\to0$ and any $d\in\R$. 
	{By the definition of $u(t)$ there is a sequence $n_j\to\infty$
		such that} $u_{n_j}(t)(x)+d{\delta_i} x\wto u(t)(x)+d {\delta_i}x$ weakly* in $\BV(0,1)$; by arguing as in Step~3 to prove the lower semicontinuity of the energy along the sequence $u_{n_j}(t)(x)+d{\delta_i} x$, and recalling \eqref{proofev13}, we find
	\begin{multline*}
	\frac{1}{{\delta_i}} \Bigl( \Phi(u(t)+d{\delta_i} x;\Gamma(t),s(t),b(t)+d{\delta_i}x) - \Phi(u(t);\Gamma(t),s(t),b(t)) \Bigr) \\
	\leq \liminf_{j\to\infty} \frac{1}{{\delta_i}} \Bigl( \Phi(u_{n_j}(t)+d{\delta_i} x;\Gamma_{n_j}(t),s_{n_j}(t),b_{n_j}(t)+d{\delta_i}x) - \Phi(u_{n_j}(t);\Gamma_{n_j}(t),s_{n_j}(t),b_{n_j}(t)) \Bigr) .
	\end{multline*}
	By writing the explicit expressions of the previous quantities, we find
	\begin{equation*}
	\frac{1}{{\delta_i}} \int_0^1 \Bigl( h(|u'(t)+{\delta_i}d|)-h(|u'(t)|) \Bigr)\de x
	\leq \liminf_{j\to\infty} \frac{1}{{\delta_i}}\int_0^1 \Bigl(  h(|u_{n_j}'(t)+{\delta_i}d|)-h(|u_{n_j}'(t)|) \Bigr)\de x ,
	\end{equation*}
	{therefore there exists an increasing sequence of integers $j_i\ge i$ such that 
		\begin{equation*}
		\frac{1}{{\delta_i}} \int_0^1 \Bigl(  h(u'(t)+{\delta_i}d)-h(u'(t)) \Bigr)\de x \leq \frac{1}{{\delta_i}}\int_0^1 \Bigl(  
		h(u_{{n_{j_i}}}'(t)+{\delta_i}d)-h(u_{{n_{j_i}}}'(t)) \Bigr)\de x + \frac{1}{i} \,.
		\end{equation*}}
	{Taking the limit $i\to\infty$ we obtain proceeding as in \cite[Lemma~4.11]{DMFraToa},}
	\begin{equation*}
	\begin{split}
	d \int_0^1 h'(u'(t))\de x
	&= \liminf_{{i\to\infty}} \int_0^1 
	\frac{  h(u'(t)+{{\delta_i}} d)-h(u'(t))}{{{\delta_i}}}
	\de x \\
	& \leq \liminf_{{i\to\infty}} \int_0^1
	\frac{ h(u_{{n_{j_i}}}'(t)+{{\delta_i}} d)-
		h(u_{{n_{j_i}}}'(t))}{{{\delta_i}}}
	\de x \\
	& = \liminf_{{i\to\infty}} d\int_0^1 h'(u_{{n_{j_i}}}'(t)+{\tau_i}d)\de x = \liminf_{{i\to\infty}} d \int_0^1 h'(u_{{n_{j_i}}}'(t))\de x
	\end{split}
	\end{equation*}
	for suitable ${\tau_i}:(0,1)\to[0,{{\delta_i}}]$.
	{Taking $d=1$ and $d=-1$ we see that this is actually an equality and that the limit does not depend on the subsequence $j_i$, which implies}
	\begin{equation*}
	\lim_{j\to\infty}\int_0^1 h'((u_{n_j}(t))') \de x =\int_0^1 h'((u(t))') \de x.
	\end{equation*}
	By the strong convergence of $u_{n_j}(t)\to u(t)$ and $w_{n_j}(t)\to w(t)$ in $L^2(0,1)$, we then conclude that $\theta_{n_j}(t)\to\theta(t)$ as $j\to\infty$ and, since $\theta_\infty$ was the limit of the subsequence $\theta_{n_j}$, this shows \eqref{proofev61}.
	
	By Lemma~\ref{lem:energy} we have
	\begin{multline}
	\mathcal{E}_{n_j}(t)
	\leq \mathcal{E}(0) + \int_{0}^{t^{n_j}_k} \theta_{n_j}(r)\de r + R_{n_j}(t) + \int_0^{t^{n_j}_k} \int_0^1 h'((u_{n_j}(r))')(\dot{b}^{n_j}(r)-\dot{b}(r))' \de x \de r \\
	+ 2\gamma \int_0^{t^{n_j}_k} \int_0^1 (u_{n_j}(r)-w_{n_j}(r))(\dot{b}^{n_j}(r)-\dot{b}(r)-\dot{w}^{n_j}(r)+\dot{w}(r)) \de x\de r,\label{eq:Enjleq}
	\end{multline}
	where $t^{n_j}_k$ is the discretization point such that $t\in[t^{n_j}_k, t^{n_j}_{k+1})$.
	One can now check that the last two terms in the previous expression are actually equal to zero, and that $R_{n_j}(t)\to0$ as $j\to\infty$, thanks to the assumptions on $b$ and $w$. Therefore, recalling \eqref{proofev53}, \eqref{proofev61}, and that $\mathcal{E}_{n_j}(t)\to\mathcal{E}(t)$ by \eqref{proofev13},  we conclude that
	\begin{equation}\label{eq:Eleq}
	\mathcal{E}(t) \leq \mathcal{E}(0) + \int_0^t \theta(r)\de r.
	\end{equation}
	
	To conclude the proof it remains to show the opposite inequality. Let us stress that the subsequence $n_j$ depends on $t$, so that we do not have a unique subsequence converging pointwise almost everywhere in $[0,T]$ to $\theta(t)$. This prevents to take directly the lower limit in the opposite of inequality \eqref{eq:Enjleq}, to get the opposite of inequality \eqref{eq:Eleq}. In order to overcome this difficulty, we will first approximate the Lebesgue integral of $\theta$ by Riemann sums.
	
	{We fix $t\in(0,T]$. We first observe that there exists a sequence of subdivisions of $[0,t]$ of the form
		\begin{equation*}
		0=\rho^m_0<\rho^m_1<\ldots<\rho^m_{i_m-1}<\rho^m_{i_m}=t,
		\qquad\text{with}\quad
		\lim_{m\to\infty} \max_{i=1,\ldots,i_m}|\rho^m_{i}-\rho^m_{i-1}|=0,
		\end{equation*}
		with the property that
		\begin{equation} \label{proofev62a}
		\lim_{m\to\infty} \sum_{i=1}^{i_m}\bigg| (\rho^m_{i}-\rho^{m}_{i-1})\theta(\rho^m_i) - \int_{\rho^m_{i-1}}^{\rho^m_i} \theta(r)\de r \bigg| = 0,
		\end{equation}
		\begin{equation} \label{proofev62b}
		\lim_{m\to\infty} \sum_{i=1}^{i_m} \int_0^1 \bigg| (\rho^m_i-\rho^m_{i-1})\dot{b}(\rho^m_i) - \int_{\rho^m_{i-1}}^{\rho^m_i} \dot{b}(r) \de r \bigg| \de x = 0,
		\end{equation}
		\begin{equation} \label{proofev62c}
		\lim_{m\to\infty} \sum_{i=1}^{i_m} \int_0^1 \bigg| (\rho^m_i-\rho^m_{i-1})\dot{w}(\rho^m_i) - \int_{\rho^m_{i-1}}^{\rho^m_i} \dot{w}(r) \de r \bigg| \de x = 0
		\end{equation}
		(see \cite[Lemma~4.12]{DMFraToa}, and also \cite[Lemma~4.12]{DMFraToa2} for a proof, adapting the arguments of \cite[page~63]{Doob}). We now exploit the global stability of $u(\rho^m_{i-1})$ (property \ref{item6ev}), taking the competitor $u(\rho^m_i)+b(\rho^m_{i-1})-b(\rho^m_i)$ and adopting an iteration argument similar to that in the proof of Lemma~\ref{lem:energy}. We repeatedly use that $b\in H^1([0,T],\R^2)$, with $b(t)=(b(t,0),b(t,1))$, and that $b(t):[0,1]\to\R$ denotes the affine function interpolating between the boundary values $b(t,0)$ and $b(t,1)$. Then, using also the monotonicity property \ref{item2ev}, we find
		\begin{multline} \label{proofev63}
		\mathcal{E}(t)
		\geq \mathcal{E}(0) + \sum_{i=1}^{i_m} \int_{\rho^m_{i-1}}^{\rho^m_i} \int_0^1 h'((u(\rho^m_i))')(\dot{b}(r))'\de x \de r \\
		+ 2\gamma\sum_{i=1}^{i_m} \int_{\rho^m_{i-1}}^{\rho^m_i} \int_0^1 \bigl( w(\rho^m_i)-u(\rho^m_i) \bigr) \bigl( \dot{w}(r)-\dot{b}(r) \bigr) \de x \de r - \mathcal{S}_m(t),
		\end{multline}
		where
		\begin{multline*}
		\mathcal{S}_m(t) := \gamma\sum_{i=1}^{i_m}\int_0^1 \bigl( w(\rho^m_i)-w(\rho^m_{i-1}) \bigr)^2\de x - 2\gamma\sum_{i=1}^{i_m}\int_0^1 \bigl( w(\rho^m_i)-w(\rho^m_{i-1}) \bigr)\bigl( b(\rho^m_i)-b(\rho^m_{i-1}) \bigr) \\
		+ \sum_{i=1}^{i_m} \int_0^1 |b'(\rho^m_i)-b'(\rho^m_{i-1})|^2\de x +\gamma\sum_{i=1}^{i_m}\int_0^1 |b(\rho^m_i)-b(\rho^m_{i-1})|^2\de x.
		\end{multline*}
		Notice that, in view of the assumptions on $w$ and $b$, we have $\mathcal{S}_m(t)\to0$ as $m\to\infty$. Recalling the definition \eqref{proofev60} of the map $\theta(t)$, we further obtain from \eqref{proofev63}
		\begin{equation} \label{proofev64}
		\mathcal{E}(t)\geq\mathcal{E}(0) + \sum_{i=1}^{i_m} (\rho^m_i-\rho^m_{i-1})\theta(\rho^m_i) - \mathcal{R}_m(t) - \mathcal{S}_m(t),
		\end{equation}
		with the position
		\begin{multline*}
		\mathcal{R}_m(t) := \sum_{i=1}^{i_m} \int_{\rho^m_{i-1}}^{\rho^m_i}\int_0^1 h'((u(\rho^m_i))')\bigl( \dot{b}(\rho^m_i)-\dot{b}(r)\bigr)'\de x \de r \\
		+ 2 \gamma \sum_{i=1}^{i_m}\int_{\rho^m_{i-1}}^{\rho^m_i}\int_0^1 \bigl( u(\rho^m_i)-w(\rho^m_i)\bigr) \bigl( \dot{b}(\rho^m_i)-\dot{b}(r)-\dot{w}(\rho^m_i)+\dot{w}(r) \bigr)\de x \de r.
		\end{multline*}
		Using the definition of $h$ and the uniform bound in $L^\infty$ on $u$ and $w$ we can estimate
		\begin{align*}
		|\mathcal{R}_m(t)|
		& \leq \ell \sum_{i=1}^{i_m} \bigg| (\rho^m_i-\rho^m_{i-1})(\dot{b}(\rho^m_i))' - \int_{\rho^m_{i-1}}^{\rho^m_i} (\dot{b}(r))'\de r \bigg| \\
		& \qquad + C \sum_{i=1}^{i_m} \int_0^1 \bigg| (\rho^m_i-\rho^m_{i-1}) \bigl( \dot{b}(\rho^m_i)-\dot{w}(\rho^m_i) \bigr) - \int_{\rho^m_{i-1}}^{\rho^m_i} \bigl( \dot{b}(r) - \dot{w}(r) \bigr) \de r \bigg|\de x\,,
		\end{align*}
		and the previous quantity vanishes in the limit as $m\to\infty$ in view of \eqref{proofev62b}--\eqref{proofev62c}.
		Eventually, by passing to the limit as $m\to\infty$ in \eqref{proofev64}, recalling \eqref{proofev62a} and that $\mathcal{R}_m(t)\to0$, $\mathcal{S}_m(t)\to0$, we conclude that
		\begin{equation*}
		\mathcal{E}(t) \geq \mathcal{E}(0) + \int_0^t \theta(r)\de r,
		\end{equation*}
		which completes the proof of \ref{item7ev}.}
\end{proof}

\begin{remark}
	Notice that in the proof of Theorem~\ref{thm:evolution} we do \emph{not} obtain a unique subsequence $(n_j)_j$, independent of $t$, such that the time-discrete evolution $(u_{n_j}(t),\Gamma_{n_j}(t),s_{n_j}(t))$ converges to the limit evolution $(u(t),\Gamma(t),s(t))$ for all $t\in[0,T]$. This in general holds only for $t$ in a countable dense set. It would be possible to prove convergence of a subsequence for every $t\in[0,T]$ if we knew that the minimum problem solved by $u(t)$ has a unique solution.
\end{remark}

%%%%%%%%%%%%%%%%%%%%%%%%%%%%%%%%%%%%%%%%%%%%%%%%%%%%%%%%%%%%%%%%%%
%%%%%%%%%%%%%%%%%%%%%%%%%%%%%%%%%%%%%%%%%%%%%%%%%%%%%%%%%%%%%%%%%%
%%%%%%%%%%%%%%%%%%%%%%%%%%%%%%%%%%%%%%%%%%%%%%%%%%%%%%%%%%%%%%%%%%
%%%%%%%%%%%%%%%%%%%%%%%%%%%%%%%%%%%%%%%%%%%%%%%%%%%%%%%%%%%%%%%%%%
%%%%%%%%%%%%%%%%%%%%%%%%%%%%%%%%%%%%%%%%%%%%%%%%%%%%%%%%%%%%%%%%%%

%\appendix
\section{Relaxation of the cohesive energy} \label{sect:relaxation}

In order to construct a time-discrete evolution in the cohesive setting including our notion of irreversibility, we made use in Sections~\ref{sect:setting} 
{and \ref{sect:evol}}
of a relaxation result in the spirit of \cite{BB,BBB}. The main difference is that the surface part of our energy contains also information on points representing a preexisting crack, and therefore is not considered in the existing literature; however, the proof is a small variant of the standard theory, and we present it in this section. A similar problem was discussed by Giacomini in \cite{Gia05b}.

Let $b:\{0,1\}\to\R$ be a given boundary datum. Let also $(\Gamma,s)$ be a given pair with $\Gamma\subset[0,1]$ countable and $s:\Gamma\to(0,\infty)$. Let $\Phi_0:\BV(0,1)\to[0,\infty]$ be the functional defined by
\begin{equation}\label{defPhi0bis}
\Phi_0(u):=
\begin{cases}
\displaystyle \int_0^1 |u'|^2\de x + { \sum_{x\in J_u^b\cup\Gamma} {\Gdue}(|[u](x)|,s(x))}  & \text{if }u\in\SBV(0,1), \\
\infty & \text{otherwise,}
\end{cases}
\end{equation}
where $J_u^b$ and the jump $[u](x)$ at the boundary points $x\in\{0,1\}$ are defined in \eqref{Dirichlet} and \eqref{Dirichlet2} respectively.

\begin{theorem} \label{thm:relaxation}
{ Assume that the surface energy density ${\Gdue}$ satisfies assumptions \ref{item1gbar1}-\ref{item2gbar1} of Section~\ref{sect:setting}.} The relaxation of the functional $\Phi_0$ with respect to the weak*-topology of $\BV(0,1)$ is given by $\Phi:\BV(0,1)\to[0,\infty]$,
	\begin{equation} \label{defPhibis}
	\Phi(u) := \int_0^1 h(|u'|)\de x + { \sum_{x\in J_u^b\cup \Gamma} {\Gdue}(|[u](x)|,s(x))} +  \ell|D^cu|(0,1)\,,
	\end{equation}
where the elastic energy density $h$ is defined in \eqref{defh1}.
\end{theorem}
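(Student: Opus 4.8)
The plan is to prove the two inequalities that identify the relaxed functional: the $\liminf$-inequality $\liminf_n\Phi_0(u_n)\ge\Phi(u)$ whenever $u_n\wstar u$ in $\BV(0,1)$, and the existence, for every $u\in\BV(0,1)$, of a recovery sequence $u_n\in\SBV(0,1)$ with $u_n\wstar u$ and $\limsup_n\Phi_0(u_n)\le\Phi(u)$. Since $\xi^2-h(\xi)=(|\xi|-\tfrac{\ell}{2})_+^2\ge0$ one has $\Phi\le\Phi_0$ pointwise, and the $\liminf$-inequality will in particular give the weak$^*$ lower semicontinuity of $\Phi$. Throughout I would regard the surface part of the energy as a sum of atoms of a finite measure on the compact interval $[0,1]$, so that the boundary contributions in \eqref{Dirichlet}--\eqref{Dirichlet2} are automatically included, and I would take as known the classical one-dimensional relaxation result of \cite{BB,BBB} (see also \cite{Gia05b}) corresponding to $\Gamma=\emptyset$: it produces exactly the bulk density $h$ of \eqref{defh1}, the Cantor density $\ell$, and the jump energies $\Gdue(\cdot,0)$. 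If $\sum_{x\in\Gamma}\Gdue(0,s(x))=+\infty$ then $\Phi_0\equiv\Phi\equiv+\infty$ and the statement is trivial, so I may assume this series converges; in the intended application $\Gamma$ is finite anyway.

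For the $\liminf$-inequality, let $u_n\wstar u$ with $\liminf_n\Phi_0(u_n)<\infty$. After passing to a subsequence I may assume that $u_n\in\SBV(0,1)$, that $\Phi_0(u_n)$ converges, and that the energy measures $\mu_n:=|u_n'|^2\,\Lu+\sum_{x\in J_{u_n}^b\cup\Gamma}\Gdue(|[u_n](x)|,s(x))\,\delta_x$ converge weakly$^*$ on $[0,1]$ to a finite measure $\mu$, with $\lim_n\Phi_0(u_n)=\mu([0,1])$. Because $s\ge0$, each atom satisfies $\Gdue(|[u_n](y)|,s(y))\ge\Gdue(|[u_n](y)|,0)$, so $\mu_n$ dominates the $\Gamma=\emptyset$ energy measure of $u_n$, and passing to the limit and invoking \cite{BB,BBB} one gets $\mu\ge h(|u'|)\,\Lu+\ell|D^cu|+\sum_{x\in J_u^b}\Gdue(|[u](x)|,0)\,\delta_x$. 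It then only remains to improve the atomic part at the points of $\Gamma$, i.e.\ to show $\mu(\{\bar x\})\ge\Gdue(|[u](\bar x)|,s(\bar x))$ for $\bar x\in\Gamma$: since $\Gamma$ is countable and $\Lu(\Gamma)=|D^cu|(\Gamma)=0$, adding this bound to the previous one (keeping the larger of the two lower bounds at each atom of $\Gamma$) yields $\mu([0,1])\ge\Phi(u)$. For the improvement I would blow up at $\bar x$: along a sequence $\delta_k\downarrow0$ with $|Du|(\{\bar x\pm\delta_k\})=0$, set $I_k:=(\bar x-\delta_k,\bar x+\delta_k)$ (one-sided if $\bar x\in\{0,1\}$), and combine the pointwise inequality $|u_n'|^2\ge\ell|u_n'|-\tfrac{\ell^2}{4}$, the monotonicity $\Gdue(\cdot,s(y))\ge\Gdue(\cdot,0)$ at the atoms $y\in I_k\setminus\{\bar x\}$, the Lipschitz bound $\Gdue(\cdot,0)\le\ell\,(\cdot)$, and the iterated subadditivity \eqref{gbarsub1} (namely $\sum_i\Gdue(a_i,0)+\Gdue(t,s')\ge\Gdue(\sum_i a_i+t,s')$ for $a_i,t\ge0$) to get
\[
\mu_n(I_k)+\tfrac{\ell^2}{2}\delta_k\ \ge\ \Gdue\bigl(|Du_n|(I_k),s(\bar x)\bigr)\ \ge\ \Gdue\bigl(|Du_n(I_k)|,s(\bar x)\bigr).
\]
Letting $n\to\infty$ (so $Du_n(I_k)\to Du(I_k)$ and $\liminf_n\mu_n(I_k)\ge\mu(I_k)$, as $I_k$ is relatively open) and then $k\to\infty$ (so $\mu(I_k)\downarrow\mu(\{\bar x\})$ and $|Du(I_k)|\to|[u](\bar x)|$), the continuity of $\Gdue$ gives the desired bound.

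For the recovery sequence I would first use the lower semicontinuity of $\Phi$ just established — it follows from the very same blow-up, now applied to $\Phi(u_n)$, since $h(\xi)\ge\ell|\xi|-\tfrac{\ell^2}{4}$ — together with a diagonal argument, to reduce to functions $u$ with $D^cu=0$: the Cantor part is approximated by replacing $D^cu\mres I_j$ with $(D^cu)(I_j)\,\delta_{x_j}$ over a fine partition $\{I_j\}$ of $(0,1)$ with $x_j\notin\Gamma$, and since $\Gdue(s,0)/s\to\ell$ as $s\to0^+$ by \eqref{gexpansion1}, while $\Gdue(\cdot,0)\le\ell\,(\cdot)$, one has $\sum_j\Gdue(|(D^cu)(I_j)|,0)\to\ell|D^cu|(0,1)$ without increase of energy. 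For $u$ with $D^cu=0$ the recovery sequence is the classical one: keep $u_n=u$ on $\{|u'|\le\tfrac{\ell}{2}\}$, and on $\{|u'|>\tfrac{\ell}{2}\}$ replace $u$ by a finely oscillating function with derivative $\pm\tfrac{\ell}{2}$ plus many small jumps of vanishing size, all placed at points not in $\Gamma$, so that $u_n\wstar u$ and $\int|u_n'|^2\de x+\sum(\text{energy of the small jumps})\to\int h(|u'|)\de x$ there (using again $\Gdue(s,0)/s\to\ell$ and $h(\xi)=\ell|\xi|-\tfrac{\ell^2}{4}$ for $|\xi|>\tfrac{\ell}{2}$). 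Since the jumps of $u$ on $J_u^b\cup\Gamma$ are left untouched, the term $\sum_{x\in J_u^b\cup\Gamma}\Gdue(|[u](x)|,s(x))$ converges by continuity of $\Gdue$, and $\limsup_n\Phi_0(u_n)\le\Phi(u)$.

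The only genuinely new point, and the step I expect to be the crux, is the improvement of the $\liminf$-inequality at the points of $\Gamma$: one has to control how the absolutely continuous part, the Cantor part, and the small jumps of $u_n$ near $\bar x$ can coalesce into the limit jump $[u](\bar x)$, and this is exactly where the subadditivity \ref{item2gbar1} and the finite-slope bound $\Gdue(\cdot,0)\le\ell\,(\cdot)$ from \eqref{gexpansion1} are used; they are also what forces the bulk and Cantor densities in the relaxed functional to be $h$ and $\ell$ respectively. Everything else is a routine adaptation of the standard relaxation theory of \cite{BB,BBB}, the novelty being only the prescribed points of $\Gamma$ carrying the history-dependent surface energy $\Gdue(\cdot,s(\cdot))$.
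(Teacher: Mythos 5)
Your overall strategy is sound and rests on the same ingredients as the paper's proof (the subadditivity \eqref{gbarsub1}, monotonicity and continuity of $\Gdue$, the bound $\Gdue(s,0)\le\ell s$ and the slope $\ell$ at the origin), but it is organized differently: for the lower bound the paper splits off the atoms of $Du_n$ sitting on $\Gamma$, applies the classical lower semicontinuity theorem to the remainder and recombines via subadditivity, while for the upper bound it invokes the abstract relaxation result of \cite{BBB} away from $\Gamma$ together with piecewise-constant competitors near $\Gamma$, after extending all functions to a larger interval so that the Dirichlet terms become interior jumps. You instead work with weak* limits of the energy measures plus a local estimate at each point of $\Gamma$, and you build the recovery sequences by hand. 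Both routes are viable; yours trades the abstract step of \cite{BBB} in the upper bound for an explicit construction, at the price of having to justify the localized (measure-level) form of the classical lower bound and, more importantly, the Dirichlet atoms at $\{0,1\}$: the statements of \cite{BB,BBB} you invoke do not contain the boundary penalization, so at those points either your one-sided-interval argument with a careful selection of radii where the traces of $u_n$ converge, or the paper's extension trick, has to be spelled out rather than absorbed into ``the classical result''.

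The one step that does not close as written is the limit passage in your key estimate at $\bar x\in\Gamma$. From $\mu_n(I_k)+\tfrac{\ell^2}{2}\delta_k\ge\Gdue\bigl(|Du_n(I_k)|,s(\bar x)\bigr)$ you want $\mu(\{\bar x\})\ge\Gdue\bigl(|[u](\bar x)|,s(\bar x)\bigr)$, but the portmanteau inequality you quote, $\liminf_n\mu_n(I_k)\ge\mu(I_k)$ on relatively open sets, goes in the wrong direction: it bounds $\liminf_n\mu_n(I_k)$ from below, whereas to transfer your estimate to $\mu$ you need an upper bound of the $\mu_n$-masses in terms of $\mu$. The repair is immediate: use $\mu_n(I_k)\le\mu_n(\overline I_k)$ and $\limsup_n\mu_n(\overline I_k)\le\mu(\overline I_k)$ on the closed intervals, then let $k\to\infty$ with $\mu(\overline I_k)\downarrow\mu(\{\bar x\})$, $Du(I_k)\to [u](\bar x)\,$ and the continuity of $\Gdue$. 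With this correction (and with the countable version of \eqref{gbarsub1}, obtained from the finite one by continuity and monotonicity, since $u_n$ may have infinitely many jumps in $I_k$), your liminf argument is complete, and the remaining pieces — reduction to $\sum_{x\in\Gamma}\Gdue(0,s(x))<\infty$, approximation of the Cantor part by small jumps off $\Gamma$, and the oscillation-plus-small-jumps construction where $|u'|>\tfrac{\ell}{2}$ — are correct and standard.
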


\begin{proof}
	In order to handle the boundary conditions, we work in a larger open interval $I$ containing $[0,1]$, for instance $I:=(-1,2)$, and we extend the boundary values as $b(x)=b(0)$ for $x\leq0$, $b(x)=b(1)$ for $x\geq1$. We consider the functional
	\begin{equation*} \label{proofrelax0}
	\Psi_0(u):=
	\begin{cases}
	\displaystyle \int_0^1 |u'|^2\de x + { \sum_{x\in J_u^b\cup \Gamma} {\Gdue}(|[u](x)|,s(x))} &
	\begin{array}{l}
	\text{if } u\in\SBV(I), \\ 
	u=b \text{ on }I\setminus(0,1),
	\end{array}\\
	\infty & \text{ otherwise in }\BV(I).
	\end{cases}
	\end{equation*}
	The statement is equivalent to proving that the relaxation of the functional $\Psi_0$ with respect to the weak*-topology of $\BV(I)$ is given by 
	\begin{equation} \label{proofrelax1}
	\Psi(u) := \int_0^1 h(|u'|)\de x + {\sum_{x\in J_u^b\cup \Gamma} {\Gdue}(|[u](x)|,s(x))} + \ell|D^cu|(0,1)
	\end{equation}
	if $u\in\BV(I)$ with $u=b$ on $I\setminus(0,1)$, $\Psi(u)=\infty$ otherwise in $\BV(I)$.
	
	\medskip\noindent\textit{Step 1: lower semicontinuity of $\Psi$.}
	Let $u_n\in\BV(I)$ be a sequence converging weakly* in $\BV$ to some function $u$; in particular, $u_n\to u$ in $L^1(I)$ and $Du_n\stackrel{*}{\wto} Du$ in the sense of measures. We can also assume without loss of generality that $u_n=b$ on $I\setminus(0,1)$ for every $n$, and that $\Psi(u_n)$ has a limit as $n\to\infty$. Since $\Psi$ can be obtained as the supremum of functionals corresponding to a \emph{finite} set $\Gamma$, we may also assume that $\Gamma=\{x_1,\ldots,x_m\}$, with $s_i=s(x_i)$.
	
	We now isolate the possible jumps of $u_n$ at the points $x_i\in\Gamma$: namely, we consider the atomic measures
	\begin{equation*}
	\mu_n := \sum_{i=1}^m [u_n](x_i) \delta_{x_i}\,,
	\end{equation*}
	and we can assume that $[u_n](x_i)\to a_i$ for suitable values $a_i\in\R$, or equivalently
	\begin{equation} \label{proofrelax2}
	\mu_n\stackrel{*}{\wto} \mu := \sum_{i=1}^m a_i\delta_{x_i} \,.
	\end{equation}
	We then consider the function $v_n\in\BV(I)$ defined by $v_n(x):=b(0)+(Du_n-\mu_n)((-1,x))$. Notice in particular that $v_n$ is continuous at the points $x_i\in\Gamma$, $i=1,\ldots,m$. Moreover $v_n\to v$ weakly* in $\BV$, with $v\in\BV(I)$ satisfying $Du=Dv+\mu$. With this decomposition we have
	\begin{align*}
	\liminf_{n\to\infty} \Psi(u_n)
	& \geq \liminf_{n\to\infty} \biggl[ \int_0^1 h(|v_n'|)\de x + \sum_{x\in J_{v_n}} { \Gdue(|[v_n](x)|,0)} + \ell|D^c v_n|(0,1)\biggr] \\
	& \qquad + \liminf_{n\to\infty} \sum_{i=1}^m {\Gdue}(|[u_n](x_i)|,s_i) \,.
	\end{align*}
	By standard results the first term on the right-hand side is lower semicontinuous with respect to weak*-convergence in $\BV$, see for instance \cite[Theorem~5.2]{AFP}; therefore, using also \eqref{proofrelax2} and the continuity of ${\Gdue}$,
	\begin{align*}
	\liminf_{n\to\infty}\Psi(u_n)
	&\geq \biggl[ \int_0^1 h(|v'|)\de x + \sum_{x\in J_{v}} { \Gdue(|[v](x)|,0)} + \ell|D^c v|(0,1)\biggr] + \sum_{i=1}^m {\Gdue}(|a_i|,s_i) \\
	& = \int_0^1 h(|u'|)\de x + \sum_{x\in J_{u}\setminus\Gamma} { \Gdue(|[u](x)|,0)} + \ell|D^c u|(0,1) \\
	&\qquad + \sum_{i=1}^m \Bigl( { \Gdue(|[v](x_i)|,0)} + {\Gdue}(|a_i|,s_i) \Bigr)\,.
	\end{align*}
	Finally, as $[u](x_i)=[v](x_i)+a_i$, using the subadditivity property of ${\Gdue}$, we can bound from below the last sum by $\sum_{i=1}^m {\Gdue}(|[u](x_i)|,s_i)$, and in turn we recover the desired inequality $\liminf_{n\to\infty}\Psi(u_n)\geq\Psi(u)$.
	
	\medskip\noindent\textit{Step 2: relaxation.}
	For all open sets $A\subset I$ we consider the localized versions $\Psi_0(\cdot;A)$ and $\Psi(\cdot;A)$ of $\Psi_0$ and $\Psi$ respectively. We denote by $\overline{\Psi}_0(\cdot;A)$ the relaxation of $\Psi_0(\cdot;A)$ with respect to the weak*-topology of $\BV$. Notice that this coincides with the lower semicontinuous envelope of $\Psi_0(\cdot;A)$ with respect to the strong topology of $L^1$, that is
	\begin{equation*}
	\overline{\Psi}_0(u;A) = \inf \Bigl\{ \liminf_{n\to\infty}\Psi_0(u_n;A) \,:\, u_n\to u \text{ in }L^1(A) \Bigr\}
	\end{equation*}
	(see Remark~\ref{rmk:relaxation2} below). The thesis amounts to show that $\overline{\Psi}_0=\Psi$.

	{We further remark that since $\Gdue$ is nondecreasing in the first argument, $\sum_{x\in\Gamma} \Gdue(0,s(x))\le \Psi(u)\le \Psi_0(u)$ for all $u$. In particular, we can assume that  $\sum_{x\in\Gamma} \Gdue(0,s(x))<\infty$ for the rest of the proof.}

	By the lower semicontinuity of $\Psi$ proved in the previous step, the inequality $\Psi\leq\overline{\Psi}_0$ is immediate. We therefore show the opposite inequality. By the same argument as in \cite[Proposition~3.3]{BBB} we have that for every fixed $u\in\BV(I)$ the set function $\overline{\Psi}_0(u;\cdot)$ is the restriction to the family of open subsets of $I$ of a regular Borel measure.
	
	Notice that for every $u\in\SBV(I)$ with $u=b$ on $I\setminus(0,1)$ and every open set $A\subset I$ we have, using the subadditivity of $\Gdue$,
	\begin{equation*}
	\Psi_0(u;A) \leq \int_A |u'|^2\de x + \sum_{x\in J_u\cap A} { \Gdue(|[u](x)|,0)} + \sum_{x\in\Gamma\cap A} {\Gdue}(0,s(x))\,.
	\end{equation*}
	As a consequence of \cite[Theorem~3.1]{BBB} we obtain for every $u\in\BV(I)$ with $u=b$ on $I\setminus(0,1)$ and every open set $A\subset I$
	\begin{equation*}
	\overline{\Psi}_0(u;A) \leq \int_A h(|u'|)\de x + \ell |D^cu|(A) + \sum_{x\in J_u\cap A} { \Gdue(|[u](x)|,0)} + \sum_{x\in\Gamma\cap A} {\Gdue}(0,s(x)) \,.
	\end{equation*}
	In particular, from the previous inequality it follows that
	\begin{equation} \label{proofrelax3}
	\overline{\Psi}_0(u;\cdot)\mres (I\setminus\Gamma) \leq h(|u'|)\Lu + \ell|D^cu| + \sum_{x\in J_u\setminus\Gamma}{ \Gdue(|[u](x)|,0)}\delta_x \,.
	\end{equation}
	
	It remains to evaluate $\overline{\Psi}_0(u;\cdot)\mres\Gamma$. Without loss of generality we may assume that $u\in\BV(I)$ with $u=b$ on $I\setminus(0,1)$. Let $K\subset\Gamma$ be any finite subset of $\Gamma$. For $\e>0$ we take an open set $A_\e\subset I$ with $K\subset A_\e$ and
	\begin{equation*}
	|Du|(A_\e\setminus K) <\e\,, \qquad \sum_{x\in A_\e\cap\Gamma\setminus K} {\Gdue}(0,s(x)) <\e \,.
	\end{equation*}
	We can construct a sequence of functions $u_h\in\SBV(I)$, with $u_h=b$ on $I\setminus(0,1)$, such that $u_h$ is piecewise constant, $|Du_h|(A_\e\setminus K)\leq |Du|(A_\e\setminus K)<\e$, and $u_h\to u$ strongly in $L^\infty(I)$. Then
	\begin{align*}
	\overline{\Psi}_0(u;A_\e)
	& \leq \liminf_{h\to\infty} \Psi_0(u_h;A_\e)
	= \liminf_{h\to\infty} { \sum_{x\in A_\e\cap(J_{u_h}\cup\Gamma)} {\Gdue}(|[u_h](x)|,s(x))} \\
	& \leq \liminf_{h\to\infty} \biggl( \sum_{x\in K} {\Gdue}(|[u_h](x)|,s(x)) + \sum_{x\in A_\e\cap\Gamma\setminus K} {\Gdue}(0,s(x)) + \sum_{x\in (J_{u_h}\setminus K)\cap A_\e} { \Gdue(|[u_h](x)|,0)} \biggr) \\
	& \leq \sum_{x\in K} {\Gdue}(|[u](x)|,s(x)) + \sum_{x\in A_\e\cap\Gamma\setminus K} {\Gdue}(0,s(x)) + \ell \limsup_{h\to0} |Du_h|(A_\e\setminus K) \\
	& \leq  \sum_{x\in K} {\Gdue}(|[u](x)|,s(x)) + (1+\ell)\e \,,
	\end{align*}
	where we used the subadditivity of $\Gdue$ in the second line, and the inequality { $\Gdue(s,0)\leq\ell s$} in the third line.
	By letting $\e\to0$ we obtain
	\begin{align*}
	\overline{\Psi}_0(u;K) \leq \sum_{x\in K} {\Gdue}(|[u](x)|,s(x)) \,,
	\end{align*}
	and since $K$ is an arbitrary finite subset of $\Gamma$ we conclude that 
	\begin{equation} \label{proofrelax4}
	\overline{\Psi}_0(u;\cdot)\mres\Gamma \leq \sum_{x\in\Gamma} {\Gdue}(|[u](x)|,s(x)) \delta_x \,.
	\end{equation}
	The combination of \eqref{proofrelax3} and \eqref{proofrelax4} yields the inequality $\overline{\Psi}_0\leq\Psi$ and concludes the proof of the theorem.
\end{proof}

\begin{remark} \label{rmk:relaxation}
	Observe that the functional $\Phi$ is also the lower semicontinuous envelope of
	\begin{equation*}
	\tilde{\Phi}_0(u):=
	\begin{cases}
	\displaystyle \int_0^1 |u'|^2\de x + { \sum_{x\in J_u^b\cup\Gamma} {\Gdue}(|[u](x)|,s(x))} &
	\begin{array}{l}
	\text{if } u\in\SBV(0,1) \\ 
	\text{with }\mathcal{H}^0(J_u)<\infty,
	\end{array} \\
	\infty & \text{ otherwise,}
	\end{cases}
	\end{equation*}
	which is finite only on functions with a finite number of jumps. Indeed, the functions of the recovery sequence constructed in the second step of the proof of Theorem~\ref{thm:relaxation} are piecewise constant and therefore they satisfy the additional constraint.
\end{remark}

\begin{remark} \label{rmk:relaxation2}
	Notice that the relaxation $\overline{\Phi}$ of $\Phi_0$ with respect to the weak*-topology of $\BV$ coincides with the relaxation of $\Phi_0$ in the $L^1$-topology, which is given by
	\begin{equation} \label{defPhi1}
	\Phi^1(u) := \inf \Bigl\{ \liminf_{n\to\infty}\Phi_0(u_n) \,:\, u_n\to u \text{ in }L^1(0,1) \Bigr\}\,.
	\end{equation}
	Indeed, the inequality $\Phi^1\leq\overline{\Phi}$ is obvious. For the converse, first notice that the same estimate as in \eqref{boundbv} gives that for every sequence $(u_n)_n$ with $\sup_n\|u_n\|_\infty<\infty$ and $\sup_n\Phi_0(u_n)<\infty$ one also has $\sup_n|Du_n|(0,1)<\infty$. Then, given any sequence $u_n\to u$ in $L^1(0,1)$ with $\sup_n\Phi_0(u_n)<\infty$, consider the truncation $u_n^M:= (-M)\vee(u_n\wedge M)$, for $M>0$ sufficiently large. By the previous observation we have $u_n^M\to u^M$ in the weak*-topology of $\BV$, and therefore
	\begin{equation*}
	\overline{\Phi}(u^M) \leq \liminf_{n\to\infty} \Phi_0(u_n^M) \leq \liminf_{n\to\infty}\Phi_0(u_n)\,.
	\end{equation*}
	By passing to the limit as $M\to\infty$, the inequality $\overline{\Phi}\leq\Phi^1$ follows.
\end{remark}

%%%%%%%%%%%%%%%%%%%%%%%%%%%%%%%%%%%%%%%%%%%%%%%%%%%%%%%%%%%%%%%%%%
%%%%%%%%%%%%%%%%%%%%%%%%%%%%%%%%%%%%%%%%%%%%%%%%%%%%%%%%%%%%%%%%%%
%%%%%%%%%%%%%%%%%%%%%%%%%%%%%%%%%%%%%%%%%%%%%%%%%%%%%%%%%%%%%%%%%%
%%%%%%%%%%%%%%%%%%%%%%%%%%%%%%%%%%%%%%%%%%%%%%%%%%%%%%%%%%%%%%%%%%
%%%%%%%%%%%%%%%%%%%%%%%%%%%%%%%%%%%%%%%%%%%%%%%%%%%%%%%%%%%%%%%%%%

%\chapter{Relation to static phase-field models (in 1D)}\label{chap:ii}

\section{Static phase-field approximation: the cohesive energy of pristine material} \label{sect:gnew}

{ The goal of this and the next section is to derive a cohesive energy density ${\Gdue}(s,s')$ from a (static) phase-field approximation, and to show that the function obtained in this way satisfies the assumptions in Section~\ref{sect:setting}. In this section we focus on the cohesive energy $\Gzero$ of ``pristine'' materials, defined in $\eqref{defgnew}$: this corresponds to the function ${\Gdue}(s,0)$ in the notation of Section~\ref{sect:setting}, i.e. it is the energy at points which were not previously fractured. The definition of $\Gzero$ is based on the phase-field approximation proved in \cite{CFI}, which we state in Section~\ref{subsect:blowup}.}

\subsection{A class of cohesive energies \texorpdfstring{$\boldsymbol{\Gzero}$}{} for pristine {material}}\label{subsect:hyp}
Let $f$ be a function with the following properties:
\begin{equation} \label{assf1}
f\in C^1([0,1);[0,\infty)) \text{ is nondecreasing, }f^{-1}(0)=\{0\},
\end{equation}
\begin{equation} \label{assf2}
\tif(s):=sf(1-s) \text{ is {strictly decreasing}, and }f_1(\sqrt{\cdot}) \text{ is convex,}
\end{equation}
\begin{equation} \label{assf3}
\tif(s) = \ell - \ell_1s + o(s) \qquad\text{as }s\to0^+,
\end{equation}
for some $\ell,\ell_1>0$, where $\lim_{s\to0^+}o(s)/s=0$.
The assumptions \eqref{assf2}--\eqref{assf3} are of technical nature and are slightly stronger than the corresponding assumption in \cite{CFI}: we need to include them in order to guarantee further properties of the surface energy density $\Gzero$ defined below. For instance, prototype pairs $(f,f_1)$ with these properties are 
\begin{equation}\label{eqexamplefa}
f^a(s):=\frac{\ell s}{1-s}
\quad\text{and}\quad
f^a_1(s):=\ell(1-s),\hskip5mm\text{for any $\ell>0$},
\end{equation}
(with $\ell_1=\ell$ and $s\mapsto f_1^a(\sqrt s)=\ell(1-\sqrt s)$ convex) as well as 
\begin{equation}\label{eqexamplefb}
f^b(s):=\frac{(\ell+b(1-s))s^2}{1-s}
\quad\text{and}\quad
f^b_1(s):=(\ell+b s)(1-s)^2,
\hskip5mm\text{for any $\ell>0$, $b\in(-\ell,2\ell)$},
\end{equation}
(with  $\ell_1=2\ell-b$ and
$s\mapsto f_1^b(\sqrt s)=(\ell+b\sqrt s)(1-\sqrt s)^2$ convex).

{ We now introduce a function $\Gzero$, depending on $f$, which will turn out to be a cohesive energy density for a pristine material, i.e., using the notation of Section~\ref{sect:setting}, will satisfy $\Gzero(\cdot)=\Gdue(\cdot,0)$, for a suitable $\Gdue$. 
{The relation of {the functions $f$ and $g_0$} to the phase-field model is discussed in Section \ref{subsect:blowup} below, {see in particular \eqref{defFe}, \eqref{deffe} and \eqref{defF}}.}
We {define} $\Gzero:[0,\infty)\to[0,\infty)$ as follows (see Figure~\ref{figg1}):}
{
\begin{equation} \label{defgnew}
\Gzero(s) := \inf_{(\alpha,\beta)\in\mathcal{U}_1}\gs(\alpha,\beta),
\end{equation}
where 
\begin{equation} \label{defG}
\gs(\alpha,\beta) := \int_{-\infty}^\infty \biggl( s^2f^2(\beta)|\alpha'|^2 + \frac{(1-\beta)^2}{4} + |\beta'|^2 \biggr) \de t \,,
\end{equation}
\begin{multline} \label{defU}
\mathcal{U}_1 := \biggl\{ (\alpha,\beta)\in H^1_{\loc}(\R)\times H^1_{\loc}(\R) \,:\, \alpha'\in L^1(\R),\, \bigg|\int_{-\infty}^\infty \alpha'(t)\de t\bigg| = 1, \\
0\leq\beta\leq1, \, \lim_{|t|\to\infty}\beta(t)=1 \biggr\}.
\end{multline}}

\begin{figure}[tb]
\begin{center}
	\includegraphics[width=6cm]{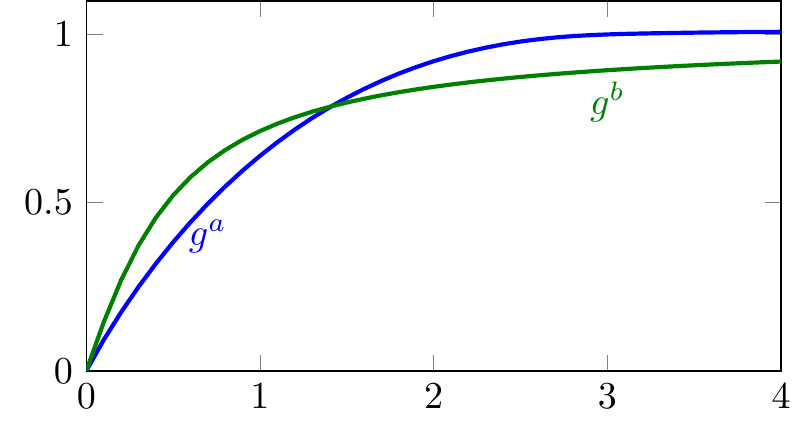}
\end{center}
	\caption{Graph of the function $\Gzero$ for two different choices of $f$. Specifically, {$g_0^a$} is generated using the function $f^a$ defined in \eqref{eqexamplefa} with $\ell=1$, and obeys {$g_0^a(s)=1$} for $s\geq\sfrac$ with $\sfrac\sim 3$; {$g_0^b$} is generated using the function $f^b$ defined in \eqref{eqexamplefb} with $\ell=1.5$, $\ell_1=0.2$ (hence $b=2.8$), and obeys {$g_0^b<1$} for all $s\in[0,\infty)$.}
	\label{figg1}
\end{figure} 

%%%%%%%%%%%%%%%%%%%%%%%%%%%%%%%%%%%%%%%%%%%%%%%%%%%%%%%%%%%%%%%%%%

\subsection{Main properties of \texorpdfstring{$\boldsymbol{\Gzero}$}{\Gzero}}\label{subsect:propg}

{ In this section we summarize the main properties satisfied by the function $\Gzero$, defined by \eqref{defgnew}. Since the proofs are rather technical, we postpone all of them to {Section}~\ref{subsect:proofg}.

We first observe that the following equivalent characterizations hold for $\Gzero$:
\begin{align}
\Gzero(s)
&= \inf_{T>0}\inf \biggl\{\int_{-T}^T \biggl(s^2 f^2(\beta)|\alpha'|^2 + \frac{(1-\beta)^2}{4} + |\beta'|^2 \biggr) \de t \: : \: \alpha,\beta\in H^1(-T,T), \nonumber\\
& \qquad\qquad\qquad \alpha(-T)=0,\; \alpha(T)=1,\; 0\leq\beta\leq1,\; \beta(\pm T)=1 \biggr\} \label{eqcar1}\\
&=\inf \biggl\{\int_0^1 |1-\beta|\sqrt{{s^2}f^2(\beta)|\alpha'|^2+|\beta'|^2}\de t \: : \: (\alpha,\beta)\in H^1(0,1)\times H^1(0,1), \nonumber \\
& \qquad\qquad\qquad \alpha(0)=0,\, \alpha(1)={1},\, 0\leq\beta\leq1,\, \beta(0)=\beta(1)=1 \biggr\} \label{eqcar2} \\
&= \inf \biggl\{\int_0^1 \sqrt{s^2(\tif(\sqrt{\gamma}))^2 + \textstyle\frac{|\gamma'|^2}{4}} \de t \: : \: \gamma\in W^{1,1}_0([0,1],[0,1]) \biggr\}.\label{eqcar3}
\end{align}
For a proof, see Proposition~\ref{prop:g2b} and Proposition~\ref{prop:g2}.
See Figure~\ref{figgamb1} for the qualitative behaviour of the optimal profiles in the minimum problems \eqref{eqcar2} and \eqref{eqcar3}.}
We remark that $(1-\beta)f(\beta)=f_1(1-\beta)$ by \eqref{assf3} is continuous at $\beta=1$, so that the integrand in \eqref{eqcar2} is interpreted as $s \ell|\alpha'|$ on the set $\{\beta=1\}$. Notice also that the infimum in \eqref{eqcar2} is invariant under reparametrization of the interval $(0,1)$. 

\begin{figure}[tb]
\begin{center}
	\includegraphics[width=13cm]{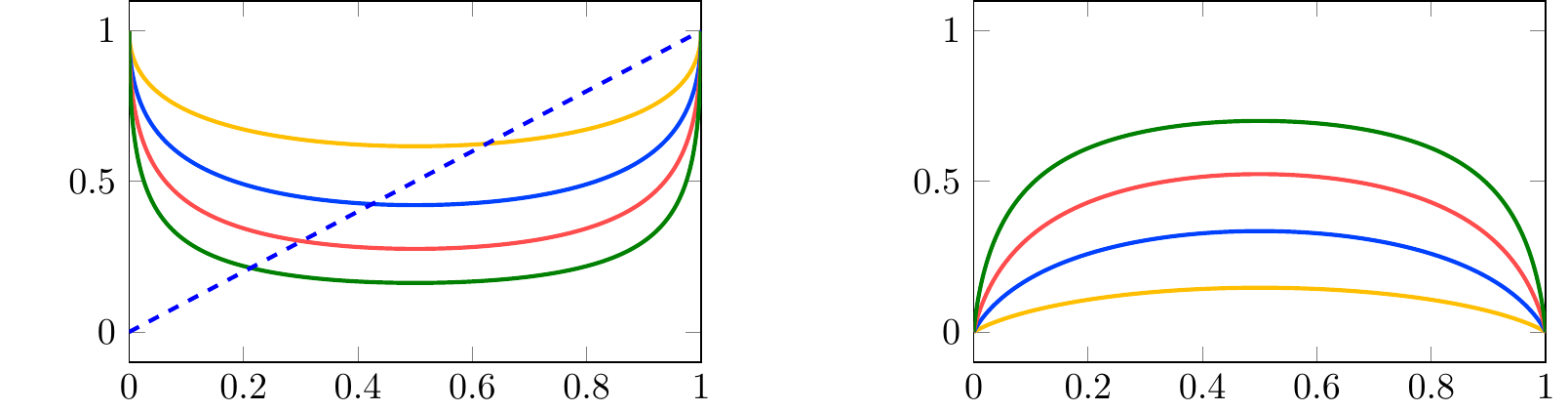}
\end{center}
	\caption{{Left: Graph of an optimal pair $(\alpha,\beta)$ from the { characterization of $\Gzero$ in \eqref{eqcar2},} for $f_1^a(s):=\ell(1-s)$, $\ell=1$, as in  \eqref{eqexamplefa}. The dashed line is $\alpha(t)=t$, the full curves are $\beta$ with (from top to bottom) $s=0.5$, $s=1$, $s=1.5$, $s=2$. Right: Graph of an optimal function $\gamma$ from the { characterization of $\Gzero$ in \eqref{eqcar3}}. From bottom to top, $s=0.5$, $s=1$, $s=1.5$, $s=2$.}}
	\label{figgamb1}
\end{figure}

The next theorem shows that $\Gzero$ enjoys the same properties as the function $\Gdue(\cdot,0)$ in Section~\ref{sect:setting}. The interpretation of $\Gzero$ as a function of type $\Gdue(\cdot,0)$, for a suitable function $\Gdue$, will be clear in Section~\ref{sect:barg}.

\begin{theorem}\label{thm:propg1}
	Let $\Gzero$ be defined by \eqref{defgnew} with a function $f$ satisfying properties
	\eqref{assf1}, \eqref{assf2}, \eqref{assf3}. Then $\Gzero$ satisfies the following properties:
	\begin{enumerate}
		\item \label{item1gbar2} $\Gzero$ is monotone nondecreasing;
		\item \label{item3gbar2} $\Gzero(s)\leq 1\wedge \ell s$
		 {and $\lim_{s\to\infty} \Gzero(s)=1$};
		\item \label{item5gbar2} there exists $\tilde{\ell}>0$ such that
		\begin{equation} \label{gnew1}
		\Gzero(s) = \ell s - \tilde{\ell}s^{5/3} + o(s^{5/3}) \qquad\text{as }s\to0^+;
		\end{equation}
		\item \label{item2gbar2} for every $s_1,s_2\geq0$,
		\begin{equation} \label{gnew2}
		\Gzero(s_1+s_2) \leq \Gzero(s_1) + \Gzero(s_2)\,;
		\end{equation}
		if in addition  $s_1,s_2>0$, the inequality is strict;
		\item \label{item4gbar2} $\Gzero$ is Lipschitz continuous with Lipschitz constant $\ell$.
	\end{enumerate}
\end{theorem}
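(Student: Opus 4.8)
The plan is to derive all five properties from the equivalent characterizations \eqref{eqcar1}--\eqref{eqcar3} of $\Gzero$, which are far more tractable than the original definition \eqref{defgnew}; I would establish them in the order (i), (ii), (iii), (iv), and obtain (v) at the end. Property (i) is immediate from \eqref{eqcar3}, since the integrand $\sqrt{s^2(\tif(\sqrt\gamma))^2+|\gamma'|^2/4}$ is nondecreasing in $s\ge0$ for every admissible $\gamma$. For $\Gzero(s)\le\ell s$ I test \eqref{eqcar3} with $\gamma\equiv0$, using $\tif(0)=\ell$ from \eqref{assf3}. For $\Gzero(s)\le1$ I test \eqref{eqcar3} with a profile rising from $0$ to the plateau value $1$ on $[0,\e]$, staying at $1$ on $[\e,1-\e]$ — where $\tif(\sqrt\gamma)=\tif(1)=f(0)=0$ by \eqref{assf1} — and dropping back on $[1-\e,1]$; bounding $\sqrt{a^2+b^2}\le a+b$ gives energy $\le 2\ell s\,\e+1\to1$. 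For $\lim_{s\to\infty}\Gzero(s)=1$, monotonicity gives a limit $L\le1$; for the reverse bound, fix $\eta\in(0,1)$: if a competitor $\gamma$ has energy $<1$, then $\int_0^1 s\,\tif(\sqrt\gamma)\,\de t<1$ together with the monotonicity of $\tif$ forces $|\{\gamma\le1-\eta\}|<(s\,\tif(\sqrt{1-\eta}))^{-1}$, so $\max\gamma>1-\eta$ for $s$ large, whence the energy is $\ge\int_0^1|\gamma'|/2\ge\max\gamma>1-\eta$; letting $\eta\to0$ gives $L\ge1$.

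The expansion (iii) is the technical core. Writing $\ell s-\Gzero(s)=\sup_\gamma\int_0^1\bigl(\ell s-\sqrt{s^2(\tif(\sqrt\gamma))^2+|\gamma'|^2/4}\bigr)\,\de t\ge0$, the natural scaling — read off from the structure of the problem — is $\gamma=s^{4/3}\eta$, so that $\sqrt\gamma=s^{2/3}\sqrt\eta$; inserting \eqref{assf3} in the form $\tif(\sigma)=\ell-\ell_1\sigma+o(\sigma)$ and Taylor expanding the square root one finds, for $\eta$ with bounded $H^1$ norm,
\begin{equation*}
\ell s-\sqrt{s^2\bigl(\tif(\sqrt{s^{4/3}\eta})\bigr)^2+\tfrac14 s^{8/3}|\eta'|^2}=s^{5/3}\Bigl(\ell_1\sqrt\eta-\tfrac{|\eta'|^2}{8\ell}\Bigr)+o(s^{5/3}).
\end{equation*}
This suggests $\lim_{s\to0^+}s^{-5/3}(\ell s-\Gzero(s))=\tilde\ell$ with
\begin{equation*}
\tilde\ell:=\sup\Bigl\{\textstyle\int_0^1\bigl(\ell_1\sqrt\eta-\tfrac{|\eta'|^2}{8\ell}\bigr)\,\de t:\eta\in H^1(0,1),\ \eta\ge0,\ \eta(0)=\eta(1)=0\Bigr\}\in(0,\infty),
\end{equation*}
the supremum being attained at a strictly concave bump solving $\eta''=-2\ell\ell_1/\sqrt\eta$ (finiteness from $\int_0^1\sqrt\eta\le(\int_0^1|\eta'|)^{1/2}$ and $\int_0^1|\eta'|^2\ge(\int_0^1|\eta'|)^2$, positivity by testing a small bump). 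The upper bound $\Gzero(s)\le\ell s-\tilde\ell s^{5/3}+o(s^{5/3})$ follows by plugging $\gamma=s^{4/3}\eta_*$ for a near-maximizer $\eta_*$ and passing to the limit. The matching lower bound is the delicate step: one must show that any near-optimal $\gamma$ necessarily has $\max\gamma\to0$ as $s\to0$ (otherwise the term $\int_0^1|\gamma'|/2$ alone makes $\ell s-\int_0^1\sqrt{\cdots}$ negative for small $s$), so that $\eta:=s^{-4/3}\gamma$ has controlled size, and then a $\Gamma$-$\liminf$-type estimate against the limit functional gives $s^{-5/3}(\ell s-\Gzero(s))\le\tilde\ell+o(1)$. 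Note that $5/3\in(1,2)$, consistent with assumption \ref{item5gbar1}.

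For (iv), subadditivity is proved by concatenation in \eqref{eqcar1}: after rescaling $a_i:=s_i\alpha_i$ so that $a_i$ runs from $0$ to $s_i$, one glues near-optimal profiles for $\Gzero(s_1)$ and $\Gzero(s_2)$ end to end — they match continuously because $\beta_i=1$ and $a_i$ is flat at the matched endpoints — obtaining an admissible competitor for $\Gzero(s_1+s_2)$ of energy $\Gzero(s_1)+\Gzero(s_2)+o(1)$. For strictness when $s_1,s_2>0$, I would show that the glued two-well profile is never optimal: merging the two damage wells into a single deeper well carrying the combined opening $s_1+s_2$ strictly saves the cost of letting $\beta$ climb back to $1$ in between. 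Equivalently, it suffices to prove that $s\mapsto\Gzero(s)/s$ is strictly decreasing on $(0,\infty)$ (since $g(0)=0$ and strict monotonicity of $g(s)/s$ imply strict subadditivity), using the strict concavity of $\Gzero$ induced by the convexity assumption \eqref{assf2} — which makes the integrand in \eqref{eqcar3} convex in $(\gamma,\gamma')$ and prevents $\Gzero$ from being linear through the origin on any interval — together with the strict superadditivity of $t\mapsto t^{5/3}$ from (iii) in the small-opening regime. Finally (v) is automatic: for $0\le s_1\le s_2$, $0\le\Gzero(s_2)-\Gzero(s_1)\le\Gzero(s_2-s_1)\le\ell(s_2-s_1)$ by (i), (iv) and $\Gzero(s)\le\ell s$.

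The two genuinely hard points are the lower bound in the expansion (iii) — which needs a careful compactness and lower-semicontinuity analysis of the rescaled functionals, both to control the $o(s^{5/3})$ error uniformly and to identify the constant $\tilde\ell$ — and the strict subadditivity in (iv); everything else is short once the characterizations \eqref{eqcar1}--\eqref{eqcar3} are in hand.
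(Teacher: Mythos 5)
Your overall architecture is the paper's: properties (i), (ii), (v) by elementary test functions and concatenation in \eqref{eqcar1}--\eqref{eqcar3}, and the expansion (iii) by the rescaling $\gamma=s^{4/3}\eta$ with exactly the limit functional and constant $\tilde\ell=\sup\int_0^1\bigl(\ell_1\sqrt\eta-\tfrac{|\eta'|^2}{8\ell}\bigr)\de t$ of Proposition~\ref{prop:gexpansion}. The genuine gap is the \emph{strict} part of (iv). Your ``merge the two damage wells'' step is only a heuristic: gluing near-minimizers already costs exactly the sum (each $\beta_i$ reaches $1$ at its own endpoints, so there is no matching cost to save), and strictness requires producing, from \emph{arbitrary} admissible competitors for $\Gzero(s_1)$ and $\Gzero(s_2)$, a competitor for $\Gzero(s_1+s_2)$ that is cheaper by a fixed amount --- nothing in your sketch quantifies such a gain. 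Your fallback, strict monotonicity of $s\mapsto\Gzero(s)/s$, is indeed equivalent and is what the paper uses, but you propose to deduce it from ``strict concavity of $\Gzero$'', which is neither proved in the paper nor a consequence of \eqref{assf2}: joint convexity of the integrand in $(\gamma,\gamma')$ makes the problem in $\gamma$ convex for \emph{fixed} $s$ (hence uniqueness of $\gamma_s$), while $\Gzero$ is an infimum over $\gamma$ of functions that are \emph{convex} in $s$, so no concavity in $s$ follows; and the $s^{5/3}$ expansion only excludes linearity near the origin. The paper's Proposition~\ref{prop:g3} proves $\Gzero(ts)>t\Gzero(s)$ for $t\in(0,1)$ directly by scaling: if $\Gzero(ts)=1$ the inequality is trivial; otherwise take an optimal pair $(\alpha_{ts},\beta_{ts})$ (existence from Proposition~\ref{prop:g2b}), rescale time by the factor $t$ to obtain a competitor for $\Gzero(s)$, and observe that the $|\beta'|^2$ term transforms with a strictly favorable factor, yielding the positive gap $\bigl(\tfrac1t-t\bigr)t\int_{\R}|\beta_{ts}'|^2\de t>0$. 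Replacing your concavity reasoning by this scaling argument (or by a quantified merged-profile argument in the spirit of Proposition~\ref{prop:gbar2}) is what is needed.

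A secondary caveat on (iii): the Taylor expansion you display ``for $\eta$ with bounded $H^1$ norm'' does not control the remainder uniformly, since the second-order error in $\sqrt{1+x}$ produces a term of the order $s^{4/3}\int_0^1|\eta'|^4\de t$, which $H^1$ bounds do not dominate. The paper resolves this by working with the actual minimizer $\gamma_s$ of \eqref{eqcar3} (which exists and is unique for small $s$) and first extracting the pointwise bounds $0\le\gamma_s\le\ell s$ and $|\gamma_s'|\le c\,s^{5/4}$ from the first integral \eqref{ELEgamma3} of its Euler--Lagrange equation; these constraints are then built into the rescaled functionals before the $\Gamma$-expansion. You correctly flagged the lower bound as the delicate step, but some device of this kind (pointwise gradient control or a truncation argument) must be supplied; it is not merely a matter of care in the compactness and lower semicontinuity analysis.
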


The function $\Gzero$ satisfies $\Gzero(s)\in[0,1]$ for all $s$. Many properties of $\Gzero$ are different if $\Gzero(s)<1$ or $\Gzero(s)=1$. Therefore we define
\begin{equation}\label{eqdefsfrac}
\sfrac:=\sup \{s: \Gzero(s)<1\}\in(0,\infty]. 
\end{equation}
{ The existence and the properties of optimal profiles in the minimum problem \eqref{defgnew} are discussed in the following theorem. In particular we show that, in the case $\Gzero(s)<1$, the infimum in \eqref{defgnew} is attained by an optimal pair $(\alpha_s,\beta_s)$, and that the minimum value $m_s$ of the optimal profile $\beta_s$ is uniquely determined by $s$, and is a monotone function of $s$.}

\begin{theorem}\label{thm:g1}
Let $\Gzero$ be defined by \eqref{defgnew} with a function $f$ satisfying properties
\eqref{assf1}, \eqref{assf2}, \eqref{assf3}, and let $\sfrac$ be defined as in \eqref{eqdefsfrac}. Then the following hold:
\begin{enumerate}
\item For all $s<s_{\mathrm{frac}}$ one has $\Gzero(s)<1$, and there exists an optimal pair $(\alpha_s,\beta_s)\in\mathcal{U}_1$ such that $\Gzero(s)=\gs(\alpha_s,\beta_s)$; the minimizer of \eqref{defgnew} is unique up to translations, in the sense that if $(\alpha_s,\beta_s)$ and $(\hat\alpha_s,\hat\beta_s)$ are minimizers then there are $a_1, t_1\in\R$ such that $\alpha_s(t)=a_1+\hat\alpha_s(t-t_1)$, $\beta_s(t)=\hat\beta_s(t-t_1)$.
\item For all $s\geq s_{\mathrm{frac}}$ one has $\Gzero(s)=1$, and $\Gzero(s)=\gs(0,\beta_s)$, where $\beta_s(t):=1-e^{-\frac{|t|}{2}}$.
\end{enumerate}
Moreover, the value of the minimum of the optimal profile $\beta_s$,
\begin{equation} \label{minbeta1}
m_s := \min_{t\in\R}\beta_s(t),
\end{equation}
is uniquely determined by $s$. The map $s\mapsto m_s$ is continuous, strictly decreasing in $[0,s_{\mathrm{frac}})$, with $m_0=1$ and $m_s=0$ for $s\geq s_{\mathrm{frac}}$.
\end{theorem}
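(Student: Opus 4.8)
The plan is to work throughout with the reparametrization–invariant characterizations \eqref{eqcar2}--\eqref{eqcar3} of $\Gzero$ rather than with the definition \eqref{defgnew}. Setting $\sigma=1-\beta$ and $\eta=\sigma^2/2\in[0,\tfrac12]$, the integrand in \eqref{eqcar2} becomes $\sqrt{s^2\phi(\eta)^2|\alpha'|^2+|\eta'|^2}$ with $\phi(\eta):=f_1(\sqrt{2\eta})$, so that (after also reparametrizing by $\alpha$) $\Gzero(s)$ equals the geodesic distance between the corners $(0,0)$ and $(1,0)$ of $[0,1]\times[0,\tfrac12]$ in the metric $g_s=(s\phi(\eta))^2\,d\alpha^2+d\eta^2$. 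By \eqref{assf1} and \eqref{assf3}, $\phi$ is continuous, strictly decreasing, with $\phi(0)=\ell>0$ and $\phi(\tfrac12)=f_1(1)=0$, so $g_s$ is Riemannian away from the edge $\{\eta=\tfrac12\}=\{\beta=0\}$ and degenerates only there; moreover \eqref{assf2} makes $\phi$ convex, hence $|\phi'|$ nonincreasing — this last fact is the engine of the monotonicity statements. The regime $s\ge\sfrac$ is immediate: $\Gzero(\sfrac)=1$, hence $\Gzero(s)=1$ for $s\ge\sfrac$, follows from \eqref{eqdefsfrac} together with the monotonicity (Theorem~\ref{thm:propg1}\ref{item1gbar2}) and Lipschitz continuity (Theorem~\ref{thm:propg1}\ref{item4gbar2}) of $\Gzero$, and a direct computation gives $\gs(0,1-e^{-|t|/2})=\int_{\R}\bigl(\tfrac14 e^{-|t|}+\tfrac14 e^{-|t|}\bigr)\de t=1$, which proves the formula in (ii), with $\beta_s:=1-e^{-|t|/2}$ (note that $(0,\beta_s)\notin\mathcal U_1$, so for $s\ge\sfrac$ the infimum \eqref{defgnew} is not attained).

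For $s<\sfrac$ one has $\Gzero(s)<1$ at once from \eqref{eqdefsfrac} and monotonicity. To produce an optimal profile I would use \eqref{eqcar3}: since $\mathcal J_s(\gamma)\ge\tfrac12\,\mathrm{Var}_{[0,1]}\gamma\ge\max\gamma$, any minimizing sequence $\gamma_n\in W^{1,1}_0([0,1],[0,1])$ eventually satisfies $\max\gamma_n<1$; reparametrizing each $\gamma_n$ to constant integrand makes the $\gamma_n$ uniformly Lipschitz, and a standard compactness–lower semicontinuity argument (the integrand being continuous in $\gamma$ and convex in $\gamma'$) yields a minimizer $\gamma$, with $\max\gamma\le\Gzero(s)<1$ and $\gamma\not\equiv0$ (the trivial path costs $\ell s$, and $\Gzero(s)<\ell s$ by \eqref{gnew1} for small $s$ and subadditivity \eqref{gnew2} for all $s>0$). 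Translating back produces $(\alpha_s,\beta_s)\in\mathcal U_1$ realizing $\Gzero(s)$ with $m_s=\min\beta_s=1-\sqrt{\max\gamma}>0$. On $\{\beta>0\}$ the minimizer solves the Euler--Lagrange system, and since the integrand does not depend on $\alpha$ it has a Clairaut first integral $\phi(\eta)^2\dot\alpha=\mathrm{const}$ along unit–speed geodesics; this forces $\beta_s$ to have a single interior minimum and $\alpha_s$ to be strictly monotone, so every candidate minimizer is determined, up to translation in $t$ and the trivial symmetries $\alpha\mapsto\pm\alpha+\mathrm{const}$ and $t\mapsto-t$, by the turning value, equivalently by $\eta^*:=(1-m)^2/2\in(0,\tfrac12)$.

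Imposing $|\int\alpha'|=1$ along this family and substituting $\phi(\eta)=\phi(\eta^*)/\cos\psi$ to absorb the inverse–square–root singularity at the turning point, the load is expressed as $s=S(\eta^*):=2\int_0^{\psi_0(\eta^*)}|\phi'(\eta(\psi))|^{-1}\de\psi$ with $\psi_0(\eta^*)=\arccos(\phi(\eta^*)/\ell)$, while the energy along the family becomes an explicit $L(\eta^*)$. Because $\phi$ is convex and decreasing, as $\eta^*$ increases both the endpoint $\psi_0(\eta^*)$ and the integrand $|\phi'(\eta(\psi))|^{-1}$ increase, so $S$ is continuous and strictly increasing, with $S(\eta^*)\to0$ as $\eta^*\to0^+$ and $S(\eta^*)\to\sfrac\in(0,\infty]$ as $\eta^*\to\tfrac12^-$; inverting $S$, for every $s<\sfrac$ the turning value $\eta^*(s)$, and with it the whole optimal profile, is uniquely determined — this gives the uniqueness assertion in (i) and makes $m_s=1-\sqrt{2\,\eta^*(s)}$ well defined and, since $S^{-1}$ is continuous and strictly increasing, continuous and strictly decreasing on $[0,\sfrac)$. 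Finally $m_0=1$, because at $s=0$ the functional \eqref{defG} reduces to the $\beta$-part, minimized by $\beta\equiv1$; $m_s=0$ for $s\ge\sfrac$ by the choice of $\beta_s$ there; and $m_s\to0$ as $s\uparrow\sfrac$ since $\eta^*(s)\to\tfrac12$, so $s\mapsto m_s$ is continuous on all of $[0,\infty)$.

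The hard part will be the monotonicity and continuity of $S$ — essentially the statement that a larger load forces the optimal damage profile to dip strictly deeper — since $S(\eta^*)$ is a singular integral whose dependence on its own endpoint $\eta^*$ must be controlled, and since one must separately rule out that the minimizer degenerates to the ``slide along $\{\beta=0\}$'' competitor of length $1$ when $s<\sfrac$; it is precisely here that the technical hypotheses \eqref{assf2}--\eqref{assf3} (monotonicity of $f_1$, convexity of $f_1(\sqrt{\,\cdot\,})$, and the expansion $f_1(s)=\ell-\ell_1 s+o(s)$) are used. A secondary technical point is the careful passage between the bounded–interval problems \eqref{eqcar2}--\eqref{eqcar3} and the original problem \eqref{defgnew} on $\R$, making sure in particular that the reconstructed $\beta_s$ lies in $H^1_{\loc}(\R)$.
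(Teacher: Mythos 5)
Your route is essentially correct in strategy but genuinely different from the paper's. The paper proves existence (for $s<\sfrac$) by minimizing out $\alpha$ explicitly, which reduces \eqref{defgnew} to a functional of $\beta$ alone, and then uses weak $H^1$ compactness of $1-\beta$ together with the pointwise bound $\beta\geq 1-\sqrt{\Gzero(s)}$; uniqueness up to translation comes from convexity of the functional in the variable $\gamma=(1-\beta)^2\circ\alpha^{-1}$ of \eqref{eqcar3} plus the equipartition identity, and strict monotonicity and continuity of $m_s$ come from the first integral of the Euler--Lagrange equation and an ODE comparison of two profiles. You instead get compactness from constant-speed curves in the degenerate metric, and you obtain uniqueness and strict monotonicity of $m_s$ simultaneously from the period/width map $S(\eta^*)$: I checked the $\psi$-substitution and the formula $s=S(\eta^*)=2\int_0^{\psi_0}|\phi'(\eta(\psi))|^{-1}\de\psi$, and the monotonicity mechanism (convexity of $\phi(\eta)=\tif(\sqrt{2\eta})$, i.e.\ hypothesis \eqref{assf2}, makes $|\phi'|^{-1}$ nondecreasing while $\psi_0$ strictly increases) is correct; this is an attractive way to make \eqref{assf2} do the work that the paper's comparison argument does.

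There are, however, concrete points you must patch. First, \eqref{eqcar3} is \emph{not} reparametrization invariant (the parametrization is already fixed by $\alpha$), so ``reparametrize $\gamma_n$ to constant integrand'' is meaningless there; the compactness step has to be run in your invariant curve formulation coming from \eqref{eqcar2}, where the a priori bound $\max\eta_n\leq\tfrac12(\Gzero(s)+o(1))<\tfrac12$ keeps the metric nondegenerate and constant-speed parametrization gives uniform Lipschitz bounds. Second, and more substantively, the identity $s=S(\eta^*)$ --- and hence the whole injectivity-of-$S$ argument for uniqueness and for monotonicity of $m_s$ --- presupposes that every minimizer is a \emph{single} arch with $\gamma>0$ on all of $(0,1)$: you must exclude stretches where $\beta=1$ while $\alpha$ advances, and configurations with several arches. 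This is the degeneracy along the edge $\beta=1$, not the edge $\beta=0$ you flag; it is needed already for your equipartition reconstruction of an $\R$-minimizer, and it is exactly where the strict inequalities you have available must be invoked: a flat stretch carrying displacement $\delta$ costs $s\ell\delta>\Gzero(s\delta)$, and two arches cost at least $\Gzero(sa)+\Gzero(s(1-a))>\Gzero(s)$ by strict subadditivity \eqref{gnew2}; one also needs enough regularity (the paper first proves $\beta_s\geq1-\sqrt{\Gzero(s)}$ and $C^1$ smoothness) to write the first integral at all. Third, do not obtain surjectivity of $S$ onto $(0,\sfrac)$ from the unproven claim $\lim_{\eta^*\to\frac12^-}S(\eta^*)=\sfrac$; it follows instead from existence plus your classification (each $s<\sfrac$ equals $S(\eta^*)$ for the turning value of its own minimizer). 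Relatedly, the assertion $m_s\to0$ as $s\uparrow\sfrac$ is neither needed for the theorem (continuity is claimed only on $[0,\sfrac)$) nor justified by your argument.
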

{ Fig.~\ref{figgamb2} shows the behavior of an optimal pair $(\alpha,\beta)$ in the sense of Theorem~\ref{thm:g1}(i).} 

\begin{figure}
	\begin{center}
	\includegraphics[width=\linewidth]{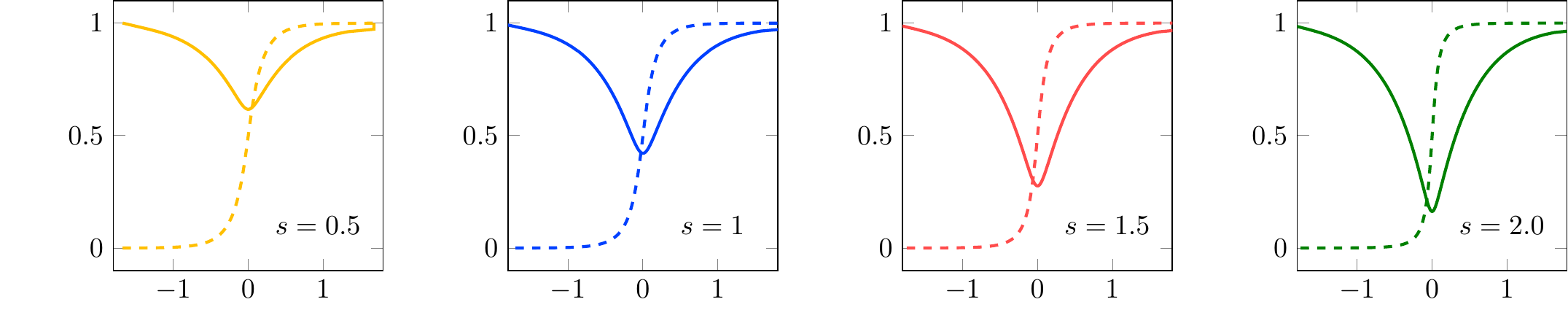}
	\end{center}
	\caption{{Graph of an optimal pair $(\alpha,\beta)$ from the {definition of $\Gzero$ in \eqref{defgnew}} with $f_1^a(s):=\ell(1-s)$, $\ell=1$, as in \eqref{eqexamplefa}, for $s=0.5$, $s=1$, $s=1.5$ and $s=2$ (from left to right). In each graph $\alpha$ is dashed, $\beta$ is the full curve.}}
	\label{figgamb2}
\end{figure}

In the following proposition we discuss under which conditions $\sfrac<\infty$.

\begin{proposition}\label{propsfracinfty2}
	Let $\slopefuno:=\liminf_{t\uparrow1} \frac{f_1(t)}{1-t^2}$, where $f_1$ is the function defined in \eqref{assf2}. If $\slopefuno=0$ then $\Gzero(s)<1$ for all $s$, so that $\sfrac=\infty$. If $\slopefuno>0$ and $s\slopefuno \ge 6$ then $\Gzero(s)=1$, so that  $\sfrac\le 6/\slopefuno$.
\end{proposition}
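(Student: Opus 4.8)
The plan is to argue entirely through the characterization \eqref{eqcar3} of $\Gzero$, together with the a priori bound $\Gzero\le 1$ of Theorem~\ref{thm:propg1}\ref{item3gbar2}. The only auxiliary fact I need for the upper bound comes from testing \eqref{eqcar3} with the trapezoidal competitor $\gamma_\delta$ that equals a fixed $M\in[0,1]$ on $[\delta,1-\delta]$ and is affine on $[0,\delta]$ and on $[1-\delta,1]$: using $\sqrt{a^2+b^2}\le a+b$ and $f_1\le\ell$ on the two affine pieces and letting $\delta\to0$ one obtains
\begin{equation*}
\Gzero(s)\ \le\ M+s\,f_1(\sqrt M)\qquad\text{for every }M\in[0,1].
\end{equation*}

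\noindent\textbf{The case $\slopefuno=0$.} I would pick a sequence $t_j\uparrow1$ with $f_1(t_j)/(1-t_j^2)\to0$ and insert $M=t_j^2$ in the estimate above, which gives $\Gzero(s)\le 1-(1-t_j^2)\bigl(1-s\,f_1(t_j)/(1-t_j^2)\bigr)$; for $j$ large the bracket is positive, hence $\Gzero(s)<1$. Since $s>0$ is arbitrary and $\Gzero(0)=0$, this yields $\sfrac=\infty$ by \eqref{eqdefsfrac}.

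\noindent\textbf{The case $\slopefuno>0$: the lower bound.} First I would upgrade the limit at $t=1$ to a global inequality. By \eqref{assf2} the function $\phi(x):=f_1(\sqrt x)$ is convex and strictly decreasing on $[0,1]$ with $\phi(1)=0$, so $\slopefuno=\lim_{t\uparrow1}f_1(t)/(1-t^2)=-\phi'(1^-)$ is a finite nonnegative number, and convexity gives $\phi'\le\phi'(1^-)=-\slopefuno$ on $[0,1)$, whence $\phi(x)\ge\slopefuno(1-x)$ for all $x\in[0,1]$; that is, $f_1(\sqrt\gamma)\ge\slopefuno(1-\gamma)$ for all $\gamma\in[0,1]$. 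Writing $k:=s\slopefuno$ and $\rho:=1-\gamma$, \eqref{eqcar3} then gives
\begin{equation*}
\Gzero(s)\ \ge\ \inf\Bigl\{\,\int_0^1\sqrt{\,k^2\rho^2+\tfrac14|\rho'|^2\,}\,\de t\ :\ \rho\in W^{1,1}([0,1];[0,1]),\ \rho(0)=\rho(1)=1\,\Bigr\}.
\end{equation*}
The key step is to read this integrand geometrically: the plane curve $P(t):=\tfrac12\rho(t)\bigl(\cos 2kt,\sin 2kt\bigr)$, $t\in[0,1]$, satisfies $|\dot P|^2=\tfrac14|\rho'|^2+k^2\rho^2$, so the right‑hand side above is the shortest Euclidean length of a curve $P$ of this form; moreover $P(0)=(\tfrac12,0)$, $|P(1)|=\tfrac12$, and the angular coordinate of $P$ sweeps the interval $[0,2k]$. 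If $k\ge\pi/2$ — in particular if $s\slopefuno\ge6$ — set $\tau:=\pi/(2k)\in(0,1]$, so that $P(\tau)=\bigl(-\tfrac12\rho(\tau),0\bigr)$. Then
\begin{equation*}
\mathrm{length}\bigl(P|_{[0,\tau]}\bigr)\ \ge\ |P(0)-P(\tau)|\ =\ \tfrac12\bigl(1+\rho(\tau)\bigr),\qquad
\mathrm{length}\bigl(P|_{[\tau,1]}\bigr)\ \ge\ \tfrac12\bigl(1-\rho(\tau)\bigr),
\end{equation*}
the second inequality because the sub‑curve joins points at distances $\tfrac12\rho(\tau)$ and $\tfrac12$ from the origin and $\rho(\tau)\le1$; adding the two bounds gives $\mathrm{length}(P)\ge1$. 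Hence $\Gzero(s)\ge1$, so $\Gzero(s)=1$ by Theorem~\ref{thm:propg1}\ref{item3gbar2}, and by \eqref{eqdefsfrac} we conclude $\sfrac\le\pi/(2\slopefuno)\le 6/\slopefuno$, which is even slightly stronger than the assertion.

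\noindent\textbf{Main obstacle.} The delicate point is the lower bound in the case $\slopefuno>0$. The functional in \eqref{eqcar3} is the length of a \emph{loop} (the admissible $\gamma$ returns to its initial value), so separating ``potential‑dominated'' from ``slope‑dominated'' regions, or any calibration/total‑derivative argument applied directly, only produces $\Gzero(s)\ge s\slopefuno/(1+s\slopefuno)$, which stays strictly below $1$ for every $s$. What unlocks the genuine bound $\Gzero(s)\ge1$ is the reinterpretation, after the substitution $\rho=1-\gamma$, of the integrand as the arc‑length element of a plane curve whose angular coordinate is \emph{forced} to increase by $2k$: once $2k\ge\pi$ the curve must wrap at least halfway around the origin, and escaping to (near) the origin and returning already costs Euclidean length $\ge1$. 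A secondary technical point is that the comparison $f_1(\sqrt\gamma)\ge\slopefuno(1-\gamma)$ is needed on all of $[0,1]$, not merely near $\gamma=1$, and this is exactly where the convexity hypothesis in \eqref{assf2} enters.
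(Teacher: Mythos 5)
Your proof is correct, and for the main part ($\slopefuno>0$) it takes a genuinely different route from the paper's. Both arguments begin with the same convexity consequence of \eqref{assf2}, namely $\tif(\sqrt{\gamma})\geq\slopefuno(1-\gamma)$ on $[0,1]$ (in the paper, $\tif(1-b)\geq\slopefuno(2b-b^2)$), and your treatment of the case $\slopefuno=0$ is essentially the paper's plateau competitor, transported from \eqref{eqcar2} to \eqref{eqcar3}. For the lower bound, however, the paper stays with the $(\alpha,\beta)$ formulation \eqref{eqcar2} and runs a dyadic decomposition of $(0,1)$ according to the levels $\beta_j=\min\{2^j\min\beta,1\}$, estimating each piece via Jensen's inequality and the elementary bound $\sqrt{1+x^2}\geq1+\tfrac18x^2$ on $[0,6]$, and closing with H\"older; this is where the constant $6$ comes from. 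You instead pass to the scalar characterization \eqref{eqcar3}, substitute $\rho=1-\gamma$, and recognize $\sqrt{k^2\rho^2+\tfrac14|\rho'|^2}$, $k=s\slopefuno$, as the arc-length element of the plane curve $\tfrac12\rho(t)(\cos 2kt,\sin 2kt)$; the two chord estimates (out to the antipodal axis point reached at $t=\pi/(2k)$, then back to radius $\tfrac12$) give length $\geq1$ as soon as $k\geq\pi/2$. This is shorter, conceptually cleaner, and yields the sharper threshold $\sfrac\leq\pi/(2\slopefuno)$, which is consistent with the paper's numerics (for $f^a$ in \eqref{eqexamplefa} with $\ell=1$ one has $\slopefuno=\tfrac12$, $\pi/(2\slopefuno)=\pi$, and the observed $\sfrac\sim3$); the paper's level-set argument, by contrast, does not rely on the one-dimensional reparametrization to \eqref{eqcar3} and is in that sense more robust. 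Only a cosmetic remark: the existence and finiteness of the limit defining $\slopefuno$ and the inequality $\phi(x)\geq\slopefuno(1-x)$ for $\phi=\tif(\sqrt{\cdot})$ are best stated via the monotonicity of the difference quotients $\frac{\phi(1)-\phi(x)}{1-x}$ of the convex function $\phi$ (rather than via $\phi'$, which a priori exists only one-sidedly); this is exactly the estimate you use and does not affect the argument.
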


We observe that the function $f^a$ defined in \eqref{eqexamplefa} has $\slopefuno=\frac12\ell$ and therefore $\sfrac\in(0,12/\ell]$, whereas the function 
$f^b$ defined in \eqref{eqexamplefb} has $\slopefuno=0$ and therefore $\sfrac=\infty$. See also Figure \ref{figg1}.

%%%%%%%%%%%%%%%%%%%%%%%%%%%%%%%%%%%%%%%%%%%%%%%%%%%%%%%%%%%%%%%%%%

\subsection{Phase-field approximation and blow-up around jump points} \label{subsect:blowup}
{ We now clarify in which sense the function $\Gzero$, defined in \eqref{defgnew}, is related to a phase-field approximation}.
Following \cite{CFI}, we introduce a family of functionals $\fe:L^1(0,1)\times L^1(0,1)\to[0,\infty]$, depending on a real parameter $\e>0$, by setting
\begin{equation} \label{defFe}
\fe(u,v):=
\begin{cases}
\displaystyle\int_0^1 \Bigl( f_\e^2(v) |u'|^2 + \frac{(1-v)^2}{4\e} + \e|v'|^2 \Bigr) \de x
&
\begin{array}{l}
\text{if } u,v\in H^1(0,1), \\ 
0\leq v\leq 1 \text{ } \Lu\text{-a.e. in }(0,1),
\end{array}\\
\infty
&\text{ otherwise.}
\end{cases}
\end{equation}
Here the function $f_\e:[0,1]\to\R$ is defined as 
\begin{equation} \label{deffe}
f_\e(s) := 1 \wedge \e^{1/2}f(s), \qquad f_\e(1)=1,
\end{equation}
where $f$ is a fixed function satisfying the assumptions \eqref{assf1}--\eqref{assf3}.

We also introduce the limit functional $\f: L^1(0,1)\times L^1(0,1)\to[0,\infty]$:
\begin{equation} \label{defF}
\f(u,v):=
\begin{cases}
\displaystyle \int_0^1 h(|u'|)\de x + \int_{J_u}\Gzero(|[u]|) \de\hz + \ell|D^cu|(0,1)
&
\begin{array}{l}
\text{if } u\in\BV(0,1), \\ 
v= 1 \text{ } \Lu\text{-a.e. in }(0,1),
\end{array}\\
\infty &\text{ otherwise}.
\end{cases}
\end{equation}
The density $h:\R\to[0,\infty)$ is defined as in \eqref{defh1},
\begin{equation} \label{defh}
h(\xi):=
{\begin{cases}
\xi^2
& \text{if } 0\leq |\xi| \leq \frac{\ell}{2},\\
\ell |\xi| - \frac{\ell^2}{4}
& \text{if  } |\xi| > \frac{\ell}{2}
\end{cases}}
\end{equation}
and $\Gzero:[0,\infty)\to[0,\infty)$ is the density defined in \eqref{defgnew}.

 We will also consider the localized versions of the functionals above, by writing $\fe(u,v;A)$ and $\f(u,v;A)$ when the domain $(0,1)$ is replaced by an open set $A\subset(0,1)$. The following $\Gamma$-convergence result is proved in \cite{CFI}.

\begin{theorem}[Conti-Focardi-Iurlano \cite{CFI}] \label{thm:cfi}
	Let $f$ obey \eqref{assf1} and \eqref{assf2}. The functionals $\fe$ defined in \eqref{defFe} $\Gamma$-converge as $\e\to0^+$ to the functional $\f$ defined in \eqref{defF} in $L^1(0,1)\times L^1(0,1)$.
	Moreover, if $(u_\e,v_\e)$ satisfies the uniform bound
	\begin{equation} \label{compactness}
	\sup_\e \Bigl( \fe(u_\e,v_\e) + \|u_\e\|_{L^1(0,1)} \Bigr) < \infty,
	\end{equation}
	then there exists a subsequence $(u_{\e_k},v_{\e_k})_k$ and a function $u\in\BV(0,1)$ such that $u_{\e_k}\to u$ almost everywhere in $(0,1)$ and $v_{\e_k}\to1$ in $L^1(0,1)$.
\end{theorem}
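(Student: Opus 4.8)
The plan is to establish the $\Gamma$-convergence and the compactness statement separately, by the by now classical scheme for Ambrosio--Tortorelli type approximations adapted to the cohesive scaling $f_\e=1\wedge\e^{1/2}f$, making crucial use of the information about the one-dimensional cell problem \eqref{defgnew} collected in Theorem~\ref{thm:propg1} and Theorem~\ref{thm:g1}. Concerning compactness: assuming \eqref{compactness}, the bound $\int_0^1(1-v_\e)^2\,\de x\le 4\e\,C$ forces $v_\e\to1$ in $L^2(0,1)$, hence in $L^1(0,1)$ and, along a subsequence, $\Lu$-a.e. For $u_\e$ I would first truncate ($u_\e^M:=(-M)\vee(u_\e\wedge M)$, which does not increase the energy), then exploit the Modica--Mortola structure of the $v$-part via $\int_0^1(1-v_\e)|v_\e'|\,\de x\le\frac12\int_0^1(\e|v_\e'|^2+\e^{-1}(1-v_\e)^2)\,\de x\le C$: this controls the number of ``deep'' oscillations of $v_\e$ uniformly in $\e$ and bounds the measure of $\{1-v_\e>\theta^{-1}\e^{1/2}\}$ by a quantity small with $\theta$, on whose complement $f_\e(v_\e)\ge\theta$ and hence $\int|u_\e'|$ is controlled; a slicing argument as in \cite{CFI} then gives, up to a further subsequence, $u_{\e_k}\to u$ $\Lu$-a.e. in $(0,1)$ for some $u\in\BV(0,1)$. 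This is the most technical point of the compactness.

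For the $\Gamma$-$\liminf$ inequality, let $(u_\e,v_\e)\to(u,1)$ in $L^1\times L^1$ with $\liminf_\e\fe(u_\e,v_\e)=:L<\infty$; by the compactness step $u\in\BV(0,1)$, and up to a subsequence the positive measures $\mu_\e:=\bigl(f_\e^2(v_\e)|u_\e'|^2+\tfrac{(1-v_\e)^2}{4\e}+\e|v_\e'|^2\bigr)\Lu\mres(0,1)$ converge weakly$*$ to a measure $\mu$ with $\mu((0,1))\le L$. I would then bound the Radon--Nikod\'ym densities of $\mu$ by a blow-up analysis: at $\Lu$-a.e.\ $x_0$ one shows $\frac{d\mu}{d\Lu}(x_0)\ge h(|u'(x_0)|)$, the density $h$ of \eqref{defh} being exactly the value of the relaxed mesoscopic cell problem in which $v_\e$ may either stay close to $1$ (quadratic branch $|\xi|^2$) or perform a microscopic dip (affine branch $\ell|\xi|-\tfrac{\ell^2}{4}$); $|D^cu|$-a.e.\ one shows $\frac{d\mu}{d|D^cu|}\ge\ell$, since near a Cantor point only the affine branch is active; and at each $x_0\in J_u$, rescaling at scale $\e$ and comparing with the cell formula \eqref{eqcar1} gives $\mu(\{x_0\})\ge\Gzero(|[u](x_0)|)$. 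Adding these yields $\mu\ge h(|u'|)\Lu+\ell|D^cu|+\sum_{x\in J_u}\Gzero(|[u](x)|)\delta_x$, hence $L\ge\f(u,1)$. The hard part here is precisely the mesoscopic blow-up that resolves the competition between $f_\e^2(v_\e)|u_\e'|^2$ and the two $v$-terms, so as to produce the convex density $h$, and the matching $\mu(\{x_0\})=\Gzero(|[u](x_0)|)$ with \eqref{eqcar1}: it is this interplay, produced by the $\e^{1/2}$-truncation in $f_\e$, between the ``elastic'' regime ($v\approx1$) and the ``crack'' regime ($v$ dipping towards $0$) that makes the cohesive density $\Gzero$ appear instead of the brittle Ambrosio--Tortorelli constant.

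For the $\Gamma$-$\limsup$ inequality, given $u\in\BV(0,1)$ I would construct $(u_\e,v_\e)\to(u,1)$ in $L^1$ with $\limsup_\e\fe(u_\e,v_\e)\le\f(u,1)$. Since $\f$ is $L^1$-lower semicontinuous (it is the $\Gamma$-$\liminf$) and $\Gzero(s)\le\ell s$ with $\Gzero(s)/(\ell s)\to1$ as $s\to0$ (Theorem~\ref{thm:propg1}), a diagonal argument reduces the problem to $u\in\SBV(0,1)$ piecewise affine with finitely many jumps $x_1,\dots,x_k$, $|[u](x_i)|=s_i$, the Cantor part being approximated by many small jumps each costing $\Gzero(s)=\ell s+o(s)$. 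Away from the jumps take $v_\e\equiv1$ where $|u'|\le\ell/2$ (so $\int|u'|^2=\int h(|u'|)$) and insert a periodic $v_\e$-microstructure realizing the affine branch of $h$ where $|u'|>\ell/2$; near each $x_i$ paste the rescaled optimal profiles of Theorem~\ref{thm:g1}: for $s_i<\sfrac$, $v_\e(x)=\beta_{s_i}((x-x_i)/\e)$ with $u_\e$ built from $\alpha_{s_i}$, cut off at scale $\e R$ and matched to $v_\e=1$ outside (then $R\to\infty$), contributing $\Gzero(s_i)+o(1)$; for $s_i\ge\sfrac$, $v_\e(x)=1-e^{-|x-x_i|/(2\e)}$ together with a steep ramp of $u_\e$ of height $s_i$ supported on a sub-$\e$ interval where $v_\e\approx0=f_\e^{-1}(0)$, again contributing $1+o(1)=\Gzero(s_i)+o(1)$. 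Gluing these constructions and diagonalizing completes the proof.
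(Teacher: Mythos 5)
You should first note that the paper does not prove this statement at all: Theorem~\ref{thm:cfi} is quoted verbatim from \cite{CFI}, so there is no internal proof to compare against; the closest material in the paper is the constrained analogue, Theorem~\ref{thm:gconv}, proved via Propositions~\ref{prop:liminf} and \ref{prop:limsup}. Measured against that, your outline is sound but takes a partly different route. Your compactness step and the reduction of the lower bound to density estimates are the standard scheme; however, for the liminf the paper (and \cite{CFI}, whose Proposition~5.1 it follows) does not use the Fonseca--M\"uller blow-up of the energy measures. Instead it argues directly in one dimension: it isolates finitely many ``bad'' points, splits a neighbourhood of each into the connected components where $v_\e$ dips below suitable thresholds, estimates each component by the cell formula (via the truncated densities of Lemma~\ref{lem:gbarmu1} to handle the fact that $\beta$ only reaches values close to $1$ at the endpoints), and concludes by subadditivity; away from these points it uses the pointwise inequality leading to $h$ as in \eqref{liminf2}. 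Your blow-up strategy can be made to work, but the step you yourself flag as hard (admissibility of the rescaled profiles in \eqref{eqcar1}, i.e.\ the boundary behaviour of $\beta$ at $\pm T$) is exactly what the threshold/component decomposition is designed to bypass. For the limsup, the paper's route is lighter than yours: one only constructs recovery sequences for $u\in\SBV$ with $u'\in L^2$ and finitely many jumps, and then invokes the relaxation Theorem~\ref{thm:relaxation} and Remark~\ref{rmk:relaxation} (lower semicontinuity of the $\Gamma$-limsup) to reach general $\BV$ functions; this replaces your explicit periodic microstructure for the affine branch of $h$ and your hand-made approximation of the Cantor part, for which in any case only the inequality $\Gzero(s)\le\ell s$ is needed, not the asymptotic sharpness \eqref{gnew1}.

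Two points in your construction deserve care. First, for a jump with $s_i\ge\sfrac$ the profile $v_\e(x)=1-e^{-|x-x_i|/(2\e)}$ never vanishes, and on a sub-interval of length $\eta\e$ one only has $v_\e\lesssim\eta$, so the gradient term of the ramp is of order $f^2(\eta/2)\,s_i^2/\eta$, which need not vanish as $\eta\to0$ under \eqref{assf1}--\eqref{assf2} alone (these control $f$ near $1$, not its vanishing rate at $0$). The fix is easy but should be said: flatten $v_\e$ to the exact value $0$ on the sub-interval carrying the ramp, so that $f_\e(v_\e)=0$ there; the extra cost in the $(1-v)^2/(4\e)$ term is $O(\eta)$, and the total is $1+o(1)=\Gzero(s_i)+o(1)$. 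Second, in the compactness step the control of $\int|u_\e'|$ only on the complement of the small set $\{1-v_\e>\theta^{-1}\e^{1/2}\}$ does not by itself give a $\BV$ bound; the passage from this to almost everywhere convergence towards a $\BV$ limit (rather than merely $\GBV$) is precisely the nontrivial slicing/truncation argument you delegate to \cite{CFI}, so it should be acknowledged as borrowed rather than established.
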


{ We next discuss the behavior of a recovery sequence of $\fe$ near to a limit jump point. In such points the material, originally pristine, develops a fracture. This investigation will motivate the introduction, in Section~\ref{sect:barg}, of an irreversibility constraint on the functionals $\fe$, which amounts to consider the case when a fracture develops in a material that is not pristine, but already pre-fractured in some points.} 

We show that the blow-ups of a recovery sequence $(u_\e,v_\e)$ of $\fe$ around a jump point $\bar{x}$ of the limit $u$ converge to an optimal profile for $\jump$, that is, to an optimal pair for the problem \eqref{defgnew}. Motivated by Proposition~\ref{prop:nocantor}, we will restrict to the case $u\in \SBV(0,1)$, with $|u'|\leq \frac\ell2$, and for simplicity we will further assume $\mathcal{H}^0(J_u)<\infty$. { The proof of the theorem is given in {Section}~\ref{subsect:proofblowup}.}

\begin{theorem}[Behaviour of recovery sequences] \label{thm:blowup}
	Let $f$ obey \eqref{assf1}--\eqref{assf3} and let $\fe$ be defined as in \eqref{defFe}--\eqref{deffe}.
	Let $u\in\SBV(0,1)$ have a finite number of jumps and satisfy $|u'|\leq\frac{\ell}{2}$ almost everywhere in $(0,1)$. Let $(u_\e,v_\e)$ be a recovery sequence for $\fe$ corresponding to $u$, that is, $(u_\e,v_\e)\in H^1(0,1)\times H^1(0,1)$, $0\leq v_\e \leq 1$ almost everywhere, and
	\begin{equation} \label{bua}
	u_\e\to u \text{ in }L^1(0,1),\qquad
	v_\e\to 1 \text{ in }L^1(0,1),\qquad
	\fe(u_\e,v_\e) \to \f(u,1)
	\end{equation}
	as $\e\to0^+$. Suppose in addition that $\sup_\e\|u_\e\|_\infty<\infty$. Then there exists a subsequence $\e_k\to0$ with the following properties:
	\begin{enumerate}
		\item\label{item1bu} For every jump point $\bar{x}\in J_u$ there exist $x_{k}\to \bar{x}$ such that
		\begin{equation} \label{bub}
		w_{k}(x) := v_{\e_k}(x_{k}+\e_k x) \to \beta_{\bar{s}}(x) \qquad\text{ strongly in }H^1_{\loc}(\R),
		\end{equation}
		where $\bar{s}:=|[u](\bar{x})|$ and $\beta_{\bar{s}}$ is an optimal profile for 
		{the definition of $\Gzero(\bar{s})$ in \eqref{defgnew}}, in the sense of Theorem~\ref{thm:g1}.
		Moreover, in the case $\jump<1$ we also have that
		\begin{equation} \label{buc}
		z_{k}(x) := u_{\e_k}(x_{k}+\e_k x) \to 
		{\bar s}
		\alpha_{\bar{s}}(x) \qquad\text{ {strongly} in }H^1_{\loc}(\R),
		\end{equation}
		where $\alpha_{\bar{s}}$ is such that $(\alpha_{\bar{s}},\beta_{\bar{s}})\in{\mathcal{U}_{1}}$ and $\Gzero(\bar{s}) = {\mathcal{G}_{\bar s}}(\alpha_{\bar{s}},\beta_{\bar{s}})$.
		\item\label{item2bu} For every $\eta>0$, setting $I_{\eta}:=\bigcup_{x\in J_u}(x-\eta,x+\eta)$ we have that as $\e\to0$
		\begin{equation} \label{bud}
		\int_{(0,1)\setminus I_\eta} f_{\e}^2(v_{\e})|u'_{\e}|^2\de x \to \int_{(0,1)\setminus I_\eta}|u'|^2\de x,
		\quad
		\int_{(0,1)\setminus I_\eta} \Bigl( \frac{(1-v_{\e})^2}{4\e_k} + \e|v'_{\e}|^2 \Bigr)\de x \to 0.
		\end{equation}
	\end{enumerate}
\end{theorem}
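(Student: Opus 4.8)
The engine is a concentration result for the energy measures, from which both assertions are extracted; part \ref{item2bu} follows by a local energy-splitting, and part \ref{item1bu} by a standard blow-up/compactness argument around each jump point, using uniqueness of the optimal profile from Theorem~\ref{thm:g1}.

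\textbf{Energy concentration and part \ref{item2bu}.} Write $\mu_\e := \bigl(f_\e^2(v_\e)|u_\e'|^2 + \tfrac{(1-v_\e)^2}{4\e} + \e|v_\e'|^2\bigr)\Lu\mres(0,1)$, so that $\mu_\e((0,1))=\fe(u_\e,v_\e)\to\f(u,1)$; viewed as measures on $[0,1]$ these have equibounded mass, so up to a subsequence $\mu_\e\wstar\mu$. Applying the (localized) $\Gamma$-convergence lower bound of Theorem~\ref{thm:cfi} on every open subinterval $A\subset(0,1)$ gives $\mu(A)\ge\liminf_\e\mu_\e(A)\ge\f(u,1;A)$; since $u\in\SBV(0,1)$ with $|u'|\le\tfrac\ell2$ has no Cantor part and $h(|u'|)=|u'|^2$, this means $\mu\ge\nu:=|u'|^2\Lu\mres(0,1)+\sum_{x\in J_u}\Gzero(|[u](x)|)\delta_x$. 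On the other hand $\mu([0,1])=\lim_\e\mu_\e([0,1])=\f(u,1)=\nu((0,1))$, whence $\mu=\nu$ and no mass escapes to $\{0,1\}$; as $\nu$ does not depend on the subsequence, $\mu_\e\wstar\nu$ along the full sequence. For $\eta>0$ smaller than half the minimal distance between distinct points of $J_u$, the boundary of $(0,1)\setminus I_\eta$ is $\nu$-null, so $\fe(u_\e,v_\e;(0,1)\setminus I_\eta)=\mu_\e((0,1)\setminus I_\eta)\to\int_{(0,1)\setminus I_\eta}|u'|^2\de x$ (the remaining values of $\eta$ are treated componentwise or are trivial). It then suffices to show $\liminf_\e\int_{(0,1)\setminus I_\eta}f_\e^2(v_\e)|u_\e'|^2\de x\ge\int_{(0,1)\setminus I_\eta}|u'|^2\de x$, which forces the Modica--Mortola part to vanish; for this one adapts the lower-bound analysis of \cite{CFI}, showing that away from $J_u$ the function $v_\e$ tends to $1$ at the rate dictated by the Modica--Mortola term, so that $f_\e(v_\e)\to1$ off a set of vanishing measure (otherwise a residual dip of $v_\e$ there would carry surface-type energy incompatible with $\mu=\nu$) on which, moreover, $u_\e$ builds up no variation, after which weak lower semicontinuity of $\int|u_\e'|^2$ closes the argument.

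\textbf{Blow-up: setup, compactness, and identification of $\beta$.} Fix $\bar x\in J_u$, $\bar s:=|[u](\bar x)|$, and a small $\eta>0$ as above; let $x_\e\in[\bar x-\eta,\bar x+\eta]$ be a point where $v_\e$ is minimal. A Modica--Mortola excursion estimate combined with $\mu=\nu$ shows $v_\e(x_\e)$ stays bounded away from $1$ and $x_\e\to\bar x$ (a dip of $v_\e$ away from $J_u$ would produce an atom of $\mu$ there). Set $w_\e(x):=v_\e(x_\e+\e x)$, $z_\e(x):=u_\e(x_\e+\e x)$; a change of variables turns $\fe(u_\e,v_\e;(\bar x-\eta,\bar x+\eta))$ into $\int\bigl(\tfrac1\e f_\e^2(w_\e)|z_\e'|^2+\tfrac{(1-w_\e)^2}{4}+|w_\e'|^2\bigr)\de t$ over an interval invading $\R$, with $\tfrac1\e f_\e^2(s)=\tfrac1\e\wedge f^2(s)$; by $\mu=\nu$ this converges to $\Gzero(\bar s)+\int_{(\bar x-\eta,\bar x+\eta)}|u'|^2\de x$, so on every window $[-R,R]$ the rescaled energy is equibounded. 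Hence $w_\e$ is bounded in $H^1(-R,R)$, $0\le w_\e\le1$, $w_\e(0)=\min_{[-\eta/\e,\eta/\e]}w_\e$, and a diagonal extraction gives $w_{\e_k}\to\beta$ weakly in $H^1_{\loc}(\R)$ and locally uniformly, with $0\le\beta\le1$ and $\beta(0)=\min_\R\beta=:m<1$ (strict, since $\beta\equiv1$ would contradict that $u_{\e_k}$ develops a jump at $\bar x=\lim x_{\e_k}$). Using again $\mu=\nu$, since $x_\e\to\bar x$ and $\e R\to0$ one gets $\limsup_\e\bigl[\text{rescaled energy on }[-R,R]\bigr]=\limsup_\e\mu_\e(B_{\e R}(x_\e))\le\Gzero(\bar s)$ for every $R$; with weak lower semicontinuity along $w_{\e_k}\to\beta$ this yields $\int_\R\bigl(\tfrac{(1-\beta)^2}{4}+|\beta'|^2\bigr)\le\Gzero(\bar s)$. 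If $\jump=1$, the Modica--Mortola lower bound and $\beta(0)=m$ force $m=0$ and $\beta(t)=1-e^{-|t|/2}=\beta_{\bar s}$ (Theorem~\ref{thm:g1}(ii)), and convergence of the energies upgrades this to strong $H^1_{\loc}$ convergence, proving \eqref{bub} in this case (no claim on $z_\e$). If $\jump<1$ then $m>0$ (otherwise the same inequality gives $\int_\R\tfrac{(1-\beta)^2}{4}+|\beta'|^2\ge1$), so on every compact window contained in $\{\beta<1\}$ one has $\tfrac1{\e_k}f_{\e_k}^2(w_{\e_k})=f^2(w_{\e_k})\ge c>0$ for $k$ large; hence $z_{\e_k}$ is bounded in $H^1_{\loc}$ (using $|z_{\e_k}(0)|=|u_{\e_k}(x_{\e_k})|\le\sup_\e\|u_\e\|_\infty$) and, after a further extraction, $z_{\e_k}\to z=:\bar s\alpha$ weakly in $H^1_{\loc}$, with $f^2(w_{\e_k})|z_{\e_k}'|^2$ lower semicontinuous in the limit.

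\textbf{Admissibility, conclusion, and the main obstacle.} It remains to check $(\alpha,\beta)\in\mathcal U_1$ and $\mathcal G_{\bar s}(\alpha,\beta)\le\Gzero(\bar s)$; admissibility then forces equality, so $(\alpha,\beta)$ is an optimal pair, and the uniqueness up to translations in Theorem~\ref{thm:g1} (the translation pinned by $\beta(0)=\min_\R\beta$) identifies $\beta=\beta_{\bar s}$ and $\alpha=\alpha_{\bar s}$ up to an additive constant; convergence of the energies again upgrades weak to strong $H^1_{\loc}$ convergence, giving \eqref{bub} and \eqref{buc}. A final diagonalization over the finitely many points of $J_u$ produces one subsequence serving all of them. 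The two points requiring genuine work are: \emph{(1)} the energy splitting in part \ref{item2bu}, i.e. controlling $u_\e$ on the set where $f_\e(v_\e)$ stays away from $1$ — a set of bounded, but not obviously vanishing, measure — which rests on a quantitative refinement of the $\Gamma$-convergence lower bound of \cite{CFI}; and \emph{(2)} the constraint $\bigl|\int_\R\alpha'\bigr|=1$ for the blow-up limit, i.e. showing that the rescaled displacement captures exactly the amplitude $\bar s$ of the limit jump. The latter, which I expect to be the main obstacle, is handled by applying part \ref{item2bu} on the one-sided intervals $(\bar x+\eta',\bar x+\eta)$ and $(\bar x-\eta,\bar x-\eta')$, on which $u_\e\to u$ strongly in $H^1$ and hence uniformly, to pin the one-sided limits of $z_{\e_k}$ to $u(\bar x^{\pm})$ as $\eta'\to0$.
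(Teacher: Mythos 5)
Your overall skeleton (blow-up at the minimum points of $v_\e$, $H^1_{\loc}$ compactness, identification of the limit profile via minimality and the uniqueness statement of Theorem~\ref{thm:g1}, energy-measure bookkeeping) is the same as the paper's, but the two points you yourself flag as ``requiring genuine work'' are exactly where the proof lives, and your proposed resolutions do not close them. For part \ref{item2bu}: weak-$*$ convergence of the energy measures to $\nu=|u'|^2\Lu+\sum_{J_u}\Gzero(|[u]|)\delta_x$ only gives $\fe(u_\e,v_\e;(0,1)\setminus I_\eta)\to\int|u'|^2$; splitting this into the two separate limits in \eqref{bud} is not obtained by showing ``$f_\e(v_\e)\to1$ off a set of vanishing measure on which $u_\e$ builds up no variation'' --- on the set $A_\e:=\{f_\e(v_\e)<1\}$ (which need not have small measure) the diffuse energy only controls $(1-v_\e)f(v_\e)|u_\e'|\approx\ell|u_\e'|$, i.e.\ a \emph{linear} quantity, so a priori $u_\e$ can carry a nontrivial part of its variation there at cost $\ell$ per unit amplitude. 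The paper's mechanism is different and quantitative: it decomposes $u_\e'=u_\e'\chi_{A_\e}+u_\e'\chi_{A_\e^c}$, passes to weak limits ($p\in L^2$ and a measure which must equal $(u'-p)\Lu$), and uses the pointwise inequality $\ell|t-y|+y^2\ge t^2+(t-y)^2$ valid for $|t|\le\frac\ell2$ --- this is where the hypothesis $|u'|\le\frac\ell2$ enters --- to force $p=u'$ and hence \eqref{bud}. Nothing in your sketch substitutes for this step.

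For part \ref{item1bu} in the case $\jump<1$, the admissibility of the blow-up limit in $\mathcal U_1$ is the crux, and your plan of ``applying part \ref{item2bu} on the one-sided intervals $(\bar x+\eta',\bar x+\eta)$'' cannot deliver it: part \ref{item2bu} controls $u_\e$ only at fixed macroscopic distance $\eta'$ from $\bar x$, while $z_k(\pm T)=u_{\e_k}(x_k\pm\e_kT)$ sits at scale $\e_kT$; the intermediate annulus between scales $\e_k T$ and $\eta'$ is precisely where part of the jump amplitude could be created at merely linear cost $\approx\ell a$ (with a shallow secondary dip of $v_\e$), and no estimate you state excludes this, so neither $\bigl|\int_\R\alpha'\bigr|=1$ nor even $\alpha'\in L^1(\R)$ (weak $H^1_{\loc}$ limits give only $L^2_{\loc}$ control of $z'$) follows. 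In the paper this is Step~2 of Lemma~\ref{lem:w>0a}: one truncates $(u_k,v_k)$ into an ``inner'' and an ``outer'' competitor, uses the $\Gamma$-liminf for each to get $\Gzero(|z_+-z_-|)+\Gzero(|[u]-(z_+-z_-)|)\le\Gzero(|[u]|)$ together with a second inequality carrying the term $(1-w(0))^2$, and then invokes the \emph{strict} subadditivity of $\Gzero$ (Proposition~\ref{prop:g3}) to conclude $z_+-z_-=[u](\bar x)$; a separate monotonization of $z$ then yields $z'\in L^1(\R)$, membership in $\mathcal U_1$, and (via the equality chain and the measure $\mu$ of $|z_k'|^2$) the strong convergence of $z_k$ and $w_k$. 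These ingredients --- the in/out splitting, strict subadditivity, the exclusion of $z_+=z_-$ via $(1-w(0))^2$, and the monotonicity argument giving $\alpha'\in L^1$ --- are absent from your proposal, so as it stands the identification of the blow-up limit as an optimal pair for \eqref{defgnew} is not proved.
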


%%%%%%%%%%%%%%%%%%%%%%%%%%%%%%%%%%%%%%%%%%%%%%%%%%%%%%%%%%%%%%%%%%
%%%%%%%%%%%%%%%%%%%%%%%%%%%%%%%%%%%%%%%%%%%%%%%%%%%%%%%%%%%%%%%%%%
%%%%%%%%%%%%%%%%%%%%%%%%%%%%%%%%%%%%%%%%%%%%%%%%%%%%%%%%%%%%%%%%%%
%%%%%%%%%%%%%%%%%%%%%%%%%%%%%%%%%%%%%%%%%%%%%%%%%%%%%%%%%%%%%%%%%%
%%%%%%%%%%%%%%%%%%%%%%%%%%%%%%%%%%%%%%%%%%%%%%%%%%%%%%%%%%%%%%%%%%

\section{Static phase-field approximation: the cohesive energy of pre-fractured material}\label{sect:barg}

\subsection{A class of cohesive energies \texorpdfstring{$\boldsymbol{\Gdue}$}{} for pre-fractured {material}}\label{subsect:barg}
We now introduce a new surface energy density ${\Gdue}$, depending on two variables $s,s'$, { and detail its connection with the phase-field approximating energies. We will show in particular that the function ${\Gdue}$ defined below satisfies all the assumptions in Section~\ref{sect:setting}, so that for such an energy density a quasi-static evolution can be constructed, as described in Section~\ref{sect:evol2}.

We let $f$ satisfy the assumptions \eqref{assf1}--\eqref{assf3}, as in Section~\ref{subsect:hyp}.} For $s,s'\geq0$ we set
\begin{equation}\label{defgbar}
{\Gdue}(s,s') :=
\inf_{(\alpha,\beta)\in\mathcal{V}_{s'}}\mathcal{G}_s(\alpha,\beta) ,
\end{equation}
where
\begin{equation} \label{defV}
\mathcal{V}_{s'} := \Bigl\{ (\alpha,\beta)\in\mathcal{U}_1 \;:\; \inf\beta\leq m_{s'}\Bigr\},
\end{equation}
$\mathcal{G}_s$ and $\mathcal{U}_1$ have been defined in \eqref{defG} and \eqref{defU} respectively,
and $m_{s'}$ denotes the value of the minimum of an optimal profile for $\Gzero(s')$, see \eqref{minbeta1}. The effect of the second variable $s'$ is to introduce a ``memory effect'' that takes into account the maximal amplitude of the jumps at previous times. { We remark that, when $s'=0$, the constraint is not active since $m_0=1$, and therefore
\begin{equation} \label{ggbar}
{\Gdue}(s,0)=\Gzero(s),
\end{equation}
where $\Gzero$ is the function defined in \eqref{defgnew}.} We refer to Figure~\ref{figgambbar1} and Figure~\ref{figgamb2gbar}  for an illustration.

\begin{figure}
	\begin{center}
		\includegraphics[width=4cm]{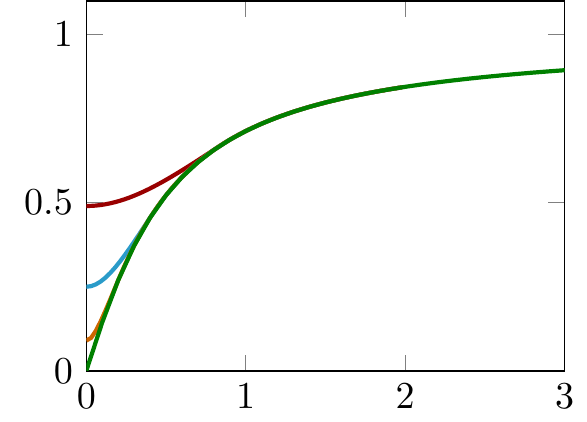}
		\includegraphics[width=10.5cm]{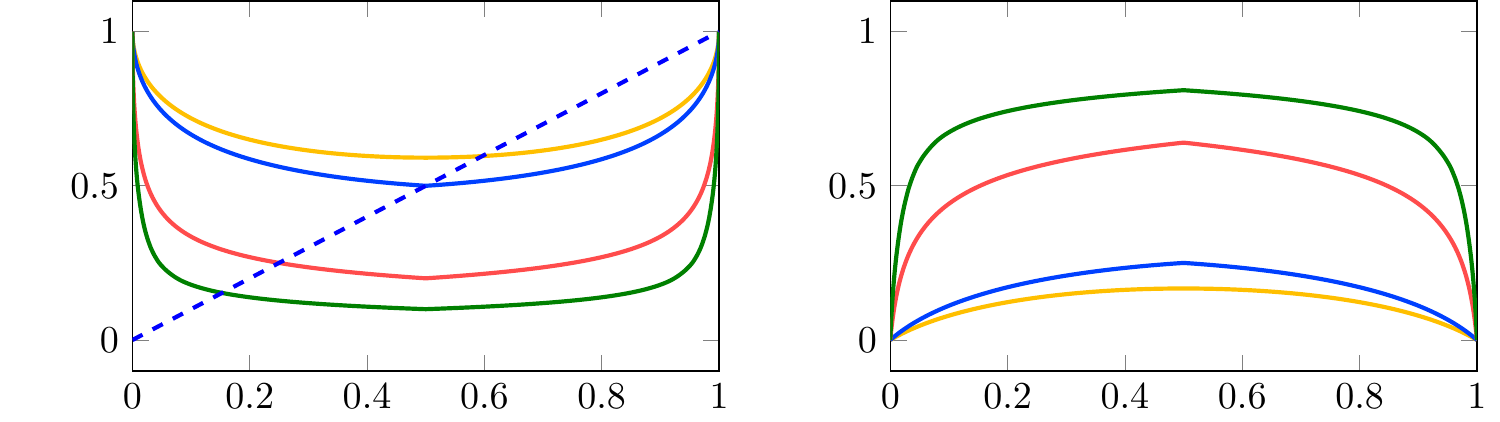}
	\end{center}
	\caption{{Left: Graph of $\Gzero(s)$ and $\Gdue(s,s')$ for $m_{s'}=0.3$, $0.5$ and $0.7$ (from bottom to top) and $f_1^b(s)$ in \eqref{eqexamplefb} with $\ell=1.5$, $\ell_1=0.2$ as in Figure \ref{figggbarintro} and Figure \ref{figg1}. 
			Middle and right: Graph of $(\alpha,\beta)$ and $\gamma$ {from {\eqref{defgbar2ter} and \eqref{defgbar2bis}}} for $s=0.3$ and $m_{s'}=0.1, 0.2, 0.5, 0.7$ and the same $f_1^b$ corresponding to Figure \ref{figgamb1}.
			For $m_{s'}=0.7$ the curves are the same as for $\Gzero$. See also Figure \ref{figgamb2gbar}.}}
	\label{figgambbar1}
\end{figure}

\begin{figure}
	\begin{center}
		\includegraphics[width=12cm]{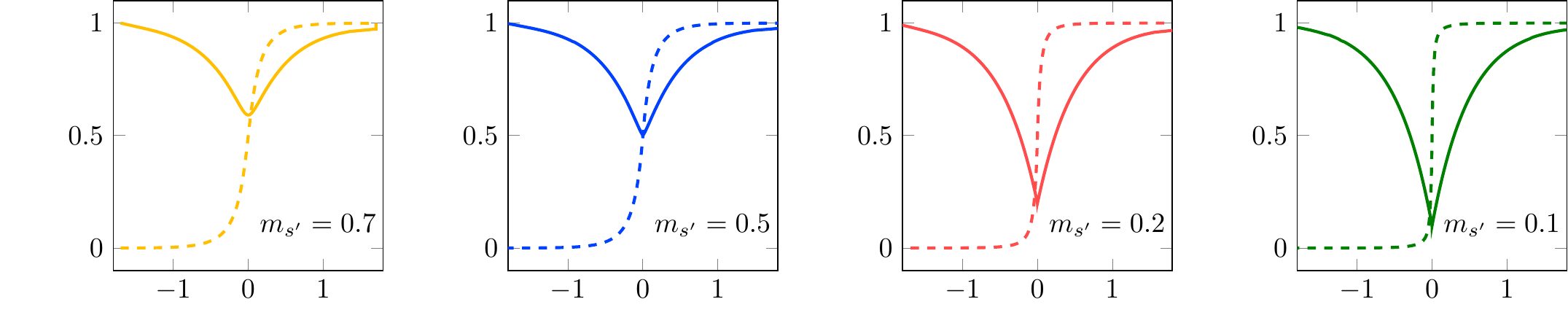}
	\end{center}
	\caption{{Graph of the optimal $(\alpha,\beta)$ entering \eqref{defgbar2} with a very large $T$, for $s=0.3$ and $m_{s'}=0.7, 0.5, 0.2, 0.1$, as
			in Figure \ref{figgambbar1}. For the largest value of $m_{s'}$ one sees that  the constraint is not active and $\beta$ is smooth at $t=0$; for the others the constraint is active and the derivative of $\beta$ jumps. Correspondingly, the profile of $\alpha$ becomes more concentrated.}}
	\label{figgamb2gbar}
\end{figure}

%%%%%%%%%%%%%%%%%%%%%%%%%%%%%%%%%%%%%%%%%%%%%%%%%%%%%%%%%%%%%%%%%%

\subsection{Main properties of \texorpdfstring{$\boldsymbol{\Gdue}$}{{\Gdue}}}\label{subsect:propbarg} 
In this section we state the main properties of $\Gdue$, whose definition is given in \eqref{defgbar}. { The corresponding proofs are postponed to Section~\ref{subsect:proofbarg}.} The next theorem establishes that $\Gdue$ enjoys all assumptions listed in Section~\ref{sect:setting}.
%This implies that quasi-static evolution of cohesive fracture (Theorem~\ref{thm:evolution}) holds when such $\Gdue$ is used as surface energy density. 

\begin{theorem}\label{thm:gbar1}
	Let $\Gdue$ be defined by \eqref{defgbar} with a function $f$ satisfying properties
	\eqref{assf1}, \eqref{assf2}, \eqref{assf3}. Then $\Gdue$ satisfies hypotheses \ref{item1gbar1}--\ref{item2gbar1} of Section~\ref{sect:setting}.
\end{theorem}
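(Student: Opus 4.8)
The plan is to verify, for $\Gdue$ defined by \eqref{defgbar}, the items \ref{item1gbar1}--\ref{item2gbar1} of Section~\ref{sect:setting}. It is convenient to note first that $\mathcal V_{s'}$ enters \eqref{defgbar} only through the threshold $m_{s'}$, and that for a fixed profile $\beta$ (with $\beta(\pm\infty)=1$) minimising $s^2\int_\R f(\beta)^2|\alpha'|^2\,\de t$ over the $\alpha$ with $|\int_\R\alpha'\,\de t|=1$ gives, by Cauchy--Schwarz, the value $s^2/K(\beta)$ with $K(\beta):=\int_\R f(\beta)^{-2}\,\de t\in(0,+\infty]$ and the convention $s^2/\infty:=0$. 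Hence, writing $\mathcal M(\beta):=\int_\R\bigl(\tfrac14(1-\beta)^2+|\beta'|^2\bigr)\,\de t$,
\[
\Gdue(s,s')=\inf\Bigl\{\tfrac{s^2}{K(\beta)}+\mathcal M(\beta):\beta\in H^1_{\loc}(\R),\ 0\le\beta\le1,\ \beta(\pm\infty)=1,\ \inf_\R\beta\le m_{s'}\Bigr\},
\]
and $\Gzero(s)$ is the same infimum without the last constraint. (A reformulation of this kind, together with its $T$- and $\gamma$-variants analogous to \eqref{eqcar1}--\eqref{eqcar3}, is also the natural tool for the proofs postponed to Section~\ref{sect:proofs}.)

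\emph{Properties \ref{item1gbar1} and \ref{item5gbar1}.} Monotonicity in $s$ is immediate from the $s^2$ factor; monotonicity in $s'$ holds since $s'\mapsto m_{s'}$ is nonincreasing (Theorem~\ref{thm:g1}) and relaxing the constraint $\inf_\R\beta\le m_{s'}$ decreases the infimum. If $s\ge s'$ then $m_s\le m_{s'}$, so for $s<\sfrac$ the optimal profile $\beta_s$ of Theorem~\ref{thm:g1} satisfies $\inf_\R\beta_s=m_s\le m_{s'}$ and is admissible, whence $\Gdue(s,s')\le\Gzero(s)$, while the reverse inequality is trivial; for $s\ge\sfrac$ both sides equal $1$ (see below). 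Thus $\Gdue(s,s')=\Gzero(s)=\Gdue(s,0)$ whenever $s\ge s'$, which in particular gives \eqref{ggbar}, so that \eqref{gexpansion1} with $p=5/3$ is exactly Theorem~\ref{thm:propg1}\ref{item5gbar2}. For continuity, monotonicity, subadditivity (shown below) and \eqref{gexpansion1} already yield, as noted after the assumptions of Section~\ref{sect:setting}, that $\Gdue(\cdot,s')$ is Lipschitz with constant $\ell$ uniformly in $s'$; since $s'\mapsto m_{s'}$ is continuous, it remains to see that $m\mapsto\inf\{s^2/K(\beta)+\mathcal M(\beta):\inf_\R\beta\le m\}$ is continuous on $[0,1]$. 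This function is monotone; right-continuity comes from a compactness argument for near-minimisers (normalised by translation so that $\min\beta$ is attained at the origin, as in the proof of Theorem~\ref{thm:g1}), and left-continuity from deepening the dip of a near-optimal profile to a slightly smaller prescribed depth, at the cost of only $o(1)$ in $\mathcal M$ and without increasing the transport term.

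\emph{Property \ref{item3gbar1}.} A profile $\beta$ with $\inf_\R\beta=0$ has $K(\beta)=+\infty$ (since $f(0)=0$), hence zero transport cost; the cheapest one---the exponential ``V'' decaying from $1$ to $0$ and back---has $\mathcal M(\beta)=1$ and $\inf_\R\beta=0\le m_{s'}$, so $\Gdue(s,s')\le1$. Combined with $\Gdue(s,s')\ge\Gzero(s)$ and $\Gzero(s)\to1$ (Theorem~\ref{thm:propg1}\ref{item3gbar2}), this also gives $\lim_{s\to\infty}\Gdue(s,s')=1$.

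\emph{Property \ref{item2gbar1}.} Given $\e>0$, take near-optimal profiles $(\alpha_1,\beta_1)$ for $\Gzero(s_1)$ and $(\alpha_2,\beta_2)$ for $\Gdue(s_2,s')$, with bounded support, equal to $(\mathrm{const},1)$ near the endpoints, $\int_\R\alpha_i'\,\de t=1$ and $\inf_\R\beta_2\le m_{s'}$ (obtained from \eqref{eqcar1} and a truncation away from the minimum). Gluing the rescaled displacements $s_1\alpha_1,s_2\alpha_2$ and the profiles $\beta_1,\beta_2$ on disjoint intervals, joined by a segment on which $\beta\equiv1$ and the displacement is constant (which carries no energy), gives a profile admissible in \eqref{defgbar} for $\Gdue(s_1+s_2,s')$, with total jump $s_1+s_2$ and energy $\mathcal{G}_{s_1}(\alpha_1,\beta_1)+\mathcal{G}_{s_2}(\alpha_2,\beta_2)<\Gzero(s_1)+\Gdue(s_2,s')+2\e$, so that \eqref{gbarsub1} holds. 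For the strict inequality when $s_1>0$ and $s_2\vee s'>0$ we distinguish cases. If $s_1\ge s'$ then $\Gdue(s_1+s_2,s')=\Gzero(s_1+s_2)$ by \ref{item1gbar1}, and this is $<\Gzero(s_1)+\Gdue(s_2,s')$ because $\Gzero$ is strictly subadditive (Theorem~\ref{thm:propg1}\ref{item2gbar2}) and $\Gdue(s_2,s')\ge\Gzero(s_2)$ if $s_2>0$, while if $s_2=0$ then $s'>0$, hence $m_{s'}<1$ and $\Gdue(0,s')\ge(1-m_{s'})^2>0$. In the remaining case $s_1<s'$ one uses the reformulation above: with $K_i=K(\beta_i)$ the glued profile gives $\Gdue(s_1+s_2,s')\le(s_1+s_2)^2/(K_1+K_2)+\mathcal M(\beta_1)+\mathcal M(\beta_2)$, and the elementary inequality $(s_1+s_2)^2/(K_1+K_2)\le s_1^2/K_1+s_2^2/K_2$ is strict unless $s_1/K_1=s_2/K_2$, a degeneracy which can be removed by a small perturbation of $\beta_1$ (or by deepening its dip towards $m_{s'}$) that keeps it near-optimal. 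Making this last improvement quantitative, in the spirit of the proof of Theorem~\ref{thm:propg1}\ref{item2gbar2}, is the main technical point and yields the strict inequality in \eqref{gbarsub1}.
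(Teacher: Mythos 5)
Most of what you verify does go through, and largely along the lines of the paper's Proposition~\ref{prop:gbar}: monotonicity, the identity $\Gdue(s,s')=\Gzero(s)$ for $s\ge s'$ (hence \eqref{gexpansion1} with $p=5/3$ via Theorem~\ref{thm:propg1}), the bound $\Gdue\le 1$ and the limit at infinity, Lipschitz continuity in $s$, and plain subadditivity \eqref{gbarsub1} by concatenation. Two small points deserve a word: your claim ``$\inf_\R\beta=0\Rightarrow K(\beta)=+\infty$'' does not follow from $f(0)=0$ alone; you need a vanishing rate such as $f(u)\le Cu$ near $0$, which does follow from the convexity and monotonicity of $f_1(\sqrt\cdot)$ in \eqref{assf2} (alternatively, avoid $K$ altogether by shrinking the flat bottom of the competitor). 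Also, for continuity in $s'$ your ``compactness for right-continuity'' is shaky, because the transport term $s^2/K(\beta)$ is not lower semicontinuous in the direction you need (Fatou gives $K(\beta)\le\liminf K(\beta_j)$, the wrong inequality); fortunately your ``deepen the dip'' device already yields $|h(m_1)-h(m_2)|\le 2|m_1-m_2|$ for the infimum as a function of the prescribed depth, so it gives two-sided continuity and the compactness step can simply be dropped.

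The genuine gap is the strict inequality in \eqref{gbarsub1} in the regime where the constraint is active (your case $s_1<s'$, the paper's case $s_1+s_2<s'$), which you yourself flag as ``the main technical point'' and do not carry out; unfortunately the mechanism you propose is unlikely to close it. Since $\Gdue(s_2,s')$ is an infimum for which no minimizer is established, any strictness must come with a gain that is \emph{uniform} over near-minimizers; the Cauchy--Schwarz gap $(s_1+s_2)^2/(K_1+K_2)\le s_1^2/K_1+s_2^2/K_2$ equals $(s_1K_2-s_2K_1)^2/\bigl(K_1K_2(K_1+K_2)\bigr)$ and can degenerate along a sequence of near-minimizers with $s_1K_2\to s_2K_1$, and the suggested repair (perturbing $\beta_1$ or deepening its dip) gains only $O(\delta^2)$ from the restored strictness while the perturbation away from the (unique) optimal profile for $\Gzero(s_1)$ also costs at that order, so no definite sign emerges; moreover your case split leaves the easy subcase $s_1<s'\le s_1+s_2$ and the degenerate cases $\Gzero(s_1)=1$ or $\Gdue(s_2,s')=1$ to this fragile argument. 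The paper's Proposition~\ref{prop:gbar2} uses a different idea: working in the reparametrized form \eqref{defgbar2bis} on the fixed interval $[0,1]$, it \emph{superposes} the unique minimizer $\gamma_1$ of $\Gzero(s_1)$ with a near-minimizer $\gamma_2$, i.e.\ takes $\gamma=(\gamma_1+\gamma_2)\wedge1$, and extracts a gain $c_{s_1,s_2,s'}>0$, independent of the approximation parameter, from the strict monotonicity of $f_1(\sqrt\cdot)$ on a fixed subinterval where $\gamma\ge\gamma_2+\rho$ and $|\gamma_2'|$ is controlled. Some interaction gain of this superposition type (rather than a perturbative strictness in the Jensen inequality for the transport term) is the missing ingredient; without it, hypothesis \ref{item2gbar1} in its strict form is not proved.
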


Analogously to what happens for $\Gzero$ (see {Section}~\ref{subsect:propg}), the following equivalent characterizations hold for $\Gdue$:
\begin{align}
{\Gdue}(s,s')
&= {\inf_{T>0}}\inf \biggl\{\int_{-T}^T \biggl( {s^2} f^2(\beta)|\alpha'|^2 + \frac{(1-\beta)^2}{4} + |\beta'|^2 \biggr) \de t \: : \: \alpha,\beta\in H^1(-T,T), \nonumber\\
& \qquad\qquad\qquad |\alpha(T)-\alpha(-T)|={1},\; 0\leq\beta\leq1,\; \beta(\pm T)=1,\; \inf\beta\leq m_{s'} \biggr\} \label{defgbar2}\\
&=\inf \biggl\{\int_0^1 |1-\beta|\sqrt{{s^2}f^2(\beta)|\alpha'|^2+|\beta'|^2}\de t \: : \: (\alpha,\beta)\in H^1(0,1)\times H^1(0,1), \nonumber \\
& \qquad\qquad\qquad \alpha(0)=0,\, {\alpha(1)=1},\, 0\leq\beta\leq1,\, \beta(0)=\beta(1)=1,\, \inf\beta\leq m_{s'} \biggr\} \label{defgbar2ter}\\
&= \inf \biggl\{\int_0^1 \sqrt{s^2(\tif(\sqrt{\gamma}))^2 + \textstyle\frac{|\gamma'|^2}{4}} \de t \: : \: \gamma\in W^{1,1}_0([0,1],[0,1]),\, \sup\gamma\geq(1-m_{s'})^2 \biggr\}. \label{defgbar2bis}
\end{align}
Extending $\alpha$ and $\beta$ by constants it is easy to show that the $\inf$ in the first line of \eqref{defgbar2} is nonincreasing in $T$. Finally, the following technical lemma holds.

\begin{lemma}\label{lem:gbarmu1}
For $\mu>0$, let ${\Gdue}^{(\mu)}:[0,\infty)\times[0,\infty)\to[0,\infty)$ be defined by
	\begin{equation}\label{defgbarmu1}
	\begin{split}
	{\Gdue}^{(\mu)}(s,s') &:= {\inf_{T>0}}\inf \biggl\{ \int_{-T}^{T}\biggl( f^2(\beta)|\alpha'|^2 + \frac{(1-\beta)^2}{4} + |\beta'|^2 \biggr) \de t  \: : \: \alpha,\beta\in H^1(-T,T),\\
	& \qquad\qquad |\alpha(T)- \alpha(-T)|=s,\, 0\leq\beta\leq1,\, \beta(\pm T)= 1-\mu,\, \inf\beta\leq m_{s'} \biggr\} \,.
	\end{split}
	\end{equation}
Then we have 
\begin{equation}\label{eq:gbareta}
|{\Gdue}(s,s')-{\Gdue}^{(\mu)}(s,s')|\leq 3\mu^2,
\end{equation}
for every $s,s'\geq0$.
\end{lemma}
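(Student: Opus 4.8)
The plan is to observe that $\Gdue(s,s')$ and $\Gdue^{(\mu)}(s,s')$ are computed by exactly the same one‑dimensional variational problem, differing only in the value imposed on $\beta$ at the two endpoints, and then to bridge the gap by gluing on a short boundary layer. Precisely, the formula \eqref{defgbarmu1} makes sense also for $\mu=0$, and the substitution $\alpha\mapsto s\alpha$ (and, when $s=0$, the fact that the elastic term is then minimized by a constant $\alpha$ and plays no role) identifies it with the characterization \eqref{defgbar2} of $\Gdue(s,s')$; hence $\Gdue(s,s')=\Gdue^{(0)}(s,s')$, and it suffices to show $|\Gdue^{(0)}(s,s')-\Gdue^{(\mu)}(s,s')|\le 3\mu^2$. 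We may assume $\mu\le 1$, since for $\mu>1$ there are no admissible pairs in \eqref{defgbarmu1} and the estimate is of interest only for small $\mu$.

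For $\Gdue^{(\mu)}(s,s')\le\Gdue^{(0)}(s,s')+3\mu^2$, fix $\varepsilon>0$ and pick $T>0$ and an admissible pair $(\alpha,\beta)\in H^1(-T,T)^2$ for the $\mu=0$ problem (so $\beta(\pm T)=1$, $|\alpha(T)-\alpha(-T)|=s$, $\inf\beta\le m_{s'}$) whose energy is at most $\Gdue^{(0)}(s,s')+\varepsilon$; this is possible since $\Gdue^{(0)}(s,s')=\Gdue(s,s')\le 1<\infty$. Extend to $(-T-1,T+1)$ by keeping $\alpha$ constant on each added unit interval and letting $\beta$ decrease affinely from $1$ to $1-\mu$ there. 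The resulting pair is again in $H^1$, satisfies $0\le\beta\le1$ (using $\mu\le1$), has the correct endpoint value $1-\mu$ for $\beta$ and the unchanged endpoint increment $s$ for $\alpha$, and — the only point that genuinely needs a check — its infimum of $\beta$ equals $\min\{\inf_{(-T,T)}\beta,\,1-\mu\}\le m_{s'}$, because the inner infimum was already $\le m_{s'}$. Since $\alpha$ is constant on the two layers, the extra contribution to the energy is just $2\int_0^1\bigl(\tfrac{\mu^2 r^2}{4}+\mu^2\bigr)\de r=\tfrac{13}{6}\mu^2<3\mu^2$. Letting $\varepsilon\to0$ gives the inequality (and in particular $\Gdue^{(\mu)}(s,s')<\infty$).

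The reverse inequality $\Gdue^{(0)}(s,s')\le\Gdue^{(\mu)}(s,s')+3\mu^2$ is proved symmetrically: starting from a near‑optimal admissible pair for \eqref{defgbarmu1} with $\beta(\pm T)=1-\mu$, one glues on two unit‑length layers on which $\alpha$ is constant and $\beta$ increases affinely from $1-\mu$ back to $1$; admissibility for the $\mu=0$ problem is checked exactly as above (the constraint $\inf\beta\le m_{s'}$ again survives since the added pieces only take values $\ge 1-\mu$), and the extra cost is once more $\tfrac{13}{6}\mu^2$. Combining the two bounds with $\Gdue(s,s')=\Gdue^{(0)}(s,s')$ yields \eqref{eq:gbareta}. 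There is no real obstacle in this argument — it is a two‑sided boundary‑layer gluing, and the constant $3$ is deliberately generous, a crude affine profile over an interval of length one already doing the job, so no optimization of the layer profile is needed. The only minor points to keep in mind are the reduction $\Gdue(s,s')=\Gdue^{(0)}(s,s')$ (with the case $s=0$ treated separately), the restriction $\mu\le1$ so that the affine profiles stay in $[0,1]$, and the (trivial) persistence of the constraint $\inf\beta\le m_{s'}$ under the gluing.
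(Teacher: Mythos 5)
Your proof is correct and follows essentially the same route as the paper: identify $\Gdue(s,s')$ with the $\mu=0$ version of \eqref{defgbarmu1} via the characterization \eqref{defgbar2} (after rescaling $\alpha$), then pass between the two problems by gluing unit-length boundary layers with $\alpha$ constant and $\beta$ affine between $1-\mu$ and $1$, which costs $\tfrac{13}{6}\mu^2<3\mu^2$. The only caveat (the implicit restriction $\mu\leq 1$ so the affine layers stay in $[0,1]$) is equally implicit in the paper's argument and does not affect the intended use of the lemma.
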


%%%%%%%%%%%%%%%%%%%%%%%%%%%%%%%%%%%%%%%%%%%%%%%%%%%%%%%%%%%%%%%%%%

\subsection{Phase-field approximation}\label{subsect:constrainedpbl}
{ In this subsection we show that the new energy density $\Gdue$ defined in \eqref{defgbar} appears in the $\Gamma$-limit of the phase-field energies $\fe$ introduced in \eqref{defFe} when we include a suitable irreversibility constraint.
Precisely, below we first introduce an irreversibility constraint at level $\e$ on the functionals $\fe$, in the form of a monotonicity condition on the \emph{minimum values} of the damage variable $v_\e$. This choice is motivated by the blow-up analysis performed in Theorem~\ref{thm:blowup}. Then, we prove that the $\Gamma$-limit of such constrained functionals is of the form \eqref{defPhi}, with $\Gdue$ given by \eqref{defgbar}.}

We assume in the following that a pair $(\Gamma,s)$ as in \eqref{previousjump} is given: $\Gamma$ is a finite subset of $[0,1]$, and $s:\Gamma\to(0,\infty)$. We also fix a boundary datum $b:\{0,1\}\to\R$. In order to deal with the boundary conditions, it is convenient to work in a larger open interval $\Omega$ containing $[0,1]$, for instance $\Omega:=(-1,2)$. For $\e>0$ let $L_\e>0$ be such that $L_\e\to0$ and $\frac{L_\e}{\e}\to\infty$ as $\e\to0$. We introduce a constrained functional, defined on $L^1(\Omega)\times L^1(\Omega)$, by setting
\begin{equation} \label{defFebar}
\febar(u,v;\Gamma,s,b):=
\begin{cases}
\fe(u,v;\Omega)
&\begin{array}{l}
\text{if } v(x)\leq m_{s(x)} \text{ for every }x\in\Gamma, \\ 
\text{$u(x)=b(0)$ for $x<-L_\e$, $u(x)=b(1)$ for $x>1+L_\e$},
\end{array}\\
\infty
&\,\,\,\text{otherwise.}
\end{cases}
\end{equation}
Here $m_s$ denotes the minimum value of an optimal profile $\beta_s$ for $\Gzero(s)$, see Theorem~\ref{thm:g1} and in particular \eqref{minbeta1}.
Recalling the definition \eqref{defPhi} of the relaxed functional $\Phi(\cdot\,;\Gamma,s,b)$, the limit functional is defined on $L^1(\Omega)\times L^1(\Omega)$ by
\begin{equation} \label{defFbar}
\fbar(u,v;\Gamma,s,b):=
\begin{cases}
\Phi(u;\Gamma,s,b)
&\begin{array}{l}
\text{if } u\in \BV(\Omega), \text{ } v=1\text{ a.e.},\\
\text{$u(x)=b(0)$ for $x<0$, $u(x)=b(1)$ for $x>1$},
\end{array}\\
\infty
&\,\,\,\text{otherwise.}
\end{cases}
\end{equation}
The main result of this section is the following.

\begin{theorem} \label{thm:gconv}
Let a finite set $\Gamma\subset[0,1]$, a map $s:\Gamma\to(0,\infty)$, and a boundary Dirichlet datum $b:\{0,1\}\to\R$ be given.
Then the functionals $\febar(\cdot\,;\Gamma,s,b)$ $\Gamma$-converge as $\e\to0^+$ to $\fbar(\cdot\,;\Gamma,s,b)$ in $L^1(\Omega)\times L^1(\Omega)$.
\end{theorem}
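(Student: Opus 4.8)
The plan is to establish the two inequalities of $\Gamma$-convergence separately, using the unconstrained result of Conti--Focardi--Iurlano (Theorem~\ref{thm:cfi}) as a black box and adding the contributions produced by the irreversibility constraint $v(x)\le m_{s(x)}$ on $\Gamma$ and by the Dirichlet datum, the latter handled through the usual device that $L_\e\to0$ while $L_\e/\e\to\infty$.

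\textbf{Lower bound.} I would take $(u_\e,v_\e)\to(u,v)$ in $L^1(\Omega)\times L^1(\Omega)$ with $\liminf_\e\febar(u_\e,v_\e;\Gamma,s,b)<\infty$ and pass to a subsequence realizing the $\liminf$ along which the constraints in \eqref{defFebar} hold for every $\e$. Since $\febar(\cdot\,;\Gamma,s,b)\ge\fe(\cdot\,;\Omega)$, Theorem~\ref{thm:cfi} gives $u\in\BV(\Omega)$ and $v=1$ a.e.; and since $L_\e\to0$, for each fixed $x\notin[0,1]$ one has $u_\e(x)=b(x)$ eventually, so $u=b$ a.e.\ on $\Omega\setminus(0,1)$ and $\fbar(u,1;\Gamma,s,b)=\Phi(u;\Gamma,s,b)$. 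To bound the energy I would regard the integrand of $\fe$ as a sequence of measures $\lambda_\e$ on $\Omega$ and extract a weak$^*$ limit $\lambda$, so that $\liminf_\e\febar(u_\e,v_\e;\Gamma,s,b)\ge\lambda(\Omega)$. On any open $A\subset\subset\Omega\setminus\Gamma$ the constraint is inactive, so the lower bound in Theorem~\ref{thm:cfi} gives, after taking the supremum over such $A$, that $\lambda\mres(\Omega\setminus\Gamma)\ge h(|u'|)\LL+\ell|D^cu|+\sum_{x\in J_u^b\setminus\Gamma}\Gzero(|[u](x)|)\delta_x$. The new ingredient is the estimate $\lambda(\{\bar x\})\ge\Gdue(|[u](\bar x)|,s(\bar x))$ for each $\bar x\in\Gamma$. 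For this I would fix $\delta>0$ with $I_\delta:=(\bar x-\delta,\bar x+\delta)\subset\subset\Omega$, $I_\delta\cap\Gamma=\{\bar x\}$ and $|Du|(I_\delta\setminus\{\bar x\})\le\eta$, and distinguish two cases. If $[u](\bar x)=0$, I would bound $\fe(u_\e,v_\e;I_\delta)$ from below by its $v_\e$-part, using $\frac{(1-v_\e)^2}{4\e}+\e|v_\e'|^2\ge(1-v_\e)|v_\e'|$ together with $v_\e\to1$ near $\partial I_\delta$ and $v_\e(\bar x)\le m_{s(\bar x)}$, obtaining $\liminf_\e\fe(u_\e,v_\e;I_\delta)\ge(1-m_{s(\bar x)})^2=\Gdue(0,s(\bar x))$. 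If $[u](\bar x)\ne0$, I would blow up, setting $z_\e(x):=u_\e(\bar x+\e x)$, $w_\e(x):=v_\e(\bar x+\e x)$; repeating the compactness part of the analysis behind Theorem~\ref{thm:cfi} (and Theorem~\ref{thm:blowup}) gives, along a further subsequence, $w_\e\to w$ and $z_\e\to z$ in $H^1_{\loc}(\R)$ with $0\le w\le1$, $w\to1$ at $\pm\infty$, and $w(0)\le m_{s(\bar x)}$ so that $\inf w\le m_{s(\bar x)}$; after normalizing $z$ I would obtain a competitor $(\alpha,w)\in\mathcal{V}_{s(\bar x)}$ up to $O(\eta)$ errors coming from $|Du|(I_\delta\setminus\{\bar x\})\le\eta$, and lower semicontinuity would yield $\liminf_\e\fe(u_\e,v_\e;I_\delta)\ge\mathcal{G}_\sigma(\alpha,w)\ge\Gdue(\sigma,s(\bar x))$ with $\sigma$ within $O(\eta)$ of $|[u](\bar x)|$. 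Since $\lambda(\overline{I_\delta})\ge\liminf_\e\fe(u_\e,v_\e;I_\delta)$, letting $\delta,\eta\to0$ and using the continuity of $\Gdue$ (Theorem~\ref{thm:gbar1}) I would conclude $\lambda(\{\bar x\})\ge\Gdue(|[u](\bar x)|,s(\bar x))$. Summing over the finitely many pairwise disjoint points of $\Gamma$ and over $J_u^b\setminus\Gamma$, and using $\Gdue(\cdot,0)=\Gzero$, yields $\liminf_\e\febar(u_\e,v_\e;\Gamma,s,b)\ge\lambda(\Omega)\ge\Phi(u;\Gamma,s,b)=\fbar(u,1;\Gamma,s,b)$.

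\textbf{Upper bound.} By Theorem~\ref{thm:relaxation} and Remark~\ref{rmk:relaxation}, $\Phi(\cdot\,;\Gamma,s,b)$ is the $L^1$-lower semicontinuous envelope of the functional equal to $\int_0^1|u'|^2\de x+\sum_{x\in J_u^b\cup\Gamma}\Gdue(|[u](x)|,s(x))$ on $u\in\SBV(0,1)$ with finitely many jumps and $+\infty$ otherwise; by a diagonal argument it is enough to produce a recovery sequence when $u$ has this form (and, after extending by $b$, agrees with $b$ outside $(0,1)$). For such $u$ I would take $v_\e\equiv1$ and $u_\e=u$ away from small intervals around the points of $J_u^b\cup\Gamma$, and around each such point $x_0$ insert a rescaled near-optimal profile for $\Gdue(|[u](x_0)|,s(x_0))$: given $\mu>0$, using the characterization \eqref{defgbar2} and Lemma~\ref{lem:gbarmu1} I would pick $(\alpha,\beta)\in H^1(-T,T)\times H^1(-T,T)$ with $|\alpha(T)-\alpha(-T)|=|[u](x_0)|$, $\beta(\pm T)=1-\mu$, $\beta$ bounded away from $1$, $\inf\beta\le m_{s(x_0)}$, and energy at most $\Gdue(|[u](x_0)|,s(x_0))+C\mu$; rescaling to $(x_0-\e T,x_0+\e T)$ I would set $v_\e(x_0+\e t)=\beta(t)$ and let $u_\e(x_0+\e t)$ follow the suitably signed and translated profile $\alpha$, and on two adjoining intervals of length $o(1)$ let $v_\e$ increase from $1-\mu$ to $1$ at cost $O(\mu^2)$ while keeping $u_\e$ constant, so that $f_\e^2(v_\e)|u_\e'|^2$ vanishes there, finally matching $u_\e$ to the bulk values $u(x_0^\pm)$. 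At $x_0\in\{0,1\}$ the room guaranteed by $L_\e/\e\to\infty$ allows placing such a profile near $x_0$ while imposing $u_\e=b(0)$ for $x<-L_\e$ and $u_\e=b(1)$ for $x>1+L_\e$; on the bulk, $f_\e(1)=1$ by \eqref{deffe} gives $\int f_\e^2(v_\e)|u_\e'|^2=\int|u'|^2$. Letting $\e\to0$, then $\mu\to0$, diagonalizing, and finally removing the extra hypotheses on $u$, I would obtain $u_\e\to u$ in $L^1(\Omega)$, $v_\e\to1$ in $L^1(\Omega)$, the constraints $v_\e(x)\le m_{s(x)}$ on $\Gamma$, and $\limsup_\e\febar(u_\e,v_\e;\Gamma,s,b)\le\Phi(u;\Gamma,s,b)=\fbar(u,1;\Gamma,s,b)$.

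\textbf{Main obstacle.} The hard part is the lower bound at the points of $\Gamma$: one has to run the blow-up/lower-bound analysis underlying Theorem~\ref{thm:cfi} for a \emph{general} admissible sequence, cleanly separating the transition energy at $\bar x$ from the bulk, Cantor, and other-jump energy in $I_\delta$, and checking that the pointwise constraint $v_\e(\bar x)\le m_{s(\bar x)}$ persists in the limit so that the limiting profile lies in $\mathcal{V}_{s(\bar x)}$. The truncation $f_\e=1\wedge\e^{1/2}f$, with $f$ possibly unbounded as $\beta\to1$, is the source of the main technicalities both here and — via Lemma~\ref{lem:gbarmu1} — in the recovery construction.
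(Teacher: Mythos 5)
Your limsup argument is essentially the paper's: reduction to $\SBV$ functions with finitely many jumps via Theorem~\ref{thm:relaxation} and Remark~\ref{rmk:relaxation}, localization to a single point of $\Gamma$, and insertion of a rescaled near-optimal profile on an interval of size $\e T$ (the room $L_\e/\e\to\infty$ handling boundary points). The only difference is that the paper takes the profile directly from the characterization \eqref{defgbar2}, where $\beta(\pm T)=1$, so your extra matching layers from $1-\mu$ to $1$ and the appeal to Lemma~\ref{lem:gbarmu1} are unnecessary, though harmless.

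The liminf inequality at the points of $\Gamma$, however, has a genuine gap: the claim that the blow-up $z_\e(x)=u_\e(\bar x+\e x)$ captures a jump $\sigma$ within $O(\eta)$ of $|[u](\bar x)|$ is unjustified, because for a general admissible sequence the transition of $u_\e$ need not take place in the $O(\e)$-window around $\bar x$ where the constraint forces $v_\e$ to dip. Concretely, with $s:=|[u](\bar x)|<s(\bar x)$, let $v_\e$ dip to $m_{s(\bar x)}$ on an interval of length $\sim\e$ centered at $\bar x$ while $u_\e$ is constant there, and let $u_\e$ perform its transition of size $s$ on a second interval of length $\sim\e$ centered at $\bar x+\sqrt{\e}$, where $v_\e$ dips to $m_{s}$. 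This sequence is admissible, converges to $u$, and has energy tending to $\Gdue(0,s(\bar x))+\Gzero(s)$; but your blow-up at scale $\e$ around $\bar x$ sees $z\equiv\mathrm{const}$ and yields only $\lambda(\{\bar x\})\ge\Gdue(0,s(\bar x))=(1-m_{s(\bar x)})^2$, while the second energy contribution also concentrates at $\bar x$ in the limit and is therefore invisible both to the blow-up and to the unconstrained bound on open sets $A\subset\subset\Omega\setminus\Gamma$. (In addition, the $H^1_{\loc}$ compactness of $z_\e$ that you import from Theorem~\ref{thm:blowup} is proved there only for recovery sequences.) The missing mechanism — and the one the paper uses — is a decomposition of a small interval around $\bar x$ into the connected components of $\{v_\e<1-\eta\}$: the component containing $\bar x$ is bounded below by ${\Gdue}^{(\eta)}\bigl(|\int_{C_\e}u_\e'\de x|,s(\bar x)\bigr)$, which is where the constraint $v_\e(\bar x)\le m_{s(\bar x)}$ enters, via Lemma~\ref{lem:gbarmu1}; the finitely many other components where $v_\e$ reaches $1-\delta$ are bounded by $\Gzero$ of their net variation; the remaining set carries at least $(1-\omega(\delta))h(|u_\e'|)$; and these pieces are recombined through the subadditivity inequalities \eqref{gnew2} and \eqref{gbarsub1} together with $\Gzero(s)\le\ell s$, producing $\Gdue\bigl(|u_\e(x_2)-u_\e(x_1)|,s(\bar x)\bigr)$ up to errors vanishing with $\eta$, $\delta$. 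This recombination step is precisely what should replace your unsupported identification $\sigma\approx|[u](\bar x)|$, and it is absent from the proposal.
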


In the rest of this section we drop the dependence on $(\Gamma,s,b)$ in the functionals $\febar$ and $\fbar$ to lighten the notation, as these quantities are fixed. In order to prove the theorem we introduce the following standard notions:
\begin{align*}
\fbar'(u,v) &:= \Gamma\text{-}\liminf_{\e\to0}\febar(u,v)\\
&\qquad :=\inf \Bigl\{\liminf_{\e\to0}\febar(u_\e,v_\e) \,:\, (u_\e,v_\e)\to(u,v) \text{ in }L^1(\Omega)\times L^1(\Omega)\Bigr\}\,,\\
\fbar''(u,v) &:= \Gamma\text{-}\limsup_{\e\to0}\febar(u,v)\\
&\qquad :=\inf \Bigl\{\limsup_{\e\to0}\febar(u_\e,v_\e) \,:\, (u_\e,v_\e)\to(u,v) \text{ in }L^1(\Omega)\times L^1(\Omega)\Bigr\}\,.
\end{align*}
The proof of Theorem~\ref{thm:gconv} follows by combining Proposition~\ref{prop:liminf} and Proposition~\ref{prop:limsup} below. 

\begin{proposition}[Liminf inequality]\label{prop:liminf}
For every $(u,v)\in L^1(\Omega)\times L^1(\Omega)$ it holds
\begin{equation} \label{gammaliminf}
\fbar(u,v)\leq \fbar'(u,v)\,.
\end{equation}
\end{proposition}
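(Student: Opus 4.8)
The plan is to combine a localisation argument with a blow-up analysis near the points of $\Gamma$, following the structure of the liminf inequality in \cite{CFI} and inserting the effect of the new constraint. We may assume $\fbar'(u,v)<\infty$ and choose $(u_\e,v_\e)\to(u,v)$ in $L^1(\Omega)\times L^1(\Omega)$ with $\febar(u_\e,v_\e)\to\fbar'(u,v)$; along a subsequence $\e_k\to0$ we then have $\febar(u_{\e_k},v_{\e_k})=\fe(u_{\e_k},v_{\e_k};\Omega)\le C$, the constraint $v_{\e_k}(x)\le m_{s(x)}$ holds for $x\in\Gamma$, and $u_{\e_k}=b(0)$ on $(-1,-L_{\e_k})$, $u_{\e_k}=b(1)$ on $(1+L_{\e_k},2)$. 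By the compactness part of Theorem~\ref{thm:cfi}, $u\in\BV(\Omega)$ and $v=1$ a.e.; since moreover $\int_\Omega(1-v_{\e_k})^2\de x\le 4\e_k C\to0$, up to a further subsequence $v_{\e_k}\to1$ a.e.\ and $u_{\e_k}\to u$ a.e.\ in $\Omega$. Letting $L_{\e_k}\to0$ yields $u\equiv b(0)$ on $(-1,0)$ and $u\equiv b(1)$ on $(1,2)$, so the admissibility constraints of $\fbar$ are satisfied.

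Write $\Gamma=\{x_1,\dots,x_m\}$, $s_j':=s(x_j)>0$, and for small $\delta>0$ set $I_j:=(x_j-\delta,x_j+\delta)$ (pairwise disjoint, contained in $\Omega$) and $A_\delta:=\Omega\setminus\bigcup_{j}\overline{I_j}$. By additivity of $\fe$ over disjoint open sets and superadditivity of the $\liminf$,
\[
\fbar'(u,v)=\lim_{k}\fe(u_{\e_k},v_{\e_k};\Omega)\ \ge\ \liminf_{k}\fe(u_{\e_k},v_{\e_k};A_\delta)+\sum_{j=1}^{m}\liminf_{k}\fe(u_{\e_k},v_{\e_k};I_j).
\]
On $A_\delta$ the constraint is vacuous, so the localised version of the liminf inequality in Theorem~\ref{thm:cfi} (valid on every open set, the functionals being local) gives
\[
\liminf_{k}\fe(u_{\e_k},v_{\e_k};A_\delta)\ \ge\ \int_{A_\delta}h(|u'|)\de x+\sum_{x\in J_u\cap A_\delta}\Gzero(|[u](x)|)+\ell|D^cu|(A_\delta).
\]
With the conventions \eqref{Dirichlet}--\eqref{Dirichlet2} and $u\equiv b$ outside $[0,1]$, if $0\notin\Gamma$ then $0$ is a jump point of $u$ inside $A_\delta$, contributing $\Gzero(|[u](0)|)=\Gdue(|[u](0)|,0)$, and similarly for $1$.

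The heart of the proof — and the main obstacle — is the estimate at a point of $\Gamma$: for each $j$,
\[
\liminf_{k}\fe(u_{\e_k},v_{\e_k};I_j)\ \ge\ \Gdue(|[u](x_j)|,s_j')-\omega(\delta),\qquad\omega(\delta)\to0\ \text{as}\ \delta\to0.
\]
Fix $\mu\in(0,1)$. Since $x_j$ has approximate one-sided limits $u^\pm(x_j)$ and $u_{\e_k}\to u$, $v_{\e_k}\to1$ a.e., we may pick $a\in(x_j-\delta,x_j)$, $b\in(x_j,x_j+\delta)$ with $u_{\e_k}(a)\to u(a)$, $u_{\e_k}(b)\to u(b)$, $v_{\e_k}(a),v_{\e_k}(b)\to1$, and $|u(a)-u^-(x_j)|+|u(b)-u^+(x_j)|\le\omega(\delta)$; thus $\sigma:=\lim_k|u_{\e_k}(b)-u_{\e_k}(a)|=|u(b)-u(a)|\ge|[u](x_j)|-\omega(\delta)$, and $v_{\e_k}(a),v_{\e_k}(b)\ge1-\mu$ for $k$ large. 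Rescaling $z_k(t):=u_{\e_k}(x_j+\e_k t)$, $w_k(t):=v_{\e_k}(x_j+\e_k t)$ on the window $(a_k,b_k)$ with $a_k:=(a-x_j)/\e_k\to-\infty$, $b_k:=(b-x_j)/\e_k\to+\infty$, a change of variables gives
\[
\fe(u_{\e_k},v_{\e_k};I_j)\ \ge\ \int_{a_k}^{b_k}\Bigl((\e_k^{-1}\wedge f^2(w_k))|z_k'|^2+\tfrac{(1-w_k)^2}{4}+|w_k'|^2\Bigr)\de t,
\]
where the rescaled pair obeys $0\le w_k\le1$, $w_k(a_k),w_k(b_k)\ge1-\mu$, $w_k(0)=v_{\e_k}(x_j)\le m_{s_j'}$ (so $\inf w_k\le m_{s_j'}$), and $|z_k(b_k)-z_k(a_k)|\to\sigma$. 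Running the lower-semicontinuity machinery of the liminf inequality in \cite{CFI} on this window — which, after truncating $\e_k^{-1}\wedge f^2$ by $M\wedge f^2$ and letting $M\to\infty$, handles the degenerate elastic term — while carrying the pinning $\inf w_k\le m_{s_j'}$ through the optimisation, yields $\liminf_k\fe(u_{\e_k},v_{\e_k};I_j)\ge\Gdue^{(\mu)}(\sigma,s_j')$, the $\mu$-relaxed energy of Lemma~\ref{lem:gbarmu1}. By that lemma and then monotonicity and $\ell$-Lipschitz continuity of $\Gdue(\cdot,s_j')$ (Theorem~\ref{thm:gbar1}), $\Gdue^{(\mu)}(\sigma,s_j')\ge\Gdue(\sigma,s_j')-3\mu^2\ge\Gdue(|[u](x_j)|,s_j')-\ell\,\omega(\delta)-3\mu^2$; letting $\mu\to0$ proves the claim. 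The same argument applies with a one-sided window (and $u^\pm$ replaced by $b$) when $x_j\in\{0,1\}$, and also when $|[u](x_j)|=0$, in which case the positive bound $\Gdue(0,s_j')>0$ is produced by the pinning $w_k(0)\le m_{s_j'}<1$ alone.

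Combining the three estimates, for every $\delta>0$,
\[
\fbar'(u,v)\ \ge\ \int_{A_\delta}h(|u'|)\de x+\sum_{x\in J_u\cap A_\delta}\Gdue(|[u](x)|,0)+\ell|D^cu|(A_\delta)+\sum_{j=1}^{m}\bigl(\Gdue(|[u](x_j)|,s(x_j))-\omega(\delta)\bigr).
\]
Since $\Gamma$ is finite and $D^cu$ has no atoms, $|D^cu|(I_j)\to0$ and $\int_{I_j}h(|u'|)\de x\to0$ as $\delta\to0$; moreover $A_\delta$ increases to $\Omega\setminus\Gamma$, so $\sum_{x\in J_u\cap A_\delta}\Gdue(|[u](x)|,0)\to\sum_{x\in(J_u\cap\Omega)\setminus\Gamma}\Gdue(|[u](x)|,0)$. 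Passing to the limit $\delta\to0$ and recalling $u\equiv b$ outside $[0,1]$, the right-hand side becomes $\int_0^1h(|u'|)\de x+\ell|D^cu|(0,1)+\sum_{x\in J_u^b\cup\Gamma}\Gdue(|[u](x)|,s(x))=\Phi(u;\Gamma,s,b)=\fbar(u,v)$, which is \eqref{gammaliminf}.
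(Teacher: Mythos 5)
Your overall architecture (localize away from $\Gamma$, blow up at the points of $\Gamma$, pass through the $\mu$-relaxed density of Lemma~\ref{lem:gbarmu1}, then use subadditivity and Lipschitz continuity of $\Gdue$ and let $\mu,\delta\to0$) is the same as the paper's, and the treatment of $A_\delta$, of the boundary points, and of the final limit $\delta\to0$ is fine. The gap is the single displayed claim $\liminf_k\fe(u_{\e_k},v_{\e_k};I_j)\ge\Gdue^{(\mu)}(\sigma,s_j')$, which is exactly the heart of the proposition and is asserted rather than proved. The obstruction is the truncation in \eqref{deffe}: after rescaling, the elastic coefficient is $\e_k^{-1}\wedge f^2(w_k)$, which is \emph{smaller} than the coefficient $f^2(\beta)$ appearing in the cell formula \eqref{defgbarmu1} precisely where $w_k$ is close to $1$ (and there $f$ blows up, by \eqref{assf3}). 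Hence the rescaled pair $(z_k,w_k)$ is not a competitor whose energy dominates $\Gdue^{(\mu)}(\sigma_k,s_j')$ -- the inequality between the truncated and untruncated energies goes the wrong way -- and in addition the window endpoints only satisfy $w_k(a_k),w_k(b_k)\ge 1-\mu$ rather than $=1-\mu$. Your proposed remedy, replacing $\e_k^{-1}\wedge f^2$ by $M\wedge f^2$ and letting $M\to\infty$, would require (i) a compactness/lower-semicontinuity argument for the rescaled sequences on the expanding intervals $(a_k,b_k)$, which is not available as a black box (Theorem~\ref{thm:blowup} applies only to recovery sequences, and $z_k$ need not be compact where $w_k$ is small), and (ii) a proof that the $M$-truncated cell formula converges to $\Gdue^{(\mu)}$ as $M\to\infty$; neither is supplied, so as written the step does not stand.

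The paper's proof of Proposition~\ref{prop:liminf} is built precisely to avoid this: inside the window it isolates the connected component $C_\e$ of $\{v_\e<1-\eta\}$ containing $x_j$ (and the finitely many further components where $v_\e$ dips below $1-\delta$). On $C_\e$ one has $v_\e<1-\eta$, so $f(v_\e)\le f(1-\eta)$ and the truncation is inactive for small $\e$, and by continuity the boundary values of $v_\e$ on $\partial C_\e$ are \emph{exactly} $1-\eta$; hence the rescaled restriction is literally admissible in the infimum defining $\Gdue^{(\eta)}$, with no limit passage needed. The other low-$v_\e$ components are estimated by $\Gzero$ in the same way, the remaining set $\{v_\e\ge 1-\delta\}$ by the diffuse estimate through $h$ and $\Gzero(s)\le\ell s$, and the pieces are recombined using the subadditivity inequalities \eqref{gnew2} and \eqref{gbarsub1} before sending $\e\to0$, $\mu\to0$, $\eta\to0$, $\delta\to0$. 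To close your argument you would need to reproduce this decomposition (or an equivalent device) inside each window; without it, the per-window lower bound remains unjustified.
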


\begin{proof}
The proof follows the lines of \cite[Proposition~5.1]{CFI}, with the natural modifications required to include the additional constraint.
We denote $\Gamma=\{\bar{x}_1,\ldots,\bar{x}_k\}$, and $\bar{s}_i=s(\bar{x}_i)$.
Fix any sequence $(u_\e,v_\e)\to(u,v)$ in $L^1(\Omega)\times L^1(\Omega)$ with $\sup_\e\febar(u_\e,v_\e)<\infty$. The proof will be achieved by showing that $u\in\BV(\Omega)$, $v=1$ almost everywhere, and
\begin{equation} \label{liminf1}
\fbar(u,1) \leq \liminf_{\e\to0}\fe(u_\e,v_\e;\Omega)\,.
\end{equation}
By possibly passing to a subsequence, we can assume without loss of generality that the $\liminf$ in \eqref{liminf1} is in fact a limit, and that the convergence of $u_\e$ and $v_\e$ is also pointwise almost everywhere. The uniform bound on the energy of $(u_\e,v_\e)$ gives $v=1$ almost everywhere, and $v_\e(\bar{x}_i) \leq m_{\bar{s}_i}<1$. Moreover, $u(x)=b(0)$ for $x<0$, $u(x)=b(1)$ for $x>1$.

By repeating the construction in the first part of the proof of \cite[Proposition~5.1]{CFI}, given any $\delta>0$ one can determine a finite number of points $S=\{ t_1,\ldots,t_L \}\subset\Omega$, with $\Gamma\subset S$, with the following property: for all $\eta>0$ sufficiently small, {$\eta\ll\delta$ and $\eta<1-m_{\bar s_i}$ for all $i$}, setting
\begin{equation*}
S_\eta := \bigcup_{i=1}^L(t_i-\eta,t_i+\eta),
\end{equation*}
one has for all $\e$ sufficiently small (depending on $\eta$)
\begin{equation} \label{liminf2}
(1-\omega(\delta))\int_{\Omega\setminus S_\eta} h(|u_\e'|)\de x \leq \fe(u_\e,v_\e;\Omega\setminus S_\eta)
\end{equation}
where $\omega(\delta)\to0$ as $\delta\to0$ is a modulus of continuity.

The uniform bound on the energies $\fe(u_\e,v_\e)$, together with \eqref{liminf2}, yields
\begin{equation*}
\sup_\eta\sup_{\e} \int_{\Omega\setminus S_\eta}|u_\e'|\de x <\infty\,,
\end{equation*}
and therefore $u\in \BV(\Omega\setminus S_\eta)$ for all $\eta>0$; in turn we have $u\in \BV(\Omega)$ by the finiteness of $S$, and by lower semicontinuity we obtain
\begin{equation} \label{liminf3}
(1-\omega(\delta))\fbar(u,1;\Omega\setminus S_\eta) \leq \liminf_{\e\to0}\fe(u_\e,v_\e;\Omega\setminus S_\eta)\,.
\end{equation}

We now estimate the contribution to the energy coming from the region $S_\eta$. We first consider the points $\bar{x}_i\in\Gamma$, $i=1,\ldots,k$, which in particular belong to $S$ (recall that $\Gamma\subset S$). Let $J^i_\eta{:=}(\bar{x}_i-\eta,\bar{x}_i+\eta)$. We introduce another small parameter $\mu>0$ and we choose $x_1,x_2\in J_\eta^i$, $x_1<\bar x_i<x_2$, with the following properties:
\begin{align}
&v_\e(x_1)\to1, & & v_\e(x_2)\to1, \label{liminf5}\\
&u_\e(x_1)\to u(x_1), & & u_\e(x_2)\to u(x_2), \label{liminf6}\\
&|u(x_1)-u^-(\bar x_i)|<\mu, & & |u(x_2)-u^+(\bar x_i)|<\mu. \label{liminf7}
\end{align}
We define $I{:=}(x_1,x_2)$ and denote by $C_\e$ the connected component of the set
\begin{equation} \label{liminf8}
\{ x\in I : v_\e(x)<1-\eta \}
\end{equation}
containing $\bar{x}_i$, and by $C_\e^j$ the connected components (different from $C_\e$) of the same set \eqref{liminf8} in which $v_\e$ achieves the value $1-\delta$ (recall that $\eta\ll\delta$, {$\eta<1-m_{\bar s_i}$} {and \eqref{liminf5}}). In each such component $C_\e^j$ we have, denoting by $y$ its first endpoint and by $z$ an interior point in which {$v_\e(z)=1-\delta$,}
\begin{align*}
\fe(u_\e,v_\e;C_\e^j)
& \geq \int_{y}^z \biggl( \frac{(1-v_\e)^2}{4\e}+\e|v_\e'|^2 \biggr)\de x \geq \int_y^z (1-v_\e)|v_\e'|\de x \\
& \geq \frac12\bigl(1-v_\e(z)\bigr)^2 - \frac12\bigl(1-v_\e(y)\bigr)^2 = \frac{\delta^2-\eta^2}{2}
{\ge\frac14\delta^2}\,.
\end{align*}
Therefore the number ${N_\e}$ of the components $C^j_\e$ is uniformly bounded by {$\frac{c}{\delta^2}$,} where $c$ is a constant independent of $\e$. Moreover in each set $C^j_\e$ and in $C_\e$ we have $f_\e(v_\e)=\sqrt{\e}f(v_\e)$ for $\e$ small, as $v_\e<1-\eta$. Recalling that $\bar{x}_i\in C_\e$ and $v_\e(\bar{x}_i)\leq m_{\bar{s}_i}$, we can now compute using the rescaling $\alpha_\e(t){:=}u_\e(\bar{x}_i+\e t)$, $\beta_\e(t){:=}v_\e(\bar{x}_i+\e t)$
\begin{align} \label{liminf9}
\fe(u_\e,v_\e;C_\e)
& = \int_{\frac{1}{\e}(C_\e-\bar{x}_i)} \Bigl( f^2(\beta_\e)|\alpha_\e'|^2 + \frac{(1-\beta_\e)^2}{4} + |\beta_\e'|^2 \Bigr)\de t \nonumber \\
& \geq {\Gdue}^{(\eta)} \biggl(\bigg| \int_{C_\e} u_\e'\de x \bigg|, \bar{s}_i\biggr)
\geq {\Gdue}\biggl(\bigg| \int_{C_\e} u_\e'\de x \bigg|, \bar{s}_i\biggr) -3\eta^2\,,
\end{align}
{ where ${\Gdue}^{(\eta)}$ is the function defined in \eqref{defgbarmu1} and the last inequality follows by Lemma~\ref{lem:gbarmu1}.}
On the other components $C_\e^j$ {the same argument gives}
\begin{align} \label{liminf10}
\fe(u_\e,v_\e;C_\e^j) & 
\geq {{\Gdue} \biggl(\bigg| \int_{C_\e^j} u_\e'\de x \bigg|,0 \biggr) -3\eta^2}
= {\Gzero \biggl(\bigg| \int_{C_\e^j} u_\e'\de x \bigg| \biggr) -3\eta^2}\,,
\end{align}
{ where $\Gzero$ has been defined in \eqref{defgnew} and satisfies \eqref{ggbar}.}
Finally, outside the selected components, that is in the set $\tilde{C}_\e:=I\setminus (C_\e\cup\bigcup_{j=1}^{{N_\e}} C_\e^j)$, one has $v_\e\geq1-\delta$ and therefore an estimate analogous to \eqref{liminf2} holds:
\begin{align} \label{liminf11}
\fe(u_\e,v_\e;\tilde{C}_\e) & \geq (1-\omega(\delta))\int_{\tilde{C}_\e}h(|u_\e'|)\de x  \geq (1-\omega(\delta)) \biggl[ \ell\int_{\tilde{C}_\e}|u_\e'|\de x - \frac{\ell^2}{4}|\tilde{C}_\e| \biggr] \nonumber\\
& \geq (1-\omega(\delta))\Gzero \biggl(\bigg| \int_{\tilde{C}_\e} u_\e'\de x \bigg| \biggr) - \frac{\ell^2}{2}\eta\,.
\end{align}
where we used the definition \eqref{defh} of $h$ { and Theorem~\ref{thm:propg1}\ref{item3gbar2}.}
By collecting \eqref{liminf9}--\eqref{liminf11}
\begin{align*}
\fe(u_\e,v_\e;I)
& \geq {\Gdue}\biggl(\bigg| \int_{C_\e} u_\e'\de x \bigg|, \bar{s}_i\biggr) -3\eta^2 + \sum_{j=1}^{N_\e} \Gzero \biggl(\bigg| \int_{C_\e^j} u_\e'\de x \bigg| \biggr) - 3{N_\e}\eta^2 \\
&\qquad\qquad + (1-\omega(\delta))\Gzero \biggl(\bigg| \int_{\tilde{C}_\e} u_\e'\de x \bigg| \biggr) - \frac{\ell^2}{2}\eta \\
& \geq {\Gdue}\biggl(\bigg| \int_{C_\e} u_\e'\de x \bigg|, \bar{s}_i\biggr) + (1-\omega(\delta)) \Gzero \biggl(\bigg| \int_{I\setminus C_\e} u_\e'\de x \bigg| \biggr) -\Bigl(3+\frac{c}{{\delta^2}}\Bigr)\eta^2 - \frac{\ell^2}{2}\eta \\
& \geq (1-\omega(\delta)) {\Gdue}\bigl(|u_\e(x_2)-u_\e(x_1)|, \bar{s}_i\bigr) -\frac{c}{{\delta^2}}\eta^2 - \frac{\ell^2}{2}\eta\,,
\end{align*}
{ where we used \eqref{gnew2} in the second passage and \eqref{gbarsub1} in the third one}. Hence \eqref{liminf6} {and the continuity of $\Gdue$} yield
\begin{equation*}
\liminf_{\e\to0}\fe(u_\e,v_\e;J^i_\eta) \geq (1-\omega(\delta)) {\Gdue}\bigl(|u(x_2)-u(x_1)|, \bar{s}_i\bigr) -\frac{c}{{\delta^2}}\eta^2 - \frac{\ell^2}{2}\eta\,,
\end{equation*}
and finally letting $\mu\to0$
\begin{equation} \label{liminf12}
\liminf_{\e\to0}\fe(u_\e,v_\e;J^i_\eta) \geq (1-\omega(\delta)) {\Gdue}\bigl(|[u]|(\bar x_i) , \bar{s}_i\bigr) -\frac{c}{{\delta^2}}\eta^2 - \frac{\ell^2}{2}\eta\,.
\end{equation}

The inequality \eqref{liminf12} gives an estimate of the contribution to the energy coming from the points in $\Gamma$. For the points $t_i\in S\setminus\Gamma$, we can reproduce the argument above just removing the component $C_\e$ (in this case the argument is the same as in the proof of \cite[Proposition~5.1]{CFI}) and obtain that, for $I_\eta^i=(t_i-\eta,t_i+\eta)$,
\begin{equation} \label{liminf4}
\liminf_{\e\to0}\fe(u_\e,v_\e;I_\eta^i) \geq (1-\omega(\delta)) \Gzero(|[u]|(\bar x_i)) + O(\eta)\,.
\end{equation}

Eventually we collect \eqref{liminf3}, \eqref{liminf12}, and \eqref{liminf4} and we let $\eta\to0$:
\begin{align*}
\liminf_{\e\to0}\fe(u_\e,v_\e;\Omega) \geq (1-\omega(\delta))\fbar(u,1).
\end{align*}
The conclusion \eqref{liminf1} follows by letting $\delta\to0$.
\end{proof}

\begin{proposition}[Limsup inequality]\label{prop:limsup}
For every $(u,v)\in L^1(\Omega)\times L^1(\Omega)$ it holds
\begin{equation} \label{gammalimsup}
\fbar''(u,v)\leq\fbar(u,v)\,.
\end{equation}
\end{proposition}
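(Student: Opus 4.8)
The plan is to establish the $\Gamma$-limsup inequality by an explicit construction of recovery sequences, adapting the Ambrosio--Tortorelli type construction of \cite{CFI} so as to account for the memory constraint $v(x)\le m_{s(x)}$ on $\Gamma$ and for the modified surface density $\Gdue(\cdot,s(\cdot))$ at the points of $\Gamma$. If $\fbar(u,v)=\infty$ there is nothing to prove, so I may assume $v=1$ a.e.\ in $\Omega$, $u\in\BV(\Omega)$, and $u(x)=b(0)$ for $x<0$, $u(x)=b(1)$ for $x>1$. By Theorem~\ref{thm:relaxation} and Remark~\ref{rmk:relaxation} (both arguments carried out, as in the proof of Theorem~\ref{thm:relaxation}, on the enlarged interval $\Omega=(-1,2)$), the functional $\Phi(\cdot\,;\Gamma,s,b)=\fbar(\cdot,1)$ is the $L^1(\Omega)$-lower semicontinuous envelope of
\[
\tilde\Phi_0(u):=\int_\Omega |u'|^2\de x+\sum_{x\in J_u\cup\Gamma}\Gdue(|[u](x)|,s(x)),
\]
which is finite only on $u\in\SBV(\Omega)$ with $\hz(J_u)<\infty$, $u'\in L^2(\Omega)$ and the prescribed boundary behaviour (here $s$ is extended by $0$ off $\Gamma$). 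Since $u\mapsto\fbar''(u,1)$ is $L^1(\Omega)$-lower semicontinuous, it suffices to prove $\fbar''(u,1)\le\tilde\Phi_0(u)$ for every such $u$ with $\tilde\Phi_0(u)<\infty$.

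Fix such a $u$ and small parameters $\mu,\eta_0>0$, and write $J_u\cup\Gamma=\{z_1,\dots,z_P\}$. For each $p$ I choose, using Lemma~\ref{lem:gbarmu1} and the definition \eqref{defgbarmu1}, a pair $(\alpha_p,\beta_p)$ on a bounded interval $[a_p,b_p]$ with $|\alpha_p(b_p)-\alpha_p(a_p)|=|[u](z_p)|$, $0\le\beta_p\le1$, $\beta_p(a_p)=\beta_p(b_p)=1-\mu$ and $\inf\beta_p\le m_{s(z_p)}$, which is $\eta_0$-optimal for $\Gdue^{(\mu)}(|[u](z_p)|,s(z_p))$; when $[u](z_p)=0$ (i.e.\ $z_p\in\Gamma\setminus J_u$) the pair reduces to $\alpha_p$ constant. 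Since $f(\beta_p(\cdot))\le f(1-\mu)<\infty$, by \eqref{deffe} we have $f_\e(\beta_p(\cdot))=\sqrt\e\,f(\beta_p(\cdot))$ for $\e$ small. For $\e$ small the intervals $B_p^\e$ obtained by rescaling $[a_p,b_p]$ by $\e$ around $z_p$ and adding two affine collars of length $\e$ are pairwise disjoint and contained in $(-L_\e,1+L_\e)$ (since $\e\,\mathrm{diam}[a_p,b_p]\ll L_\e$ and $z_p\in[0,1]$); on $B_p^\e$ I let $v_\e$ be the rescaled profile of $\beta_p$ (centred so that $z_p$ corresponds to a minimum point of $\beta_p$) completed by the collars joining $1-\mu$ to $1$, and $v_\e\equiv1$ on $\Omega\setminus\bigcup_pB_p^\e$; I let $u_\e$ be the rescaled transition $u(z_p^-)+[u](z_p)\,\tilde\alpha_p$ (where $\tilde\alpha_p:=(\alpha_p-\alpha_p(a_p))/(\alpha_p(b_p)-\alpha_p(a_p))$) on the central part of $B_p^\e$, equal to $u$ off $\bigcup_pB_p^\e$, interpolated affinely on the collars (an $O(\e)$ correction) so as to be continuous. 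Then $u_\e\to u$ and $v_\e\to1$ in $L^1(\Omega)$; moreover $v_\e(z_p)=\min\beta_p\le m_{s(z_p)}$ and $u_\e=b(0)$ on $\{x<-L_\e\}$, $u_\e=b(1)$ on $\{x>1+L_\e\}$, so $(u_\e,v_\e)$ is admissible for $\febar$.

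It remains to estimate $\fe(u_\e,v_\e;\Omega)=\fe(u_\e,v_\e;\Omega\setminus\bigcup_pB_p^\e)+\sum_p\fe(u_\e,v_\e;B_p^\e)$. Where $v_\e=1$ one has, by \eqref{deffe}, contribution $\int_{\Omega\setminus\bigcup_pB_p^\e}|u'|^2\de x\to\int_\Omega|u'|^2\de x$. On the central part of $B_p^\e$, the change of variables $x=z_p+\e t$ together with $f_\e(\beta_p)=\sqrt\e f(\beta_p)$ and $|[u](z_p)|^2|\tilde\alpha_p'|^2=|\alpha_p'|^2$ gives
\[
\fe\bigl(u_\e,v_\e;\text{central part of }B_p^\e\bigr)=\int_{a_p}^{b_p}\Bigl(f^2(\beta_p)|\alpha_p'|^2+\tfrac{(1-\beta_p)^2}{4}+|\beta_p'|^2\Bigr)\de t\le\Gdue^{(\mu)}(|[u](z_p)|,s(z_p))+\eta_0,
\]
which by Lemma~\ref{lem:gbarmu1} is at most $\Gdue(|[u](z_p)|,s(z_p))+3\mu^2+\eta_0$; on each collar $\int f_\e^2(v_\e)|u_\e'|^2\le\int|u_\e'|^2=O(\e)$ and $\int(\tfrac{(1-v_\e)^2}{4\e}+\e|v_\e'|^2)=O(\mu^2)$. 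Summing the finitely many terms,
\[
\limsup_{\e\to0}\fe(u_\e,v_\e;\Omega)\le\int_\Omega|u'|^2\de x+\sum_{p=1}^P\Gdue(|[u](z_p)|,s(z_p))+C(\mu^2+\eta_0)=\tilde\Phi_0(u)+C(\mu^2+\eta_0),
\]
so $\fbar''(u,1)\le\tilde\Phi_0(u)+C(\mu^2+\eta_0)$; letting $\eta_0,\mu\to0$ yields $\fbar''(u,1)\le\tilde\Phi_0(u)$, which by the first paragraph proves \eqref{gammalimsup}.

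The main technical obstacle is the gluing near the points $z_p$: because $f$ is only defined on $[0,1)$ and typically blows up at $1$, one must keep the transition profiles away from the value $1$, so that $f_\e(v_\e)=\sqrt\e f(v_\e)$ (and not $1$) throughout the rescaled region, where the energy has the scale-invariant form above; this is exactly what the choice $\beta_p(a_p)=\beta_p(b_p)=1-\mu$ achieves, at the cost of the collar corrections $O(\mu^2)$ which are compared back to $\Gdue$ via Lemma~\ref{lem:gbarmu1}. The remaining points — disjointness of the $B_p^\e$, the $O(\e)$ matching corrections, the collar estimates, and the bookkeeping of the Dirichlet conditions on $\Omega$ — are routine and parallel \cite{CFI}.
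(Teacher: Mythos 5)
Your construction is correct and rests on the same core mechanism as the paper's proof: rescale an (almost) optimal profile for the cell formula at scale $\e$ around the crack points, enforce the irreversibility constraint through the minimum value of the rescaled $\beta$, use $L_\e/\e\to\infty$ to handle the Dirichlet datum, and pass from $\SBV$ with finitely many jumps to general $u$ via the relaxation result (Theorem~\ref{thm:relaxation}, Remarks~\ref{rmk:relaxation} and \ref{rmk:relaxation2}) and the lower semicontinuity of $\fbar''$. The implementation differs in three respects. First, you build the recovery sequence simultaneously at all points of $J_u\cup\Gamma$, whereas the paper localizes: on intervals not meeting $\Gamma$ it simply invokes Theorem~\ref{thm:cfi}, and it only constructs a profile by hand near a single point of $\Gamma$. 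Second, you take profiles with boundary value $1-\mu$ from $\Gdue^{(\mu)}$ and pay $O(\mu^2)$ collar errors, compensated through Lemma~\ref{lem:gbarmu1}; the paper avoids the collars altogether by using the characterization \eqref{defgbar2}, whose competitors already satisfy $\beta(\pm T)=1$ --- the concern that $f$ blows up at $1$ is harmless there, since $f_\e\le 1\wedge\sqrt{\e}\,f$ and finiteness of the chosen profile's energy forces $\alpha'=0$ a.e.\ on $\{\beta=1\}$. Third, you glue $u_\e$ directly to $u$ by affine matching, while the paper first treats $u$ locally constant near the jump and then removes this restriction by an approximation argument combined with lower semicontinuity of $\fbar''$. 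Both routes work; yours trades the paper's two-step approximation for somewhat heavier bookkeeping in the gluing.

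Two small corrections, neither of which is a gap. The matching term on the $u_\e$-collars is not $O(\e)$ for general $u'\in L^2$: the mismatch at the collar endpoints is only of order $\bigl(\int_{z_p-C\e}^{z_p+C\e}|u'|^2\de x\bigr)^{1/2}(C\e)^{1/2}$, so the collar energy is bounded by $C\int_{z_p-C\e}^{z_p+C\e}|u'|^2\de x$, which tends to $0$ by absolute continuity of the integral --- this $o(1)$ bound is all you need. Moreover, an admissible $\beta_p$ in \eqref{defgbarmu1} need not satisfy $\beta_p\le 1-\mu$ on all of $[a_p,b_p]$, so your assertion $f(\beta_p)\le f(1-\mu)$ and the ensuing identity $f_\e(\beta_p)=\sqrt{\e}f(\beta_p)$ may fail where $\beta_p$ reaches $1$; the estimate you need survives nonetheless, because on $\{\beta_p=1\}$ finiteness of the energy forces $\alpha_p'=0$ a.e.\ (recall $f(t)\to\infty$ as $t\to1$ by \eqref{assf3}), while elsewhere $f_\e\le\sqrt{\e}f$ gives the inequality $\fe(u_\e,v_\e;\cdot)\le\int\bigl(f^2(\beta_p)|\alpha_p'|^2+\tfrac{(1-\beta_p)^2}{4}+|\beta_p'|^2\bigr)\de t$, which is all that is used.
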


\begin{proof} 
{Let us consider first} the case {$v\equiv 1$ and} $u\in\SBV(0,1)$, {with $u'\in L^2(0,1)$} and $\mathcal{H}^0(J_u)<\infty$, and {recall that} by assumption also $\mathcal{H}^0(\Gamma)<\infty$.
By a localization argument we can further assume that $\Gamma$ consists of a single point $x_0\in[0,1]$, and that $u$ has at most one jump point, also located at $x_0$. Indeed, in any interval which does not contain any point of $\Gamma$ the conclusion follows directly by Theorem~\ref{thm:cfi}.

Let therefore $\Gamma=\{x_0\}$, $x_0\in[0,1]$, and let $s_0:=s(x_0)$.
Let us also assume for the moment that $u$ only takes the two values $u^{\pm}(x_0)$ in a neighbourhood of $x_0$. Given any $\sigma>0$, { using the characterization \eqref{defgbar2} of ${\Gdue}$} {(and rescaling $\alpha$)} we can find $T>0$ and $\alpha,\beta\in H^1(-T,T)$ such that $\alpha(-T)=u^-(x_0)$, $\alpha(T)=u^+(x_0)$, $0\leq\beta\leq1$, $\beta(\pm T)=1$, $\beta(0)\leq m_{s_0}$, and
\begin{equation*}
\int_{-T}^T \biggl( f^2(\beta)|\alpha'|^2 + \frac{(1-\beta)^2}{4} + |\beta'|^2 \biggr) \de t \leq {\Gdue}(|[u](x_0)|,s_0) + \sigma .
\end{equation*}
We then take as recovery sequence 
\begin{equation*}
u_\e(x):=
\begin{cases}
\alpha\bigl(\frac{x-x_0}{\e}\bigr) & \text{if } x\in A_\e,\\
u(x) & \text{if }x\in \Omega\setminus A_\e,
\end{cases}
\qquad
v_\e(x):=
\begin{cases}
\beta\bigl(\frac{x-x_0}{\e}\bigr) & \text{if } x\in A_\e,\\
1 & \text{if }x\in \Omega\setminus A_\e,
\end{cases}
\end{equation*}
where $A_\e:=(x_0-\e T,x_0+\e T)$. Notice that, since $\frac{L_\e}{\e}\to\infty$ as $\e\to0$, we have $A_\e\subset(x_0-L_\e,x_0+L_\e)$ for $\e$ small enough, and therefore $u_\e$ satisfies the boundary conditions as in \eqref{defFebar} in the case $x_0$ is a boundary point. It is easily seen that $u_\e\to u$ and $v_\e\to1$ in $L^1(0,1)$. Moreover, as $v_\e(x_0)=\beta(0)\leq m_{s_0}$ we have
\begin{align*}
\febar(u_\e,v_\e) &= \int_{\Omega\setminus A_\e} |u'|^2\de x + \int_{A_\e} \biggl(f_\e^2(v_\e)|u_\e'|^2 + \frac{(1-v_\e)^2}{4\e} + \e|v_\e'|^2 \biggr)\de x \\
&\leq \int_0^1 |u'|^2\de x + \int_{-T}^T \biggl(f^2(\beta)|\alpha'|^2 + \frac{(1-\beta)^2}{4} + |\beta'|^2 \biggr)\de t \\
&\leq \int_0^1 |u'|^2\de x + {\Gdue}(|[u](x_0)|,s_0) + \sigma.
\end{align*}
Since $\sigma$ is arbitrary we obtain
\begin{equation} \label{limsup1}
\fbar''(u,1) \leq \limsup_{\e\to0}\febar(u_\e,v_\e) \leq \int_0^1 |u'|^2\de x + {\Gdue}(|[u](x_0)|,s_0).
\end{equation}
In order to remove the assumption that $u$ is locally piecewise constant in a neighbourhood of $x_0$, we consider the sequence
$$
u_j(x) :=
\begin{cases}
u(x_0-\frac1j) & \text{for }x_0-\frac1j < x < x_0, \\
u(x_0+\frac1j) & \text{for }x_0 < x < x_0+\frac1j, \\
u(x) & \text{for } x\in\Omega\setminus(x_0-\frac1j,x_0+\frac1j).
\end{cases}
$$
In view of the previous discussion inequality \eqref{limsup1} holds with $u$ replaced by $u_j$; since $u_j\to u$ in $L^1(0,1)$ {and $u'_j\to u'$ in $L^2(0,1)$} as $j\to\infty$, by lower semicontinuity of $\fbar''$ we conclude that \eqref{limsup1} is still satisfied by $u$.

The inequality \eqref{limsup1}, together with Theorem~\ref{thm:cfi} and a localization argument, proves that for every $u\in\SBV(0,1)$ with {$u'\in L^2(0,1)$ and} $\hz(J_u)<\infty$ we have
\begin{equation*}
\fbar''(u,1) \leq \int_0^1 |u'|^2\de x + \sum_{x\in\Gamma} {\Gdue}(|[u](x)|,s(x)) + \sum_{x\in J^b(u)\setminus\Gamma} \Gzero(|[u](x)|) \,.
\end{equation*}
Then the conclusion \eqref{gammalimsup} follows since $\fbar$ is the lower semicontinuous envelope of the right-hand side, by Theorem~\ref{thm:relaxation} and Remark~\ref{rmk:relaxation}.
\end{proof}

\begin{remark} \label{rmk:cfi}
From Theorem~\ref{thm:gconv}, in the particular case $\Gamma=\emptyset$, it follows that the $\Gamma$-convergence result of \cite{CFI} (see Theorem~\ref{thm:cfi}) continues to hold if we include Dirichlet boundary conditions.
\end{remark}

%%%%%%%%%%%%%%%%%%%%%%%%%%%%%%%%%%%%%%%%%%%%%%%%%%%%%%%%%%%%%%%%%%
%%%%%%%%%%%%%%%%%%%%%%%%%%%%%%%%%%%%%%%%%%%%%%%%%%%%%%%%%%%%%%%%%%
%%%%%%%%%%%%%%%%%%%%%%%%%%%%%%%%%%%%%%%%%%%%%%%%%%%%%%%%%%%%%%%%%%
%%%%%%%%%%%%%%%%%%%%%%%%%%%%%%%%%%%%%%%%%%%%%%%%%%%%%%%%%%%%%%%%%%

\section{Static phase-field approximation: proofs} \label{sect:proofs}

{ We prove in this section all properties stated in {Section}s~\ref{subsect:propg} and \ref{subsect:propbarg} of the fracture energy densities $\Gzero$ and ${\Gdue}$, defined in \eqref{defgnew} and \eqref{defgbar} respectively.}
{ We also give the proof of the blow-up result in {Section}~\ref{subsect:blowup}. Along this section, we work under the assumptions \eqref{assf1}--\eqref{assf3} on $f$.}

%%%%%%%%%%%%%%%%%%%%%%%%%%%%%%%%%%%%%%%%%%%%%%%%%%%%%%%%%%%%%%%%%%

\subsection{Proof of the statements of Section~\ref{subsect:propg}}\label{subsect:proofg}

{ We detail is a series of propositions the properties of the function $\Gzero$, defined in \eqref{defgnew}. The proofs of the results stated in Section~\ref{subsect:propg} follow by combining the propositions below.

We start by proving the two alternative representation formulas \eqref{eqcar1} and \eqref{eqcar2} for $\Gzero$, and by discussing the existence and the properties of optimal profiles for the minimum problem \eqref{defgnew}.}

\begin{proposition} \label{prop:g2b}
The following properties hold.
	\begin{enumerate}
		\item \label{prop:g2bcar} { The equivalent characterizations \eqref{eqcar1} and \eqref{eqcar2} hold.}
		\item\label{prop:g2bexist} {For $\Gzero(s)<1$ the problem in (\ref{defgnew}) has a minimizer $(\alpha_s,\beta_s)\in\mathcal U_1$.}
		\item\label{prop:g2bming} {Any minimizer obeys $\min \beta_s\ge 1- \sqrt{\Gzero(s)}$.}
		\item\label{prop:g2bmong} {If $0\le s'<s$ and $\Gzero(s)<1$, then $\Gzero(s')<\Gzero(s)$.}
		\item\label{prop:g2bmonoton} 
		For any minimizer $(\alpha_s,\beta_s)$ there is $t_*\in\R$ such that $\beta_s$ is nonincreasing in $(-\infty,t_*)$ and nondecreasing in $(t_*,\infty)$.
		\item\label{prop:g2bC1}
		If $\Gzero(s)<1$, then for any minimizer $(\alpha_s,\beta_s)$ there are $T_-,T_+\in{\R\cup\{\pm\infty\}}$ such that $\beta_s\in C^1((T_-,T_+);{[0,1)})$ and $\alpha_s \in C^1(\R)$, with $\beta_s=1$ and $\alpha_s'=0$ on $\R\setminus(T_-,T_+)$. The map $\alpha_s-\alpha_s(-\infty)$ is a $C^1$ bijection of $(T_-,T_+)$ onto $(0,1)$.
		\item\label{prop:equip} {Any minimizer  $(\alpha_s,\beta_s)$ obeys
			\begin{equation}\label{eqequipartab}
			s^2f^2(\beta_s) |\alpha_s'|^2+|\beta_s'|^2=\frac{(1-\beta_s)^2}{4}  \text{ pointwise in $\R$}.
			\end{equation}}
%			This in particular implies that $\alpha'_s=0$ on the set  $\{\beta_s=1\}$.}
	\end{enumerate}
\end{proposition}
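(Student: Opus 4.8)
The plan is to establish the seven items in a logical (not numerical) order: first the representation formulas \ref{prop:g2bcar}, then an a priori estimate from which \ref{prop:g2bming} is immediate, then existence \ref{prop:g2bexist} and strict monotonicity \ref{prop:g2bmong}, and finally the structure of minimizers \ref{prop:g2bmonoton}, \ref{prop:g2bC1}, \ref{prop:equip} from the Euler--Lagrange system. For \ref{prop:g2bcar} I would first prove \eqref{eqcar1}: the inequality ``$\le$'' is trivial, since a pair on $(-T,T)$ with $\alpha(-T)=0$, $\alpha(T)=1$, $\beta(\pm T)=1$ extends to an element of $\mathcal U_1$ of the same energy by setting $\alpha$ constant and $\beta\equiv1$ outside $(-T,T)$; for ``$\ge$'', given $(\alpha,\beta)\in\mathcal U_1$ with $\gs(\alpha,\beta)<\infty$ and (after a sign change) $\int\alpha'=1$, I fix $\delta>0$, choose $T$ so large that $\beta>1-\delta$ outside $[-T+1,T-1]$ and $\int_{|t|>T-1}(\cdots)\de t<\delta$, cut $\beta$ off linearly to $1$ and $\alpha$ off to constants on the two buffer unit intervals (extra cost $O(\delta)$), and rescale $\alpha$ by a factor tending to $1$ as $\delta\to0$ to restore unit increment. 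For \eqref{eqcar2} I use $(1-\beta)f(\beta)=f_1(1-\beta)$, so the integrand equals $\sqrt{s^2 f_1(1-\beta)^2|\alpha'|^2+(1-\beta)^2|\beta'|^2}$ and is invariant under orientation-preserving reparametrisation: then ``$\ge$'' follows from the pointwise bound $a^2+b^2\ge2ab$ with $a^2=s^2f^2(\beta)|\alpha'|^2+|\beta'|^2$ and $b=\tfrac12(1-\beta)$, applied to an \eqref{eqcar1}-competitor on $(-T,T)$ and reparametrised affinely onto $(0,1)$; and ``$\le$'' follows by reparametrising a near-optimal \eqref{eqcar2}-pair to equipartition, i.e.\ through a new parameter $t$ with $\mathrm{d}t/\mathrm{d}\tau=2\sqrt{s^2f^2(\beta)|\alpha'|^2+|\beta'|^2}/(1-\beta)$ (regularised to stay positive), which turns the \eqref{eqcar2}-integrand into $\gs$ of the reparametrised pair, a pair in $\mathcal U_1$ since $\int|\alpha'|=\int|\hat\alpha'|<\infty$ and the increment is preserved, whether the new parameter covers $\R$ or a bounded interval (then extended by constants).

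Next I record the a priori estimate: for every $(\alpha,\beta)\in\mathcal U_1$ one has $\gs(\alpha,\beta)\ge\int_\R\bigl(\tfrac{(1-\beta)^2}{4}+|\beta'|^2\bigr)\de t\ge\int_\R(1-\beta)|\beta'|\de t\ge(1-\inf\beta)^2$, the last step by splitting at (a point near) the minimum of $\beta$ and integrating $\tfrac{\mathrm{d}}{\mathrm{d}t}\bigl(-\tfrac12(1-\beta)^2\bigr)=(1-\beta)\beta'$ against $\beta(\pm\infty)=1$; note also that $f_1(\mu)=\mu f(1-\mu)\to\ell$ as $\mu\to0$ forces $f(1^-)=+\infty$, so finite energy already excludes $\beta\equiv1$. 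This gives \ref{prop:g2bming} and shows that any pair with $\gs<1$ has $\inf\beta\ge1-\sqrt{\gs}>0$. For existence \ref{prop:g2bexist} when $\Gzero(s)<1$, I take a minimizing sequence with $\int\alpha_n'=1$ and first replace $\alpha_n$ by the unique minimizer of $\int s^2f^2(\beta_n)|\alpha'|^2$ under the constraint $\int_\R\alpha'=1$: by Cauchy--Schwarz this is monotone, so $\int|\alpha_n'|=1$ and, normalising the additive constant, $\alpha_n\colon\R\to[0,1]$; I translate so that $\beta_n$ attains its minimum at $0$. Then $1-\beta_n$ is bounded in $H^1(\R)$, while $\inf\beta_n\ge1-\sqrt{\gs(\alpha_n,\beta_n)}$ stays bounded below by some $c_0>0$, which together with monotonicity of $f$ gives $\int|\alpha_n'|^2\le\bigl(s^2f(c_0)^2\bigr)^{-1}\gs(\alpha_n,\beta_n)$, bounded; so $\alpha_n$ is bounded in $H^1(\R)$. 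Extracting weak limits $1-\beta_n\wto1-\beta_s$ and $\alpha_n\wto\alpha_s$ in $H^1(\R)$, the unit increment survives because any variation of $\alpha_n$ located where $\beta_n\ge1-\delta$ costs at least $s^2f(1-\delta)^2$ times its squared $L^2$-size and $f(1-\delta)\to\infty$, so no increment escapes to infinity; lower semicontinuity of $\gs$ (convexity in the gradients, Fatou for the positive terms, $f$ continuous and $\beta_s>0$) then makes $(\alpha_s,\beta_s)$ a minimizer. Strict monotonicity \ref{prop:g2bmong} is then immediate: for $s'<s$ with $\Gzero(s)<1$ the minimizer has $\beta_s>0$, hence $f(\beta_s)>0$ a.e.\ and $|\{\alpha_s'\ne0\}|>0$, so $\mathcal G_{s'}(\alpha_s,\beta_s)<\gs(\alpha_s,\beta_s)=\Gzero(s)$.

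For the structural items I work with a minimizer $(\alpha_s,\beta_s)$, $\Gzero(s)<1$. The equipartition identity \ref{prop:equip} comes from the first variation under inner variations $t\mapsto t+\tau\varphi(t)$, $\varphi\in C_c^\infty(\R)$: this forces the conserved quantity $s^2f^2(\beta_s)|\alpha_s'|^2+|\beta_s'|^2-\tfrac{(1-\beta_s)^2}{4}$ to be constant, and evaluating along a sequence $|t_k|\to\infty$ on which both $|\beta_s'|$ and $s^2f^2(\beta_s)|\alpha_s'|^2$ tend to $0$ (such a sequence exists by integrability) shows the constant is $0$. The $\alpha$-Euler--Lagrange equation integrates on each component of $\{\beta_s<1\}$ to $s^2f^2(\beta_s)\alpha_s'\equiv c$; combined with equipartition this gives the autonomous equation $|\beta_s'|^2=P_c(\beta_s)$ with $P_c(y):=\tfrac{(1-y)^2}{4}-\tfrac{c^2}{s^2f^2(y)}$. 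Since $P_c(1)=0$ (because $f(1^-)=\infty$) and $P_c$ behaves like $\bigl(\tfrac14-\tfrac{c^2}{s^2\ell^2}\bigr)(1-y)^2$ near $y=1$ with $\tfrac14-\tfrac{c^2}{s^2\ell^2}>0$ (which follows from $c=\tfrac12 sf_1(1-\inf\beta_s)<\tfrac12 s\ell$, $f_1$ being strictly decreasing and $\inf\beta_s<1$), the approach to $\beta_s=1$ is exponential, hence takes infinite time; this excludes $c=0$ on a component and excludes intervals where $\beta_s\equiv1$, so $\{\beta_s<1\}=\R$, i.e.\ $T_\pm=\pm\infty$. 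Reading $P_c(\inf\beta_s)=0$ at the turning point gives $c=\tfrac12 sf_1(1-\inf\beta_s)$, whence $P_c(y)>0\iff f_1(1-y)>f_1(1-\inf\beta_s)\iff y>\inf\beta_s$; thus $\beta_s$ descends from $1$ to $\inf\beta_s$ and climbs back to $1$ with a single turning point, which is \ref{prop:g2bmonoton}; moreover $\beta_s$ is smooth on $\R$ since $\sqrt{P_c}$ is smooth on $(\inf\beta_s,1)$ and $\beta_s$ solves a second-order ODE with continuous right-hand side across the turning point, $\alpha_s'=c/(s^2f^2(\beta_s))>0$ is continuous with $\alpha_s'\to0$ as $|t|\to\infty$, so $\alpha_s\in C^1(\R)$ and $\alpha_s-\alpha_s(-\infty)$ is a $C^1$ bijection of $\R$ onto $(0,1)$, which is \ref{prop:g2bC1}.

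The principal obstacle is existence \ref{prop:g2bexist}: minimisation takes place on the whole line and $\alpha$ is controlled only through the $\beta$-weighted gradient term, so one must normalise $\alpha$ to be monotone to obtain an $L^1$ bound, pin down the translation via the minimum of $\beta$, and above all prevent the net increment $\int\alpha'=1$ from leaking to infinity --- which is exactly where the blow-up $f(1^-)=+\infty$ forced by \eqref{assf3} is indispensable. The reparametrisation arguments in \ref{prop:g2bcar} also require some care at points where the curve speed degenerates, but this is a routine regularisation.
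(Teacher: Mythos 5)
Most of your plan is sound: items \ref{prop:g2bcar}, \ref{prop:g2bming}, \ref{prop:g2bmong} and \ref{prop:equip} run essentially parallel to the paper (the paper cites \cite{CFI} for the equivalence of \eqref{eqcar1} and \eqref{eqcar2}, where you give a direct Modica--Mortola-type argument), and your phase-plane treatment of \ref{prop:g2bmonoton} and \ref{prop:g2bC1} (constant flux $s^2f^2(\beta_s)\alpha_s'=c$ plus the first integral $|\beta_s'|^2=\frac{(1-\beta_s)^2}{4}-\frac{c^2}{s^2f^2(\beta_s)}$) is a legitimate alternative to the paper's cut-and-paste proof of monotonicity and its first-variation proof of regularity. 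The genuine gap is in the existence proof \ref{prop:g2bexist}, precisely at the step you yourself call the principal obstacle: your justification that ``no increment escapes to infinity because any variation of $\alpha_n$ located where $\beta_n\ge1-\delta$ costs at least $s^2f^2(1-\delta)$ times its squared $L^2$-size and $f(1-\delta)\to\infty$'' is fallacious. The increment is the $L^1$-norm of $\alpha_n'$, not its $L^2$-norm, and on long intervals a small weighted $L^2$-norm is compatible with an order-one increment; by Young's inequality $s^2f^2(\beta)|\alpha'|^2+\frac{(1-\beta)^2}{4}\ge s\,\tif(1-\beta)|\alpha'|$, so transporting an amount $\sigma$ of increment through a region where $\beta_n\approx 1$ costs only about $\ell s\sigma$ --- a finite price, which is exactly the mechanism behind $\Gzero(s)\le\ell s$ and behind the appearance of $h$ and $\ell|D^cu|$ in the relaxed functional. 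Hence the dichotomy scenario (a fraction $\sigma$ of the increment, possibly accompanied by a secondary dip of $\beta_n$, drifting off to infinity) is not excluded, and the weak limit may carry increment $1-\sigma<1$, i.e.\ fail to belong to $\mathcal{U}_1$. You also cannot invoke strict subadditivity to kill the splitting: in both your ordering and the paper's, Proposition~\ref{prop:g3} is proved \emph{after} and \emph{using} existence, so that would be circular; plain subadditivity together with $\Gzero(t)<\ell t$ (Proposition~\ref{prop:g1}\ref{item5g}) only rules out increment leaking through regions where $\beta_n\to1$, not escaping secondary dips.

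The paper's proof is organized differently at exactly this point: for fixed $\beta$ the problem in $\alpha$ is solved exactly ($\alpha'=c\,f^{-2}(\beta)$, with the convention $f^{-2}(1)=0$), which reduces \eqref{defgnew} to the scalar functional $\hat{\mathcal G}_s(\beta)=s^2\bigl(\int_\R f^{-2}(\beta)\,\de t\bigr)^{-1}+\int_\R\bigl(\frac{(1-\beta)^2}{4}+|\beta'|^2\bigr)\de t$; the lower bound $\beta_k\ge\mathrm{const}>0$ (your item \ref{prop:g2bming}) then gives the pointwise domination $f^{-2}(\beta_k)\le C_s(1-\beta_k)^2$, via \eqref{assf2}, which is what allows passing to the limit in the nonlocal term. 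If you want to keep your two-variable direct method, you must replace the ``infinite cost'' claim by a genuine tightness/concentration--compactness argument for the increment (equivalently, after your normalization $\alpha_n'=c_nf^{-2}(\beta_n)$, for the mass of $f^{-2}(\beta_n)$); note that $f^{-2}(\beta)$ is an increasing function of $1-\beta$, so rearrangement or cut-and-paste modifications of the minimizing sequence are natural tools here, but some argument of this kind must actually be supplied --- as written, the compactness step is not established.
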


\begin{proof}
{ To prove \ref{prop:g2bcar}, we show the equivalence of \eqref{defgnew} and \eqref{eqcar1}; the equivalence of \eqref{eqcar1} and \eqref{eqcar2} is proved in \cite[Proposition~4.3]{CFI}. Let $\hat{g}$ denote the infimum in \eqref{eqcar1}. Notice that the minimum problems in \eqref{eqcar1} are decreasing with respect to $T$; the inequality $\Gzero(s)\leq\hat{g}(s)$ is trivial, since if $\alpha,\beta\in H^1(-T,T)$ are admissible functions in \eqref{eqcar1} they can be extended as constants outside $(-T,T)$ to obtain a pair in $\mathcal{U}_1$.

To prove the opposite inequality, we then have to show that for every $(\alpha,\beta)\in\mathcal{U}_1$ the inequality $\hat{g}(s)\leq\gs(\alpha,\beta)$ holds.}
Since translations and truncations of $\alpha$ do not increase the energy, as well as the symmetric reflection with respect to the origin of $\alpha$ and $\beta$, we can assume without loss of generality that $0\leq\alpha\leq1$, $\lim_{t\to-\infty}\alpha(t)=0$, $\lim_{t\to\infty}\alpha(t)=1$.
	
Fix $T>0$ and let
\begin{equation*}
M_T := 1-\sqrt{1-\alpha(T)}\,.
\end{equation*}
Notice that $0\leq M_T\leq1$ and $\lim_{T\to\infty}M_T=1$. We define a new pair $(\alpha_T, \beta_T)$, admissible for the minimum problem \eqref{eqcar1}, by modifying the functions $(\alpha,\beta)$ outside the interval $(-T,T)$ as follows:
\begin{equation*}
\alpha_T(t) :=
\begin{cases}
\alpha(t) & \text{if } t\in(-T,T),\\
\alpha(T) & \text{if } t\in[T,T+1),\\
\text{linear interpolation }& \text{if } t\in[T+1,T+2),\\
1 & \text{if } t \in [T+2,\infty),\\
\end{cases}
\end{equation*}
\begin{equation*}
\beta_T(t) :=
\begin{cases}
\beta(t) & \text{if } t\in(-T,T),\\
\text{linear interpolation } & \text{if } t\in[T,T+1),\\
M_T & \text{if } t\in[T+1,T+2),\\
\text{linear interpolation } & \text{if } t \in [T+2,T+3),\\
1 & \text{if }t\in[T+3,\infty)
\end{cases}
\end{equation*}
(in the interval $(-\infty,-T)$ we do a symmetric construction, with the value $M_T$ replaced by $M'_{T}:=1-\sqrt{\alpha(-T)}$). Then we have $\alpha_T,\beta_T\in H^1(-T-3,T+3)$, $\alpha_T(-T-3)=0$, $\alpha_T(T+3)=1$, $0\leq\beta_T\leq1$, $\beta_T(-T-3)=\beta_T(T+3)=1$, and the pair $(\alpha_T,\beta_T)$ is therefore admissible in the minimum problem \eqref{eqcar1}. Furthermore
\begin{align*}
\omega_+(T) :&= \int_T^{T+3} \biggl( s^2f^2(\beta_T)|\alpha_T'|^2 + \frac{(1-\beta_T)^2}{4} + |\beta_T'|^2\biggr) \de t \\
& = \int_{(T,T+1)\cup(T+2,T+3)} \biggl( \frac{(1-\beta_T)^2}{4} + |\beta_T'|^2 \biggr)\de t +  s^2f^2(M_T)|1-\alpha(T)|^2 + \frac{(1-M_T)^2}{4} \\
& = \frac14\int_0^1\bigl(1-\beta(T)-(M_T-\beta(T))t\bigr)^2\de t + \frac14\int_0^1\bigl(1-M_T-(1-M_T)t\bigr)^2\de t \\
& \qquad + |\beta(T)-M_T|^2 + |1-M_T|^2 + s^2\bigl(f(M_T)(1-M_T)\bigr)^2(1-\alpha(T)) + \frac{(1-M_T)^2}{4}\,,
\end{align*}
and from the fact that $\alpha(T),\beta(T),M_T\to 1$ as $T\to\infty$ we obtain, using assumption \eqref{assf3},
\begin{equation*}
\omega_+(T) \to 0 \quad\text{as }T\to\infty\,.
\end{equation*}
Similarly
\begin{equation*}
\omega_-(T):= \int_{-T-3}^{-T} \biggl( s^2f^2(\beta_T)|\alpha_T'|^2 + \frac{(1-\beta_T)^2}{4} + |\beta_T'|^2\biggr) \de t \to 0 \qquad\text{as }T\to\infty\,.
\end{equation*}
Therefore
\begin{align*}
\gs(\alpha,\beta)
& \geq \int_{-T}^T \biggl( s^2f^2(\beta)|\alpha'|^2 + \frac{(1-\beta)^2}{4} + |\beta'|^2 \biggr) \de t \\
& = \int_{-T-3}^{T+3} \biggl( s^2f^2(\beta_T)|\alpha_T'|^2 + \frac{(1-\beta_T)^2}{4} + |\beta_T'|^2 \biggr) \de t - \omega_+(T) - \omega_-(T) \\
& \geq {\hat{g}(s)} -\omega_+(T) -\omega_-(T)
\end{align*}
and the conclusion follows by sending $T\to\infty$. This concludes the proof of \ref{prop:g2bcar}.

We now observe that for any $\beta$ with $ 1-\beta\in H^1(\R)$ and any $t_0\in\R$ we have
\begin{equation}\label{g=1}
\int_{-\infty}^\infty \Bigl( \frac{(1-\beta)^2}{4} + |\beta'|^2 \Bigr)\de t
\ge -\int_{-\infty}^{t_0} (1-\beta)\beta'\de t +\int_{t_0}^\infty (1-\beta)\beta'\de t 
=(1-\beta(t_0))^2,
\end{equation}
with equality if and only if $\beta(t)=1-(1-\beta(t_0))e^{-|t-t_0|/2}$. Assertion \ref{prop:g2bming} follows immediately.
	
{Let us prove \ref{prop:g2bexist}. Assume now that $\Gzero(s)<1$, which is equivalent to $s<\sfrac$. 
By equation \eqref{g=1}, in the minimization it is sufficient to consider functions $\beta$ which obey $\beta>0$ almost everywhere.
For a fixed $\beta$, \eqref{defgnew} is a linear problem in $\alpha$. One can check that the minimizer is such that $f^2(\beta)\alpha'$ is constant on the set $\{\beta<1\}$, and that $\alpha'=0$ on $\{\beta=1\}$ (up to null sets).
Therefore the minimizer takes the form $\alpha'(t)=c f^{-2}(\beta(t))$ for some $c=c(s,\beta)\in\R$, where to shorten notation we write $f^{-2}(1)=0$. Integrating shows that $1=c\int_{-\infty}^\infty f^{-2}(\beta)\de t$, so that}
\newcommand\Gsbeta{{\hat {\mathcal G}_s}}
\newcommand\Gspbeta{{\hat {\mathcal G}_{s'}}}
\begin{equation*}
{\Gsbeta(\beta):=\min_{\alpha} \mathcal G_s(\alpha,\beta)=s^2\left(\int_{-\infty}^\infty f^{-2}(\beta)\de t\right)^{-1} + \int_{-\infty}^\infty \Bigl(\frac{(1-\beta)^2}4 + |\beta'|^2 \Bigl)\de t.}
\end{equation*}
Let now $\beta_k\in H^1_{\loc}(\R;[0,1])$ be a minimizing sequence for $\Gsbeta$. 
Since $\inf \Gsbeta=\Gzero(s)<1$ we can assume that there is $\delta>0$ such that  $\Gsbeta(\beta_k)\le 1-\delta$ for all $k$. By
\eqref{g=1} we then obtain $1-\delta\ge (1-\beta_k(t))^2$ and therefore
$\beta_k(t)\ge \frac12\delta$ for all $k$ and all $t$. 

The sequence $1-\beta_k$ is bounded in $H^1(\R)$ and has therefore a subsequence converging weakly to some function $1-\beta_s$.
Since the second term in $\Gsbeta$ is convex, if we prove that $\int_{-\infty}^\infty f^{-2}(\beta_k)\de t\to \int_{-\infty}^\infty f^{-2}(\beta_s)\de t$ then 
$\beta_s$ is a minimizer of $\Gsbeta$, and by the formula stated above we can reconstruct $\alpha$.
To prove continuity we observe that (possibly after extracting a further subsequence) $\beta_k\to\beta_s$ pointwise almost everywhere, and that \eqref{assf2}
and  $\beta_k(t)\ge \frac12\delta$
imply
$f(\beta_k)=f_1(1-\beta_k)/(1-\beta_k)\ge f_1(1-\frac12\delta)/(1-\beta_k)$ and therefore
$0\le f^{-2}(\beta_k)\le C_s(1-\beta_k)^2$ pointwise. Since $\int_{-\infty}^\infty(1-\beta_k)^2\de t\le 4$
{and $1-\beta_k\to 1-\beta$ in $L^2(\R)$,}
 by Fatou's Lemma we obtain 
\begin{equation*}
{ \int_{-\infty}^\infty f^{-2}(\beta_s)\de t = \lim_{k\to\infty} 
\int_{-\infty}^\infty f^{-2}(\beta_k)\de t .}
\end{equation*}
Therefore $\beta_s$ is a minimizer of $\Gsbeta$. This concludes the proof of \ref{prop:g2bexist}.
	
Assume now that $s\in(0,\infty)$, $\Gzero(s)<1$ and {$s'\in[0,s)$.} Let $\beta_s$ be a minimizer of $\Gsbeta$. 
By the above estimates $\int_{-\infty}^\infty f^{-2}(\beta_s)\de t\le C_s \int_{-\infty}^\infty (1-\beta_s)^2\de t<\infty$ and therefore
$\Gzero(s')\le \Gspbeta(\beta_s)<\Gsbeta(\beta_s)=\Gzero(s)$.
This concludes the proof of \ref{prop:g2bmong}.

In order to prove \ref{prop:g2bmonoton}, it suffices to show that for any $m\in[0,1]$ the set $I_m:=\{t: \beta_s(t)\le m\}$ is an interval. This will imply the stated monotonicity properties, with $t_*$ any point in the intersection of all $I_m$ for $m>\inf\beta_s$. If $I_m$ were not an interval, there would be {$t_1<t_2\in I_m$} with $m=\beta_s(t_1)=\beta_s(t_2)$ and $\beta_s(t)>m$ for $t\in(t_1,t_2)$. We can construct an admissible competitor $(\alpha,\beta)\in\mathcal{U}_1$ by modifying $(\alpha_s,\beta_s)$ as follows:
\begin{equation*}
\begin{split}
\alpha(t)&:=
\begin{cases}
\alpha_s(t) & \text{if }t\leq t_1,\\
\alpha_s(t_1)+\frac{\alpha_s(t_2)-\alpha_s(t_1)}{\delta}(t-t_1)& \text{if }t_1<t<t_1+\delta,\\
\alpha_s(t+t_2-t_1-\delta) & \text{if }t\geq t_1+\delta,
\end{cases}\\
\beta(t)&:=
\begin{cases}
\beta_s(t) & \text{if }t\leq t_1,\\
m & \text{if }t_1<t<t_1+\delta,\\
\beta_s(t+t_2-t_1-\delta) & \text{if }t\geq t_1+\delta,
\end{cases}
\end{split}
\end{equation*}
where $\delta>0$ is to be chosen. Then, by using Young's inequality and by \eqref{assf2},
\begin{align*}
\mathcal{G}_s(\alpha_s,\beta_s) &- \mathcal{G}_s(\alpha,\beta) 
= \int_{t_1}^{t_2} \Bigl( s^2f^2(\beta_s)|\alpha_s'|^2 + \frac{(1-\beta_s)^2}{4} + |\beta_s'|^2 \Bigr) \de t \\
& \qquad\qquad\qquad\qquad\qquad - \int_{t_1}^{t_1+\delta} \Bigl( s^2f^2(\beta)|\alpha'|^2 + \frac{(1-\beta)^2}{4} + |\beta'|^2 \Bigr) \de t \\
& > \int_{t_1}^{t_2} sf(\beta_s)(1-\beta_s)|\alpha_s'|\de t - \frac{s^2f^2(m)|\alpha_s(t_2)-\alpha_s(t_1)|^2}{\delta} - \frac{(1-m)^2}{4}\delta \\
& \geq sf(m)(1-m)|\alpha_s(t_2)-\alpha_s(t_1)| - \frac{s^2f^2(m)|\alpha_s(t_2)-\alpha_s(t_1)|^2}{\delta} - \frac{(1-m)^2}{4}\delta \,.
\end{align*}
By optimizing in $\delta$ we see that the right-hand side in the previous inequality is zero, and this contradicts the minimality of $(\alpha_s,\beta_s)$.
This concludes the proof of \ref{prop:g2bmonoton}.

Let us prove \ref{prop:g2bC1}. Let $\Gzero(s)<1$ and choose an optimal pair $(\alpha_s,\beta_s)\in\mathcal{U}_1$ for $\Gzero(s)$. 
By \ref{prop:g2bming} and \eqref{assf1} we have $f(\beta_s)\geq f({1-\sqrt{\Gzero(s)}})>0$ on $\R$. 
Let $T_-:=\inf\{t: \beta_s(t)<1\}\in\R\cup\{-\infty\}$, $T_+:=\sup\{t: \beta_s(t)<1\}\in\R\cup\{\infty\}$. By \ref{prop:g2bmonoton} we have $\beta_s<1$ in $(T_-,T_+)$, by finiteness of the integral we have $\alpha_s'=0$ in $\R\setminus (T_-,T_+)$.
By taking variations in the variable $\alpha$ in the minimum problem \eqref{defgnew} we find,
{as in the proof of \ref{prop:g2bexist}},
\begin{equation*}
f^2(\beta_s)\alpha_s'=\mathrm{const.} \qquad\text{almost everywhere in }{(T_-,T_+)}.
\end{equation*}
Therefore $\alpha_s'>0$ in ${(T_-,T_+)}$ 
{with $\lim_{t\uparrow {T_+}}=\lim_{t\downarrow {T_-}}\alpha_s'(t)=0$,}
and in particular $\alpha_s:{(T_-,T_+)}\to(\alpha_s(T_-),\alpha_s(T_+))$ is a $C^1$, bijective map
(with $\alpha_s({T_\pm})$ interpreted as the limit of $\alpha_s(t)$ for $t\to{\pm}\infty$ if ${T_\pm=\pm\infty}$). By taking variations in the variable $\beta$ we obtain instead
\begin{equation*}
\int_{-\infty}^\infty \Bigl( s^2f(\beta_s)f'(\beta_s)|\alpha_s'|^2\vphi - \frac{(1-\beta_s)\vphi}{4} + \beta_s'\vphi' \Bigr)\de t = 0 \qquad\text{for all }\vphi\in C^\infty_{\rm c}{(T_-,T_+)},
\end{equation*}
from which it follows by standard arguments that $\beta_s\in C^1{(T_-,T_+)}$. {Hence \ref{prop:g2bC1} is proved.}

Finally, taking internal variations (in the sense of considering competitors of the type $(\alpha_s(t+\e \varphi(t)), \beta_s(t+\e \varphi(t))$, for $\varphi\in C^1_c(\R)$) one also obtains the usual equipartition result, in the sense that the minimizer fulfills \eqref{eqequipartab}. This proves \ref{prop:equip}.
\end{proof}

{ We next list} in the following proposition some basic properties of the function $\Gzero$, already observed in \cite[Proposition~4.1]{CFI} for the exception of \ref{item5g}, which guarantees that the function $\Gzero$ is strictly below the linear function $\ell s$: for the proof of this last property, we require the additional assumption \eqref{assf3} (which is not needed for the $\Gamma$-convergence result in \cite{CFI}). Notice that this condition, which is used in the proof of the strict subadditivity of $\Gzero$ (Proposition~\ref{prop:g3}), will be significantly improved in Proposition~\ref{prop:gexpansion}.

\begin{proposition} \label{prop:g1}
	The function $\Gzero$ defined in \eqref{defgnew} enjoys the following properties:
	\begin{enumerate}
		\item\label{item1g} $\Gzero(0)=0,$ and $\Gzero$ is subadditive, \emph{i.e.\ }$\Gzero(s_1+s_2)\leq \Gzero(s_1)+\Gzero(s_2)$ for every $s_1,s_2\in\R^+$;
		\item\label{item2g} $\Gzero$ is nondecreasing,
		$\Gzero(s) \leq 1\wedge \ell s$,
		and $\Gzero$ is Lipschitz continuous with Lipschitz constant $\ell$; 
		\item\label{item3g} $\lim_{s\to\infty}\Gzero(s)= 1$;
		\item\label{item4g} $\lim_{s\to0^+}\frac{\Gzero(s)}{s}= \ell$;
		\item\label{item5g} $\Gzero(s)< \ell s$ for all $s>0$.
	\end{enumerate}
\end{proposition}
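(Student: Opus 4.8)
The plan is to verify the five items in order, leaning on the characterizations of $\Gzero$ established in Proposition \ref{prop:g2b}\ref{prop:g2bcar}, in particular \eqref{eqcar2}, and on the structural description of minimizers in Proposition \ref{prop:g2b}\ref{prop:g2bexist}--\ref{prop:equip}. First, for \ref{item1g}, the identity $\Gzero(0)=0$ is immediate by taking the constant pair $\alpha\equiv 0$, $\beta\equiv 1$ in \eqref{defgnew} (the integrand vanishes, since $(1-\beta)f(\beta)=f_1(1-\beta)$ extends continuously by $0$ at $\beta=1$ by \eqref{assf3}). For subadditivity, given $\e>0$ I would take near-optimal admissible profiles $(\alpha_i,\beta_i)$ for $\Gzero(s_i)$, $i=1,2$, with $\alpha_i$ monotone from $0$ to $1$, and concatenate them along the real line: first let $(\alpha,\beta)$ follow $(s_1\alpha_1,\beta_1)$ suitably rescaled so that $\alpha$ increases by $s_1/(s_1+s_2)$, then follow $(s_2\alpha_2,\beta_2)$ for the remaining increment, matching at the value $\beta=1$ where both profiles attain their boundary condition. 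This produces an admissible pair for $\Gzero(s_1+s_2)$ of energy at most $\Gzero(s_1)+\Gzero(s_2)+\e$; it is cleanest to phrase the gluing using the reparametrization-invariant form \eqref{eqcar2} on $(0,1)$, splitting $(0,1)$ into two subintervals.

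For \ref{item2g}, monotonicity follows from Proposition \ref{prop:g2b}\ref{prop:g2bmong} together with the fact that on $\{s:\Gzero(s)=1\}$ the function is constant (hence trivially nondecreasing across the threshold $\sfrac$); alternatively, monotonicity is direct from \eqref{eqcar3}, since increasing $s$ only increases the integrand. The bound $\Gzero(s)\le 1$ follows from \eqref{g=1} in the proof of Proposition \ref{prop:g2b} (take $\alpha\equiv 0$, $\beta(t)=1-e^{-|t|/2}$); the bound $\Gzero(s)\le \ell s$ follows from \eqref{eqcar2} by choosing $\alpha(t)=t$ and $\beta\equiv 1$, which gives energy $\int_0^1|1-\beta|\,s\,f(\beta)\,\de t = s f_1(0)=\ell s$ using \eqref{assf3}. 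Lipschitz continuity with constant $\ell$ then follows from monotonicity together with subadditivity \ref{item1g}: for $s_1<s_2$, $\Gzero(s_2)\le\Gzero(s_1)+\Gzero(s_2-s_1)\le\Gzero(s_1)+\ell(s_2-s_1)$. Item \ref{item3g} follows from \ref{item2g} (so $\Gzero\le 1$ and is nondecreasing, hence has a limit) combined with Proposition \ref{propsfracinfty2} and Proposition \ref{prop:g2b}\ref{prop:g2bming}: if $\sfrac<\infty$ the limit is $1$ outright, while if $\sfrac=\infty$ one uses that any minimizer $\beta_s$ obeys $\min\beta_s\ge 1-\sqrt{\Gzero(s)}$ and that the elastic term $s^2 f^2(\min\beta_s)|\alpha_s'|^2$ integrated against $\alpha_s'\in L^1$ forces $\min\beta_s\to 0$ as $s\to\infty$, whence $\Gzero(s)\to 1$ by \ref{prop:g2bming}. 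Item \ref{item4g} follows from $\Gzero(s)\le\ell s$ (so $\limsup_{s\to0^+}\Gzero(s)/s\le\ell$) together with a lower bound: using \eqref{eqcar3}, the integrand is at least $s\,\tif(\sqrt\gamma)\ge s\,\tif(1)=0$ which is too crude, so instead I would bound $\Gzero(s)\ge \inf_\gamma \int_0^1 s\,\tif(\sqrt\gamma)\,\de t$ (dropping the $|\gamma'|^2/4$ term) and note that as $s\to 0$ near-optimal $\gamma$ must spend most of its mass near $0$ where $\tif\approx\ell$, giving $\Gzero(s)/s\to\ell$; more robustly, one uses \eqref{eqcar2} and the fact that $\beta_s\to 1$ uniformly on compacts as $s\to0$ (since $m_s\to1$ by Theorem \ref{thm:g1}), so the energy is asymptotic to $\int |1-\beta_s|\,s\,\ell\,|\alpha_s'|\,\de t\ge s\ell(1-o(1))$.

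The main obstacle is item \ref{item5g}, the \emph{strict} inequality $\Gzero(s)<\ell s$ for all $s>0$. For $s\ge\sfrac$ this is automatic since $\Gzero(s)=1<\ell s$ provided $\ell s>1$, which holds when $s$ is large; for the remaining bounded range one must produce a genuine competitor beating the trivial profile $\beta\equiv 1$. Here is where assumption \eqref{assf3}, $\tif(s)=\ell-\ell_1 s+o(s)$ with $\ell_1>0$, enters decisively: I would take in \eqref{eqcar3} a small bump $\gamma_\lambda$ of height $\lambda^2$ and width of order $\lambda$, say $\gamma_\lambda(t)=\lambda^2\,\chi(t/\lambda)$ for a fixed profile $\chi\in W^{1,1}_0((0,1),[0,1])$ rescaled to fit in $(0,1)$; then the elastic contribution $\int s\,\tif(\sqrt{\gamma_\lambda})\,\de t = \int s(\ell - \ell_1\lambda\sqrt{\chi(t/\lambda)} + o(\lambda))\,\de t = s\ell - c_1 s\,\lambda^2 + o(\lambda^2)$ decreases quadratically in $\lambda$ from the baseline $s\ell$, while the gradient term $\int |\gamma_\lambda'|^2/4\,\de t = O(\lambda^2\cdot\lambda^{-2}\cdot\lambda) = O(\lambda^3)$ (the $\sqrt{}$ is absent here since \eqref{eqcar3} has $\gamma$, not $\sqrt\gamma$, in the gradient—one must be careful: $\gamma_\lambda' \sim \lambda^2\cdot\lambda^{-1} = \lambda$ on a set of measure $\sim\lambda$, so $\int|\gamma_\lambda'|^2\sim\lambda^2\cdot\lambda=\lambda^3$) is genuinely higher order. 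Hence for $\lambda$ small the total energy is strictly below $s\ell$, giving $\Gzero(s)<\ell s$. I expect the delicate point to be keeping the $o(\cdot)$ terms uniform and correctly tracking the powers of $\lambda$ in the two terms; it may be technically smoother to argue via \eqref{eqcar2} directly, perturbing $\beta\equiv1$ by a small triangular dip and exploiting $\ell_1>0$, but the scaling bookkeeping is essentially the same.
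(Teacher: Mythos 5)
For item \ref{item5g} --- the only part of Proposition~\ref{prop:g1} the paper actually proves, the remaining items being quoted from \cite[Proposition~4.1]{CFI} --- your argument is essentially the paper's: both build a competitor whose damage variable dips slightly below $1$ and exploit \eqref{assf3}, so that the gain of order $s\ell_1\times(\text{depth of the dip})$ beats the higher-order transition cost. The paper does this with an explicit piecewise affine pair in \eqref{eqcar2} ($\beta\equiv\sigma$ on the middle third, $\alpha$ ramping there), which gives $\Gzero(s)\le s\tif(1-\sigma)+(1-\sigma)^2<\ell s$ for $\sigma$ close to $1$; your bump in \eqref{eqcar3} is the same computation after the change of variables $\gamma=(1-\beta)^2\circ\alpha^{-1}$, and your scaling bookkeeping is sound provided you make explicit the splitting $\sqrt{a^2+b^2}\le a+\frac{b^2}{2a}$ with $a=s\tif(\sqrt{\gamma_\lambda})\ge s\ell/2$, which turns the gradient contribution into $O(\lambda^3/s)$ against a gain of order $s\lambda^2$. (Your preliminary case distinction at $\sfrac$ is unnecessary, since the construction works for every $s>0$.)

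For items \ref{item1g}--\ref{item4g} a few steps fail as literally written, although all are repairable by standard means. First, $\alpha\equiv0$ is not admissible in $\mathcal{U}_1$, since \eqref{defU} requires $\bigl|\int_\R\alpha'\,\de t\bigr|=1$: for $\Gzero(0)=0$ take any ramp together with $\beta\equiv1$ (harmless, as the first term in \eqref{defgnew} carries the factor $s^2$), and for $\Gzero(s)\le1$ you must let $\alpha$ make its transition where $\beta$ is close to $0$, so that $f(\beta)\approx f(0)=0$ by \eqref{assf1} (or use a trapezoidal $\gamma$ reaching the value $1$ in \eqref{eqcar3}). Second, in \ref{item3g} the claim that ``the elastic term $s^2f^2(\min\beta_s)|\alpha_s'|^2$ integrated against $\alpha_s'\in L^1$ forces $\min\beta_s\to0$'' is false as stated: on the whole line one can have $\int_\R|\alpha'|\,\de t=1$ while $\int_\R|\alpha'|^2\,\de t\to0$ (spread the transition over a long interval), so the elastic term alone does not blow up. The correct step is the Young-type bound $s^2f^2(\beta)|\alpha'|^2+\frac{(1-\beta)^2}{4}\ge s\,\tif(1-\beta)\,|\alpha'|$: if $\Gzero(s)\le L<1$ for all $s$, then $1-\beta\le\sqrt{L}$ along minimizers (Proposition~\ref{prop:g2b}\ref{prop:g2bming}), hence $\gs\ge s\,\tif(\sqrt{L})\to\infty$, a contradiction. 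Third, in \ref{item4g} the displayed estimate $\int|1-\beta_s|\,s\,\ell\,|\alpha_s'|\,\de t\ge s\ell(1-o(1))$ is wrong as written, since the prefactor $|1-\beta_s|$ tends to $0$; the same cross-term bound gives integrand $\ge s\,\tif(1-\beta)\,|\alpha'|$ with $\tif(1-\beta)\to\ell$, because $(1-\min\beta)^2\le\Gzero(s)+\e\le\ell s+\e\to0$ along near-minimizers --- this also avoids invoking Theorem~\ref{thm:g1}, which in the paper comes after this proposition. With these repairs your plan is complete and, for the part the paper proves, coincides in substance with its argument.
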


\begin{proof}
Only the statement \ref{item5g} requires a new proof, the others {follow from} \cite[Proposition~4.1]{CFI} {using Proposition~\ref{prop:g2b}\ref{prop:g2bcar}.}
With fixed $s>0$, let $\sigma\in(0,1)$ and set $\alpha(t):=0$ in $[0,\frac13]$, $\alpha(t):={1}$ in $[\frac23,1]$, and the linear interpolation between these two values in $[\frac13,\frac23]$; set also $\beta(t):=\sigma$ in $[\frac13,\frac23]$, and the linear interpolation between the values $\sigma$ and 1 in each of the two intervals $[0,\frac13]$ and $[\frac23,1]$. By using the pair $(\alpha,\beta)$ as a competitor in the { characterization \eqref{eqcar2}} of $\Gzero$ we find
\begin{equation}\label{proofitem5g}
\begin{split}
\Gzero(s)
& \leq\inf_{\sigma\in(0,1)} \bigl[ s(1-\sigma)f(\sigma)+(1-\sigma)^2 \bigr] \\
& = \inf_{\sigma\in(0,1)} \bigl[ s\ell -s\ell_1(1-\sigma)+o(1-\sigma) + (1-\sigma)^2 \bigr]  \,,
\end{split}
\end{equation}
where we used the assumption \eqref{assf3}. The strict inequality $\Gzero(s)<\ell s$ follows from \eqref{proofitem5g}, by choosing a $\sigma$ sufficiently close to 1.
\end{proof}

{ In the following proposition we prove the representation formula \eqref{eqcar3} for $\Gzero$, which removes the invariance under reparametrization in \eqref{eqcar2} and has a unique minimizer. We recall the definition \eqref{eqdefsfrac} of the threshold $\sfrac$.}

\begin{proposition}\label{prop:g2}
	The following properties hold:
	\begin{enumerate}
		\item \label{prop:g2gamma} { The characterization \eqref{eqcar3} of $\Gzero$ holds.}
		\item \label{prop:g2unique} For $s\in(0,\sfrac)$ the variational problem \eqref{eqcar3} has a unique minimizer $\gamma_s$. It obeys $\gamma_s\in C^1(0,1)$, $\gamma_s(\frac12)=\max \gamma_s=(1-\min\beta_s)^2$, where $\beta_s$ is any minimizer from Proposition~\ref{prop:g2b}\ref{prop:g2bexist}.
		\item\label{prop:g2monoton} If $0<s_1<s_2<\sfrac$ then $0< \gamma_{s_1}(t)<\gamma_{s_2}(t)$ for all $t\in(0,1)$.
		\item\label{prop:g2ms} The function
		\begin{equation}
		m_s:=
		\begin{cases}
		1-\sqrt{\gamma_s(\frac12)},& \text{ if } s\in[0,\sfrac),\\
		0, &\text{ if } s>\sfrac,
		\end{cases}
		\end{equation}
		is continuous and strictly decreasing in $[0,\sfrac)$.
		\item\label{prop:uniqbeta} {For $s<\sfrac$, the minimizer of \eqref{defgnew} is unique up to translations, in the sense that if $(\alpha_s,\beta_s)$ and $(\hat\alpha_s,\hat\beta_s)$ are minimizers then there are $a_1, t_1\in\R$ such that $\alpha_s(t)=a_1+\hat\alpha_s(t-t_1)$, $\beta_s(t)=\hat\beta_s(t-t_1)$.}
	\end{enumerate}
\end{proposition}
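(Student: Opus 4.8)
The plan is to regard \eqref{eqcar3} as the reparametrization--fixed form of \eqref{eqcar2} (and hence of \eqref{defgnew}): the change of variables $\gamma:=(1-\beta)^2$ produces the pointwise identity
\[
(1-\beta)^2\bigl(s^2 f^2(\beta)|\alpha'|^2+|\beta'|^2\bigr)=s^2\bigl(\tif(\sqrt{\gamma})\bigr)^2|\alpha'|^2+\tfrac14|\gamma'|^2,\qquad\gamma:=(1-\beta)^2 ,
\]
since $(1-\beta)f(\beta)=\tif(1-\beta)=\tif(\sqrt\gamma)$ and $|\gamma'|=2(1-\beta)|\beta'|$. For (i), one inequality is obtained, when $s<\sfrac$, from an optimal pair $(\alpha_s,\beta_s)$ for \eqref{defgnew} (Proposition~\ref{prop:g2b}\ref{prop:g2bexist}): by Proposition~\ref{prop:g2b}\ref{prop:g2bC1} the map $\alpha_s-\alpha_s(-\infty)$ is a $C^1$ bijection of $(T_-,T_+)$ onto $(0,1)$, so reparametrizing the profile by $a=\alpha_s(t)-\alpha_s(-\infty)$ and setting $\gamma(a):=(1-\beta_s)^2$ (extended by $0$) gives an admissible competitor for \eqref{eqcar3} of energy $\mathcal G_s(\alpha_s,\beta_s)=\Gzero(s)$; when $s\ge\sfrac$ a thin-spike competitor gives energy $\le1=\Gzero(s)$. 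For the reverse inequality, given admissible $\gamma$ one approximates it in $W^{1,1}$ by functions $\gamma_\delta$ with $\sqrt{\gamma_\delta}\in H^1(0,1)$, sets $\beta_\delta:=1-\sqrt{\gamma_\delta}$ and $\alpha(t):=t$, and uses the identity to produce admissible competitors for \eqref{eqcar2}, whose infimum is $\Gzero(s)$ by Proposition~\ref{prop:g2b}\ref{prop:g2bcar}; the only point requiring care is this approximation, which is routine.

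The crucial structural observation for (ii) is that, in the variable $\gamma$, the functional $\mathcal J_s(\gamma):=\int_0^1\sqrt{s^2\tif(\sqrt\gamma)^2+\tfrac14|\gamma'|^2}\,\de t$ is \emph{convex} on $W^{1,1}_0([0,1],[0,1])$: by \eqref{assf2} the map $q\mapsto\tif(\sqrt q)$ is convex and nonnegative, and $(q,p)\mapsto|(sq,\tfrac12 p)|$ is convex and nondecreasing in $q\ge0$, so $(\gamma,\gamma')\mapsto|(s\tif(\sqrt\gamma),\tfrac12\gamma')|$ is jointly convex. Existence of a minimizer $\gamma_s$ is then immediate (part (i) already exhibits a $C^1$ one; alternatively one argues by the direct method, using $\tfrac12\int|\gamma'|\le\mathcal J_s$). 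For uniqueness when $s<\sfrac$: by Proposition~\ref{prop:g2b}\ref{prop:g2bming} the corresponding profile obeys $\min\beta_s\ge1-\sqrt{\Gzero(s)}$, so every minimizer takes values in $[0,\Gzero(s)]\subset[0,1)$, where $s^2\tif(\sqrt\gamma)^2>0$ makes the integrand strictly convex in $\gamma'$; together with the strict convexity of $q\mapsto\tif(\sqrt q)$ near $q=0$ coming from \eqref{assf3}, the usual midpoint argument applied to two minimizers forces their derivatives to coincide a.e., and $\gamma_1(0)=\gamma_2(0)=0$ then gives $\gamma_1=\gamma_2$.

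With uniqueness in hand, the remaining assertions of (ii) follow: $\gamma_s$ is symmetric about $\tfrac12$ (the problem \eqref{eqcar3} is invariant under $t\mapsto1-t$), hence $\max\gamma_s=\gamma_s(\tfrac12)$; moreover $\gamma_s\in C^1(0,1)$ and is a single hump, since it must coincide with the profile produced in (i) from $\beta_s$, which is nonincreasing--nondecreasing by Proposition~\ref{prop:g2b}\ref{prop:g2bmonoton} and $C^1$ by Proposition~\ref{prop:g2b}\ref{prop:g2bC1}; the same identification gives $\max\gamma_s=(1-\min\beta_s)^2$, which also shows that $\min\beta_s$ does not depend on the chosen minimizer, so $m_s$ is well defined. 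For (v): given two minimizers of \eqref{defgnew}, reparametrize each by its (bijective) $\alpha$-component to obtain two minimizers of \eqref{eqcar3}; these agree by (ii), and since Proposition~\ref{prop:g2b}\ref{prop:g2bexist} and Proposition~\ref{prop:g2b}\ref{prop:g2bC1} give $\alpha_s'=c_s f^{-2}(\beta_s)$ with $c_s$ fixed by $\int f^{-2}(\beta_s)=1/c_s$, the two original pairs must coincide up to a translation in $t$ and an additive constant in $\alpha$.

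For (iii) I would pass to the Euler--Lagrange equation: since the Lagrangian of $\mathcal J_s$ is autonomous, a minimizer satisfies, on $(0,\tfrac12)$ and on $(\tfrac12,1)$, the first integral $s^2\tif(\sqrt{\gamma_s})^2\bigl(s^2\tif(\sqrt{\gamma_s})^2+\tfrac14|\gamma_s'|^2\bigr)^{-1/2}=c_s$ with $c_s=s\tif(\sqrt{\gamma_s(\tfrac12)})$, hence a first-order autonomous equation $|\gamma_s'|=F_s(\gamma_s)$; a comparison argument for these equations, combined with the strict monotonicity of $s\mapsto\max\gamma_s$ (equivalently of $s\mapsto\Gzero(s)$ on $[0,\sfrac)$, Proposition~\ref{prop:g2b}\ref{prop:g2bmong}), gives $0<\gamma_{s_1}(t)<\gamma_{s_2}(t)$ on $(0,1)$. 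Finally (iv): $m_s=1-\sqrt{\gamma_s(\tfrac12)}$ is strictly decreasing on $[0,\sfrac)$ by (iii); $m_0=1$ since for $s=0$ one has $\mathcal J_0(\gamma)=\tfrac12\int|\gamma'|$, uniquely minimized by $\gamma_0\equiv0$; continuity on $[0,\sfrac)$ follows from the continuous dependence of the unique minimizer on $s$, a standard stability argument for the strictly convex family $\mathcal J_s$; and $m_s=0$ for $s>\sfrac$ by definition. The step I expect to be most delicate is (iii): making the ODE comparison rigorous requires controlling how the first-integral constant $c_s$, and with it $\max\gamma_s$, depends on $s$, and translating the known strict monotonicity of $\Gzero$ into strict monotonicity of $\max\gamma_s$ relies on the explicit single-hump structure established above. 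A minor additional point, already used in (ii), is the exclusion of flat-top or interior-local-minimum profiles, which is disposed of by the identification with the monotone profile $\beta_s$.
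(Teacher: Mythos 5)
Parts (i) and (ii) of your proposal follow, in essence, the paper's own route (the reparametrization $\gamma=(1-\beta)^2\circ\alpha^{-1}$ and the convexity of the integrand of \eqref{eqcar3} coming from \eqref{assf2}), and (v) uses the same reduction to uniqueness of $\gamma_s$. The genuine gap is in (iii)--(iv). There you take as input the strict monotonicity of $s\mapsto\max\gamma_s$ and assert that it is ``equivalent'' to the strict monotonicity of $\Gzero$ on $[0,\sfrac)$ (Proposition~\ref{prop:g2b}\ref{prop:g2bmong}). No such equivalence is available: monotonicity of the minimal \emph{value} of the energy gives no direct information on the maximum of the minimizer, and since $m_s=1-\sqrt{\max\gamma_s}$, the strict monotonicity of $\max\gamma_s$ is precisely assertion (iv). As written the scheme is circular: (iv) is deduced from (iii), while (iii) presupposes the monotonicity of $\max\gamma_s$. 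The missing idea, which the paper supplies, is a direct proof of injectivity of $s\mapsto m_s$: if $m_{s_1}=m_{s_2}$ with $s_1<s_2$, the first integral \eqref{ELEgamma3} gives $\gamma_{s_2}'(0)>\gamma_{s_1}'(0)$, so the difference is positive near $0$; the equation \eqref{ELEgamma} gives a strictly negative second derivative of the difference at $t=\frac12$ (using that $\tif$ is strictly decreasing), so the difference is negative near $\frac12$; and at the first zero crossing \eqref{ELEgamma3} forces the wrong sign of the derivative, a contradiction. Injectivity together with continuity of $m_s$ (uniform Lipschitz bound from \eqref{ELEgamma3}, compactness, uniqueness) yields strict monotonicity, and only then does your ODE comparison for (iii) run. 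Without this step (or a genuine monotonicity analysis of the associated time map, which is not easier), (iii) and (iv) remain unproven; you flag this yourself as the delicate point but offer no substitute.

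Two smaller repairs. In (ii) the uniqueness mechanism is misstated: when $\gamma_1(t)\neq\gamma_2(t)$, equality in the midpoint inequality does not force $\gamma_1'=\gamma_2'$ (strict convexity in $\gamma'$ acts only at fixed $\gamma$), and ``strict convexity of $q\mapsto\tif(\sqrt q)$ near $0$'' is not among the hypotheses \eqref{assf1}--\eqref{assf3}. The convex route can nevertheless be completed: for $s<\sfrac$ every minimizer satisfies $\max\gamma\le\frac12\int_0^1|\gamma'|\de t\le\Gzero(s)<1$, so $\tif(\sqrt\gamma)$ is bounded away from $0$; the equality case of the triangle inequality forces the vectors $(s\tif(\sqrt{\gamma_i}),\frac12\gamma_i')$ to be positively parallel a.e., i.e. $\gamma_1'/\tif(\sqrt{\gamma_1})=\gamma_2'/\tif(\sqrt{\gamma_2})$, and integrating the strictly increasing primitive of $1/\tif(\sqrt{\cdot})$ with $\gamma_1(0)=\gamma_2(0)=0$ gives $\gamma_1=\gamma_2$. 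In (v), the claim that both minimizers share the same constant in $\alpha'=c\,f^{-2}(\beta)$ is unjustified (the normalization $\int f^{-2}(\beta_s)\de t=1/c$ is circular and does not identify the two constants); the paper instead deduces $|\varphi'|=1$ for $\varphi=\hat\alpha_s^{-1}\circ\alpha_s$ from the equipartition identity \eqref{eqequipartab}, which in your scheme is available because the chain $\gs(\alpha_s,\beta_s)\ge\int_0^1\sqrt{s^2\tif^2(\sqrt\gamma)+\frac14|\gamma'|^2}\de t\ge\Gzero(s)$ must be an equality.
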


\begin{proof}
	\ref{prop:g2gamma}: The identity { \eqref{eqcar3} follows from \eqref{eqcar2}} by reparametrization: indeed, first observe that by a density argument the infimum in \eqref{eqcar2} can be written as
	\begin{equation}  \label{proofg2}
	\begin{split}
	\inf \biggl\{ &\int_0^1 \sqrt{s^2(1-\beta)^2f^2(\beta)|\alpha'|^2 +\frac14\Big|\frac{\de}{\de t}(1-\beta)^2\Big|^2} \de t \: : \: (1-\beta)^2\in C^1_{\mathrm c}(0,1),\, 0\leq\beta\leq 1, \\
	& \qquad\qquad\qquad\qquad\qquad \alpha\in C^{1}([0,1]),\, \alpha(0)=0,\, \alpha(1)=1,\, \alpha'>0 \biggr\}.
	\end{split}
	\end{equation}
	Similarly, the infimum in \eqref{eqcar3} can be considered on the class of $\gamma\in C^1_{\mathrm c}(0,1)$ with $0\leq\gamma\leq1$.
	Then, for $(\alpha,\beta)$ admissible in \eqref{proofg2}, the reparametrization
	\begin{equation}\label{eqdefgammafromalphabeta}
	\gamma(t):=(1-\beta)^2\circ\alpha^{-1}(t)
	\end{equation}
	gives an admissible profile for \eqref{eqcar3} with
	\begin{equation} \label{proofg2b}
	\int_0^1 \sqrt{s^2(1-\beta)^2f^2(\beta)|\alpha'|^2 +\frac14\Big|\frac{\de}{\de t}(1-\beta)^2\Big|^2} {\de t}= \int_0^1 \sqrt{s^2(\tif(\sqrt{\gamma}))^2 + \textstyle\frac{|\gamma'|^2}{4}} \de t .
	\end{equation}
	Conversely, given $\gamma\in C^1_{\mathrm c}(0,1)$ with $0\leq\gamma\leq1$, we consider the pair $(\alpha,\beta)$ defined by $\alpha(t):=t$, $\beta(t):=1-\sqrt{\gamma({t})}$, which is admissible in \eqref{proofg2} and satisfies \eqref{proofg2b}. This proves \ref{prop:g2gamma}.
	
	\ref{prop:g2unique}: Assume now $\Gzero(s)<1$ and let $(\alpha_s,\beta_s)\in \mathcal U_1$ be a minimizer for \eqref{defgnew} with $\alpha_s(-\infty)=0$, which exists by Proposition~\ref{prop:g2b}\ref{prop:g2bexist}.
	We consider the rescaled profile $\gamma_s:[0,1]\to[0,1]$, $\gamma_s(t):=( 1-\beta_s\circ\alpha_s^{-1}(t))^2$: we have $\gamma_s\in C^1(0,1)$, $0\leq\gamma_s\leq1$, $\gamma_s(0)=\gamma_s(1)=0$, and, {with $T_-$, $T_+$ as in Proposition~\ref{prop:g2b}\ref{prop:g2bC1},}
	\begin{align*}
	\Gzero(s) = \mathcal{G}_s(\alpha_s,\beta_s)
	& \geq \int_{T_-}^{T_+} |1-\beta_s|\sqrt{s^2f^2(\beta_s)|\alpha_s'|^2 + |\beta_s'|^2}\de t 
	= \int_0^1 \sqrt{s^2\bigl( \tif(\sqrt{\gamma_s}) \bigr)^2 + \textstyle\frac{|\gamma_s'|^2}{4}}\de t ,
	\end{align*}
	which shows that $\gamma_s$ is a minimizer in problem \eqref{eqcar3}. We have $1-\min \beta_s=\max\sqrt{\gamma_s}$ and by symmetrization the maximum is attained at the point $t=\frac{1}{2}$. Thanks to the convexity assumption \eqref{assf2}, the minimizer $\gamma_s$ of \eqref{eqcar3} is unique.
	
	\ref{prop:g2monoton} and \ref{prop:g2ms}:
	We will exploit further properties of the optimal profile $\gamma_s$ {for $s<\sfrac$}: by computing the Euler-Lagrange equation satisfied by a $C^1$-minimizer of problem \eqref{eqcar3} we find
	\begin{equation*}
	\Biggl(\frac{\gamma_s'}{2\sqrt{s^2\bigl( \tif(\sqrt{\gamma_s}) \bigr)^2 + \frac14|\gamma_s'|^2}}\Biggr)'
	=\frac{s^2\tif(\sqrt{\gamma_s})\tif'(\sqrt{\gamma_s})}{\sqrt{\gamma_s}\sqrt{s^2\bigl( \tif(\sqrt{\gamma_s}) \bigr)^2 + \frac14|\gamma_s'|^2}}
	\qquad \text{in }(0,1),
	\end{equation*}
	which yields the $C^2$-regularity of $\gamma_s$ in $(0,1)$ and, after elementary computations,
	\begin{equation}\label{ELEgamma}
	\gamma_s'' = \frac{ \tif(\sqrt{\gamma_s})\tif'(\sqrt{\gamma_s})}{\sqrt{\gamma_s}} \Biggl( 2s^2 + \frac{|\gamma_s'|^2}{\bigl(\tif(\sqrt{\gamma_s})\bigr)^2} \Biggr) \qquad\text{in }(0,1).
	\end{equation}
	Furthermore, the equation \eqref{ELEgamma} has a first integral:
	\begin{equation*}
	(\tif(\sqrt{\gamma_s}))^2 = c\sqrt{s^2(\tif(\sqrt{\gamma_s}))^2 + \frac14|\gamma_s'|^2}
	\qquad\text{in }(0,1)
	\end{equation*}
	for a constant $c>0$, which can be computed by imposing that $\gamma_s$ has a maximum at the point $t=\frac{1}{2}$ with value $(1-m_s)^2$: this gives $c=\frac1s\tif(1-m_s)$ and in turn
	\begin{equation} \label{ELEgamma3}
	\gamma_s'= \pm 2s\tif(\sqrt{\gamma_s}) \sqrt{ \Bigl( \frac{\tif(\sqrt{\gamma_s})}{\tif(1-m_s)} \Bigr)^2 -1 }
	\qquad\text{in }(0,1)
	\end{equation}
	(with positive sign in $(0,\frac12)$, and negative sign in $(\frac12,1)$).
	
	By using the previous equations we can now show the continuity of the map $s\mapsto m_s$. Indeed, consider a sequence $s_n\to s\in[0,s_{\rm frac})$ and the corresponding optimal profiles $\gamma_{s_n}$. In view of \eqref{ELEgamma3}, the first derivatives $\gamma_{s_n}'$ are uniformly bounded in $L^\infty(0,1)$, hence possibly extracting a subsequence we have $\gamma_{s_n}\wto \gamma$ weakly* in $W^{1,\infty}(0,1)$ (and uniformly), for some $\gamma$ vanishing at the boundary. By continuity of $\Gzero$ we find that $\gamma$ is the unique minimizer in problem \eqref{eqcar3} for $\Gzero(s)$; in particular $(1-m_s)^2=\gamma(\frac12)=\lim_n\gamma_{s_n}(\frac12)=\lim_n(1-m_{s_n})^2$, which proves the continuity of $m_s$.
	
	We then show that the map $s\mapsto m_s$ is injective (then also the strict monotonicity follows). Assume by contradiction that for two different values $s_1<s_2$ one has $m_{s_1}=m_{s_2}$, and let $\gamma:=\gamma_{s_2}-\gamma_{s_1}$ be the difference of the corresponding optimal profiles. By \eqref{ELEgamma3} we have $\gamma'(0)>0$, and hence $\gamma(t)>0$ for all sufficiently small $t$. On the other hand, at the point $t=\frac12$ by assumption $\gamma(\frac12)=\gamma_{s_1}'(\frac12)=\gamma_{s_2}'(\frac12)=0$, therefore we find using \eqref{ELEgamma}
	\begin{equation*}
	\gamma''({\textstyle\frac12})
	= 2(s_2^2-s_1^2)\frac{ \tif(1-m_{s_1})\tif'(1-m_{s_1})}{1-m_{s_1}} < 0
	\end{equation*}
	(by the assumption \eqref{assf2}). We conclude that $\gamma(t)<0$ for $t$ sufficiently close to $\frac12$. {Let then $\bar{t}\in(0,\frac12)$ be the smallest value with $\gamma(\bar{t})=0$,} that is $\gamma_{s_1}(\bar{t})=\gamma_{s_2}(\bar{t})=:\sigma$. By \eqref{ELEgamma3}
	\begin{equation*}
	\gamma'(\bar{t}) = 2(s_2-s_1)f_1(\sqrt{\sigma}) \sqrt{ \Bigl( \frac{\tif(\sqrt{\sigma})}{\tif(1-m_{s_1})} \Bigr)^2 -1 } >0,
	\end{equation*}
	which is a contradiction.
	
	At this point we conclude the proof of \ref{prop:g2monoton}. Fix two values $s_1<s_2<\sfrac$ and let $\gamma:=\gamma_{s_2}-\gamma_{s_1}$ be the difference of the corresponding optimal profiles. Then $m_{s_2}<m_{s_1}$, hence $f_1(1-m_{s_2})<f_1(1-m_{s_1})$ and by \eqref{ELEgamma3} we have $\gamma'(0)>0$, and hence $\gamma(t)>0$ for all sufficiently small $t$. If $\gamma(t)>0$ for all $t\in(0,1)$, we are done. Otherwise, let $\bar t\in(0,1)$ be the smallest value with $\gamma(\bar t)=0$. Then \eqref{ELEgamma3} implies $\gamma'(\bar t)>0$, which is a contradiction.
	
	\ref{prop:uniqbeta}:
	Let $(\alpha_s,\beta_s)$ and $(\hat\alpha_s,\hat\beta_s)\in\mathcal U_1$ be minimizers for \eqref{defgnew}. Without loss of generality we can assume $\alpha_s(-\infty)=\hat\alpha_s(-\infty)=0$. By Proposition~\ref{prop:g2b}\ref{prop:g2bC1}, $\alpha_s$ is a $C^1$ bijection from $(T_-,T_+)$ onto $(0,1)$; $\alpha_s$ and $\beta_s$ are constant on $(-\infty,T_-)$ and $(T_+,\infty)$ (if these intervals are nonempty). We define $\gamma\in C^1(0,1)$ by $\gamma:=(1-\beta_s)^2\circ\alpha_s^{-1}$, so that $\gamma(t)\to0$ as $t\to0$ and $t\to1$. Then $\gamma$ is a competitor in \eqref{eqcar3}.
	Using first equipartition, that was proven in \eqref{eqequipartab}, and then the change of variables $r:=\alpha_s(t)$, we have
		\begin{equation*}
		\begin{split}
		\int_{-\infty}^\infty \biggl( s^2f^2(\beta_s)|\alpha_s'|^2 + \frac{(1-\beta_s)^2}{4} + |\beta_s'|^2 \biggr) \de t
		=& \int_{-\infty}^\infty \sqrt{(1-\beta_s)^2 s^2f^2(\beta_s)|\alpha_s'|^2 + (1-\beta_s)^2|\beta_s'|^2 } \de t\\
		=& \int_{T_-}^{T_+} \sqrt{ s^2f_1^2(\sqrt{\gamma}\circ\alpha_s)|\alpha_s'|^2 + \frac14 |\gamma'|^2\circ\alpha_s |\alpha_s'|^2 } \de t\\
		=& \int_{0}^1 \sqrt{ s^2f_1^2(\sqrt{\gamma}) + \frac14 |\gamma'|^2 } \de r,
		\end{split}
		\end{equation*}
	so that $\gamma$ is a minimizer of \eqref{eqcar3}.
	By uniqueness, the pair $(\hat\alpha_s,\hat\beta_s)$ has to produce the same $\gamma$, so that
		$\beta_s\circ\alpha_s^{-1}=\hat\beta_s\circ\hat\alpha_s^{-1}$.
		The function $\varphi:=\hat\alpha_s^{-1}\circ\alpha_s$ is a $C^1$ bijection from $( T_-,  T_+)$ onto $(\hat T_-,\hat T_+)$. Then
		$\beta_s=\hat\beta_s\circ\varphi$, $\alpha_s=\hat\alpha_s\circ\varphi$.
		Recalling the equipartition condition \eqref{eqequipartab},
		\begin{equation*}
		\begin{split}
		0&=s^2f^2(\beta_s) |\alpha_s'|^2+|\beta_s'|^2-\frac{(1-\beta_s)^2}{4}  \\
		&=\big[s^2f^2(\hat\beta_s) |\hat\alpha_s'|^2 +|\hat\beta_s'|^2\big] \circ\varphi |\varphi'|^2-\frac{(1-\hat\beta_s)^2}{4}  \circ\varphi =\frac{(1-\hat\beta_s)^2\circ\varphi }{4} \big[|\varphi'|^2-1\big]
		\end{split}
		\end{equation*}
		where in the last step we used the equipartition condition \eqref{eqequipartab} for $(\hat\alpha_s,\hat \beta_s)$. We conclude that $|\varphi'|=1$ wherever ${\hat\beta_s(\varphi)<1}$, which implies $\varphi(t)=t+t_1$ for some $t_1\in\R$ and concludes the proof.
\end{proof}

We remark that $(\alpha,\beta)$ minimizing \eqref{eqcar2} is not unique, as the variational problem \eqref{eqcar2}  is invariant under reparametrization. However, if one fixes $\alpha$, for example by setting $\alpha(t)=t$, then for $s<\sfrac$ one easily obtains uniqueness of the corresponding $\beta$. Indeed, any minimizer $\beta$ produces an optimal $\gamma$ via \eqref{eqdefgammafromalphabeta}. Analogously, once $\alpha$ is fixed, monotonicity of $s\mapsto \gamma_s(t)$ implies monotonicity of {$\beta$ with respect to $s$}.

\begin{proposition}[Strict subadditivity] \label{prop:g3}
	For every $s>0$ and $t\in(0,1)$ the function $\Gzero$ satisfies the inequality $\Gzero(ts)>t\Gzero(s)$. In particular $\Gzero$ is strictly subadditive: for every $s_1,s_2>0$
	\begin{equation*}
	\Gzero(s_1+s_2) < \Gzero(s_1) + \Gzero(s_2)\,.
	\end{equation*}
\end{proposition}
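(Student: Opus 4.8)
The plan is to exploit the reparametrized representation \eqref{eqcar3} of $\Gzero$, in which the whole dependence on the parameter sits inside a single square root, together with the elementary inequality
\begin{equation*}
\sqrt{t^{2}\xi_{1}^{2}+\xi_{2}^{2}}\;\ge\; t\,\sqrt{\xi_{1}^{2}+\xi_{2}^{2}}\qquad\text{for all }t\in(0,1),\ \xi_{1},\xi_{2}\in\R,
\end{equation*}
which is \emph{strict} precisely when $\xi_{2}\ne 0$ (square both sides and compare $\xi_{2}^{2}$ with $t^{2}\xi_{2}^{2}$). Fix $s>0$ and $t\in(0,1)$.

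First I dispose of the case $ts\ge\sfrac$: then $\Gzero(ts)=1$ by \eqref{eqdefsfrac} and Theorem~\ref{thm:propg1}\ref{item3gbar2},\ref{item4gbar2}, while $t\,\Gzero(s)\le t<1$ since $\Gzero(s)\le 1$; hence $\Gzero(ts)>t\,\Gzero(s)$. So assume $ts\in(0,\sfrac)$. By Proposition~\ref{prop:g2}\ref{prop:g2unique} the variational problem \eqref{eqcar3} defining $\Gzero(ts)$ has a minimizer $\gamma_{ts}\in W^{1,1}_{0}([0,1],[0,1])\cap C^{1}(0,1)$ with $\gamma_{ts}(\tfrac12)=(1-m_{ts})^{2}$. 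Since $s\mapsto m_{s}$ is strictly decreasing on $[0,\sfrac)$ with $m_{0}=1$ (Theorem~\ref{thm:g1}) and $ts>0$, we get $m_{ts}<1$, so $\gamma_{ts}(\tfrac12)>0=\gamma_{ts}(0)$; in particular $\gamma_{ts}$ is nonconstant, hence (being absolutely continuous) $\gamma_{ts}'\ne 0$ on a measurable set $E\subset(0,1)$ of positive measure. Applying the displayed inequality pointwise with $\xi_{1}=s\,\tif(\sqrt{\gamma_{ts}})$ and $\xi_{2}=\tfrac12\gamma_{ts}'$ — strictly on $E$, non-strictly elsewhere — and then integrating, I obtain
\begin{align*}
\Gzero(ts)&=\int_{0}^{1}\sqrt{t^{2}s^{2}\bigl(\tif(\sqrt{\gamma_{ts}})\bigr)^{2}+\tfrac14|\gamma_{ts}'|^{2}}\,\de r\\
&>t\int_{0}^{1}\sqrt{s^{2}\bigl(\tif(\sqrt{\gamma_{ts}})\bigr)^{2}+\tfrac14|\gamma_{ts}'|^{2}}\,\de r\ \ge\ t\,\Gzero(s),
\end{align*}
the last inequality because $\gamma_{ts}$ is admissible in \eqref{eqcar3} for $\Gzero(s)$ (the admissible class there does not depend on the parameter, and $\Gzero(s)$ is the infimum of that functional by Proposition~\ref{prop:g2}\ref{prop:g2gamma}). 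This proves $\Gzero(ts)>t\,\Gzero(s)$ in all cases.

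Finally, for the strict subadditivity, given $s_{1},s_{2}>0$ I set $s:=s_{1}+s_{2}$ and $t_{i}:=s_{i}/s\in(0,1)$, so $t_{1}+t_{2}=1$; the bound just proved yields $\Gzero(s_{i})=\Gzero(t_{i}s)>t_{i}\Gzero(s)$ for $i=1,2$, and summing gives $\Gzero(s_{1})+\Gzero(s_{2})>(t_{1}+t_{2})\Gzero(s)=\Gzero(s_{1}+s_{2})$.

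I do not expect a genuine obstacle here; the only step requiring a little care is the strictness, which needs the minimizer $\gamma_{ts}$ of \eqref{eqcar3} to be truly nonconstant so that $\gamma_{ts}'$ does not vanish almost everywhere. This is exactly where the strict monotonicity of $s\mapsto m_{s}$ is used (equivalently, one could note that the trivial profile $\gamma\equiv 0$ is not optimal, since $\Gzero(ts)<\ell\,ts$ by Proposition~\ref{prop:g1}\ref{item5g}); everything else is bookkeeping around the scaling inequality and the representation \eqref{eqcar3}.
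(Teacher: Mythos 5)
Your argument is correct, but it runs through a different representation than the paper's proof. The paper works directly with the two-field formulation \eqref{defgnew}: it takes a minimizer $(\alpha_{ts},\beta_{ts})$ (Proposition~\ref{prop:g2b}), rescales the domain, $\tilde\alpha(y):=\alpha_{ts}(ty)$, $\tilde\beta(y):=\beta_{ts}(ty)$, and observes that the change of variables produces the exact identity $\Gzero(ts)=t\,\gs(\tilde\alpha,\tilde\beta)+(\tfrac1t-t)\int|\tilde\beta'|^2\,\de y\geq t\,\Gzero(s)+(\tfrac1t-t)\,t\int|\beta_{ts}'|^2\,\de y$, with strictness because $\beta_{ts}$ is nonconstant; the case $\Gzero(ts)=1$ is disposed of first exactly as you do. You instead pass to the reparametrized scalar formulation \eqref{eqcar3}, where the jump amplitude enters only as a multiplicative factor under the square root, and replace the domain rescaling by the pointwise inequality $\sqrt{t^2\xi_1^2+\xi_2^2}\geq t\sqrt{\xi_1^2+\xi_2^2}$, strict where $\gamma_{ts}'\neq0$; since the admissible class in \eqref{eqcar3} is independent of the parameter, admissibility of $\gamma_{ts}$ for $\Gzero(s)$ closes the argument. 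Both proofs exploit the same structural fact (the $|\beta'|^2$, resp.\ $|\gamma'|^2$, term does not scale with the opening), but yours needs the existence, regularity and nontriviality of the minimizer of \eqref{eqcar3} from Proposition~\ref{prop:g2} (or, as you note, just $\Gzero(ts)<\ell ts$ from Proposition~\ref{prop:g1}\ref{item5g} to rule out $\gamma\equiv0$), whereas the paper's route needs only Proposition~\ref{prop:g2b}\ref{prop:g2bexist} and no properties of $m_s$; conversely, your version avoids the rescaling computation entirely and the strictness is purely algebraic. One bookkeeping remark: inside a proof of Proposition~\ref{prop:g3} you should not cite Theorem~\ref{thm:propg1}, since that theorem is assembled using Proposition~\ref{prop:g3} itself; the facts you actually need there ($\Gzero\leq1$, and $\Gzero(r)=1$ for $r\geq\sfrac$) are available independently from Proposition~\ref{prop:g1} and Theorem~\ref{thm:g1}(ii), so the fix is only a change of references.
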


\begin{proof}
	{Let $s>0$, $t\in(0,1)$, $\bar s=ts$.
	{If $\Gzero(\bar s)=1$, then {$t\Gzero(s)< 1 =\Gzero(ts)$} and the first assertion is proven. Otherwise,}
	by Proposition~\ref{prop:g2b}\ref{prop:g2bexist} there is
		$(\alpha_{\bar s},\beta_{\bar s})\in \mathcal U_1$ such that
		$\mathcal G_{\bar s}(\alpha_{\bar s},\beta_{\bar s})=\Gzero(\bar s)$.
		Then by rescaling $\tilde{\alpha}(y):=\alpha_{\bar s}(ty)$, $\tilde{\beta}(y):=\beta_{\bar s}(ty)$ we have
		$(\tilde{\alpha},\tilde{\beta})\in\mathcal{U}_1$ and therefore
		\begin{align*}
		\Gzero({\bar s}) &= \int_{-\infty}^\infty \Bigl( t^2s^2 f^2(\beta_{\bar s})|\alpha'_{\bar s}|^2 + \frac{(1-\beta_{\bar s})^2}{4} + |\beta_{\bar s}'|^2 \Bigr)\de x \\
		& = \int_{-\infty}^\infty \Bigl( s^2 f^2(\tilde\beta)|\tilde\alpha'|^2 + \frac{(1-\tilde\beta)^2}{4} + \frac{|\tilde\beta'|^2}{t^2} \Bigr) t\de y \\
		& = t\gs(\tilde{\alpha},\tilde{\beta}) + \Bigl(\frac{1}{t}-t\Bigr)\int_{-\infty}^\infty |\tilde{\beta}'|^2 \de y  \\
		& \geq t\Gzero(s) + \Bigl(\frac{1}{t}-t\Bigr)t
		\int_{-\infty}^\infty |{\beta_{\bar s}}'|^2 \de y  
		>t\Gzero(s)
		\,,
		\end{align*}
		where the last inequality follows by $\beta_{\bar s}\ne 0$}.  This proves the first part of the statement. By writing
	\begin{align*}
	\Gzero(s_1) + \Gzero(s_2) = \Gzero\Bigl((s_1+s_2)\frac{s_1}{s_1+s_2}\Bigr) + \Gzero\Bigl((s_1+s_2)\frac{s_2}{s_1+s_2}\Bigr) > \Gzero(s_1+s_2)
	\end{align*}
	the strict subadditivity of $\Gzero$ follows.
\end{proof}

%We remark for later use that, by Proposition~\ref{prop:g3} {and continuity}, given any $s>0$ there exists a constant $c_s>0$, depending on $s$, such that
%\begin{equation} \label{gsub}
%\Gzero(s_1+s_2)-\Gzero(s_1)-\Gzero(s_2) \leq - c_s \qquad\text{for every }s_1,s_2\geq s.
%\end{equation}

In the following proposition we perform a careful asymptotic analysis of the function $\Gzero(s)$ as $s\to0^+$. Notice that, in view of Proposition~\ref{prop:g1}, $\Gzero$ is differentiable at the origin, with positive and finite slope; we now compute the second order correction by means of a $\Gamma$-convergence expansion in the minimum problems \eqref{eqcar3}.

\begin{proposition} \label{prop:gexpansion}
	There exists $\tilde{\ell}>0$ such that
	\begin{equation} \label{gexpansion}
	\Gzero(s) = \ell s - \tilde{\ell}s^{5/3} + o(s^{5/3}) \qquad\text{as }s\to0^+.
	\end{equation}
	Moreover, if $\gamma_s$ is a minimizer in problem \eqref{eqcar3}, then the sequence $s^{-4/3}\gamma_s$ converges uniformly in $[0,1]$ {as $s\to0^+$} to the unique solution $\bar{\eta}\in H^1_0(0,1)$ of the minimum problem
	\begin{equation} \label{gexpansion4}
	\min\biggl\{  H(\eta) : \eta\in H^1_0(0,1),\, \eta\geq0 \biggr\} 
	\end{equation}
	where
	\begin{equation} \label{gexp3}
	H(\eta):=\int_0^1\Big(-\frac{\ell_1}{\ell}\sqrt{\eta} + \frac{|\eta'|^2}{8\ell^2} \Big)\de t\,.
	\end{equation}
	In particular
	\begin{equation} \label{gexpansion3}
	(1-m_s)^2 = \max_{t\in[0,1]}\gamma_s(t) \leq cs^{4/3}
	\end{equation}
	for all $s>0$ small enough and for a uniform constant $c>0$.
\end{proposition}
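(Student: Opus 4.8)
The plan is to carry out a $\Gamma$--expansion of the one--dimensional minimum problem \eqref{eqcar3} around its leading term $\ell s$. The correct scaling is $\gamma=s^{4/3}\eta$: since
\[
\sqrt{s^{2}\tif(\sqrt\gamma)^{2}+\tfrac14|\gamma'|^{2}}=s\,\sqrt{\tif(s^{2/3}\sqrt\eta)^{2}+\tfrac14 s^{2/3}|\eta'|^{2}}\,,
\]
the change of variable $\gamma\leftrightarrow\eta$ in \eqref{eqcar3} gives the exact identity
\[
\frac{\Gzero(s)-\ell s}{s^{5/3}}=\inf_\eta F_s(\eta)=F_s(\eta_s),\qquad
F_s(\eta):=s^{-2/3}\!\int_0^1\!\Bigl(\sqrt{\tif(s^{2/3}\sqrt\eta)^{2}+\tfrac14 s^{2/3}|\eta'|^{2}}-\ell\Bigr)\de t\,,
\]
the infimum running over $\eta\in H^1_0(0,1)$ with $0\le\eta\le s^{-4/3}$, and $\eta_s:=s^{-4/3}\gamma_s$ with $\gamma_s$ the (unique, for $s$ small) minimizer of \eqref{eqcar3}. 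Using $\tif(r)=\ell-\ell_1 r+o(r)$ from \eqref{assf3} and $\sqrt{1+x}=1+\tfrac x2+O(x^2)$ one expects $F_s(\eta)\to \ell H(\eta)$ uniformly on bounded sets, with $H$ as in \eqref{gexp3}; the statement will follow once this is made rigorous for the competitor $\bar\eta$ of \eqref{gexpansion4} (upper bound) and for $\eta_s$ (lower bound), together with a priori bounds on $\eta_s$.

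The first, and I expect hardest, step is the a priori estimate: one must pin down the \emph{exact} power $s^{4/3}$, since a mere $o(1)$ bound on $\|\gamma_s\|_{W^{1,\infty}}$ would not control the higher integrability of $\eta_s'$ needed to make the Taylor expansion of the square root uniformly valid. Here I would invoke the first integral \eqref{ELEgamma3} of the Euler--Lagrange equation for \eqref{eqcar3}, together with $\max\gamma_s=\gamma_s(\tfrac12)=(1-m_s)^2$ and the continuity of $s\mapsto m_s$ with $m_0=1$ (Proposition~\ref{prop:g2}): since $\tif$ is decreasing, \eqref{ELEgamma3} gives $|\gamma_s'|\le 2s\ell\,\bigl((\ell/\tif(1-m_s))^2-1\bigr)^{1/2}$, which by \eqref{assf3} becomes $|\gamma_s'|\le C s\,(1-m_s)^{1/2}$ for $s$ small; combined with $(1-m_s)^2=\gamma_s(\tfrac12)\le\tfrac12\|\gamma_s'\|_\infty$ this forces $1-m_s\le Cs^{2/3}$, hence $\max\gamma_s\le Cs^{4/3}$ and $\|\gamma_s'\|_\infty\le Cs^{4/3}$. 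Consequently $\{\eta_s\}$ is bounded in $W^{1,\infty}_0(0,1)$ with $\eta_s\ge0$, so that along subsequences $\eta_s\to\eta$ uniformly and $\eta_s'\rightharpoonup\eta'$ weakly in $L^2(0,1)$, with $\eta\in W^{1,\infty}_0(0,1)$, $\eta\ge0$; in particular $s^{2/3}\sqrt{\eta_s}\to0$ and $s^{1/3}|\eta_s'|\to0$ uniformly, which legitimizes the pointwise expansion below.

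For the upper bound I would use the competitor $s^{4/3}\bar\eta$, with $\bar\eta$ the minimizer of $H$ over $\{\eta\in H^1_0(0,1):\eta\ge0\}$: $H$ is convex (as $-\sqrt{\,\cdot\,}$ is convex), strictly convex because of the Dirichlet term, and coercive and weakly lower semicontinuous on $H^1_0(0,1)$, so $\bar\eta$ exists and is unique, and the structure of its Euler--Lagrange equation $\bar\eta''=-2\ell\ell_1/\sqrt{\bar\eta}$ gives $\bar\eta\in W^{1,\infty}_0(0,1)$. Since $\bar\eta,\bar\eta'$ are bounded, \eqref{assf3} together with $\sqrt{1+x}=1+\tfrac x2+O(x^2)$ with remainders uniform in $t$ yield
\[
\sqrt{\tif(s^{2/3}\sqrt{\bar\eta})^{2}+\tfrac14 s^{2/3}|\bar\eta'|^{2}}=\ell+s^{2/3}\Bigl(-\ell_1\sqrt{\bar\eta}+\frac{|\bar\eta'|^2}{8\ell}\Bigr)+o(s^{2/3}),
\]
hence $F_s(\bar\eta)\to\int_0^1\bigl(-\ell_1\sqrt{\bar\eta}+\tfrac{|\bar\eta'|^2}{8\ell}\bigr)\de t=\ell H(\bar\eta)$, so $\limsup_{s\to0^+}s^{-5/3}(\Gzero(s)-\ell s)\le\ell H(\bar\eta)$. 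For the lower bound, the same expansion applied to $\eta_s$ gives $F_s(\eta_s)=\int_0^1\bigl(-\ell_1\sqrt{\eta_s}+\tfrac{|\eta_s'|^2}{8\ell}\bigr)\de t+o(1)$; since $\int_0^1\sqrt{\eta_s}\to\int_0^1\sqrt{\eta}$ by uniform convergence and $\liminf_s\int_0^1|\eta_s'|^2\ge\int_0^1|\eta'|^2$ by weak lower semicontinuity, $\liminf_s F_s(\eta_s)\ge\ell H(\eta)\ge\ell H(\bar\eta)$. Combining, $s^{-5/3}(\Gzero(s)-\ell s)=F_s(\eta_s)\to\ell H(\bar\eta)$ along the subsequence, hence along the whole family; this is \eqref{gexpansion} with $\tilde\ell:=-\ell H(\bar\eta)$, and $\tilde\ell>0$ because $H(\bar\eta)=\min H<0$ (test $H$ with $\varepsilon\varphi$, $\varphi\in C^\infty_c(0,1)$ nonnegative and nonzero, and let $\varepsilon\to0^+$, so that the $O(\sqrt\varepsilon)$ negative term dominates the $O(\varepsilon^2)$ positive one). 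Finally the displayed chain forces $\ell H(\eta)=\min\ell H$, whence $\eta=\bar\eta$ by uniqueness, so that $s^{-4/3}\gamma_s=\eta_s\to\bar\eta$ uniformly on $[0,1]$; since $\max\gamma_s=s^{4/3}\max\eta_s$ this yields $(1-m_s)^2=\max\gamma_s\le cs^{4/3}$ for all sufficiently small $s$, i.e.\ \eqref{gexpansion3}.
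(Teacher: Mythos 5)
Your proposal is correct and follows the same overall strategy as the paper: rescale $\gamma=s^{4/3}\eta$, identify the limit functional $H$ in \eqref{gexp3}, and obtain \eqref{gexpansion} with $\tilde\ell=-\ell\min H$ together with uniform convergence of $s^{-4/3}\gamma_s$ to the unique minimizer $\bar\eta$. The execution, however, differs in two useful ways. First, your a priori estimate is sharper than the paper's: by bootstrapping the first integral \eqref{ELEgamma3} (namely $|\gamma_s'|\le Cs(1-m_s)^{1/2}$ combined with $(1-m_s)^2=\gamma_s(\frac12)\le\frac12\|\gamma_s'\|_\infty$, using $m_s\to1$ from Proposition~\ref{prop:g2}) you get $\max\gamma_s\le Cs^{4/3}$ and $\|\gamma_s'\|_\infty\le Cs^{4/3}$, i.e.\ a uniform $W^{1,\infty}$ bound on $\eta_s$; the paper only derives $\gamma_s\le\ell s$ and $|\gamma_s'|\le c_0 s^{5/4}$, builds these weaker constraints into the rescaled functionals $H_s$, and recovers $H^1_0$-compactness from the energy via Young and Poincar\'e. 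Your bound makes the Taylor expansion of the integrand uniform at once, yields $H^1$-weak (indeed $W^{1,\infty}$-weak*) compactness immediately, and gives \eqref{gexpansion3} directly, independently of the convergence statement. Second, where the paper proves a full $\Gamma$-convergence statement (liminf along arbitrary admissible sequences, limsup via constant recovery sequences for $\eta\in C^1_c$ and density), you argue only along the actual minimizers $\eta_s$ (lower bound) and along the single fixed competitor $\bar\eta$ (upper bound), which is enough since the expansion and the uniqueness of $\bar\eta$ force full convergence by the usual subsequence argument.

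The only thinly justified step is the regularity $\bar\eta\in W^{1,\infty}_0(0,1)$, which you need to make the pointwise expansion of the square root uniform along the competitor $\bar\eta$: you assert it from the Euler--Lagrange equation $\bar\eta''=-2\ell\ell_1/\sqrt{\bar\eta}$, but that equation is only available once you know the obstacle $\eta\ge0$ is inactive in $(0,1)$, and the Lipschitz bound then comes from its first integral $\frac12|\bar\eta'|^2+4\ell\ell_1\sqrt{\bar\eta}=\mathrm{const}$; these points should be spelled out (positivity follows, e.g., because adding a small bump $\e\varphi$ lowers $H$ by order $\sqrt\e$ against a cost of order $\e^2$). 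Alternatively you can avoid the issue entirely: either approximate $\bar\eta$ by nonnegative $C^1_c$ functions as the paper does, or estimate the competitor from above via $\sqrt{a+b}\le\sqrt a+\frac{b}{2\sqrt a}$ with $a=\tif^2(s^{2/3}\sqrt{\bar\eta})$, $b=\frac14 s^{2/3}|\bar\eta'|^2$, which only uses $\bar\eta\in H^1_0\cap L^\infty$ and the uniform convergence $\tif(s^{2/3}\sqrt{\bar\eta})\to\ell$. With this point fixed, the argument is complete.
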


We remark that the minimum in (\ref{gexpansion4}) can be found explicitly, and leads to
\begin{equation}\label{eqtildeell}
	\tilde \ell=
	{-\ell \min H({\tilde\eta})=}
	\ell_1^{4/3}\ell^{1/3} \frac3{10}\Bigl(\frac32\Bigr)^{2/3}\simeq 0.4 \ell_1^{4/3}\ell^{1/3}.
\end{equation}
The computation is straightforward but somewhat cumbersome, we do not report it for brevity since this result is not needed for what follows.

\begin{proof}
	To guess the order of the second term in the expansion of $\Gzero$ as $s\to0$, {we} perform a Taylor expansion in the energy of a minimizer $\gamma_s$ for \eqref{eqcar3}. Let $\gamma_s$ be an optimal profile for problem \eqref{eqcar3}, as in Proposition~\ref{prop:g2}\ref{prop:g2bexist}. Using $\max\gamma_s=\gamma_s(\frac12)=(1-m_s)^2$ and Proposition~\ref{prop:g1}\ref{item2g} gives
	\begin{equation*}
	\ell s \geq \Gzero(s) \geq \frac12\int_0^1 |\gamma_s'|\de t = (1-m_s)^2,
	\end{equation*}
	and therefore
	\begin{equation} \label{gexp-1}
	0\leq \gamma_s(t) \leq \ell s \qquad\text{for all }t\in[0,1].
	\end{equation}
	Furthermore, by the Euler-Lagrange equation \eqref{ELEgamma3} satisfied by $\gamma_s$ we further deduce {for $s$ small}, using \eqref{gexp-1} and \eqref{assf3}, the bound on the derivative
	\begin{equation} \label{gexp0}
	\begin{split}
	|\gamma_s'(t)|
	& = 2s\tif(\sqrt{\gamma_s}) \sqrt{ \Bigl( \frac{\tif(\sqrt{\gamma_s})}{\tif(1-m_s)} \Bigr)^2 -1 } \\
	& \leq 2 s \frac{\tif(\sqrt{\gamma_s})}{\tif(1-m_s)} \sqrt{ 2\ell\ell_1\bigl(1-m_s-\sqrt{\gamma_s}\bigr) +o(1-m_s) }
	\leq c_0s^{\frac54}
	\end{split}
	\end{equation}
	for all $t\in[0,1]$, for some positive constant $c_0$, independent of $s$.
	Since $\gamma_s$ converges to zero uniformly as $s\to0$, we can formally expand the energy of $\gamma_s$ by using \eqref{assf3} and neglecting higher order terms:
	\begin{equation*}
	\begin{split}
	\Gzero(s) = \int_0^1\sqrt{s^2\bigl(\tif(\sqrt{\gamma_s})\bigr)^2 + \textstyle\frac{|\gamma_s'|^2}{4}} \de t
	& \sim \int_0^1 \sqrt{s^2\ell^2 -2s^2\ell\ell_1\sqrt{\gamma_s} + \textstyle\frac{|\gamma_s'|^2}{4} } \de t \\
	& \sim s\ell + s\ell \int_0^1\Bigl( -\frac{\ell_1}{\ell}\sqrt{\gamma_s} + \frac{|\gamma_s'|^2}{8s^2\ell^2} \Bigr).
	\end{split}
	\end{equation*}
	For $s$ small, the two terms in the last integral are of the same order if $\gamma_s\sim s^{4/3}$, so that, in turn, we might expect $\Gzero(s) - \ell s \sim s^{5/3}$.
	
	The previous formal considerations can be made rigorous by means of an asymptotic development by $\Gamma$-convergence. The expected decay of minimizers of \eqref{eqcar3} suggests to consider the rescaling $\eta(t):=s^{-\frac43}\gamma(t)$: with this position the minimum problem \eqref{eqcar3} can be reformulated as
	\begin{equation} \label{gexp1}
	\Gzero(s) = \inf \biggl\{\int_0^1 \sqrt{s^2 \bigl( \tif(s^{2/3}\eta^{1/2}) \bigr)^2 + \textstyle\frac{s^{8/3}|\eta'|^2}{4}} \: : \: \eta\in W^{1,1}_0(0,1),\, 0\leq\eta\leq s^{-4/3} \biggr\}.
	\end{equation}
	We compute the $\Gamma$-limit as $s\to0$, with respect to the strong topology of $L^{1}(0,1)$, of the family of functionals defined by
	\begin{equation} \label{gexp2}
	H_s(\eta) :=
	\frac{1}{s^{2/3}}\biggl(\frac{1}{s\ell}\int_0^1 \sqrt{ s^2 \bigl(\tif(s^{2/3}\eta^{1/2})\bigr)^2 + \frac{s^{8/3}|\eta'|^2}{4}}\de t - 1 \biggr)
	\end{equation}
	if $\eta\in W^{1,1}_0(0,1)$, $0\leq\eta\leq \ell s^{-1/3}$, and $|\eta'|\leq c_0s^{-1/12}$, and $H_s(\eta):=\infty$ otherwise in $W^{1,1}_0(0,1)$. Notice that the previous bounds are satisfied by the (rescaled) minimizer $\gamma_s$, in view of \eqref{gexp-1}--\eqref{gexp0}. We will prove that the $\Gamma$-limit is given by the functional
	$H$ defined as the expression in \eqref{gexp3}
	for $\eta\in H^{1}_0(0,1)$, $\eta\geq0$, and $H(\eta):=\infty$ otherwise in $W^{1,1}_0(0,1)$.
	
	We first prove the $\Gamma$-liminf inequality, together with the compactness of sequences with bounded energy with respect to the weak topology of $H^{1}(0,1)$. Without loss of generality, we consider a subsequence $\eta_s\in W^{1,1}_0(0,1)$, not relabeled, such that $0\leq\eta_s\leq \ell s^{-1/3}$, $|\eta_s'|\leq c_0s^{-1/12}$, and $\lim_{s\to0} H_s(\eta_s)<\infty$. Since $s^{2/3}\eta_s^{1/2}\leq \ell^{1/2}s^{1/2}$, by \eqref{assf3} we have
	\begin{equation*}
	\tif(s^{2/3}\eta_s^{1/2}) = \ell-\ell_1s^{2/3}\eta_s^{1/2} + o({s^{2/3}\eta_s^{1/2}}),
	\end{equation*}
	and therefore
	\begin{equation*}
	\begin{split}
	H_s(\eta_s)
	& =\frac{1}{s^{2/3}}\biggl(\frac{1}{s\ell}\int_0^1 \sqrt{ s^2\bigl( \ell-\ell_1s^{2/3}\eta_s^{1/2}+o(s^{2/3}\eta_s^{1/2}) \bigr)^2 +\frac{s^{8/3}|\eta_s'|^2}{4}}\de t - 1 \biggr)\\
	& =\frac{1}{s^{2/3}}\biggl(\int_0^1 \sqrt{ 1-2\frac{\ell_1}{\ell}s^{2/3}\eta_s^{1/2}+o(s^{2/3}\eta_s^{1/2}) + \frac{s^{2/3}|\eta_s'|^2}{4\ell^2} }\de t - 1 \biggr).
	\end{split}
	\end{equation*}
	Now, since
	\begin{equation*}
	\bigg| -2\frac{\ell_1}{\ell}s^{2/3}\eta_s^{1/2}+o(s^{2/3}\eta_s^{1/2}) + \frac{s^{2/3}|\eta_s'|^2}{4\ell^2} \bigg| \leq cs^{1/2},
	\end{equation*}
	a Taylor's expansion gives
	\begin{equation}\label{gexp4}
	\begin{split}
	H_s(\eta_s)
	& = \int_0^1\biggl(-\frac{\ell_1}{\ell}\eta_s^{1/2} + \frac{|\eta_s'|^2}{8\ell^2} \biggr)\de t + \frac{1}{s^{2/3}}\int_0^1o\bigl(s^{2/3}\eta_s^{1/2}+s^{2/3}|\eta_s'|^2\bigr)\de t.
	\end{split}
	\end{equation}
	Notice that, for $s$ small enough, the second term in the right-hand side of \eqref{gexp4} can be controlled by the first one:
	\begin{equation*}
	H_s(\eta_s) \geq \int_0^1\biggl(-\frac{2\ell_1}{\ell}\eta_s^{1/2} + \frac{|\eta_s'|^2}{16\ell^2} \biggr)\de t.
	\end{equation*}
	Now, for any $\e>0$ we have $\eta_s^{1/2}\leq \e\eta_s^2 + c_\e$ by Young's inequality; therefore, using also Poincar\'e's inequality we end up with
	\begin{equation*}
	H_s(\eta_s) \geq - \frac{2\e\ell_1}{\ell}\int_0^1\eta_s^{2}\de t - \frac{2c_\e\ell_1}{\ell} + \frac{1}{16\ell^2}\int_0^1|\eta_s'|^2\de t \geq \Bigl(\frac{1}{16\ell^2}- \frac{2\e\ell_1}{\ell}\Bigr)\int_0^1 |\eta_s'|^2\de t - \frac{2c_\e\ell_1}{\ell},
	\end{equation*}
	and choosing $\e$ small enough we deduce that the sequence $(\eta_s)_s$ is uniformly bounded in $H^1_0(0,1)$. Up to subsequences, $\eta_s\wto\eta$ weakly in $H^1(0,1)$ (and uniformly) for some $\eta\in H^1_0(0,1)$, $\eta\geq0$. By \eqref{gexp4} we also deduce that
	\begin{equation*}
	\liminf_{s\to0} H_s(\eta_s) \geq \int_0^1 \Bigl(-\frac{\ell_1}{\ell}\eta^{1/2}+\frac{|\eta'|^2}{8\ell^2}\Bigr)\de t = H(\eta).
	\end{equation*}
	
	As for the $\Gamma$-limsup inequality, fixed $\eta\in C^1_c(0,1)$, $\eta\geq0$, we simply define $\eta_s:=\eta$. Hence, \eqref{gexp4} gives for $s$ small enough
	\begin{equation*}
	H_s(\eta_s)=\int_0^1\biggl(-\frac{\ell_1}{\ell}\eta^{1/2}+\frac{|\eta'|^2}{8\ell^2}\biggr)\de t +o(1) = H(\eta)+o(1),
	\end{equation*}
	where $o(1)$ depends on $\eta$ and tends to 0 as $s\to0$. The $\Gamma$-limsup inequality follows by density of $C^1_c(0,1)$ in $H^1_0(0,1)$.
	
	The existence of a minimizer $\bar{\eta}\in H^{1}_0(0,1)$ for the limit functional $H$ follows by a standard application of the direct method of the Calculus of Variations, and its uniqueness by strict convexity of the functional $H$.
	We can now check that the value of the minimum of $H$ is strictly negative: by considering {$\lambda\sin(\pi t)$} as a competitor, with $\lambda>0$, we have
	\begin{equation*}
	\min_{\eta\in W^{1,1}_0(0,1)} H(\eta)\leq 
	-\frac{\ell_1}{\ell} \lambda^{1/2}{\int_0^1
		\sin^{1/2}(\pi t)\de t} + \frac{\lambda^2}{8\ell^2}{\int_0^1
		\pi^2\cos^{2}(\pi t)\de t},
	\end{equation*}
	which is strictly negative {if} $\lambda$ is small enough. We denote $\tilde{\ell}:=-\ell\min H >0$. By \eqref{gexp1} and standard properties of $\Gamma$-convergence we obtain the convergence of the minimizers of $H_s$ to $\bar{\eta}$ and
	\begin{equation*}
	\frac{\Gzero(s)-\ell s}{\ell s^{5/3}} = \min_{\eta\in W^{1,1}_0(0,1)} H_s(\eta) \to \min_{\eta\in W^{1,1}_0(0,1)} H(\eta) = -\frac{\tilde{\ell}}{\ell} \qquad\text{as }s\to0^+,
	\end{equation*}
	so that \eqref{gexpansion} {and \eqref{gexpansion3}} follow.
\end{proof}

{ We are now in position to give the proofs of the results in Section~\ref{subsect:propg}, which essentially collect the main properties of $\Gzero$ from the previous statements.
	
\begin{proof}[Proof of Theorem~\ref{thm:propg1}]
	The result follows by combining Proposition~\ref{prop:g1}, Proposition~\ref{prop:g3}, and Proposition~\ref{prop:gexpansion}.
\end{proof}
	
\begin{proof}[Proof of Theorem~\ref{thm:g1}]
	The first assertion follows from Proposition~\ref{prop:g2b}\ref{prop:g2bexist} and Proposition~\ref{prop:g2}\ref{prop:uniqbeta}.
	The second follows from monotonicity (Proposition~\ref{prop:g1}\ref{item2g}) and an explicit computation (see also \eqref{g=1}).
	The remaining assertions follow from Proposition~\ref{prop:g2}.
\end{proof}
}

\newcommand\betamin{\beta_{\mathrm{min}}}
\begin{proof}[{ Proof of Proposition~\ref{propsfracinfty2}}]
	{First we observe that  $\slopefuno=\liminf_{t\uparrow1} \frac{f_1(\sqrt t)}{1-t}$,
		that by convexity of $f_1(\sqrt\cdot)$ the $\liminf$ is actually a limit, and that, using convexity, $f_1(0)=\ell$ and $f_1(1)=0$, we have
		\begin{equation*}
		\slopefuno (1-t) \le f_1(\sqrt t) \le \ell (1-t)  \quad\text{ for all } t\in [0,1],
		\end{equation*}
		which implies $\slopefuno\in[0,\ell]$ and $f_1(1-b)\ge \slopefuno (1-(1-b)^2)=\slopefuno(2b-b^2)$ for all $b\in[0,1]$.
		We  also observe that, using \eqref{assf2}, the integrand in  \eqref{eqcar2} can be estimated by
		\begin{equation}\label{eqgfracintr}
		\begin{split}
		(1-\beta)\sqrt{s^2 f^2(\beta)(\alpha')^2 + (\beta')^2} 
		&=
		\sqrt{s^2 f_1^2(1-\beta)(\alpha')^2 + \frac14 \big(\frac{\de}{\de t} (1-\beta)^2 \big)^2} \\
		&\ge
		\sqrt{s^2 \slopefuno^2(2\beta-\beta^2)^2(\alpha')^2 + \frac14 \big(\frac{\de}{\de t} (1-\beta)^2 \big)^2} . 
		\end{split}
		\end{equation}}
	
	{To prove the first assertion we construct a competitor for 
		the characterization of $\Gzero$ in \eqref{eqcar2}. Assume $\slopefuno=0$ and fix $s>0$. 
		By definition of $\slopefuno$ we can find $q_s\in (0,1)$
		such that $s f_1(q_s)\le \frac12 (1-q_s^2)$.
		We let $\beta=1-q_s$ in $(\frac13,\frac23)$, $\beta=1$ in $\{0,1\}$, and the linear interpolation in between; 
		we let $\alpha=0$ in $(0,\frac13)$, $\alpha=1$ in $(\frac23,0)$, and the linear interpolation in between. A straightforward computation shows that
		\begin{equation*}
		\Gzero(s)\le \int_0^1
		\sqrt{s^2 f_1^2(1-\beta)(\alpha')^2 + \frac14 \Bigl(\frac{\de}{\de t} (1-\beta)^2 \Bigr)^2} \de t
		=q_s^2 + s f_1(q_s) \le \frac12 (1+q_s^2)<1.
		\end{equation*}}
	
	{ We now turn to the second assertion. We assume $s\slopefuno\ge 6$, let $\alpha\in H^1(0,1)$, $\beta\in H^1(0,1)$ be test functions in the characterization \eqref{eqcar2} of $\Gzero$, denote by $I(\alpha,\beta)$ the integral in \eqref{eqcar2}, and prove $I(\alpha,\beta)\ge1$.}
	
	{Assume first that there is $q\in (0,\frac12]$ such that
		$s\slopefuno \int_{\{q\le \beta<2q\}} {|\alpha'|} \de t\ge 2$. Then \eqref{eqgfracintr} yields
		\begin{equation*}
		\begin{split}
		\int_0^1(1-\beta)\sqrt{s^2 f^2(\beta)(\alpha')^2 + (\beta')^2} \de t
		&\ge \int_{\{q\le \beta<2q\}} s\slopefuno (2\beta-\beta^2){|\alpha'|} \de t 
		+ \int_{\{2q\le \beta\le 1\}} \frac12 \Big|\frac{\de}{\de t} (1-\beta)^2 \Big| \de t\\
		&\ge 2(2q-q^2)
		+ (1-2q)^2
		=1+2q^2>1
		\end{split}
		\end{equation*}
		and the proof is concluded. }
	
	{We  let $\betamin:=\min\beta$. 
		If $\betamin=0$ then necessarily $I(\alpha,\beta)\ge1$ and we are done. We can therefore assume
		$\betamin>0$.
		We also define, for $j\in\N$,  $\beta_j:=\min\{ 2^j\betamin,1\}$ and $E_j:=\{t: \beta_j\le \beta<\beta_{j+1}\}$. 
		We let $J$ be the largest index with $\beta_J<1$ and observe that $(E_j)$, $j=0, \dots, J$, is a partition of $(0,1)$, with
		$\beta(\inf E_j)=\beta(\sup E_j)=\beta_{j+1}$ and $\min \beta(E_j)=\beta_j$.
		We define $a_j:=\int_{E_j}{|\alpha'|}\de t$, which obey $\sum_j a_j{\ge 1}$, and
		\begin{equation*}
		\begin{split}
		g_j:= \int_{E_j}(1-\beta)\sqrt{s^2 f^2(\beta)(\alpha')^2 + (\beta')^2} \de t 
		&\ge
		\int_{E_j}\sqrt{s^2 \slopefuno^2(2\beta_j-\beta_j^2)^2(\alpha')^2 + \frac14 \Bigl(\frac{\de}{\de t} (1-\beta)^2 \Bigr)^2} \de t.
		\end{split}
		\end{equation*}
		We first treat $g_J$. We observe that
		\begin{equation*}
		I(\alpha,\beta)\ge g_J\ge \int_{E_J} s\slopefuno (2\beta_J-\beta_J^2){|\alpha'|} \de t 
		= s \slopefuno  (2\beta_J-\beta_J^2)a_J 
		\ge s \slopefuno \frac34 a_J,
		\end{equation*}
		so that if $s\slopefuno a_J\ge \frac43$ we are done. Therefore we can assume $s\slopefuno a_J<\frac43$ in the following.}
	
	{By Jensen's inequality, for any functions $h,k:E_j\to\R$ we have
		$|E_j|^{-1}\int_{E_j} \sqrt{h^2+k^2}\de t\ge
		\sqrt{(|E_j|^{-1} \int_{E_j} h \de t)^2
			+(|E_j|^{-1} \int_{E_j} k \de t)^2}$ and therefore
		$\int_{E_j} \sqrt{h^2+k^2}\de t\ge
		\sqrt{ (\int_{E_j} |h| \de t)^2
			+(\int_{E_j} |k| \de t)^2}$.
		For $j<J$, 
		$\int_{E_j} \frac12 |\frac{d}{dt} (1-\beta)^2 |\de t \ge  (1-\beta_j)^2-(1-\beta_{j+1})^2
		=  (\beta_{j+1}-\beta_j)(2-\beta_{j+1}-\beta_j)=\beta_j (2-3\beta_j)$. 
		Therefore, again for $j<J$,
		\begin{equation*}
		\begin{split}
		g_j &\ge 
		\sqrt{s^2 \slopefuno^2\beta_j^2(2-\beta_j)^2 a_j^2 + (\beta_j (2-3\beta_j))^2} = \beta_j (2-3\beta_j) \sqrt{1+ x_j^2} 
		\end{split}
		\end{equation*}
		with $x_j:=(s\slopefuno  a_j)(2-\beta_j)/(2-3\beta_j)$. Recalling $s\slopefuno a_j\le 2$ and $\beta_j\le\frac12$ we obtain $x_j\in [0,6]$. 
		We use the fact that $\sqrt{1+x^2}\ge 1 + \frac18 x^2$ for all $x\in[0,6]$ and obtain, inserting first the definition of $x_j$ and then the one of $\beta_j$,
		\begin{equation*}
		\begin{split}
		g_j\ge 
		\beta_j (2-3\beta_j) \Bigl(1+\frac18 \frac{s^2\slopefuno^2 a_j^2(2-\beta_j)^2}{(2-3\beta_j)^2}\Bigr)
		\ge (1-\beta_j)^2-(1-\beta_{j+1})^2 + \betamin \frac18 \frac{(2-\beta_j)^2}{2-3\beta_j}  2^j(s\slopefuno a_j)^2.
		\end{split}
		\end{equation*}
		We observe that $\frac{(2-x)^2}{2-3x} \ge 2$ for $x\in[0,\frac12]$,
		sum over $j$, and recall  that $g_J\ge (1-\beta_J)^2$. This gives
		\begin{equation*}
		\begin{split}
		\sum_j g_j\ge 
		(1-\betamin)^2 + \betamin \frac14 \sum_{j<J} 2^j(s\slopefuno a_j)^2.
		\end{split}
		\end{equation*}
		We recall that we are working under the assumption that {$\sum_j s\slopefuno a_j \geq s\slopefuno \ge 6$} and $s\slopefuno a_J\le \frac43$, which imply
		$4\le \sum_{j<J} s\slopefuno a_j$. By Hölder's inequality,
		\begin{equation*}
		16\le \Bigl(\sum_{j<J} s\slopefuno a_j\Bigr)^2\le \sum_{j<J} 2^j(s\slopefuno a_j)^2 \cdot \sum_{j<J} 2^{-j}
		\le 2\sum_{j<J} 2^j(s\slopefuno a_j)^2. 
		\end{equation*}
		Therefore
		$\sum_{j<J} 2^j(s\slopefuno a_j)^2\ge 8$, so that
		$I(\alpha,\beta)=\sum_jg_j\ge (1-\betamin)^2 + 2\betamin =1+\betamin^2>1$. This concludes the proof.}
\end{proof}

%%%%%%%%%%%%%%%%%%%%%%%%%%%%%%%%%%%%%%%%%%%%%%%%%%%%%%%%%%%%%%%%%%

\subsection{Proof of Theorem~\ref{thm:blowup} in Section \ref{subsect:blowup}}\label{subsect:proofblowup}

This section is devoted to the proof of Theorem~\ref{thm:blowup}, which will be achieved through a sequence of lemmas. In the following we assume that $u$, $u_\e$ and $v_\e$ satisfy the assumption of Theorem~\ref{thm:blowup}. {We start by \ref{item1bu}. Since this assertion is} local, we can assume without loss of generality that the jump set of $u$ consists of a single point $\bar{x}\in(0,1)$, $J_u=\{\bar{x}\}$. We let
\begin{equation*}
I_\eta := (\bar{x}-\eta,\bar{x}+\eta),
\end{equation*}
where $\eta>0$ is always supposed to be small enough so that $I_\eta\subset(0,1)$. Notice that, since the endpoints of $I_\eta$ are not in the jump set of $u$, one has the localized convergence
\begin{equation*}
\lim_{\e\to0}\fe(u_\e,v_\e;I_\eta) = \f(u,1;I_\eta).
\end{equation*}
In the following, we will work with a fixed subsequence $\e_k\to0^+$, $k\to\infty$; to lighten the notation we will replace the subscript $\e_k$ by $k$ (thus, for instance, we set $u_k:=u_{\e_k}$, $v_k:=v_{\e_k}$,\ldots). We further assume that for this subsequence $u_k\to u$ and $v_k\to1$ almost everywhere in $(0,1)$, {and choose a continuous representative for $u_k$ and $v_k$.}

\begin{lemma} \label{lem:meps}
{For $\Lu$-a.e. $\eta$ sufficiently small we have $u_k(\bar{x}\pm\eta)\to u(\bar{x}\pm\eta)$, $v_k(\bar{x}\pm\eta)\to1$, and, if} $x_k\in\overline{I}_\eta$ {is} a minimum point of $v_k$ in the interval $I_\eta$, {in the sense that}
\begin{equation} \label{meps}
m_k := \inf_{x\in I_\eta} v_k(x) = v_k(x_k)\,,
\end{equation}
{then} the following properties hold:
\begin{enumerate}
\item\label{item1meps} $f_k(v_k(x_k))=f_k(m_k)\to 0$ as $k\to\infty$;
\item\label{item2meps} $\limsup_{k\to\infty} m_k <1$;
\item\label{item3meps} $x_k\to\bar{x}$ as $k\to\infty$;
\item\label{item4meps} if $\jump<1$, then $\liminf_{k\to\infty}m_k>0$.
\end{enumerate}
\end{lemma}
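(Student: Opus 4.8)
The plan is to prove the four assertions of Lemma~\ref{lem:meps} essentially in order, extracting one subsequence at a time, and to exploit throughout the fact that the localized energies converge, $\fe(u_k,v_k;I_\eta)\to\f(u,1;I_\eta)$, together with the compactness information that $u_k\to u$, $v_k\to 1$ almost everywhere and in $L^1$.

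First I would fix a good radius $\eta$. Since $u_k\to u$ and $v_k\to 1$ in $L^1(0,1)$, by Fatou/Fubini for $\Lu$-a.e.\ $\eta>0$ small one has $u_k(\bar x\pm\eta)\to u(\bar x\pm\eta)$ and $v_k(\bar x\pm\eta)\to 1$ along a (further) subsequence; fix such an $\eta$. For \ref{item1meps}, I would argue by contradiction: if $f_k(m_k)=f_{\e_k}(m_k)$ did not tend to $0$, then along a subsequence $f_{\e_k}(m_k)\ge c>0$, hence $v_k\ge m_k\ge \delta>0$ on all of $I_\eta$ for some $\delta$ (using $f^{-1}(0)=\{0\}$ and monotonicity of $f$ from \eqref{assf1}, and the definition \eqref{deffe} of $f_\e$). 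Then on $I_\eta$ the coefficient $f_{\e_k}^2(v_k)$ in front of $|u_k'|^2$ is bounded below by $\e_k f^2(\delta)$... which is too weak by itself; the cleaner route is to observe that if $m_k\to 1$ we would get, by lower semicontinuity of $\int h(|\cdot|)$ applied on $I_\eta$ with the full gradient (no jump absorbed), a strictly larger liminf than $\f(u,1;I_\eta)=\int_{I_\eta}h(|u'|)+\Gzero(|[u](\bar x)|)$, because the jump of size $|[u](\bar x)|>0$ cannot be produced at finite cost when $v_k$ stays near $1$; this contradicts energy convergence. So I would package \ref{item1meps} and \ref{item2meps} together: $\limsup m_k<1$ follows from energy convergence exactly because $\bar x$ is a genuine jump point, and then $f_k(v_k(x_k))\to 0$ follows because $v_k\to 1$ a.e.\ forces $m_k=\inf_{I_\eta}v_k$... no — $m_k$ need not go to $1$; rather, $f_k(m_k)=1\wedge\e_k^{1/2}f(m_k)\to 0$ because $\e_k\to 0$ and $f(m_k)$ is bounded (as $m_k\le 1$, $f\in C^1([0,1))$, but $f$ may blow up at $1$!). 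This is the delicate point: one must first rule out $m_k\to 1$ (that is \ref{item2meps}) and then $f(m_k)$ is bounded on a compact subset of $[0,1)$, giving \ref{item1meps}.

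For \ref{item3meps} I would show $x_k\to\bar x$ by contradiction: if along a subsequence $x_k\to x^*\ne\bar x$ in $\overline{I_\eta}$, pick a small interval $I'\subset I_\eta$ around $x^*$ not containing $\bar x$; on $I'$ the limit $u$ has no jump, yet $v_k$ attains there a value $m_k\le\limsup m_k<1$ (by \ref{item2meps}). A Modica--Mortola type lower bound (as in \eqref{g=1}) on $\int_{I'}\big(\frac{(1-v_k)^2}{4\e}+\e|v_k'|^2\big)\ge\frac12(1-m_k)^2+o(1)$ shows the energy on $I'$ has a strictly positive liminf, while $\f(u,1;I')=\int_{I'}h(|u'|)$ and in fact comparing with the total energy on $I_\eta$ one gets a contradiction with energy convergence since the contribution near $\bar x$ alone already accounts for $\Gzero(|[u](\bar x)|)$. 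For \ref{item4meps}, assuming $\Gzero(\bar s)<1$ with $\bar s:=|[u](\bar x)|$, suppose $\liminf m_k=0$; then along a subsequence the rescaled functions $w_k(x)=v_k(x_k+\e_k x)$ have $\inf w_k\to 0$, and the rescaled energy $\fe(u_k,v_k;I_\eta)$ localized near $\bar x$ converges (by the liminf inequality in the $\Gamma$-convergence theory behind \eqref{defgnew}, or directly by the characterization \eqref{eqcar2}) to at least $\Gzero(\bar s)$ computed with the extra constraint that the profile $\beta$ touches $0$; but that constrained infimum equals $1$ by the monotonicity of $s\mapsto m_s$ and $m_s=0\iff s\ge\sfrac$ (Theorem~\ref{thm:g1}), contradicting $\Gzero(\bar s)<1$ via energy convergence.

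The main obstacle I expect is the careful bookkeeping in \ref{item1meps}--\ref{item2meps}: one must be precise about the order of the two claims, since $f$ is only defined and $C^1$ on $[0,1)$ and may be unbounded near $1$, so ``$f_k(m_k)\to 0$'' is \emph{not} automatic from $\e_k\to 0$ and needs $\limsup m_k<1$ first, while $\limsup m_k<1$ itself must be extracted from energy convergence plus the presence of a genuine jump — essentially a local version of the lower bound that each jump of size $\bar s$ costs $\Gzero(\bar s)$, which in turn relies on the representation \eqref{eqcar2} of $\Gzero$ and on $\Gzero(\bar s)$ being strictly less than the ``no-damage'' comparison value $\ell\bar s$ would give if $v_k$ stayed near $1$. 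I would therefore organize the proof so that a single contradiction argument, comparing $\liminf\fe(u_k,v_k;I_\eta)$ against $\f(u,1;I_\eta)$ and using the scaled profiles $w_k$, simultaneously yields $\limsup m_k<1$, $x_k\to\bar x$, and (in the case $\Gzero(\bar s)<1$) $\liminf m_k>0$, with \ref{item1meps} as an immediate corollary of \ref{item2meps} and the boundedness of $f$ on compact subsets of $[0,1)$.
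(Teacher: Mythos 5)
Your proposal is correct in substance and, for items \ref{item2meps}--\ref{item4meps}, follows essentially the paper's route: localized energy convergence on $I_\eta$, the elementary bound behind \eqref{g=1} to detect a well of $v_k$ away from $\bar x$ (for \ref{item3meps}) or a well of depth close to $1$ (for \ref{item4meps}), and, for \ref{item2meps}, the fact that keeping $v_k$ near $1$ forces a surface cost $\approx\ell\,|[u](\bar x)|$, which is strictly larger than $\Gzero(|[u](\bar x)|)$; the paper implements this with the quantitative gap $\tilde g$ coming from \eqref{gnew1}, you via lower semicontinuity/relaxation of $\int h(|u_k'|)$, and both work — your closing paragraph correctly identifies $\Gzero(\bar s)<\ell\bar s$ as the decisive point, which supersedes the earlier slip that the jump ``cannot be produced at finite cost'' (the cost is finite, just too large). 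The genuine difference is item \ref{item1meps}: you deduce it from \ref{item2meps}, since once $m_k\le 1-c$ the monotonicity of $f$ gives $f_{\e_k}(m_k)\le\e_k^{1/2}f(1-c)\to0$; the paper instead proves \ref{item1meps} first and independently: if $f_{\e_k}(m_k)\ge\sigma>0$, then $f_{\e_k}(v_k)\ge\sigma$ on all of $I_\eta$, so equiboundedness of the energy yields $\sigma^2\int_{I_\eta}|u_k'|^2\le C$ and hence $u\in H^1(I_\eta)$, contradicting $\bar x\in J_u$. Both are valid; the paper's version is more economical (no reordering of the four claims), while yours correctly isolates the pitfall that $f$ may blow up at $1$. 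Two small polish points on \ref{item4meps}: the constrained infimum over profiles touching $0$ being at least $1$ is just the computation \eqref{g=1} applied to $v_k$ on $I_\eta$ (invoking the $\Gamma$-liminf machinery or the characterization of $m_s$ in Theorem~\ref{thm:g1} is unnecessary and, for the blow-up profiles, somewhat circular since Theorem~\ref{thm:blowup} is what one is ultimately proving); and the contradiction requires $\int_{I_\eta}h(|u'|)+\Gzero(|[u](\bar x)|)<1$, so you should explicitly use the smallness of $\eta$ permitted by the statement, as the paper does.
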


\begin{proof}
Since we are assuming that $u_k\to u$ and $v_k\to1$ almost everywhere in $(0,1)$, the convergence at the endpoints of the interval $I_\eta$ is automatically satisfied for almost every $\eta$.

\ref{item1meps}:
If for a subsequence we had $f_k(m_k)\geq{\sigma}>0$, then as the energies $\fk(u_k,v_k)$ are equibounded we would get
\begin{equation*}
C\geq \int_{I_\eta} f^2_k(v_k)|u_k'|^2\de x \geq {\sigma}^2 \int_{I_\eta}|u_k'|^2\de x\,.
\end{equation*}
Therefore $u\in H^1(I_\eta)$, which is a contradiction since $\bar{x}\in J_u\cap I_\eta$.

\ref{item2meps}:
Suppose by contradiction that for a subsequence $m_k\to1$. For a given $\delta>0$, we have $v_k(x)\geq 1-\delta$ in $I_\eta$ for $k$ sufficiently large; by assumption \eqref{assf3} this implies that
\begin{equation*}
|(1-v_k)f(v_k)-\ell| \leq \ell \omega(\delta)
\end{equation*}
for some modulus of continuity $\omega(\delta)\to0$ as $\delta\to0$. Therefore for $k$ large enough
\begin{align}\label{meps1}
\fk(u_k,v_k;I_\eta)
& \geq \int_{I_\eta} \Bigl( f_k^2(v_k)|u_k'|^2 + \frac{(1-v_k)^2}{4\e_k} \Bigr)\de x
\geq \int_{I_\eta} |u_k'|^2 \wedge \Bigl( \e_kf^2(v_k)|u_k'|^2 + \frac{(1-v_k)^2}{4\e_k} \Bigr) \de x \nonumber \\
& \geq \int_{I_\eta} \Bigl( |u_k'|^2 \wedge (1-v_k)f(v_k)|u_k'| \Bigr)\de x
\geq (1-\omega(\delta)) \int_{I_\eta} \Bigl( |u_k'|^2\wedge \ell|u_k'| \Bigr)\de x \\
& \geq (1-\omega(\delta)) \biggl( \ell\int_{I_\eta} |u_k'|\de x - \frac{\ell^2}{4}2\eta \biggr)\,. \nonumber
\end{align}
Notice now that { in view of \eqref{gnew1}} we have $\Gzero(s) \leq \ell s - \tilde{g}(s)$ {for $s$ small and} for some continuous and strictly positive function $\tilde{g}(s)>0$; inserting this inequality into \eqref{meps1} we find
\begin{align*}
\fk(u_k,v_k;I_\eta)
& \geq (1-\omega(\delta)) \biggl[ \Gzero\biggl(\Big|\int_{I_\eta}u_k'{\de x}\Big|\biggr) + \tilde{g}\biggl(\Big|\int_{I_\eta}u_k'{\de x}\Big|\biggr) - \frac{\ell^2\eta}{2} \biggr] \\
& = (1-\omega(\delta)) \Bigl[ \Gzero\bigl(| u_k(\bar{x}+\eta)-u_k(\bar{x}-\eta)|\bigr) + \tilde{g}\bigl(| u_k(\bar{x}+\eta)-u_k(\bar{x}-\eta)|\bigr) - \frac{\ell^2\eta}{2} \Bigr] \,.
\end{align*}
By passing to the limit as $k\to\infty$, since the left-hand side converges to the limit energy in $I_\eta$ and by assumption $u_k(\bar{x}\pm\eta)\to u(\bar{x}\pm\eta)$ we have
\begin{align*}
\int_{I_\eta}h(|u'|) {\de x}&+ \jump = \f(u,1;I_\eta) \\
& \geq (1-\omega(\delta)) \Bigl[ \Gzero\bigl(| u(\bar{x}+\eta)-u(\bar{x}-\eta)|\bigr) + \tilde{g}\bigl(| u(\bar{x}+\eta)-u(\bar{x}-\eta)|\bigr) - \frac{\ell^2\eta}{2} \Bigr] \,.
\end{align*}
Therefore, letting first $\delta\to0$ and {then} in turn $\eta\to0$, we conclude that
\begin{equation*}
\jump \geq \jump + \tilde{g}(|[u](\bar{x})|) > \jump \,,
\end{equation*}
which is a contradiction.

\ref{item3meps}:
By contradiction, assume that for a (not relabeled) subsequence $x_k\to\tilde{x}\neq\bar{x}$.  
{Fix $\delta\in(0,\eta)$ such that $v_k(\tilde{x}+\delta)\to1$ 
and $\bar{x}\notin[\tilde{x}-\delta,\tilde{x}+\delta]$. 
For  $k$ large enough, 
$x_k\in(\tilde{x}-\delta,\tilde{x}+\delta)$ and}
\begin{align*}
\fk(u_k,v_k;(\tilde{x}{-\delta},\tilde{x}+\delta))
& \geq \int_{{\tilde x-\delta}}^{\tilde{x}+\delta} \Bigl( \frac{(1-v_k)^2}{4\e_k} + \e_k|v_k'|^2 \Bigr) \de x
\geq \int_{x_k}^{\tilde{x}+\delta} (1-v_k)v_k'\de x \\
& = \frac12\bigl(1-m_k\bigr)^2 - \frac12\bigl(1-v_k(\tilde{x}+\delta)\bigr)^2 \,.
\end{align*}
Notice that the $\liminf$ as $k\to\infty$ of the right-hand side is a strictly positive quantity, in view of property~\ref{item2meps}, {which does not depend on $\delta$}. On the other hand
\begin{equation*}
\lim_{k\to\infty}\fk(u_k,v_k;({\tilde{x}-\delta},\tilde{x}+\delta)) = \f(u,1;{(\tilde{x}-\delta},\tilde{x}+\delta)) = \int_{{\tilde{x}-\delta}}^{\tilde{x}+\delta}h(|u'|)\de x\,,
\end{equation*}
and therefore we conclude that $\int_{{\tilde{x}-\delta}}^{\tilde{x}+\delta}h(|u'|)\de x\geq \frac12\liminf_{k\to\infty}(1-m_k)^2>0$. This is a contradiction since $\delta$ can be chosen arbitrarily small.

\ref{item4meps}:
By arguing as in the previous step and using the fact that $x_k\in I_\eta$ for $k$ large by \ref{item3meps} we have the inequality 
\begin{align*}
\fk(u_k,v_k;I_\eta)
& \geq \int_{\bar{x}-\eta}^{\bar{x}+\eta} (1-v_k)|v_k'|\de x
\geq \frac12\int_{\bar{x}-\eta}^{x_k} \bigl[(1-v_k)^2\bigr]'\de x - \frac12\int_{x_k}^{\bar{x}+\eta} \bigl[(1-v_k)^2\bigr]'\de x \\
& \geq (1-m_k)^2 - \frac12(1-v_k(\bar{x}-\eta))^2 - \frac12(1-v_k(\bar{x}+\eta))^2 \,.
\end{align*}
By letting $k\to\infty$,  and {using that $v_k(\bar{x}\pm\eta)\to1$,} we have
\begin{equation*}
\limsup_{k\to\infty}(1-m_k)^2 \leq {\int_{I_\eta} h(|u'|)\de x + \jump} \,,
\end{equation*}
and the right-hand side is strictly smaller than one provided $\eta$ is small enough.
\end{proof}

In the following we fix $\eta>0$ such that the conclusions of Lemma~\ref{lem:meps} hold, and we define $x_k$ and $m_k$ as in \eqref{meps}. Since these quantities depend on the choice of $\eta$, in the arguments below we will always let $k\to\infty$ and $\eta\to0$ in this order. The following formula, obtained by a simple change of variables, will be often useful: for every fixed $T>0$ and for $k\geq k_T$, for some $k_T$ large enough,
\begin{equation} \label{resc}
\begin{split}
\fk(u_k,v_k;(x_k-\e_kT,x_k+\e_kT)) &= \int_{x_k-\e_kT}^{x_k+\e_kT} \Bigl( f^2_k(v_k)|u_k'|^2 + \frac{(1-v_k)^2}{4\e_k} + \e_k|v_k'|^2 \Bigr)\de x \\
& = \int_{-T}^T \Bigl(\frac{1}{\e_k}f_k^2(w_k)|z_k'|^2 + \frac{(1-w_k)^2}{4} + |w_k'|^2 \Bigr) \de x \,,
\end{split}
\end{equation}
where $w_k$ and $z_k$ are the functions defined in \eqref{bub} and \eqref{buc} respectively.

\begin{lemma}[Compactness of $w_k$] \label{lem:w}
Up to subsequences, $w_k\wto w$ weakly in $H^1_{\loc}(\R)$ {and therefore locally uniformly} for some function $w$ such that $1-w\in H^1(\R)$. In particular $w$ satisfies
\begin{equation*}
\lim_{|x|\to\infty}w(x)=1\,,\qquad w(0)=\lim_{k\to\infty}v_k(x_k)=\lim_{k\to\infty}m_k <1.
\end{equation*}
\end{lemma}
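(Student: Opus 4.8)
The plan is to read off the compactness directly from the rescaled energy identity \eqref{resc}, using that $1-w_k$ inherits a uniform $H^1$-bound on every bounded interval.

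First, fix $T>0$. By Lemma~\ref{lem:meps}\ref{item3meps} we have $x_k\to\bar x\in(0,1)$, and $\e_k\to0$, so for $k$ large enough the interval $(x_k-\e_kT,x_k+\e_kT)$ is contained in $(0,1)$; hence, for such $k$,
\begin{equation*}
\int_{-T}^T\Bigl(\frac{(1-w_k)^2}{4}+|w_k'|^2\Bigr)\de x\le \fk(u_k,v_k;(x_k-\e_kT,x_k+\e_kT))\le \fk(u_k,v_k;(0,1))\le C,
\end{equation*}
where the first inequality follows from \eqref{resc} by dropping the nonnegative term $\frac1{\e_k}f_k^2(w_k)|z_k'|^2$, and the last bound is the uniform energy bound coming from \eqref{bua}. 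Together with $0\le w_k\le1$, this shows that $1-w_k$ is bounded in $H^1(-T,T)$, with a bound independent of $k$.

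Next I would run a diagonal argument over $T\in\N$: passing to successive subsequences and relabelling, we obtain a single subsequence along which $w_k\wto w$ weakly in $H^1(-T,T)$ for every $T$, hence weakly in $H^1_{\loc}(\R)$; by the compact embedding $H^1(-T,T)\hookrightarrow C^0([-T,T])$ in one dimension, the convergence is also locally uniform, and $0\le w\le1$. By weak lower semicontinuity (strong $L^2_{\loc}$-convergence of $1-w_k$ together with weak $L^2_{\loc}$-convergence of $w_k'$) and the uniform bound above, $\int_{-T}^T\bigl(\frac{(1-w)^2}{4}+|w'|^2\bigr)\de x\le C$ for all $T$; letting $T\to\infty$ gives $1-w\in H^1(\R)$. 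The Sobolev embedding $H^1(\R)\hookrightarrow C_0(\R)$ then yields $\lim_{|x|\to\infty}w(x)=1$.

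Finally I would evaluate at the origin: by the definition \eqref{bub} we have $w_k(0)=v_{\e_k}(x_k)=m_k$, and the locally uniform convergence gives $w(0)=\lim_{k\to\infty}w_k(0)=\lim_{k\to\infty}v_k(x_k)=\lim_{k\to\infty}m_k$ (in particular the limit exists along this subsequence). That $w(0)<1$ is exactly Lemma~\ref{lem:meps}\ref{item2meps}. I do not expect a genuine obstacle here; the only point requiring a little care is ensuring the $H^1$-bound on $(-T,T)$ is truly uniform in $k$ before the diagonal extraction, which is guaranteed by the inclusion $(x_k-\e_kT,x_k+\e_kT)\subset(0,1)$ for $k$ large and the global energy bound from \eqref{bua}.
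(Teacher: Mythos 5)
Your argument is correct and is essentially the paper's proof: the uniform bound on $\|1-w_k\|_{H^1(-T,T)}$ is read off from \eqref{resc} together with the equiboundedness of the energy of the recovery sequence, and $w(0)<1$ comes from Lemma~\ref{lem:meps}\ref{item2meps}. The extra details you supply (diagonal extraction, lower semicontinuity to get $1-w\in H^1(\R)$, Sobolev embedding for the behaviour at infinity, and $w_k(0)=m_k$) are exactly the steps the paper leaves implicit.
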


\begin{proof}
The result is an immediate consequence of the uniform bound on $\|1-w_k\|_{H^1(-T,T)}$ for every $T>0$, which follows from \eqref{resc} and the fact that $(u_k,v_k)$ is a recovery sequence and has therefore equibounded energy. The fact that $w(0)<1$ follows from Lemma~\ref{lem:meps}\ref{item2meps}.
\end{proof}

In order to proceed with the proof of Theorem~\ref{thm:blowup}, it is convenient to distinguish between the cases $w(0)=0$ and $w(0)>0$. Indeed, we will see that these conditions correspond to $\jump=1$ {and} $\jump<1$ respectively; according to Theorem~\ref{thm:g1} the behaviour of minimizing sequences for the minimum problem defining $\Gzero$ is different in the two cases.

We first consider the (easier) case $w(0)=0$.
\begin{lemma}\label{lemmabluw0}
If $w(0)=0$, then Theorem~\ref{thm:blowup}\ref{item1bu} holds.
\end{lemma}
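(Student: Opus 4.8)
The goal is to show that when $w(0)=0$, we may take $w_k\to\beta_{\bar s}$ strongly in $H^1_{\loc}(\R)$ with $\beta_{\bar s}$ an optimal profile for $\Gzero(\bar s)$; in this case $\jump=1$ and $\beta_{\bar s}(x)=1-e^{-|x|/2}$ by Theorem~\ref{thm:g1}(ii). First I would observe that $w(0)=0$ together with $0\le w\le 1$ and $1-w\in H^1(\R)$ forces, via the elementary inequality \eqref{g=1} applied to $w$ on $(-T,T)$ (or rather its analogue, splitting at $0$), the lower bound
\begin{equation*}
\liminf_{k\to\infty}\int_{-T}^T\Bigl(\frac{(1-w_k)^2}{4}+|w_k'|^2\Bigr)\de x \ge \int_{-\infty}^\infty\Bigl(\frac{(1-w)^2}{4}+|w'|^2\Bigr)\de x \ge (1-w(0))^2=1,
\end{equation*}
using weak lower semicontinuity in $H^1_{\loc}$ for the first inequality and $w(0)=0$ for the last. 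On the other hand, since $(u_k,v_k)$ is a recovery sequence, $\fk(u_k,v_k;(0,1))\to\f(u,1;(0,1))$, and by Lemma~\ref{lem:meps}\ref{item3meps} all the blow-up action concentrates at $\bar x$; combined with the (already established) lower-semicontinuity-type bound on the complement of a neighbourhood of $\bar x$ (analogous to Theorem~\ref{thm:blowup}\ref{item2bu}, or simply the liminf inequality of Theorem~\ref{thm:cfi}), the surface contribution localized near $\bar x$ is at most $\Gzero(|[u](\bar x)|)=\Gzero(\bar s)\le 1$.

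Next I would combine these two bounds. The scaled energy identity \eqref{resc} shows that $\fk(u_k,v_k;(x_k-\e_kT,x_k+\e_kT))$ is bounded below by $\int_{-T}^T\bigl(\frac{(1-w_k)^2}{4}+|w_k'|^2\bigr)\de x$, whose liminf is $\ge 1$ by the above. Since the total surface contribution near $\bar x$ is $\le\Gzero(\bar s)\le 1$, we deduce $\Gzero(\bar s)=1$, hence $\bar s\ge\sfrac$, and moreover that the full liminf of the localized energy equals $1$ and is carried entirely by the $\frac{(1-w_k)^2}{4}+|w_k'|^2$ terms, i.e. the elastic term $\frac1{\e_k}f_k^2(w_k)|z_k'|^2$ contributes nothing in the limit. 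Equality in \eqref{g=1} for the limit profile $w$ (with $t_0=0$, $w(0)=0$) then forces $w(x)=1-e^{-|x|/2}=\beta_{\bar s}(x)$, which is exactly the optimal profile from Theorem~\ref{thm:g1}(ii). So far this gives weak $H^1_{\loc}$ convergence to the right limit; upgrading to strong convergence follows from convergence of the norms: $\int_{-T}^T|w_k'|^2\de x$ together with $\int_{-T}^T(1-w_k)^2\de x$ converges to the corresponding integrals for $w$ (the sum converges by the energy bookkeeping above and each term is weakly lower semicontinuous, so neither can drop), which combined with weak convergence yields strong convergence in $H^1(-T,T)$ for every $T$.

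The only delicacy — and the step I would be most careful about — is the bookkeeping that isolates the contribution near $\bar x$ and shows it is $\le\Gzero(\bar s)$ while the far-field recovers $\int h(|u'|)$ exactly; this is where one uses that $(u_k,v_k)$ is a recovery sequence (so no energy is wasted) rather than merely a sequence of bounded energy, and where the argument from the proof of Theorem~\ref{thm:cfi}, Proposition~5.1 in \cite{CFI}, together with Lemma~\ref{lem:meps}, is invoked. One must also make sure $x_k\to\bar x$ (Lemma~\ref{lem:meps}\ref{item3meps}) so that the rescaling in \eqref{resc} is genuinely centred at the jump. Once the energy split is in hand, everything else is the soft functional-analytic argument sketched above, and since $\jump=1$ there is no claim about $z_k$ to verify (the statement \eqref{buc} is only asserted in the case $\jump<1$), which is why this case is the easy one.
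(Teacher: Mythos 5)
Your overall route is essentially the paper's: localize at $\bar x$ using the recovery-sequence property, rescale via \eqref{resc}, bound the blown-up energy from below by $\int\bigl(\tfrac{(1-w_k)^2}{4}+|w_k'|^2\bigr)\de x$, and use saturation of the Young/\eqref{g=1} inequality at $w(0)=0$ to identify $w(x)=1-e^{-|x|/2}$ as the optimal profile of Theorem~\ref{thm:g1}(ii). The one genuine (and perfectly valid) deviation is how you get $\jump=1$: the paper reads it off immediately from Lemma~\ref{lem:meps}\ref{item4meps} (its contrapositive, since $\lim_k m_k=w(0)=0$), whereas you deduce $\Gzero(\bar s)=1$ by sandwiching the localized energy between the lower bound $(1-w(0))^2=1$ and the upper bound $\Gzero(\bar s)\le 1$; this buys nothing but costs nothing either. (A cosmetic point: your first display, $\liminf_k\int_{-T}^T(\dots)\ge\int_{-\infty}^{\infty}(\dots)$, is not correct for fixed $T$; lower semicontinuity only gives the integral over $(-T,T)$, and the full-line bound comes after taking $T\to\infty$, which is clearly what you intend.)

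The step that is not justified as written is the upgrade to strong convergence. You assert that ``the sum converges by the energy bookkeeping above,'' i.e. that $\int_{-T}^T\bigl(\tfrac{(1-w_k)^2}{4}+|w_k'|^2\bigr)\de x\to\int_{-T}^T\bigl(\tfrac{(1-w)^2}{4}+|w'|^2\bigr)\de x$ for each fixed $T$. But the bookkeeping only gives $\limsup_k\int_{-T}^T(\dots)\le \lim_{\eta\to0}\f(u,1;I_\eta)=1=\int_{\R}\bigl(\tfrac{(1-w)^2}{4}+|w'|^2\bigr)\de x$, which exceeds the target by the tail $\int_{|x|>T}\bigl(\tfrac{(1-w)^2}{4}+|w'|^2\bigr)\de x>0$; a priori the sequence could concentrate exactly that extra amount inside $(-T,T)$, which is compatible with weak convergence and would destroy strong convergence, and your remark that ``each term is weakly lower semicontinuous, so neither can drop'' addresses the wrong direction (lsc forbids loss, not excess). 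To close this you need the additional argument the paper uses around \eqref{w0}: assume an excess $\sigma>0$ on some $(-M,M)$, split $(-T,T)$ into $(-M,M)$ and the annulus, apply lower semicontinuity on the annulus, and let $T\to\infty$, $\eta\to0$ to reach $1\ge 1+\sigma$ --- i.e. one must exploit that $w$ already saturates the total budget on all of $\R$, not just on $(-T,T)$. With that (standard, two-line) contradiction inserted, your proof is complete; the observations that \eqref{buc} is vacuous here and that $x_k\to\bar x$ guarantees the rescaled window sits inside $I_\eta$ are correctly placed.
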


\begin{proof}
By \ref{item4meps} of Lemma~\ref{lem:meps} it has to be $\jump=1$. Therefore for $\eta>0$ and $T>0$ we have by {\eqref{resc}}
\begin{align*}
\f(u,1;I_\eta) &= \lim_{k\to\infty}\fk(u_k,v_k;I_\eta) \\
& \geq \liminf_{k\to\infty} \int_{-T}^T \Bigl(\frac{(1-w_k)^2}{4}+|w_k'|^2\Bigr)\de x \geq \int_{-T}^T \Bigl(\frac{(1-w)^2}{4}+|w'|^2\Bigr) \de x\,,
\end{align*}
where the last inequality follows by lower semicontinuity with respect to the weak convergence of $w_k$ to $w$ in $H^1(-T,T)$, proved in Lemma~\ref{lem:w}. By letting first $T\to\infty$ and then $\eta\to0$ we obtain
\begin{equation*}
1= \jump =\lim_{\eta\to0}\f(u,1;I_\eta)\geq \int_{-\infty}^\infty \Bigl(\frac{(1-w)^2}{4}+|w'|\Bigr)\de x \,.
\end{equation*}
As $w(0)=0$, { by Young inequality we conclude that the right-hand side of the previous expression is exactly $1$ and then} $w(x)=1-e^{-\frac{|x|}{2}}$ is the optimal profile for $\jump$.

It remains to prove the strong convergence in $H^1_{\loc}(\R)$. To this aim, it is sufficient to show convergence of the energies: suppose by contradiction that for some $M>0$ and $\sigma>0$
\begin{equation} \label{w0}
\limsup_{k\to\infty}\int_{-M}^M \Bigl( \frac{(1-w_k)^2}{4} + |w_k'|^2 \Bigr)\de x = \int_{-M}^M \Bigl(\frac{(1-w)^2}{4}+|w'|^2 \Bigr)\de x + \sigma\,.
\end{equation}
Then for every $\eta>0$ and $T>M$ one has
\begin{align*}
\f(u,1;I_\eta) &= \lim_{k\to\infty}\fk(u_k,v_k;I_\eta) \geq \limsup_{k\to\infty} \int_{-T}^T\Bigl( \frac{(1-w_k)^2}{4} + |w_k'|^2 \Bigr)\de x\\
& \geq \limsup_{k\to\infty}\int_{-M}^M\Bigl( \frac{(1-w_k)^2}{4} + |w_k'|^2 \Bigr)\de x + \liminf_{k\to\infty}\int_{[-T,T]\setminus[-M,M]}\Bigl( \frac{(1-w_k)^2}{4} + |w_k'|^2 \Bigr)\de x \\
& \geq \int_{-T}^T \Bigl( \frac{(1-w)^2}{4} + |w'|^2 \Bigr)\de x + \sigma\,,
\end{align*}
so that by letting first $T\to\infty$ and then $\eta\to0$ we end up with
\begin{equation*}
\jump \geq \int_{-\infty}^\infty \Bigl( \frac{(1-w)^2}{4} + |w'|^2 \Bigr)\de x + \sigma\,,
\end{equation*}
which is a contradiction. This completes the proof.
\end{proof}

\begin{lemma} \label{lem:w>0a}
{If $w(0)>0$, there is a subsequence such that
$z_k$ converges to some $z\in H^1_{\loc}(\R)\cap L^\infty(\R)$ strongly in $H^1_{\loc}(\R)$,
and $w_k$ converges to $w\in H^1_{\loc}(\R)$ strongly in $H^1_{\loc}(\R)$.
The pair $(\bar s^{-1}z,w)$, 
where $\bar{s}:=[u](\bar{x})$,
is a minimizer of the functional $\g_{|\bar s|}$, introduced in \eqref{defG}, in the class $\mathcal{U}_{1}$.} 
\end{lemma}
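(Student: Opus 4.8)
\textbf{Proof plan for Lemma~\ref{lem:w>0a}.}

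The plan is to first establish the strong $H^1_{\loc}$ convergence of $w_k$ and the compactness and convergence of $z_k$, then to identify the limit pair as an element of $\mathcal U_1$, and finally to prove its optimality by a two-sided inequality. For the compactness of $z_k$: from \eqref{resc} and the equiboundedness of the recovery energy we have $\int_{-T}^T \frac1{\e_k}f_k^2(w_k)|z_k'|^2\de x\le C$ for every $T$. Since $w(0)>0$ and $w_k\to w$ locally uniformly by Lemma~\ref{lem:w}, for $k$ large we have $w_k\ge \frac12 w(0)>0$ on $[-T,T]$, hence $f_k(w_k)=\sqrt{\e_k}f(w_k)\ge \sqrt{\e_k}f(\frac12 w(0))>0$ there (using \eqref{assf1}). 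Therefore $\int_{-T}^T |z_k'|^2\de x\le C/f^2(\frac12 w(0))$, a bound uniform in $k$; combined with $\sup_\e\|u_\e\|_\infty<\infty$ (which gives $\|z_k\|_{L^\infty}\le C$) and $z_k(0)=u_k(x_k)$ convergent, this yields a subsequence with $z_k\wto z$ weakly in $H^1_{\loc}(\R)$ and locally uniformly, with $z\in H^1_{\loc}(\R)\cap L^\infty(\R)$.

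Next I would identify the limit. Write $\alpha_k:=\bar s^{-1}z_k$ (using $|\bar s|\ne 0$ since $\bar x\in J_u$), $\beta_k:=w_k$, so $\alpha_k\to \alpha:=\bar s^{-1}z$, $\beta_k\to\beta:=w$ locally in $H^1$. Passing to the limit in \eqref{resc} with lower semicontinuity of each term (the $\frac1{\e_k}f_k^2(w_k)|z_k'|^2=f^2(\beta_k)|\bar s|^2|\alpha_k'|^2$ term is handled by the strong $H^1_{\loc}$ convergence of $\alpha_k$ and $\beta_k$ together with continuity of $f$ on compact subsets of $[0,1)$, the other two terms by weak lower semicontinuity in $H^1$) gives, for every $T>0$ and every admissible $\eta$,
\begin{equation*}
\int_{-T}^T\Bigl(|\bar s|^2 f^2(\beta)|\alpha'|^2+\tfrac{(1-\beta)^2}{4}+|\beta'|^2\Bigr)\de x\le \lim_{k\to\infty}\fk(u_k,v_k;I_\eta)=\f(u,1;I_\eta).
\end{equation*}
Letting $T\to\infty$ and then $\eta\to 0$ shows $\mathcal G_{|\bar s|}(\alpha,\beta)\le \lim_{\eta\to 0}\f(u,1;I_\eta)-\int_0^1 h(|u'|)\de x\cdot 0=\Gzero(\bar s)$ after subtracting the (vanishing as $\eta\to0$) bulk contribution on $I_\eta$; note here $h(|u'|)\le \ell|u'|$ and $|u'|\le\ell/2$ give $\int_{I_\eta}h(|u'|)\de x\to0$. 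In particular the integral is finite, so $\alpha'\in L^1$ (by Cauchy–Schwarz on $\{\beta<1\}$ using $f(\beta)\ge f(\min\beta)>0$ there, together with $\alpha'=0$ a.e.\ on $\{\beta=1\}$, which one checks from finiteness of the elastic term), $1-\beta\in H^1(\R)$ with $\beta(\pm\infty)=1$, and $0\le\beta\le1$; it remains to verify $|\int \alpha'|=1$, i.e.\ $|\alpha(+\infty)-\alpha(-\infty)|=1$. This follows because $z_k(\pm T)=u_k(x_k\pm\e_k T)\to u^\pm(\bar x)$ along a suitable choice (first sending $k\to\infty$ at fixed $T$ using $x_k\to\bar x$ and $L^1$-convergence of $u_k$, then $T\to\infty$), so $|z(+\infty)-z(-\infty)|=|[u](\bar x)|=|\bar s|$, hence $|\alpha(+\infty)-\alpha(-\infty)|=1$ and $(\alpha,\beta)\in\mathcal U_1$. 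Consequently $\Gzero(\bar s)\le \mathcal G_{|\bar s|}(\alpha,\beta)\le \Gzero(\bar s)$, so equality holds and $(\bar s^{-1}z,w)$ is a minimizer.

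Finally, the strong $H^1_{\loc}$ convergence. Having the equality $\mathcal G_{|\bar s|}(\alpha,\beta)=\Gzero(\bar s)$ and the liminf inequality above, a standard argument gives that no energy is lost: if strong convergence failed on some $[-M,M]$, one would get, as in the proof of Lemma~\ref{lemmabluw0} (estimating $\liminf$ on $[-T,T]\setminus[-M,M]$ by weak lsc and $\limsup$ on $[-M,M]$ strictly above the limit energy), a strict inequality $\Gzero(\bar s)=\lim_{\eta\to0}\bigl(\f(u,1;I_\eta)-\int_{I_\eta}h(|u'|)\de x\bigr)\ge \mathcal G_{|\bar s|}(\alpha,\beta)+\sigma=\Gzero(\bar s)+\sigma$, a contradiction. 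This forces convergence of all three energy densities in $L^1_{\loc}$; convergence of $\int\frac{(1-w_k)^2}{4}+|w_k'|^2$ upgrades $w_k\wto w$ to strong $H^1_{\loc}$, and convergence of $\int f^2(\beta_k)|\bar s|^2|\alpha_k'|^2$ together with $f(\beta_k)\to f(\beta)$ locally uniformly (and bounded below) upgrades $\alpha_k\wto\alpha$, i.e.\ $z_k\wto z$, to strong $H^1_{\loc}$.

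The main obstacle I expect is the bookkeeping of the two independent limits $k\to\infty$ and $\eta\to0$ (and the auxiliary $T\to\infty$) so that the boundary values $z_k(\pm\e_kT)\to u^\pm(\bar x)$ are correctly captured and the bulk term on $I_\eta$ is shown to vanish; the coercivity that gives compactness of $z_k$ relies crucially on $w(0)>0$, which is exactly the hypothesis, so that part is clean, but one must be careful that the lower bound $w_k\ge\frac12 w(0)$ holds on each fixed compact set only for $k$ large depending on the set.
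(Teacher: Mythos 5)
Your compactness step for $z_k$ (coercivity of the elastic term from $w(0)>0$), the liminf inequality, and the final ``no energy is lost, hence strong convergence'' argument all run parallel to the paper's proof. But there is a genuine gap at the point you dismiss in one sentence: the verification that $|\alpha(+\infty)-\alpha(-\infty)|=1$, i.e.\ that the blow-up limit $z$ carries the \emph{whole} jump $[u](\bar x)$. Your justification ``$z_k(\pm T)=u_k(x_k\pm\e_k T)\to u^\pm(\bar x)$, first $k\to\infty$ at fixed $T$, then $T\to\infty$'' does not hold: at fixed $T$ the evaluation points $x_k\pm\e_k T$ collapse onto the jump point $\bar x$ itself, and $u_k\to u$ only in $L^1$ (a.e.\ along a subsequence), so nothing forces $u_k(x_k+\e_k T)$ to approach $u^+(\bar x)$; by local uniform convergence its limit is $z(T)$, which is exactly the unknown quantity. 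Nothing a priori prevents a recovery sequence from building part of the jump at scales intermediate between $\e_k$ and $O(1)$ (e.g.\ on intervals of width $\sqrt{\e_k}$ where $v_{\e_k}$ is close to $1$), in which case $z(+\infty)-z(-\infty)$ would be only a fraction of $[u](\bar x)$, $(\bar s^{-1}z,w)\notin\mathcal U_1$, and your chain $\Gzero(|\bar s|)\le \mathcal G_{|\bar s|}(\alpha,\beta)\le\Gzero(|\bar s|)$ breaks at the first inequality. The paper devotes Step~2 of its proof precisely to excluding this: it splits $(u_k,v_k)$ into an inner part on $(x_k-\e_kT_j,x_k+\e_kT_j)$ and an outer part, passes to the limit in the split energies to obtain $\Gzero(|z^+-z^-|)+\Gzero(|[u]-(z^+-z^-)|)\le\Gzero(|[u]|)$ together with $(1-w(0))^2+\Gzero(|[u]-(z^+-z^-)|)\le\Gzero(|[u]|)$, and then invokes the \emph{strict} subadditivity of $\Gzero$ (Proposition~\ref{prop:g3}) plus the positive cost $(1-w(0))^2$ to conclude $z^+-z^-=[u](\bar x)$. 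Some argument of this type is indispensable; without it the minimality claim is unproven.

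Two smaller points. First, the identity $\frac1{\e_k}f_k^2(w_k)|z_k'|^2=f^2(w_k)|z_k'|^2$ is not exact: $\e_k^{-1}f_k^2=\e_k^{-1}\wedge f^2$, and the truncation is active where $w_k$ approaches $1$ and $f$ is unbounded; moreover you invoke ``strong $H^1_{\loc}$ convergence'' of $z_k,w_k$ inside the liminf step, which at that stage is not yet available (only weak), so the argument as written is circular. The paper avoids both issues by working with the weak-$*$ limit measure $\mu$ of $|z_k'|^2\mathcal L^1$, proving $\mu(\{w=1\})=0$ via the decomposition into $\{w<1-\delta\}$ and its complement, and only at the very end upgrading to $\mu=|z'|^2\mathcal L^1$ (hence strong convergence of $z_k$) from the energy equality; your $\{\beta=1\}$ discussion and the $\alpha'\in L^1$ claim need a comparable amount of care (the paper instead shows $z$ is monotone by a truncation argument). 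These are repairable, but the jump identification above is the essential missing idea.
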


\begin{proof}
Preliminaries. We first prove the {weak} compactness of the sequence $z_k$. For every fixed $T>0$ and for $k\geq k_T$ one has
\begin{equation*}
\min_{[-T,T]}w_k = \min_{[x_k-\e_kT,x_k+\e_kT]}v_k = v_k(x_k) = w_k(0) \to w(0)>0,
\end{equation*}
so that
\begin{equation*}
\inf_{k\geq k_T} \min_{[-T,T]} \frac{1}{\e_k}f^2_k(w_k) = \inf_{k\geq k_T} \frac{1}{\e_k}\wedge f^2(w_k(0)) >0.
\end{equation*}
By \eqref{resc} it follows that for every $T>0$
\begin{equation}\label{eqzkbdh1}
\sup_{k\geq k_T} \int_{-T}^T |z_k'|^2\de x <\infty\,.
\end{equation}
Since by assumption the sequence $z_k$ is also uniformly bounded in $L^\infty$, we can extract a further subsequence such that
\begin{equation} \label{convz}
z_k \wto z \qquad\text{weakly in }H^1_{\loc}(\R)
\end{equation}
for some function $z\in H^1_{\loc}(\R)\cap L^\infty(\R)$. 
{Possibly after extracting a subsequence, by \eqref{eqzkbdh1} we can assume that
$|z_k'|^2\mathcal L^1$ converges weakly in measures to a Radon measure $\mu$.
By lower semicontinuity, this implies
\begin{equation}\label{muzpt}
|z'|^2\mathcal L^1\le \mu \hskip5mm\text{ as measures,}
\end{equation}
with $\mu(-T,T)=\int_{-T}^T |z'|^2\de x$ if and only if 
$z_k\to z$ strongly in $H^1(-T,T)$.}

\medskip\noindent\textit{{Step 1.}} We prove that $\mu(\{w=1\})=0$ and
\begin{equation*}
\jump \geq
\int_{\R} f^2(w)\de\mu +\int_{-\infty}^\infty \Bigl(  \frac{(1-w)^2}{4} + |w'|^2 \Bigr)\de x\,.
\end{equation*}
To see this, 
we first observe that for almost any $T>0$ we have $\mu(\{-T,T\})=0$ and, by \eqref{resc},
\begin{align*}
\f(u,1;I_\eta)
& = \lim_{k\to\infty}\fk(u_k,v_k;I_\eta)
{\geq}  \limsup_{k\to\infty} \int_{-T}^T \Bigl(\frac{1}{\e_k}f_k^2(w_k)|z_k'|^2 + \frac{(1-w_k)^2}{4} + |w_k'|^2 \Bigr) \de x.
\end{align*}
For $\delta>0$, 
we let $A_\delta:=\{x\in(-T,T): w(x)<1-\delta\}$. For almost every $\delta$ we have $\mu(\partial A_\delta)=0$.
By uniform convergence (Lemma~\ref{lem:w}), for sufficiently large $k$ we have $w_k(x)<1-\frac12\delta$ on $A_\delta$, therefore
$f(w_k)\to f(w)$ uniformly on $A_\delta$. Since  $\mu(\partial A_\delta)=0$,
$|z_k'|^2\mathcal L^1\wto\mu$, and $f(w)\in C^0(\overline A_\delta)$, 
\begin{equation*}
\int_{A_\delta} f^2(w) \de\mu
=
\lim_{k\to\infty} \int_{A_\delta} f^2(w)|z_k'|^2 \de x
=
\lim_{k\to\infty} \int_{A_\delta} f^2(w_k)|z_k'|^2 \de x
\le
\liminf_{k\to\infty} \int_{-T}^T \frac{1}{\e_k}f_k^2(w_k)|z_k'|^2 \de x.
\end{equation*}
By monotone convergence, this implies
\begin{equation}\label{eqwmin1}
\begin{split}
\int_{(-T,T)\cap\{w<1\}} f^2(w) \de \mu =&
\lim_{\delta\to0}
\int_{A_\delta} f^2(w)\de \mu
\le
\liminf_{k\to\infty} \int_{-T}^T \frac{1}{\e_k}f_k^2(w_k)|z_k'|^2 \de x. 
\end{split}
\end{equation}
Also by uniform convergence of $w_k$ to $w$, for $k$ sufficiently large
we have $1-2 \delta\le w_k(x)$ on $B_\delta:=(-T,T)\setminus A_\delta$. 
Recalling \eqref{assf1},  for large $k$ we have $f^2(1-2\delta)\le \e_k^{-1}\wedge f^2(w_k)=\e_k^{-1} f_k^2(w_k)$ on $B_\delta$.
Since $\mu(\partial B_\delta)=0$, 
\begin{equation*}
\begin{split}
f^2(1-2\delta)\mu(B_\delta)
&=
f^2(1-2\delta) \lim_{k\to\infty} \int_{B_\delta}|z_k'|^2 \de x\le
\liminf_{k\to\infty} \int_{B_\delta} \frac1{\e_k} f_k^2(w_k)|z_k'|^2 \de x\le C.
\end{split}
\end{equation*}
By monotone convergence,
\begin{equation*}
\mu((-T,T)\cap\{w=1\})
=\lim_{\delta\to0} \mu(B_\delta)
\le
\lim_{\delta\to0}
\frac{C}{f^2(1-2\delta)}=0.
\end{equation*}
Since $w_k$ converges to $w$ weakly in $H^1(-T,T)$  (Lemma~\ref{lem:w}), \eqref{eqwmin1} yields
\begin{equation*}
\int_{(-T,T)} f^2(w)\de\mu+
\int_{(-T,T)} \Bigl(
\frac{(1-w)^2}{4} + |w'|^2 \Bigr)\de x
\le \lim_{k\to\infty} \fk(u_k, v_k;I_\eta)=
\f(u,1;I_\eta)
\end{equation*}
for all $T$ and all $\eta$. Sending first $T\to\infty$ and then $\eta\to0$, 
\begin{equation}\label{eqbulemst1f}
\int_\R f^2(w)\de\mu+
\int_{\R}\Bigl( \frac{(1-w)^2}{4} + |w'|^2 \Bigr)\de x
\le \jump.
\end{equation}

\newcommand{\tin}{\text{in}}
\newcommand{\tout}{\text{out}}
\newcommand{\xst}{a}
\newcommand{\xend}{b}
\newcommand{\ukj}{u_{k,j}}
\newcommand{\vkj}{v_{k,j}}
\medskip\noindent\textit{{Step 2.}} By Lemma~\ref{lem:w} and \eqref{convz}, for every $j\in\N$ we can pick $T_j>j$ such that
$|w(\pm T_j)-1|^2\le   2^{-j}$
and $z_k(\pm T_j)\to z(\pm T_j)$ as $k\to\infty$.
Since $u_k$ and $z_k$ are bounded in $L^\infty$, possibly passing to a subsequence
we have 
\begin{equation}\label{eqzTj}
z_+:=\lim_{j\to\infty} z(T_j), \hskip5mm
z_-:=\lim_{j\to\infty} z(-T_j).
\end{equation}
We intend to show that $z_+-z_-=[u](\bar x)$.

To do so, we separate the part of the profile in the inner interval
$(\xst,\xend):=(\xst^j_k,\xend^j_k):=(x_k-\e_kT_j, x_k+\e_k T_j)$ from the part outside 
(for sufficiently large $k$ we can assume $[a-\e_k,b+\e_k]\subset I_\eta$).
Specifically, we set
\begin{equation*}
\ukj^\tin(x):=\begin{cases}
u_k(\xst) & \text{ if } x\le\xst,\\
u_k(x) & \text{ if } \xst<x\le\xend, \\
u_k(\xend) & \text{ if } x>\xend,\\               
\end{cases}
\hskip1cm
\ukj^\tout(x):=\begin{cases}
u_k(x) - u_k(\xst) & \text{ if } x\le\xst,\\
0 & \text{ if } \xst\le x\le\xend, \\
u_k(x)- u_k(\xend) & \text{ if } x>\xend\\               
\end{cases}                  
\end{equation*}
which implies, since $u_k(a)=z_k(-T_j)\to z(-T_j)$
and $u_k(b)=z_k(T_j)\to z(T_j)$,
that
\begin{equation*}
\ukj^\tin(x)\to u^\tin_j(x):=
\begin{cases} 
z(-T_j) & \text{ if } x\le \bar x,\\
z(T_j) & \text{ if } x> \bar x\,, 
\end{cases}
\hskip1cm
\ukj^\tout\to u^\tout_j:=u-u^\tin_j
\end{equation*}
in $L^1(I_\eta)$ {(as $k\to\infty$)}.
Correspondingly, we define
\begin{equation*}
\vkj^\tin(x):=
\begin{cases}
1                                 
& \text{ if } x\le \xst - \e_k,\\
v_k(x) & \text{ if } \xst\le x\le \xend, \\
1 & \text{ if } x\ge \xend+\e_k,\\
\text{affine interp.} & \text{ in between,}
\end{cases}
\hskip0.6cm
\vkj^\tout(x):=
\begin{cases}
v_k(x)                                 
& \text{ if } x\le \xst,\\
1 & \text{ if } \xst+\e_k\le x\le \xend-\e_k, \\
v_k(x) & \text{ if } x\ge \xend,\\
\text{affine interp.} & \text{ in between.}
\end{cases}
\end{equation*}
We observe that
$\vkj^\tin\to1$ with $\min \vkj^\tin=\vkj^\tin(x_k)=m_k\to w(0)<1$; $\vkj^\tout\to1$.
{By Lemma \ref{lem:w} and $|w(\pm T_j)-1|\le 2^{-j}$,
for $k$ sufficiently large (on a scale depending on $j$) 
we have
$|v_k(a_k^j)-1|\le 2^{-j+1}$ and 
$|v_k(b_k^j)-1|\le 2^{-j+1}$.}
The energy then obeys
\begin{equation*}
\fk(\ukj^\tin,\vkj^\tin;I_\eta) 
\le \fk(u_k, v_k; (a,b))+c 2^{-j},
\end{equation*}
\begin{equation*}
\fk(\ukj^\tout,\vkj^\tout;I_\eta) 
\le \fk(u_k, v_k; I_\eta\setminus (a,b))+c 2^{-j},
\end{equation*}
so that
\begin{equation*}
\fk(\ukj^\tin,\vkj^\tin;I_\eta)+
\fk(\ukj^\tout,\vkj^\tout;I_\eta)
\le \fk(u_k,v_k;I_\eta) + c 2^{-j}.
\end{equation*}
Passing to the {limit} and using \eqref{bua},
\begin{equation*}
\f(u^\tin_j,1;I_\eta)+\f(u^\tout_j,1;I_\eta)
\le \liminf_{k\to\infty}\fk(u_k,v_k;I_\eta) + c2^{-j}
=\f(u,1;I_\eta)+ c2^{-j}
\end{equation*}
for all $j$ and $\eta$. Using the explicit form of $u^\tin_j$, $u^\tout_j$ and $\f$, we obtain
\begin{equation}\label{eqgtintout}
\Gzero(|[u^\tin_j]|)+\Gzero(|[u^\tout_j]|)
\le \Gzero(|[u]|)+ c2^{-j},
\end{equation}
where all jumps are evaluated at $\bar x$.
Recalling $\min \vkj^\tin=\vkj^\tin(x_k)=m_k\to w(0)<1$ and $\vkj^\tin=1$ on $\partial I_\eta$,
\begin{equation*}
(1-m_k)^2 \le \int_{I_\eta} |1-\vkj^\tin| |(\vkj^\tin)'| \de x \le 
\int_{I_\eta} \left( \frac{(1-\vkj^\tin)^2}{4\e_k} + \e_k |(\vkj^\tin)'|^2\right) \de x \le 
\fk(\ukj^\tin,\vkj^\tin;I_\eta) 
\end{equation*}
which gives, by the same argument, first
\begin{equation*}
(1-w(0))^2+ \f(u^\tout_j,1;I_\eta) \le \f(u,1;I_\eta)+ c2^{-j}
\end{equation*}
and then
\begin{equation}\label{eqgtintout2}
(1-w(0))^2+ \Gzero(|[u^\tout_j]|)
\le \Gzero(|[u]|)+ c2^{-j}.
\end{equation}

Taking  $j\to\infty$ in \eqref{eqgtintout} and \eqref{eqgtintout2} we obtain, with \eqref{eqzTj},
\begin{equation*}
\Gzero(|z^+-z^-|)+ \Gzero(|[u]-(z^+-z^-)|)
\le \Gzero(|[u]|) \hskip5mm\text{and}\hskip5mm
(1-w(0))^2+\Gzero(|[u]-(z^+-z^-)|)\le \Gzero(|[u]|).
\end{equation*}
Recalling that $\Gzero$ is strictly subadditive on $(0,\infty)$, we see that one of the two terms in the first inequality must vanish. The second inequality excludes the case $z^+=z^-$, therefore $z^+-z^-=[u]$.

In order to show that $z'\in L^1(\R)$, we prove that $z$ is monotone. Assume for definiteness that $z^-\le z^+$. We define
\begin{equation*}
\bar z :=   z_-\vee (z_+\wedge z) \hskip5mm\text{and}\hskip5mm
\hat z (x) := \begin{cases}
\min \bar z([x,0]) & \text{ if }x\le0,\\
\max \bar z([0,x]) & \text{ if } x>0.
\end{cases}
\end{equation*}
{Then $z_-\le \hat z\le \bar z$ on $(-\infty,0)$
and $\bar z\le \hat z \le z_+$ on $(0,\infty)$, hence
$\hat z(\pm t)\to z_\pm$ for $t\to\infty$.} Obviously $\hat z$ is monotone. Further, 
$\hat z\in H^1_{\loc}(\R)$, $0\le \hat z'\le |z'|$, 
$\int_\R \hat z'\de x=z^+-z^-$,
and
\begin{equation*}
\int_{-\infty}^\infty  f^2(w)|\hat z'|^2 \de x 
\le \int_{-\infty}^\infty  f^2(w)|z'|^2 \de x ,
\end{equation*}
with equality if and only if $\hat z=z$ almost everywhere, which {would imply} that $z$ is also monotone.
In particular, $(\bar s^{-1}\hat z,w)\in\mathcal{U}_{1}$ for $\bar{s}=[u](\bar{x})$.
{ Using \eqref{defgnew}}, $|\hat z'|\le |z'|$,
\eqref{muzpt} and \eqref{eqbulemst1f} we obtain
\begin{equation*}
\begin{split}
\jump \le&\int_{-\infty}^\infty \Bigl( f^2(w)|\hat z'|^2 + \frac{(1-w)^2}{4} + |w'|^2 \Bigr)\de x \\
\le& \int_{-\infty}^\infty \Bigl( f^2(w)|z'|^2 + \frac{(1-w)^2}{4} + |w'|^2 \Bigr)\de x \\
\le& \int_{\R}f^2(w)\de\mu +\int_{-\infty}^\infty \Bigl( \frac{(1-w)^2}{4} + |w'|^2 \Bigr)\de x 
\le \jump. 
\end{split}
\end{equation*}
Therefore equality holds throughout. This implies in particular $\hat z=z$, so that $z$ is monotone,
$(\bar s^{-1}z,w)\in\mathcal{U}_1$,  and $\mu=|z'|^2\mathcal L^1$, which gives strong convergence of $z_k$ to $z$ in $H^1(-T,T)$ for all $T$.

\medskip\noindent\textit{{Step 3.}}
In order to complete the proof it only remains to show the strong convergence of $w_k$ to $w$.
We have for all $T>0$
\begin{equation} \label{wc1}
\liminf_{k\to\infty}\int_{-T}^T \Bigl( \frac{(1-w_k)^2}{4}+|w_k'|^2\Bigr)\de x  \geq \int_{-T}^T \Bigl( \frac{(1-w)^2}{4}+|w'|^2\Bigr)\de x
\end{equation}
and
\begin{equation*}
\liminf_{k\to\infty}\int_{-T}^T \frac{1}{\e_k}f^2_{\e_k}(w_k)|z_k'|^2\de x \geq \int_{-T}^T f^2(w)|z'|^2\de x\,.
\end{equation*}
Then arguing as in the case $w(0)=0$ and assuming by contradiction that \eqref{w0} holds, it is easily seen that one gets the contradiction
\begin{equation*}
\jump \geq \int_{-\infty}^\infty \Bigl( f^2(w)|z'|^2 + \frac{(1-w)^2}{4} + |w'|^2 \Bigr)\de x + \sigma\,,
\end{equation*}
proving that \eqref{wc1} actually holds with an equality and a limit on the left-hand side. This gives the strong convergence of $w_k$ to $w$.
\end{proof}

\begin{proof}[Proof of Theorem \ref{thm:blowup}]
 {The first assertion follows from Lemma \ref{lemmabluw0}
 and Lemma \ref{lem:w>0a}. We remark that the
uniqueness result of { Theorem \ref{thm:g1}(i)} implies convergence of the entire sequence (after suitable translations).

It remains to prove the second assertion of Theorem~\ref{thm:blowup}.}
We fix $\eta>0$ and we let $A:=(0,1)\setminus\bigcup_{x\in J_u}[x-\eta,x+\eta]$. We first remark that
\begin{equation*} 
v_\e\to 1 \qquad\text{uniformly on }A\,.
\end{equation*}
Indeed, this follows by the same argument used to prove Lemma~\ref{lem:meps}\ref{item3meps}. Therefore by assumption \eqref{assf3} we can write
\begin{equation*} 
(1-v_\e)f(v_\e) \geq (1-\omega(\e))\ell \qquad\text{on }A,
\end{equation*}
for some modulus of continuity $\omega(\e)\to0$ as $\e\to0$.
We introduce the set $A_\e:=\{x\in A:f_\e(v_\e)(x)<1\}$ and we observe that, since $f_\e(v_\e)=\sqrt{\e}f(v_\e)$ on $A_\e$,
\begin{align} \label{part2bb}
\int_A \biggl( f^2_\e(v_\e)|u_\e'|^2 + \frac{(1-v_\e)^2}{4\e}\biggr)\de x
& \geq  \int_{A_\e}\biggl( \e f^2(v_\e)|u_\e'|^2 + \frac{(1-v_\e)^2}{4\e}\biggr)\de x + \int_{A_\e^c} f^2_\e(v_\e)|u_\e'|^2 \de x \nonumber \\
& \geq \int_{A_\e} (1-v_\e)f(v_\e)|u_\e'|\de x + \int_{A_\e^c} |u_\e'|^2\de x \\
& \geq \ell(1-\omega(\e))\int_{A_\e}|u_\e'|\de x + \int_{A_\e^c} |u_\e'|^2\de x \,. \nonumber
\end{align}
The sequence $u_\e'\chi_{A_\e^c}$ is bounded in $L^2(A)$, and therefore, possibly passing to a subsequence, has a weak limit $p\in L^2(A)$.
The sequence $u_\e'\chi_{A_\e}$ is bounded in $L^1(A)$, and analogously we can assume that
$u_\e'\chi_{A_\e}\wto \mu$ and 
$|u_\e'|\chi_{A_\e}\wto \nu$ weakly in measures, 
with 
\begin{equation*}
|\mu|(A)\le \nu(A)\le\liminf_{\e\to0} \int_A |u_\e'|\chi_{A_\e}\de x. 
\end{equation*}
Since $u_\e'=u_\e'\chi_{A_\e}+u_\e'\chi_{A_\e^c}$ converges distributionally to $u'\in L^\infty(A)$, we have $\mu+p\mathcal L^1=u'\mathcal L^1$, which implies that $\mu=(u'-p)\mathcal L^1$
and $|\mu|(A)=\int_A |u'-p|\de x$. Thus \eqref{part2bb} gives
\begin{equation*}
\begin{split}
\liminf_{\e\to0}\int_A \biggl( f^2_\e(v_\e)|u_\e'|^2 + \frac{(1-v_\e)^2}{4\e}\biggr)\de x
\ge \int_A (\ell |u'-p| + p^2) \de x
\ge \int_A (|u'|^2+ |u'-p|^2) \de x,
\end{split}
\end{equation*}
where in the last step we used that for any $t\in [-\frac\ell2,\frac\ell2]$ and any $y\in\R$ one has
\begin{equation*}
\ell|t-y|+ y^2\ge t^2+(t-y)^2.
\end{equation*}
Indeed, this inequality is the same as $\ell|t-y|\ge 2 t (t-y)$, which is true since $\ell\ge 2 |t|$.

Since we know that 
\begin{equation*}
\begin{split}
\limsup_{\e\to0}\int_A \biggl( f^2_\e(v_\e)|u_\e'|^2 + \frac{(1-v_\e)^2}{4\e}\biggr)\de x
\le 
\f(u,1;A)=  \int_A |u'|^2 \de x,
\end{split}
\end{equation*}
we conclude that $p=u'$. In particular
\begin{equation*}
\begin{split}
\int_A |u'|^2 \de x\le &
\liminf_{\e\to0}\int_{A_\e^c} f^2_\e(v_\e)|u_\e'|^2 \de x
\le 
\liminf_{\e\to0}\int_A f^2_\e(v_\e)|u_\e'|^2 \de x\\
\le& 
\limsup_{\e\to0} \f_\e(u_\e,v_\e;A)
=\f(u,1;A)=\int_A |u'|^2\de x,
\end{split}
\end{equation*}
hence equality holds throughout and \eqref{bud} is proven.
This completes the proof.
\end{proof}

%%%%%%%%%%%%%%%%%%%%%%%%%%%%%%%%%%%%%%%%%%%%%%%%%%%%%%%%%%%%%%%%%%

\subsection{Proof of the statements of Section~\ref{subsect:propbarg}}\label{subsect:proofbarg}
Let $\Gdue$ be defined as in \eqref{defgbar}. First, let us observe that 
{$\Gdue(s,0)=\Gzero(s)$ and that}
the characterizations of ${\Gdue}$ given in \eqref{defgbar2}--\eqref{defgbar2bis} follow by the same arguments used { to prove the corresponding characterizations of $\Gzero$ in \eqref{eqcar1}--\eqref{eqcar3}.} We collect the main properties of the function ${\Gdue}$ in the following proposition.

\begin{proposition} \label{prop:gbar}
	The function ${\Gdue}$ enjoys the following properties:
	\begin{enumerate}
		\item\label{item1gbar} ${\Gdue}$ is monotone nondecreasing in both variables, 
		${\Gdue}(s,s')=\Gzero(s)$ if $s\geq s'$; %, ${\Gdue}(s,s')\geq \Gzero(s)$ if $s<s'$;
		\item\label{item2gbar} for every $s_1,s_2,s'\geq0$,
		\begin{equation} \label{gbarsub}
		{\Gdue}(s_1+s_2,s') \leq \Gzero(s_1) + {\Gdue}(s_2,s')\,;
		\end{equation}
		\item\label{item3gbar} ${\Gdue}(0,s')=(1-m_{s'})^2$, $(1-m_{s'})^2\leq {\Gdue}(s,s') \leq 1\wedge \bigl( (1-m_{s'})^2 +\ell s\bigr)$;
		\item\label{item4gbar} ${\Gdue}$ is continuous in both variables and, for fixed $s'$, the map ${\Gdue}(\cdot,s')$ is Lipschitz continuous with Lipschitz constant $\ell$ (in particular $\Gdue$ is globally continuous);
		\item\label{item5gbar} $\lim_{s\uparrow\infty}{\Gdue}(s,s')= 1$; {for $s'>0$}  $\lim_{s\downarrow0}\frac{{\Gdue}(s,s')-{\Gdue}(0,s')}{s}=0$.
	\end{enumerate}
\end{proposition}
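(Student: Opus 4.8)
The plan is to derive all five properties of $\Gdue$ from the corresponding properties of $\Gzero$ (Theorem~\ref{thm:propg1}, Theorem~\ref{thm:g1}, Proposition~\ref{prop:g2b}, Proposition~\ref{prop:g3}) together with the monotonicity of the map $s'\mapsto m_{s'}$ (Theorem~\ref{thm:g1}) and the elementary structure of the constraint set $\mathcal{V}_{s'}$ defined in \eqref{defV}. The key observation throughout is that the admissible class $\mathcal V_{s'}=\{(\alpha,\beta)\in\mathcal U_1:\inf\beta\le m_{s'}\}$ shrinks as $m_{s'}$ decreases, i.e.\ as $s'$ increases (since $s'\mapsto m_{s'}$ is nonincreasing by Theorem~\ref{thm:g1}), and that for $s\ge s'$ the unconstrained optimal profile $\beta_s$ already satisfies $\min\beta_s=m_s\le m_{s'}$, so the constraint is inactive and $\Gdue(s,s')=\Gzero(s)$. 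When $s'=0$ we have $m_0=1$, the constraint $\inf\beta\le1$ is vacuous, and again $\Gdue(s,0)=\Gzero(s)$.

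First, for \ref{item1gbar} (monotonicity and the identity $\Gdue(s,s')=\Gzero(s)$ for $s\ge s'$): monotonicity in $s'$ is immediate since a larger $s'$ gives a smaller admissible class $\mathcal V_{s'}$, hence a larger infimum. Monotonicity in $s$ requires a short argument: given $s_1<s_2$ and an almost-optimal pair $(\alpha,\beta)\in\mathcal V_{s'}$ for $\Gdue(s_2,s')$, one reparametrizes $\alpha$ to reduce the $\int f^2(\beta)|\alpha'|^2$ term — more precisely, using the characterization \eqref{defgbar2ter}, replacing $\alpha$ appropriately while keeping $\beta$ fixed (so the constraint $\inf\beta\le m_{s'}$ is preserved) shows $\Gdue(s_1,s')\le\Gdue(s_2,s')$; alternatively one can use \eqref{defgbar2bis} and note that the integrand $\sqrt{s^2(\tif(\sqrt\gamma))^2+|\gamma'|^2/4}$ is nondecreasing in $s$ pointwise while the constraint $\sup\gamma\ge(1-m_{s'})^2$ does not involve $s$. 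The identity for $s\ge s'$ was discussed above. For \ref{item2gbar} (the subadditivity-type inequality \eqref{gbarsub}): take an almost-optimal pair for $\Gdue(s_2,s')$ and an almost-optimal pair for $\Gzero(s_1)=\Gdue(s_1,0)$, juxtapose them (extending by constants between the two profiles as in the gluing constructions of Proposition~\ref{prop:g2b}\ref{prop:g2bcar}), obtaining an admissible pair for $\Gdue(s_1+s_2,s')$ — admissibility of the constraint $\inf\beta\le m_{s'}$ is inherited from the first piece. Control the small junction error and let it vanish. This mirrors exactly the proof of strict subadditivity of $\Gzero$ in Proposition~\ref{prop:g3}.

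For \ref{item3gbar}: the value $\Gdue(0,s')=(1-m_{s'})^2$ follows from the characterization \eqref{defgbar2bis} with $s=0$, which reduces to $\inf\{\frac12\int_0^1|\gamma'|\,\de t:\gamma\in W^{1,1}_0([0,1],[0,1]),\ \sup\gamma\ge(1-m_{s'})^2\}$, and this infimum equals $(1-m_{s'})^2$ by the elementary fundamental-theorem-of-calculus bound $\frac12\int|\gamma'|\ge\sup\gamma$ with equality achieved by a tent function. The lower bound $\Gdue(s,s')\ge(1-m_{s'})^2$ follows since dropping the nonnegative $s^2f^2(\beta)|\alpha'|^2$ term and using \eqref{g=1} from Proposition~\ref{prop:g2b} gives $\mathcal G_s(\alpha,\beta)\ge(1-\beta(0))^2\ge(1-m_{s'})^2$ wait — more carefully, one uses that $\inf\beta\le m_{s'}$ forces $\int(\frac{(1-\beta)^2}4+|\beta'|^2)\ge(1-m_{s'})^2$ by \eqref{g=1} evaluated at a minimum point of $\beta$. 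The upper bound $\Gdue(s,s')\le(1-m_{s'})^2+\ell s$ follows by constructing a competitor: take the optimal $\beta_{s'}$ for $\Gzero(s')$ (which has $\min=m_{s'}$, hence is admissible for $\mathcal V_{s'}$) and choose $\alpha$ accomplishing the unit jump with $\int s^2f^2(\beta_{s'})|\alpha'|^2\le \ell s$ — using $(1-\beta)f(\beta)=f_1(1-\beta)\le\ell(1-\beta)$ and equipartition-type estimates; alternatively juxtapose the $\Gdue(0,s')$-optimal configuration with a sharp brittle-like jump costing $\ell s$. The bound $\le1$ is monotonicity in $s$ combined with $\lim_{s\to\infty}\Gdue(s,s')=1$ (proved next).

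For \ref{item4gbar} (continuity): continuity in $s$ for fixed $s'$ with Lipschitz constant $\ell$ follows from the representation \eqref{defgbar2bis}: the integrand's $s$-dependence is $1$-Lipschitz up to the factor $\tif(\sqrt\gamma)\le\ell$, so $|\Gdue(s_1,s')-\Gdue(s_2,s')|\le\ell|s_1-s_2|$ by comparing optimal profiles. Continuity in $s'$: use that $s'\mapsto m_{s'}$ is continuous (Theorem~\ref{thm:g1}), hence the constraint level $(1-m_{s'})^2$ varies continuously; a standard $\Gamma$-convergence / direct comparison argument (perturb an almost-optimal $\gamma$ for $\Gdue(s,s'_1)$ by a small rescaling to meet the slightly different constraint for $s'_2$) gives continuity, and joint continuity follows from separate Lipschitz/continuity by a triangle inequality. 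For \ref{item5gbar}: $\lim_{s\to\infty}\Gdue(s,s')=1$ follows from $\Gzero(s)\le\Gdue(s,s')\le1$ wait — that is false when the constraint is active; rather, use $\Gdue(s,s')\ge\Gzero(s)$ (dropping the constraint enlarges the class) — no, dropping a constraint \emph{enlarges} the class and \emph{decreases} the inf, so $\Gdue(s,s')\ge\Gzero(s)$ indeed, wait $\mathcal V_{s'}\subset\mathcal U_1$ so the inf over the smaller set is $\ge$; thus $\Gdue(s,s')\ge\Gzero(s)\to1$ by Theorem~\ref{thm:propg1}\ref{item3gbar2}, and the upper bound $\le1$ from \ref{item3gbar} gives the limit. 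The right-derivative statement $\lim_{s\downarrow0}(\Gdue(s,s')-\Gdue(0,s'))/s=0$ for $s'>0$ is the most delicate point and the expected main obstacle: when $s'>0$ we have $m_{s'}<1$, so at $s=0$ the optimal $\gamma$ is pinned at level $(1-m_{s'})^2>0$ and in particular is bounded away from the degenerate regime where $\tif(\sqrt\gamma)\to\ell$; a second-order expansion of the functional in \eqref{defgbar2bis} about $s=0$, analogous to the $\Gamma$-expansion in Proposition~\ref{prop:gexpansion} but now around a \emph{nondegenerate} profile, should show that the first-order-in-$s$ term vanishes because the optimal $\gamma$ at $s=0$ already saturates its constraint and perturbing it costs at order $s^2$ rather than $s$ — I would make this rigorous by an envelope-type argument (differentiate the value function using that the constraint is active and the $s=0$ minimizer is an interior tent profile) or by explicit comparison showing $\Gdue(s,s')\le(1-m_{s'})^2+Cs^{?}$ with exponent $>1$. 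Finally, Theorem~\ref{thm:gbar1} is deduced by matching \ref{item1gbar}--\ref{item5gbar} against the list \ref{item1gbar1}--\ref{item2gbar1} of Section~\ref{sect:setting}, noting \eqref{gexpansion1} comes from $\Gdue(s,0)=\Gzero(s)$ and Theorem~\ref{thm:propg1}\ref{item5gbar2}, and the strict subadditivity for $s_1>0$, $s_2\vee s'>0$ from the strict junction gain as in Proposition~\ref{prop:g3}.
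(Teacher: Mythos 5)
Your overall strategy coincides with the paper's (reduce everything to properties of $\Gzero$, monotonicity and continuity of $s'\mapsto m_{s'}$, gluing of near-optimal profiles, explicit competitors), and items \ref{item1gbar}, \ref{item2gbar}, \ref{item4gbar} and the identification $\Gdue(0,s')=(1-m_{s'})^2$ are essentially right. But there are two genuine gaps. First, your treatment of the bound $\Gdue(s,s')\le1$ in \ref{item3gbar} and of $\lim_{s\to\infty}\Gdue(s,s')=1$ in \ref{item5gbar} is circular as written: \ref{item3gbar} invokes the limit ``proved next'', while \ref{item5gbar} invokes ``the upper bound $\le1$ from \ref{item3gbar}''. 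The repair is to prove the limit first, directly from \ref{item1gbar}: for $s\ge s'$ one has $\Gdue(s,s')=\Gzero(s)\to1$, and then monotonicity in $s$ gives $\le1$ everywhere. Note, however, that when $\Gzero(s)=1$ (i.e.\ $s\ge\sfrac$, which does occur, e.g.\ for $f^a$) there is no minimizer in $\mathcal U_1$, so your ``the optimal profile is admissible'' argument must be replaced by an explicit admissible sequence: take $\beta_\delta=0$ on $[-\delta,\delta]$ and $\beta_\delta(t)=1-e^{-(|t|-\delta)/2}$ outside, with $\alpha_\delta'=\frac1{2\delta}\chi_{(-\delta,\delta)}$; since $f(0)=0$ its energy is $1+\delta/2$ and $\inf\beta_\delta=0\le m_{s'}$, which gives $\Gdue(s,s')\le1$ unconditionally and settles the identity in that regime. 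Second, the statement you yourself flag as the main obstacle, $\Gdue(s,s')-\Gdue(0,s')=o(s)$ for $s'>0$, is left unproven (``I would make this rigorous by\ldots''). The actual argument is a one-line comparison of exactly the type you guess, but cleanest in the $(\alpha,\beta)$ formulation: keep the $s=0$ optimizer $\bar\beta_{s'}(t)=1-(1-m_{s'})e^{-|t|/2}$ fixed and pair it with any fixed $\alpha$ whose derivative has compact support; since $f(\bar\beta_{s'})$ is bounded on that support, $\Gdue(s,s')\le(1-m_{s'})^2+s^2\int f^2(\bar\beta_{s'})|\alpha'|^2=\Gdue(0,s')+Cs^2$. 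Beware that in the $\gamma$-formulation \eqref{defgbar2bis} the naive estimate $\sqrt{a^2+b^2}\le|a|+|b|$ applied to the tent profile only yields an $O(s)$ correction with a nonzero coefficient; you would need $\sqrt{a^2+b^2}\le|b|+a^2/(2|b|)$ together with $|\gamma'|$ bounded below, so the $(\alpha,\beta)$ route is simpler.

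A smaller point in \ref{item3gbar}: your first competitor for $\Gdue(s,s')\le(1-m_{s'})^2+\ell s$, built from the coupled optimizer $\beta_{s'}$ of $\Gzero(s')$, does not work, because its Modica--Mortola part $\int\bigl(\frac{(1-\beta_{s'})^2}{4}+|\beta_{s'}'|^2\bigr)\de t$ is in general strictly larger than $(1-m_{s'})^2$ (equality in \eqref{g=1} forces the two-sided exponential, which $\beta_{s'}$ is not when the coupling is active); your alternative, concatenating the $\Gdue(0,s')$ configuration with a piece of cost $\Gzero(s)\le\ell s$, i.e.\ \eqref{gbarsub} with $s_2=0$, is the correct argument and is what the paper does. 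Also, in your Lipschitz estimate for \ref{item4gbar} compare near-optimal rather than optimal profiles, since minimizers need not exist.
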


\begin{proof}
	\ref{item1gbar}:
	The monotonicity of ${\Gdue}$ in the first variable follows from $\mathcal{G}_{s_1}(\alpha,\beta)\leq\mathcal{G}_{s_2}(\alpha,\beta)$ for $s_1<s_2$, while the monotonicity in the second variable follows since the map $s'\mapsto m_{s'}$ is decreasing. 
	%The inequality ${\Gdue}(s,s')\geq \Gzero(s)$ is trivial by \eqref{defgnew} and \eqref{defgbar}. 
	If $s\geq s'$, the optimal profile $\beta_s$ for $\Gzero(s)$ is admissible in the minimum problem which defines ${\Gdue}(s,s')$, by monotonicity of $m_{s'}$, and therefore the values of the two functions coincide. This completes the proof of \ref{item1gbar}.
	
	\smallskip\ref{item2gbar}: Fix $\sigma>0$ and let $T>0$, $(\alpha_1,\beta_1)$, $(\alpha_2,\beta_2)\in H^1(-T,T)\times H^1(-T,T)$ be such that $\alpha_i(-T)=0$, $\alpha_i(T)=s_i$, $0\leq\beta_i\leq1$, $\beta_i(\pm T)=1$, $\inf\beta_2\leq \beta_{s'}(0)$, and
	\begin{align*}
	\int_{-T}^T \biggl( f^2(\beta_1)|\alpha_1'|^2 + \frac{(1-\beta_1)^2}{4} + |\beta_1'|^2 \biggr)\de t \leq \Gzero(s_1) + \sigma\,,\\
	\int_{-T}^T \biggl( f^2(\beta_2)|\alpha_2'|^2 + \frac{(1-\beta_2)^2}{4} + |\beta_2'|^2 \biggr)\de t \leq {\Gdue}(s_2,s') + \sigma\,,
	\end{align*}
	according to the representation formulas \eqref{eqcar1} and \eqref{defgbar2} {after rescaling $\alpha$}. By defining
	\begin{equation*}
	\alpha(t)=
	\begin{cases}
	\alpha_1(t) & \text{if }t\in[-T,T], \\
	\alpha_2(t-2T)+s_1 & \text{if }t\in[T,3T],
	\end{cases}
	\qquad\qquad
	\beta(t)=
	\begin{cases}
	\beta_1(t) & \text{if }t\in[-T,T], \\
	\beta_2(t-2T) & \text{if }t\in[T,3T],
	\end{cases}
	\end{equation*}
	we have $\alpha,\beta\in H^1(-T,3T)$, with $\alpha(-T)=0$, $\alpha(3T)=s_1+s_2$, $\beta(-T)=\beta(3T)=1$, $0\leq\beta\leq1$, $\inf\beta\leq\beta_{s'}(0)$. By using the pair $({\frac{1}{s_1+s_2}}\alpha,\beta)$ as a competitor in \eqref{defgbar2} we obtain
	\begin{equation*}
	{\Gdue}(s_1+s_2,s') \leq \Gzero(s_1) + {\Gdue}(s_2,s') + 2\sigma\,,
	\end{equation*}
	and the conclusion follows as $\sigma$ is arbitrary.
	
	\smallskip\ref{item3gbar}:
	The value of ${\Gdue}(0,s')$ can be computed by observing that in this case the minimum problem \eqref{defgbar} defining ${\Gdue}$ is independent of $\alpha$, and the optimal profile is given by the function
	\begin{equation} \label{betabar}
	\bar{\beta}_{s'}(t) := 1-(1-m_{s'})e^{-\frac{|t|}{2}} \,.
	\end{equation}
	The lower bound on ${\Gdue}(s,s')$ follows directly by monotonicity. To derive the upper bound with the constant 1, one can simply consider the admissible pair $(\alpha,\beta)\in\mathcal{V}_{s'}$ given by
	\begin{equation*}
	\alpha(t) =
	\begin{cases}
	0 & \text{if }t\in(-\infty,-1),\\
	\frac{t}{2} + \frac12 & \text{if }t\in[-1,1],\\
	1 & \text{if }t\in(1,\infty),
	\end{cases}
	\qquad\qquad
	\beta(t)=
	\begin{cases}
	1-e^{\frac{t+1}{2}} & \text{if }t\in(-\infty,-1),\\
	0 & \text{if }t\in[-1,1],\\
	1-e^{-\frac{t-1}{2}} & \text{if }t\in(1,\infty).
	\end{cases}
	\end{equation*}
	The inequality ${\Gdue}(s,s')\leq {\Gdue}(0,s')+\ell s$ follows directly from \eqref{gbarsub} and $\Gzero(s)\leq\ell s$.
	
	\smallskip\ref{item4gbar}: The continuity of $s'\mapsto{\Gdue}(s,s')$ can be proved by taking into account the continuity of the map $s'\mapsto m_{s'}$, proved in Theorem~\ref{thm:g1}. The Lipschitz continuity of $s\mapsto{\Gdue}(s,s')$ is a consequence of the monotonicity of this map, of the subadditivity inequality \eqref{gbarsub}, and of the bound $\Gzero(s)\leq\ell s$.
	
	\smallskip\ref{item5gbar}: The first limit is a trivial consequence of ${\Gdue}(s,s')=\Gzero(s)$ for $s\geq s'$, and of {Theorem~\ref{thm:propg1}}. To compute the slope of ${\Gdue}(\cdot,s')$ at the origin, notice that given any $\alpha$ 
	such that  $\alpha'$ has compact support and the pair $(\alpha,\bar{\beta}_{s'})$ belongs to $\mathcal{V}_{s'}$, where $\bar{\beta}_{s'}$ is the optimal profile defined in \eqref{betabar}, we have
	\begin{equation*}
	{\Gdue}(s,s') \leq \int_{-\infty}^\infty \biggl( s^2f^2(\bar\beta_{s'})|\alpha'|^2 + \frac{(1-\bar\beta_{s'})^2}{4} + |\bar\beta_{s'}'|^2 \biggr) \de t
	=  \int_{-\infty}^\infty s^2f^2(\bar\beta_{s'})|\alpha'|^2 \de t + (1-m_{s'})^2\,.
	\end{equation*} 
	Dividing by $s$ and letting $s\to0^+$ we obtain the inequality $\limsup_{s\to0^+}\frac{{\Gdue}(s,s')-{\Gdue}(0,s')}{s}\leq0$. The lower bound ${\Gdue}(s,s')\geq{\Gdue}(0,s')$ concludes the proof.
\end{proof}

The next proposition shows that the inequality \eqref{gbarsub} is strict.

\begin{proposition}[Strict subadditivity] \label{prop:gbar2}
For every $s_1>0$, $s_2\geq0$, and $s'>0$, we have
	\begin{equation}\label{ssbarg}
          \Gdue(s_1+s_2,s')<\Gzero(s_1)+\Gdue(s_2,s').
	\end{equation}
%	In particular, for every $\e>0$ there exists a constant $c_{\e}>0$ such that for every $s_2\geq 0$ and $s_1,s'\geq\e$ one has
%	\begin{equation} \label{gsub2}
%	{\Gdue}(s_1+s_2,s')-g(s_1)-{\Gdue}(s_2,s') \leq -c_{\e}.
%	\end{equation}
\end{proposition}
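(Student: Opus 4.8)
The idea is to reduce the strict subadditivity of $\Gdue(\cdot,s')$ to the strict subadditivity of $\Gzero$, which is already available (Proposition~\ref{prop:g3}). The mechanism is the same as in the proof of Proposition~\ref{prop:gbar}\ref{item2gbar}: I concatenate an optimal (or near-optimal) profile for $\Gzero(s_1)$ with an optimal (or near-optimal) profile for $\Gdue(s_2,s')$. The point is that the concatenated profile, while admissible for $\Gdue(s_1+s_2,s')$, is \emph{not} optimal, and one can gain a definite amount of energy by a local surgery. I expect two cases, depending on whether $\Gdue(s_1+s_2,s')=1$ or $<1$.

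\textbf{Case $\Gdue(s_1+s_2,s')<1$ (equivalently $s_1+s_2<\sfrac$, if one also has $s_2<s'$; in general this is the generic case).} Here, by the compactness arguments behind Proposition~\ref{prop:g2b} (extended to the constrained class $\mathcal V_{s'}$ — the constraint $\inf\beta\le m_{s'}$ is preserved under weak $H^1$ limits, since $\beta_k\to\beta$ locally uniformly), the infimum in \eqref{defgbar} is attained by some $(\alpha,\beta)\in\mathcal V_{s'}$ with $\min\beta\ge 1-\sqrt{\Gdue(s_1+s_2,s')}>0$, so $\beta$ stays bounded away from $0$; in particular the integrand is strictly positive on a set of positive measure, and the optimal $\beta$ is (by the reparametrization argument) monotone on each side of a single minimum point $t_*$. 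Now take near-optimal profiles $(\alpha_1,\beta_1)$ for $\Gzero(s_1)$ and $(\alpha_2,\beta_2)$ for $\Gdue(s_2,s')$ (with $\inf\beta_2\le m_{s'}$), each equal to $1$ near the endpoints, and concatenate as in Proposition~\ref{prop:gbar}\ref{item2gbar}: this produces an admissible competitor for $\Gdue(s_1+s_2,s')$ whose energy is $\le\Gzero(s_1)+\Gdue(s_2,s')+2\sigma$. The crucial observation is that in this concatenation there is a whole interval on which $\beta\equiv1$ (the matching region between the two pieces), where $\alpha$ is constant. On that interval one can apply exactly the local-surgery trick used in the proof of Proposition~\ref{prop:g2b}\ref{prop:g2bmonoton} (or the rescaling trick in Proposition~\ref{prop:g3}): since $s_1>0$, one may ``collapse'' a sub-interval of the $\beta\equiv1$ region, gaining a strictly positive amount $c(s_1,s_2)>0$ of energy that is independent of $\sigma$. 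Choosing $\sigma$ small enough then yields $\Gdue(s_1+s_2,s')\le\Gzero(s_1)+\Gdue(s_2,s')-c(s_1,s_2)/2<\Gzero(s_1)+\Gdue(s_2,s')$.

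\textbf{Case $\Gdue(s_1+s_2,s')=1$.} Then it suffices to show $\Gzero(s_1)+\Gdue(s_2,s')>1$. By Proposition~\ref{prop:gbar}\ref{item3gbar} we have $\Gdue(s_2,s')\ge(1-m_{s'})^2=\Gdue(0,s')>0$ (strictly, since $s'>0$ forces $m_{s'}<1$), and by Proposition~\ref{prop:g1}\ref{item5g}, $\Gzero(s_1)>0$ for $s_1>0$. Combined with subadditivity $1=\Gdue(s_1+s_2,s')\le\Gzero(s_1)+\Gdue(s_2,s')$ this is not quite enough on its own; instead I argue as in the previous case but starting from the \emph{explicit} optimal profile for $\Gdue(0,s')$ when $s_2=0$, or from near-optimizers otherwise, and use that the concatenated competitor for $\Gdue(s_1+s_2,s')$ has energy $<1$ after the surgery only if $\Gzero(s_1)+\Gdue(s_2,s')<1$, contradicting $\Gdue(s_1+s_2,s')=1$; hence in fact $\Gzero(s_1)+\Gdue(s_2,s')>1=\Gdue(s_1+s_2,s')$ and \eqref{ssbarg} holds strictly. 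Alternatively, and more cleanly: apply the already-proven non-strict inequality together with the strict inequality from Case~1 to the smaller jump $s_1'<s_1$ with $s_1'+s_2<\sfrac$ (such $s_1'$ exists since $\sfrac>s_2$ when $\Gdue(s_2,s')<1$), obtaining $\Gdue(s_1'+s_2,s')<\Gzero(s_1')+\Gdue(s_2,s')\le\Gzero(s_1)+\Gdue(s_2,s')$, and then note $\Gdue(s_1+s_2,s')=1\le \Gdue(s_1'+s_2,s')$ would force $1<\Gzero(s_1)+\Gdue(s_2,s')$ anyway — so one reduces entirely to Case~1 plus monotonicity.

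\textbf{Main obstacle.} The delicate point is the existence and structure of minimizers for the \emph{constrained} problem \eqref{defgbar}: I need an analogue of Proposition~\ref{prop:g2b}\ref{prop:g2bexist}--\ref{prop:g2bmonoton} for the class $\mathcal V_{s'}$, in particular that when $\Gdue(s_1+s_2,s')<1$ the optimal $\beta$ stays uniformly away from $0$ (so the surgery interval where $\beta\equiv1$ actually exists and the collapse argument produces a \emph{strictly} positive, $\sigma$-independent gain). The compactness is routine (the constraint passes to the limit under local uniform convergence), but one must be slightly careful that the constraint $\inf\beta\le m_{s'}$ does not interfere with the lower bound $\min\beta\ge 1-\sqrt{\Gdue(s_1+s_2,s')}$ — these are compatible precisely because $\Gdue(s_1+s_2,s')<1$ implies $1-\sqrt{\Gdue(s_1+s_2,s')}>0$ while $m_{s'}$ may be small; as long as $m_{s'}\ge 1-\sqrt{\Gdue(s_1+s_2,s')}$, which holds because $m_{s'}=1-\sqrt{\gamma_{s'}(1/2)}$ and monotonicity of $\Gzero$ (so $\Gzero(s')\ge\Gdue(s_1+s_2,s')$... this needs a short check, using $\Gdue(s_1+s_2,s')\le\Gdue(s',s')=\Gzero(s')$ when $s_1+s_2\le s'$, and otherwise the constraint is inactive and we are back in the unconstrained strictly-subadditive setting). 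Handling this interplay between the two lower bounds on $\min\beta$ cleanly is the part requiring the most care.
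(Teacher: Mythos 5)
There is a genuine gap at the heart of your Case~1, namely in the claimed source of the strict inequality. After concatenating near-optimizers for $\Gzero(s_1)$ and $\Gdue(s_2,s')$, the matching region where $\beta\equiv 1$ and $\alpha$ is constant carries \emph{zero} energy (all three terms of the integrand vanish there), so ``collapsing a sub-interval of the $\beta\equiv1$ region'' gains nothing: the surgery of Proposition~\ref{prop:g2b}\ref{prop:g2bmonoton} produces a gain only when $\beta$ makes an excursion \emph{above} a level $m<1$ between two points at level $m$, and the rescaling trick of Proposition~\ref{prop:g3} relies on an exact minimizer of the unconstrained problem and does not transfer to the asymmetric constrained decomposition $\Gzero(s_1)+\Gdue(s_2,s')$. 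Even if one reorganizes the surgery around the two wells of the concatenated profile, the real difficulty — which your sketch leaves entirely open — is to produce a gain that is bounded below by a constant depending only on $s_1,s_2,s'$ and \emph{independent of} the near-optimality parameter $\sigma$ (the profiles, their minima and their supports all vary with $\sigma$). Your Case~2 is also not sound as written: monotonicity gives $\Gdue(s_1'+s_2,s')\le\Gdue(s_1+s_2,s')$, not the reverse, and since $\Gzero(s_1)<\ell s_1$ one cannot conclude $\Gzero(s_1)+\Gdue(s_2,s')>1$ from Lipschitz/subadditivity bounds alone; the case $\Gdue(s_1+s_2,s')=1$ with $\Gzero(s_1)<1$ and $\Gdue(s_2,s')<1$ needs the same quantitative mechanism as the generic case, not a separate soft argument.

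For comparison, the paper's proof works in the reparametrized formulation \eqref{defgbar2bis} and obtains the quantified gain by \emph{superposition} rather than concatenation: it takes the exact (unique) minimizer $\gamma_1$ for $\Gzero(s_1)$, a near-optimal $\gamma_2$ for $\Gdue(s_2,s')$ with $\max\gamma_2\ge(1-m_{s'})^2$, and tests $\Gdue(s_1+s_2,s')$ with $\gamma:=(\gamma_1+\gamma_2)\wedge1$. Splitting $(s_1+s_2)^2\le$ the two pieces and using that $\tif(\sqrt\cdot)$ is strictly decreasing, the $s_1$-part is controlled by $\Gzero(s_1)$, while on a \emph{fixed} interval $J$ around $t=\tfrac12$ one has $\gamma\ge\gamma_2+\rho$ with $\rho>0$ determined by $\gamma_1$ and by $1-\Gdue(s_2,s')$; strict monotonicity of $\tif(\sqrt\cdot)^2$ then lowers the $s_2$-transmission cost by a fixed amount $\tilde c>0$ on $J$, and a measure bound on $\{|\gamma_2'|\le 2/\delta\}$ (coming from $\int_0^1|\gamma_2'|\,\de t\le2$) converts this into a gain $c_{s_1,s_2,s'}>0$ independent of the near-optimality parameter, which survives the limit. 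This bypasses entirely the need for existence/structure of minimizers of the constrained problem, which you flagged as your main obstacle but which is not in fact the missing ingredient; the missing ingredient is a valid, uniformly quantified energy improvement, and that is what your proposal does not supply.
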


\begin{proof}
	If $s_1+s_2\geq s'$, then $\Gdue(s_1+s_2,s')=\Gzero(s_1+s_2)$ and \eqref{ssbarg} holds since $\Gzero$ is strictly subadditive by {Theorem~\ref{thm:propg1}}. 
	{If $\Gzero(s_1)=1$ the assertion follows from $\Gdue(s_1+s_2,s')\le 1$ and $\Gdue(s_2,s')>0$.} {Analogously if $\Gdue(s_2,s')=1$.}
	Let us assume now that $s_1+s_2<s'$, {$s_1<\sfrac$} {and $\Gdue(s_2,s')<1$}. Fixed {$\eta\in (0, \frac12(1-\Gdue(s_2,s')))$}, let $\gamma_1,\gamma_2\in W^{1,1}_0([0,1],[0,1])$ be such that $\max\gamma_2\geq(1-m_{s'})^2$ and 
	\begin{equation}\label{ssgbar2}
{1>}\Gzero(s_1)=\int_0^1 \sqrt{s_1^2(\tif(\sqrt{\gamma_1}))^2 + \textstyle\frac{|\gamma_1'|^2}{4}} \de t,
	\end{equation}
	\begin{equation}\label{ssgbar1}
{1>}\Gdue(s_2,s')+\eta\ge\int_0^1 \sqrt{s_2^2(\tif(\sqrt{\gamma_2}))^2 + \textstyle\frac{|\gamma_2'|^2}{4}} \de t.
	\end{equation}
	Notice that this is possible by the characterization \eqref{defgbar2bis}
	and Proposition~\ref{prop:g2}\ref{prop:g2unique}.
	In particular $\gamma_1$ is the unique minimizer of the problem defining $
	\Gzero(s_1)$, {and \eqref{ssgbar1} implies $0\le\gamma_2\le \frac12 (1+\Gdue(s_2,s'))<1$ as well as $\int_0^1|\gamma_2'|\de t\le 2$.}
	
	The function $\gamma:=(\gamma_1+\gamma_2)\wedge 1$ is admissible for $\Gdue(s_1+s_2,s')$, being ${\max\gamma \geq \max\gamma_2\ge}(1-m_{s'})^2$. Therefore,
	\begin{multline*}
	\Gdue(s_1+s_2,s')\leq \int_0^1 \sqrt{(s_1+s_2)^2(\tif(\sqrt{\gamma}))^2 + \textstyle\frac{|\gamma'|^2}{4}} \de t\\
	\leq \int_0^1 \sqrt{s_1^2(\tif(\sqrt{\gamma}))^2 + \textstyle\frac{|\gamma_1'|^2}{4}} \de t+
	\int_0^1 \sqrt{s_2^2(\tif(\sqrt{\gamma}))^2 + \textstyle\frac{|\gamma_2'|^2}{4}} \de t.
	\end{multline*}
	In order to estimate the first integral, we observe that $\gamma\geq\gamma_1$ {by definition}
%	, that {$\gamma(1/2)>\gamma_1(1/2)$ (since $\gamma_2 (1/2)>0$ by $s'>0$ and $\gamma_1(1/2)<1$ by $s_1<\sfrac$),}} 
and that by \eqref{assf2} $f_1(\sqrt{\cdot})$ is {strictly} decreasing, hence
	\begin{equation}\label{equnggammamgamma1}
	\int_0^1 \sqrt{s_1^2(\tif(\sqrt{\gamma}))^2 + \textstyle\frac{|\gamma_1'|^2}{4}} \de t{\le} \int_0^1 \sqrt{s_1^2(\tif(\sqrt{\gamma_1}))^2 + \textstyle\frac{|\gamma_1'|^2}{4}} \de t.	 
	\end{equation}
	As for the second integral, we {first assume $s_2>0$ and} show that there exists a constant $c_{s_1,s_2,s'}{>0}$, only depending on $s_1$, $s_2$ and $s'$, such that 
	\begin{equation}\label{stimacost}\int_0^1 \sqrt{s_2^2(\tif(\sqrt{\gamma}))^2 + \textstyle\frac{|\gamma_2'|^2}{4}} \de t\leq \int_0^1 \sqrt{s_2^2(\tif(\sqrt{\gamma_2}))^2 + \textstyle\frac{|\gamma_2'|^2}{4}} \de t-c_{s_1,s_2,s'}.
	\end{equation}
	This would lead to 
	\begin{multline*}
	\Gdue(s_1+s_2,s')\leq\int_0^1 \sqrt{s_1^2(\tif(\sqrt{\gamma_1}))^2 + \textstyle\frac{|\gamma_1'|^2}{4}} \de t+
	\int_0^1 \sqrt{s_2^2(\tif(\sqrt{\gamma_2}))^2 + \textstyle\frac{|\gamma_2'|^2}{4}} \de t-c_{s_1,s_2,s'}\\
	\leq \Gzero(s_1)+\Gdue(s_2,s')+\eta-c_{s_1,s_2,s'},
	\end{multline*}
	and then to \eqref{ssbarg} as $\eta\to0$.
	
	In order to check \eqref{stimacost}, 
	{we let $\rho:=\frac{1-\Gdue(s_2,s')}2\wedge \frac12 \gamma_1(\frac12)>0$. Then there is  $\delta>0$, only depending on $s_1$, $s_2$ and $s'$,  such that $\gamma_1\geq {\rho}$ in  $J:=(\frac12-\delta,\frac12+\delta)$}.
{This implies $\gamma=1\wedge (\gamma_1+\gamma_2)\ge \gamma_2+\rho$ in $J$. 
Convexity and strict monotonicity of  $f_1(\sqrt{\cdot})$ 
imply convexity and strict monotonicity of 
$(f_1(\sqrt{\cdot}))^2$, and hence monotonicity of its difference quotients. Therefore}
\begin{equation*}
 {f_1^2(\sqrt{\gamma_2})-f_1^2(\sqrt{\gamma})\ge
 f_1^2(\sqrt{\gamma_2})-f_1^2(\sqrt{\gamma_2+\rho})\ge
 f_1^2(\sqrt{1-\rho})-f_1^2(1)=:\tilde c>0 \text{ in $J$}.}
 \end{equation*}
  {We remark that $\tilde c$ only depends on $\rho$ and therefore on $s_1$, $s_2$, $s'$.}
	{Let $I_\eta:=\{t\in J: |\gamma'_2|(t)\le 2\delta^{-1}\}$. From
	$\int_0^1|\gamma_2'|\de t\le 2$ and $\mathcal L^1(J)=2\delta$ we obtain $\mathcal L^1(I_\eta)\ge \delta$.
	Since $\sqrt{A-\e}\le \sqrt{A}-\frac{\e}{2\sqrt A}$ whenever $0\le \e\le A$ and $f_1\le \ell$, 
	}
	\begin{equation*}
	\begin{split}
	\int_{I_\eta} \sqrt{s_2^2(\tif(\sqrt{\gamma}))^2 + \textstyle\frac{|\gamma_2'|^2}{4}} \de t
	& \leq \int_{I_\eta} \sqrt{s_2^2(\tif(\sqrt{\gamma_2}))^2 + \textstyle\frac{|\gamma_2'|^2}{4}-{s_2^2\tilde c }} \de t\\
	& \leq \int_{I_\eta} \sqrt{s_2^2(\tif(\sqrt{\gamma_2}))^2 + \textstyle\frac{|\gamma_2'|^2}{4}} \de t -
	{\frac{{s_2^2\tilde c}\delta}{2\sqrt{s_2^2\ell^2+{\delta^{-2}}}}},
	\end{split}
	\end{equation*}
	where {the last term is nonzero and} only depends on $s_1$, $s_2$ {and $s'$}. Hence \eqref{stimacost} follows.
	
{It remains to deal with the case $s_2=0$. In this situation we can take  {$\gamma_2(t)=2(t\wedge (1-t)) (1-m_{s'})^2$ and $\eta=0$} in \eqref{ssgbar1}, {then $\gamma>\gamma_1$ pointwise in $(0,1)$ and} \eqref{equnggammamgamma1} is a strict inequality; {by monotonicity of $f_1$} \eqref{stimacost} holds with
		$c_{s_1,s_2,s'}=0$. The proof is then concluded as above.}
\end{proof}

{
\begin{proof}[Proof of Theorem~\ref{thm:gbar1}]
All the properties listed in Section~\ref{sect:setting} follow by combining {Theorem~\ref{thm:propg1}}, Proposition~\ref{prop:gbar} and Proposition~\ref{prop:gbar2}, and observing that by definition one has ${\Gdue}(s,0)=\Gzero(s)$.
\end{proof}
}

\begin{proof}[{ Proof of Lemma~\ref{lem:gbarmu1}}]
	Given $T>0$ and an admissible pair $(\alpha_\mu,\beta_\mu)$ for ${\Gdue}^{(\mu)}(s,s')$, we construct an admissible pair $(\alpha,\beta)$ for ${\Gdue}(s,s')$ in $(-T-1,T+1)$ by setting $\alpha\equiv\alpha_\mu(-T)$ in $(-T-1,-T)$, $\alpha=\alpha_\mu$ in $(-T,T)$, $\alpha\equiv\alpha_\mu(T)$ in $(T,T+1)$, and by setting $\beta=\beta_\mu$ in $(-T,T)$ and linearly linked to the value 1 in $(-T-1,-T)$ and $(T,T+1)$. Then by \eqref{defgbar2}, {scaling $\alpha$,}
	\begin{align*}
	{\Gdue}(s,s')
	&\leq \int_{-T-1}^{T+1} \biggl( f^2(\beta)|\alpha'|^2 + \frac{(1-\beta)^2}{4} + |\beta'|^2 \biggr) \de t \\
	&\leq \int_{-T}^{T} \biggl( f^2(\beta_\mu)|\alpha_\mu'|^2 + \frac{(1-\beta_\mu)^2}{4} + |\beta_\mu'|^2 \biggr) \de t +3\mu^2\,,
	\end{align*}
	from which it follows that ${\Gdue}(s,s')\leq {\Gdue}^{(\mu)}(s,s')+3\mu^2$. The other inequality follows by an analogous construction, reversing the roles of ${\Gdue}$ and ${\Gdue}^{(\mu)}$.
\end{proof}

%%%%%%%%%%%%%%%%%%%%%%%%%%%%%%%%%%%%%%%%%%%%%%%%%%%%%%%%%%%%%%%%%%
%%%%%%%%%%%%%%%%%%%%%%%%%%%%%%%%%%%%%%%%%%%%%%%%%%%%%%%%%%%%%%%%%%
%%%%%%%%%%%%%%%%%%%%%%%%%%%%%%%%%%%%%%%%%%%%%%%%%%%%%%%%%%%%%%%%%%
%%%%%%%%%%%%%%%%%%%%%%%%%%%%%%%%%%%%%%%%%%%%%%%%%%%%%%%%%%%%%%%%%%
%%%%%%%%%%%%%%%%%%%%%%%%%%%%%%%%%%%%%%%%%%%%%%%%%%%%%%%%%%%%%%%%%%

\bigskip
\bigskip
\noindent
{\bf Acknowledgments.}{
We acknowledge support by the Deutsche Forschungsgemeinschaft (DFG, German Research Foundation) through project 211504053 -- CRC 1060 \textit{The mathematics of emergent effects} at the University of Bonn. MB is member of the 2019 INdAM - GNAMPA project \emph{Analysis and optimisation of thin structures}. FI wishes to thank the warm hospitality of the Institute for Applied Mathematics at the University of Bonn where part of this work has been carried out. FI has been a recipient of scholarships from the Fondation Sciences Mathématiques de Paris, Emergence Sorbonne Université, and the Séphora-Berrebi Foundation, and gratefully acknowledges their support. FI acknowledges support through the PEPS CNRS 2019 \textit{Évolution quasi-statique de la rupture cohésive à travers une	approche de champ de phase}.}

\bigskip
{\frenchspacing

}

\end{document}